\newtheorem{thm}{Theorem}[section]
\newtheorem{corollary}[thm]{Corollary}
\newtheorem{lem}[thm]{Lemma}
\theoremstyle{definition}
\newtheorem{defn}[thm]{Definition}
\newtheorem{example}[thm]{Example}
\newtheorem{assumption}[thm]{Assumption}
\theoremstyle{remark}
\newtheorem{rem}[thm]{Remark}
\newcommand\bB{\mathbb{B}}
\newcommand\bC{\mathbb{C}}
\newcommand\bH{\mathbb{H}}
\newcommand\bN{\mathbb{N}}
\newcommand\bM{\mathbb{M}}
\newcommand\bR{\mathbb{R}}
\newcommand\bS{\mathbb{S}}
\newcommand\bZ{\mathbb{Z}}
\newcommand\cB{\mathcal{B}}
\newcommand\cF{\mathcal{F}}
\newcommand\cK{\mathcal{K}}
\newcommand\cP{\mathcal{P}}
\newcommand\cS{\mathcal{S}}
\newcommand\cM{\mathcal{M}}
\newcommand\cT{\mathcal{T}}
\newcommand\cO{\mathcal{O}}
\newcommand{\p}{\partial}
\DeclareMathOperator*{\esssup}{ess\,sup}
\newcommand{\aint}{-\hspace{-0.40cm}\int}
\newcommand{\mysection}[1]{\section{#1}
\setcounter{equation}{0}}
\begin{document}

\title[A weighted $L_p$-regularity theory for parabolic PDEs with P-D operators]
{A weighted $L_p$-regularity theory for parabolic partial differential equations with time-measurable pseudo-differential operators}

\author[J.-H. Choi]{Jae-Hwan Choi}
\address[J.-H. Choi]{Department of Mathematical Sciences, KAIST, 291 Daehak-ro, Yuseong-gu, Daejeon, 34141, Republic of Korea}
\email{jaehwanchoi@kaist.ac.kr}

\author[I. Kim]{Ildoo Kim}
\address[I. Kim]{Department of Mathematics, Korea University, 145 Anam-ro, Seongbuk-gu, Seoul, 02841, Republic of Korea}
\email{waldoo@korea.ac.kr}

\thanks{The authors were supported by the National Research Foundation of Korea(NRF) grant funded by the Korea government(MSIT) (No.2020R1A2C1A01003959)}

\subjclass[2020]{35S05, 35B65, 47G30}

\keywords{Pseudo-differential operator, Muckenhoupt weight, Cauchy problem}

\maketitle
\begin{abstract}
We obtain the existence, uniqueness, and regularity estimates of the following Cauchy problem 
\begin{equation}
\begin{cases}
								\label{ab eqn}
\p_tu(t,x)=\psi(t,-i\nabla)u(t,x)+f(t,x),\quad &(t,x)\in(0,T)\times\mathbb{R}^d,\\
u(0,x)=0,\quad & x\in\mathbb{R}^d,
\end{cases}
\end{equation}
in (Muckenhoupt) weighted $L_p$-spaces with time-measurable pseudo-differential operators
\begin{align}
								\label{ab op}
\psi(t,-i\nabla)u(t,x):=\cF^{-1}\left[\psi(t,\cdot)\cF[u](t,\cdot)\right](x).
\end{align}
More precisely, we find sufficient conditions of the symbol $\psi(t,\xi)$ (especially depending on the smoothness of the symbol with respect to $\xi$) to guarantee that equation \eqref{ab eqn} is well-posed in (Muckenhoupt) weighted $L_p$-spaces. 
Here the symbol $\psi(t,\xi)$ is merely measurable with respect to $t$, and the sufficient smoothness of $\psi(t,\xi)$ with respect to $\xi$ is characterized by a property of each weight. In particular, we prove the existence of a positive constant $N$ such that for any solution $u$ to equation \eqref{ab eqn},
\begin{align}
							\label{ab est}
\int_0^T \int_{\bR^d} |(-\Delta)^{\gamma/2} u(t,x) |^p (t^2 + |x|^2)^{\alpha/2} \mathrm{d}x\mathrm{d}t
\leq N\int_0^T \int_{\bR^d} |f(t,x)|^p (t^2 + |x|^2)^{\alpha/2} \mathrm{d}x\mathrm{d}t
\end{align}
and
\begin{align}
							\label{ab est 2}
\int_0^T \left(\int_{\bR^d} |(-\Delta)^{\gamma/2} u(t,x) |^p |x|^{\alpha_2} \mathrm{d}x \right)^{q/p} t^{\alpha_1}\mathrm{d}t
\leq N\int_0^T \left(\int_{\bR^d} |f(t,x) |^p |x|^{\alpha_2} \mathrm{d}x \right)^{q/p} t^{\alpha_1}\mathrm{d}t,
\end{align}
where $p,q\in(1,\infty)$, $-d-1<\alpha < (d+1)(p-1)$, $-1 < \alpha_1 < q-1$, $-d <\alpha_2< d(p-1)$, and $\gamma$ is the order of the operator $\psi(t,-i\nabla)$.
\end{abstract}

\mysection{Introduction}
Pseudo-differential operators have been interesting operators in mathematics for a long time. 
They include not only classical differential operators with a natural number order but also fractional differential operators. 
Moreover, many interesting non-local operators are contained in the class of pseudo-differential operators and it naturally connects them to generators of Markov processes (cf. \cite{niel2001, niel2002, niel2005, taka1984}). 

$L_p$-theories also have a long history in mathematics. There are tremendous results showing $L_p$-boundedness of many interesting operators in Fourier analysis (cf. \cite{grafakos2014classical, stein2016harmonic}).
They have played important roles in theories of partial differential equations (PDEs) to show the well-posedness of solutions  to PDEs in the Sobolev space.
Especially, there commonly appear singularities to obtain optimal regularity estimates of solutions to PDEs and they are overcome by collaboration of many deep theories in analysis and PDEs. Classically, $L_p$-theories in PDEs mostly targeted second-order equations (cf. \cite{krylov2008lectures, lady1988}).
However, these days beyond second-order operators, there occur numerous interesting kinds of research handling high-order operators, non-local operators, and pseudo-differential operators (cf. \cite{chen2018lp, choi2020maximal, dong2012lp, gallarati2017maximal, kang2021lp, kim2015parabolic, kim2016Lp, kim2016lplq, kim2018lp, kim2021lq, mikulevivcius1992, mikulevivcius2014, mikulevivcius2017p, mikulevivcius2019cauchy, zhang2013maximal, zhang2013lp}). 
Even for stochastic PDEs, there have been improvements in $L_p$-theories for various operators (cf. \cite{chang2012stochastic, han2021regularity,kim2013parabolic,kim2016lp, kim2012lp, kim2021sobolev}). 
Particularly, we need to mention that there is progress for these theories in weighted $L_p$-spaces recently (cf. \cite{dong2021nonlocal, dong2021sobolev,gallarati2017maximal, han2020weighted,emiel21,neerven2012stochastic, neerven2012maximal,neerven2015maximal, park2021maximal,portal2019stochastic}).

As mentioned above, there have been tons of researches studying the properties of pseudo-differential operators and the well-posedness of PDEs with them. 
However, these theories are built up in   an elliptic setting mostly.  In other words, there are not many results handling PDEs with pseudo-differential operators in a parabolic setting even though generalizations from elliptic theories to parabolic theories  are considered difficult and important  in theories of mathematics (especially in  PDEs). 
In particular, if an additional (time) variable does not satisfy any regularity condition, then this parabolic generalization becomes complicated and non-trivial even though there are many well-constructed theories in an elliptic setting. 
Our plan of this paper is to construct weighted $L_p$-theories  to equation \eqref{ab eqn} with time-measurable differential operators defined in \eqref{ab op}.
To the best of our knowledge, there are not many results handling general time-measurable differential operators in $L_p$-spaces even without weights. 
For previous results with these operators, we refer readers to \cite{kim2015parabolic,kim2016lplq, kim2018lp}  in $L_p$, $L_q(L_p)$, and $L_p(C^\alpha)$ spaces without weights.

Next, we explain a connection between our results and classical important theories in Fourier analysis.
Recall classical Mikhlin's multiplier theorem (cf. \cite[Section 6.2]{grafakos2014classical}). 
Let $\psi(\xi)$ be a complex-valued function defined on $\bR^d$ and assume that there exists a positive constant $M$ such that
\begin{align}
					\label{2021-11-17 01}
|D^{\alpha}_{\xi}\psi(\xi)|\leq M|\xi|^{-|\alpha|},\quad \forall \xi \in \bR^d 
~\text{for any multi-index $\alpha$ with $|\alpha| \leq \lfloor d/2 \rfloor$}+1,
\end{align}
 where $\lfloor d/2 \rfloor$  denotes the integer which is less than or equal to $d/2$.
Then the operator
$$
\cT_{\psi} f (x) := \cF^{-1}\left[\psi\cF[f]\right](x)
$$
becomes $L_p$-bounded, \textit{i.e.}, for any $p \in (1,\infty)$, there exists a positive constant $N(d,p,M)$ such that 
\begin{align*}
\int_{\bR^d} |\cT_{\psi} f (x)|^p \mathrm{d}x 
\leq N \int_{\bR^d} |f(x)|^p \mathrm{d}x \qquad \forall f \in L_p(\bR^d).
\end{align*}
However, if a weight $w$ is given, then the smoothness on $\psi$ in \eqref{2021-11-17 01} are not sufficient to guarantee 
\begin{align}
								\label{2021 11 17 02}
\int_{\bR^d} |\cT_{\psi} f (x)|^p w(x)\mathrm{d}x 
\leq N \int_{\bR^d} |f(x)|^p w(x) \mathrm{d}x \qquad \forall f \in L_p(\bR^d).
\end{align}
In other words, we expect an extra condition on a symbol $\psi$ if a weight $w$ is additionally given in $L_p$-estimates.
Particularly, it is well-known that  \eqref{2021 11 17 02} holds if $\psi$ satisfies stronger smooth conditions than \eqref{2021-11-17 01} depending on the weight $w$. For instance, let $w$ be a weight in Muckenhoupt's class $A_{pl/d}(\bR^d)$ (see Definition \ref{def weight}) with $s \in (1,2]$, $d/s <l \leq d$, and 
$d/l < p$. Then, if
\begin{align*}
|D^{\alpha}_{\xi}\psi(\xi)|\leq N|\xi|^{\gamma-|\alpha|},\quad \forall \xi \in \bR^d
~\text{for any multi-index $\alpha$ with $|\alpha| \leq l$},
\end{align*}
we have \eqref{2021 11 17 02} (cf. \cite[Theorem 1]{kurtz1979results}).
Based on these theories in Fourier analysis, we  find appropriate conditions on $\psi(t,\xi)$ to enable equation \eqref{ab eqn} to be well-posed in weighted $L_p$-spaces. Formally, the solution $u$ to \eqref{ab eqn} is given by
\begin{align*}
u(t,x) 
&=  \int_0^t \int_s^t \mathrm{e}^{r \psi(r,-i\nabla)}(x-\cdot )f(s,\cdot) \mathrm{d}r \mathrm{d}s := \int_0^t  \int_{\bR^d}  p(t,s,x-y) f(s,y)\mathrm{d}y\mathrm{d}s,
\end{align*}
where
\begin{align*}
p(t,s,x):=1_{0 < s< t} \cdot \frac{1}{(2\pi)^{d/2}}\int_{\bR^d} \exp\left(\int_{s}^t\psi(r,\xi)\mathrm{d}r\right)\mathrm{e}^{ix\cdot\xi}\mathrm{d}\xi.
\end{align*}
Moreover, if $\psi(r,\xi) \simeq |\xi|^\gamma$, then 
\begin{align*}
f \mapsto (-\Delta)^{\gamma/2}u
\end{align*}
becomes a (parabolic) singular integral operator (see Section \ref{section para sing}), where
\begin{align}
								\label{2021 11 18 01}
(-\Delta)^{\gamma/2}u(t,x)=
\int_0^t  \int_{\bR^d} \left( (-\Delta)^{\gamma/2} p \right) (t,s,x-y) f(s,y)\mathrm{d}y\mathrm{d}s
\end{align}
and
\begin{align*}
\left( (-\Delta)^{\gamma/2}p \right) (t,s,x):=1_{0 < s< t} \cdot \frac{1}{(2\pi)^{d/2}}\int_{\bR^d} |\xi|^\gamma \exp\left(\int_{s}^t\psi(r,\xi)\mathrm{d}r\right)\mathrm{e}^{ix\cdot\xi}\mathrm{d}\xi.
\end{align*}
Similarly to  elliptic cases, if 
\begin{align}
					\label{2021 11 17 11}
|D^{\alpha}_{\xi}\psi(t,\xi)|\leq N|\xi|^{\gamma-|\alpha|},\quad \forall (t,\xi) \in \bR \times (\bR^d\setminus\{0\})
~\text{for any multi-index $\alpha$ with $|\alpha| \leq \lfloor d/2 \rfloor$+1},
\end{align}
then $L_p$ and $L_q(L_p)$ norms of $(-\Delta)^{\gamma/2}u(t,x)$ can be controlled by $f$'s (see \cite{kim2015parabolic,kim2016lplq}).
However, if a weight $w$ is given, the upper bound condition of $\psi(t,\xi)$ in \eqref{2021 11 17 11} should be enhanced. 
That is, it needs to  hold for any multi-index with $|\alpha|$ which is greater than $\lfloor d/2 \rfloor$ depending on the weight $w$. 
We characterize this as requiring smoothness based on a constant related to each weight. We call this constant {\bf a regularity constant} and denote it by  $R_{p,d}^w$ (see \eqref{2021-01-19-01}). In particular, one can easily check that
$$
\left\lfloor \frac{d}{2} \right\rfloor  \leq  \left\lfloor\frac{d}{R_{p,d}^w}\right\rfloor  \leq d  + 1.
$$
In our main theorems (Theorem \ref{21.01.08.17.12} and Theorem  \ref{21.01.08.17.12-2}), we characterize numbers of smoothness of symbols $\psi(t,\xi)$  satisfying \eqref{2021 11 17 11} depending on  $A_p$-weights $w$, $w_1$, and $w_2$ with a help of regularity constants for us to find a positive constant $N$ which is independent of $f$ such that
\begin{align*}
\int_0^T \int_{\bR^d} |(-\Delta)^{\gamma/2} u(t,x) |^p w(t,x) \mathrm{d}x\mathrm{d}t
\leq N\int_0^T \int_{\bR^d} |f(t,x)|^p w(t,x) \mathrm{d}x\mathrm{d}t
\end{align*}
and
\begin{align*}
\int_0^T \left(\int_{\bR^d} |(-\Delta)^{\gamma/2} u(t,x) |^p w_2(x) \mathrm{d}x \right)^{q/p}w_1(t)\mathrm{d}t
\leq N\int_0^T \left(\int_{\bR^d} |f(t,x) |^p w_2(x) \mathrm{d}x \right)^{q/p} w_1(t) \mathrm{d}t.
\end{align*}
Estimates \eqref{ab est} and \eqref{ab est 2} are given as particular cases of our theorems. 
We need to mention that if a weight is given by a constant or depends only on time, then our regularity constants are not optimal (Remark \ref{opti rem})
and this initiates for us to prepare the next papers handling only time-dependent weights.

Due to \eqref{2021 11 18 01}, the most important part of our theory is to show $L_p$-boundedness of parabolic singular integral operators.
For future applications, we consider general parabolic singular integral operators in the form of
\begin{align*}
\cT_{\varepsilon} f(t,x) = \int_{-\infty}^t \int_{\bR^d} K_{\varepsilon}(t,s,x-y)f(s,y)\mathrm{d}y\mathrm{d}s,
\end{align*}
where $K_{\varepsilon}$ is a kernel satisfying 
\begin{align}
							\label{2021 11 18 10}
       \sup_{x \in \bR^d}|x|^n|\p_t^mD^{\alpha}_xK_{\varepsilon}(t,s,x)| 
       \leq N_1 |t-s|^{-(m+\varepsilon)-\frac{(d+|\alpha|-n)}{\gamma}}  
\end{align}
and
\begin{align}
							\label{2021 11 18 11}
        \left(\int_{\bR^d}|x|^{2n}|\p_t^mD^{\alpha}_xK_{\varepsilon}(t,s,x)|^2\mathrm{d}x\right)^{1/2}\leq N_2|t-s|^{-(m+\varepsilon)-\frac{(d+|\alpha|-n)}{\gamma}+\frac{d}{2\gamma}},
\end{align}
where $\alpha$  is a multi-index.
For weights $w \in A_p(\bR^{d+1})$,  $w_2 \in A_p(\bR^{d})$, and  $w_1 \in A_p(\bR)$, we show (Theorem \ref{20.12.17.11.23})
\begin{align}
						\label{2021 11 18 30}
\int_0^T \int_{\bR^d} |\cT_{\varepsilon} f(t,x) |^p w(t,x) \mathrm{d}x\mathrm{d}t
\leq NT^{1-\varepsilon}\int_0^T \int_{\bR^d} |f(t,x)|^p w(t,x) \mathrm{d}x\mathrm{d}t
\end{align}
and
\begin{align}
						\label{2021 11 18 31}
\int_0^T \left(\int_{\bR^d} |\cT_{\varepsilon} f(t,x) |^p w_2(x) \mathrm{d}x \right)^{q/p}w_1(t)\mathrm{d}t
\leq NT^{1-\varepsilon}\int_0^T \left(\int_{\bR^d} |f(t,x) |^p w_2(x) \mathrm{d}x \right)^{q/p} w_1(t) \mathrm{d}t
\end{align}
if $K_{\varepsilon}$ satisfies \eqref{2021 11 18 10} and \eqref{2021 11 18 11} for any multi-index $\alpha$ which is less than or equal to a positive integer related to regularity constants 
$R_{p,d+1}^w$, $R_{p,d}^{w_1}$, and $R_{q,1}^{w_2}$. 
Our main tool is Hardy-Littlewood's maximal function and Fefferman-Stein's sharp function.
Based on $L_2$-boundedness of $\cT_{\varepsilon}$ and kernel estimates given in \eqref{2021 11 18 10} and \eqref{2021 11 18 11}, we prove 
\begin{align}
						\label{2021 11 18 20}
(\cT_{\varepsilon} f)^{\sharp}(t,x)\leq NT^{1-\varepsilon}(\bM|f|^{p_0}(t,x))^{1/p_0},
\end{align}
for  $p_0 \in (1,2]$ depending on a given weight, where $(\cT_{\varepsilon} f)^{\sharp}$ denotes the sharp function of $\cT_{\varepsilon} f$ and $\bM|f|$ is the standard maximal function of $f$ (see Section \ref{sharp maximal sec} for the explicit definitions). 
Due to the equivalence of the sharp and maximal operators in weighted $L_p$-spaces, \eqref{2021 11 18 20} leads us to obtain \eqref{2021 11 18 30} and \eqref{2021 11 18 31}.

Our main results are given in Section \ref{21.11.30.14.19} and the existence and uniqueness of solutions to \eqref{ab eqn} for a smooth data $f$ is shown in Section \ref{21.11.30.14.20}.
In Section \ref{21.11.30.14.21}, we obtain an $L_2$-estimate and auxiliary estimates of a fundamental solution to \eqref{ab eqn}.
The Boundedness of general parabolic singular integral operators is handled in Section \ref{section para sing} and \ref{21.11.30.14.22}.
Many interesting properties of weighted $L_p$-spaces are self-contained in Appendix \ref{append} since most well-known properties of Sobolev spaces become unclear due to the effects of unbounded weights. 

\vspace{2mm}
We finish this section with the notations used in the article.

\begin{itemize}
\item 
Let $\bN$, $\bZ$, $\bR$, $\bC$ denote the natural number system, the integer number system, the real number system, and the complex number system, respectively. For $d\in\bN$, $\bR^d$ denotes the $d$-dimensional Euclidean space. 
\item 
For $i=1,...,d$, a multi-index $\alpha=(\alpha_{1},...,\alpha_{d})$ with
$\alpha_{i}\in\{0,1,2,...\}$, and function $g$, we set
$$
\frac{\partial g}{\partial x^{i}}=D_{x^i}g,\quad
D^{\alpha}g=D_{x^1}^{\alpha_{1}}\cdot...\cdot D^{\alpha_{d}}_{x^d}g,\quad |\alpha|:=\sum_{i=1}^d\alpha_i.
$$
For $\alpha_i =0$, we define $D^{\alpha_i}_{x^i} f = f$. We denote the gradient of a function $g:\bR^d\to\bR$ by 
$$
\nabla g = (D_{x^1}g, D_{x^2}g, \cdots, D_{x^d}g).
$$
If $g=g(t,x):\bR\times\bR^d\to\bR^d$, then we denote
$$
 D_x^{\alpha}g=D_{x^1}^{\alpha_{1}}\cdot...\cdot D^{\alpha_{d}}_{x^d}g,
$$
where $\alpha=(\alpha_1,\cdots,\alpha_d)$.
\item 
Let $C_c^{\infty}(\bR^d)$ denote the space of infinitely differentiable functions with compact support, $\cS(\bR^d)$ be the Schwartz space on $\bR^d$ and $\cS'(\bR^d)$ be the space of tempered distributions on $\bR^d$. The convergence $f_n \to f$ in $S(\bR^d)$ as $n \to \infty$ implies
\begin{align*}
\sup_{x \in \bR^d} |(x^1)^{\alpha_1}\cdots(x^d)^{\alpha_d} (D^\beta (f_n-f))(x)| \to 0 \quad \text{as}~ n \to \infty\quad \forall \alpha_1,\cdots\alpha_d,\beta.
\end{align*}

\item 
Let $F$ be a normed space and $(X,\mathcal{M},\mu)$ be a measure space.
\begin{itemize}
    \item $\mathcal{M}^{\mu}$ denotes the completion of $\cM$ with respect to the measure $\mu$.
    \item For $p\in[1,\infty)$, the space of all $\mathcal{M}^{\mu}$-measurable functions $f : X \to F$ with the norm 
\[
\left\Vert f\right\Vert _{L_{p}(X,\cM,\mu;F)}:=\left(\int_{X}\left\Vert f(x)\right\Vert _{F}^{p}\mu(\mathrm{d}x)\right)^{1/p}<\infty
\]
is denoted by $L_{p}(X,\cM,\mu;F)$. We also denote by $L_{\infty}(X,\cM,\mu;F)$ the space of all $\mathcal{M}^{\mu}$-measurable functions $f : X \to F$ with the norm
$$
\|f\|_{L_{\infty}(X,\cM,\mu;F)}:=\inf\left\{r\geq0 : \mu(\{x\in X:\|f(x)\|_F\geq r\})=0\right\}<\infty.
$$
We usually omit the given measure and $\sigma$-algebra if there is no confusion (\textit{e.g.} Lebesgue (or Borel) measure and $\sigma$-algebra).
\item  
We denote by $C(X;F)$ the space of all $F$-valued continuous functions $f : X \to F$ with the norm 
$$
|f|_{C(X;F)}:=\sup_{x\in X}|f(x)|_F<\infty.
$$
\end{itemize}
We omit $F$ if $F=\bR$ or $F=\bC$.

\item
For $r>0$,
$$
B_r(x):=\{y\in\bR^d:|x-y|< r\},\quad
\overline{B_r(x)}:=\{y\in\bR^d:|x-y|\leq r\}
$$

\item 
For $\cO\subseteq \bR^d$, the set of all Borel sets contained in $\cO$ is denoted by $\cB(\cO)$. We denote by $|\cO|$ the Lebesgue measure of a measurable set $\cO \subset \bR^d$. For locally integrable function $f$ on $\bR^d$ and bounded measurable set $A\subseteq \bR^d$ satisfying $|A|>0$,
$$
\aint_{A}f(x)\mathrm{d}x:=\frac{1}{|A|}\int_{A}f(x)\mathrm{d}x
$$

\item 
For integrable function $f$ on $\bR^d$, we denote the $d$-dimensional Fourier transform of $f$ by 
\[
\cF[f](\xi) := \frac{1}{(2\pi)^{d/2}}\int_{\bR^{d}} \mathrm{e}^{- i\xi \cdot x} f(x) \mathrm{d}x
\]
and the $d$-dimensional inverse Fourier transform of $f$ by 
\[
\cF^{-1}[f](x) := \frac{1}{(2\pi)^{d/2}}\int_{\bR^{d}} \mathrm{e}^{  ix \cdot \xi} f(\xi) \mathrm{d}\xi.
\]
We also use the same notations $\cF$ and $\cF^{-1}$ as Fourier transform operator on tempered distributions or $L_2(\bR^d)$ functions.
\item 
We write $N=N(a,b,\cdots)$, if the constant $N$ depends only on $a,b,\cdots$. 
\item 
For $a,b\in \bR$,
$$
a \wedge b := \min\{a,b\},\quad a \vee b := \max\{a,b\},\quad \lfloor a \rfloor:=\max\{n\in\bZ: n\leq a\}.
$$
For $z\in\bC$, $\Re[z]$ denotes the real part of $z$, $\Im[z]$ is the imaginary part of $z$ and $\bar{z}$ is the complex conjugate of $z$.
\end{itemize}

\mysection{Main results}
\label{21.11.30.14.19}
Throughout the paper, we fix $T \in (0,\infty)$ and $d \in \bN$. 
For a suitable complex-valued locally integrable function $\psi(t,\xi)$ on $[0,T] \times \bR^d$ and almost every $t \in [0,T]$,
we  can consider a pseudo-differential operator $\psi(t,i \nabla)$ given by 
$$
\psi(t,-i\nabla)u(x):=\cF^{-1}\left[\psi(t,\cdot)\cF[u]\right](x) \qquad u \in C_c^\infty(\bR^d).
$$
More generally, for a complex-valued nice function $u(t,x)$ on $[0,T] \times \bR^d$
$$
\psi(t,-i\nabla)u(t,x):=\cF^{-1}\left[\psi(t,\cdot)\cF[u](t,\cdot)\right](x).
$$
This function $\psi(t,\xi)$ is usually called {\bf the symbol} of the pseudo-differential operator. 
The operator $\psi(t,-i\nabla)$ is called {\bf a time-measurable pseudo-differential operator} if there is no regularity condition on the symbol $\psi$ with respect to $t$. 
By considering natural constant extensions at $t=0$ and $t=T$, we may assume the symbol $\psi(t,\xi)$ is defined on $\bR \times \bR^d$. 
In this paper, we study the following Cauchy problem with a time-measurable pseudo-differential operator
\begin{equation}
\label{21.01.07.15.22}
\begin{cases}
\p_tu(t,x)=\psi(t,-i\nabla)u(t,x)+f(t,x),\quad &(t,x)\in(0,T)\times\mathbb{R}^d,\\
u(0,x)=0,\quad & x\in\mathbb{R}^d
\end{cases}
\end{equation}
and find appropriate conditions on the symbol for us to establish a well-posedness theory to \eqref{21.01.07.15.22} in weighted $L_p$-spaces. 
First, we introduce related functions spaces handling solutions, and inhomogeneous data.

\begin{defn}[Muckenhoupt weight]
						\label{def weight}
For $p\in(1,\infty)$, let $A_p(\bR^d)$ be the class of all nonnegative and locally integrable functions $w$ satisfying
$$
[w]_{A_p(\bR^d)}:=\sup_{x_0\in\bR^d,r>0}\left(\aint_{B_r(x_0)}w(x)\mathrm{d}x\right)\left(\aint_{B_r(x_0)}w(x)^{-1/(p-1)}\mathrm{d}x\right)^{p-1}<\infty.
$$
\end{defn}
We need to relate each $A_p$-weight to a nonnegative integer to mention the appropriate smoothness of symbols guaranteeing our $L_p$-theory. 
\begin{rem}
				\label{21.02.23.13.03}
The class $A_p(\bR^d)$ is increasing as $p$ increases, and it holds that
$$
A_p(\bR^d)=\bigcup_{q\in(1,p)}A_q(\bR^d).
$$
More precisely, by \cite[Corollary 7.2.6]{grafakos2014classical}, for any $w\in A_p(\bR^d)$, there exist $c_{w}=c_w(d,p,[w]_{A_p(\bR^d)})>0$ and $C_{w}=C_{w}(d,p,[w]_{A_p(\bR^d)})>0$ such that 
$$
1<q:=\frac{p+c_w}{1+c_w}<p,\quad \text{and}\quad [w]_{A_q(\bR^d)}\leq C_w[w]_{A_p(\bR^d)}.
$$
Note that the constants $C_w$ and $c_w^{-1}$ increase according to $[w]_{A_p(\bR^d)}$. 
\end{rem}

\begin{defn}[Regularity constant of a weight in $A_p$]
							\label{regular exponent}
Let  $ w \in A_p(\bR^d)$ and define
\begin{align}
							\label{2021-01-19-01}
R_{p,d}^{w} := \sup \{ p_0 \in (1,2] :  w \in A_{p/p_0}(\bR^d) \}.
\end{align}
We say that $R_{p,d}^{w}$ is \textbf{the regularity constant of the weight $w\in A_p(\bR^d)$} since this constant plays an important role to characterize the differentiability of a symbol to make $L_p$-theories possible.  
\end{defn}
\begin{rem}
In this remark, we assume $w\in A_p(\mathbb{R}^d)$, and we denote 
$$
p_w:=\frac{p(1+c_w)}{p+c_w}\wedge 2\in(1,2].
$$
As pointed out in Remark \ref{21.02.23.13.03}, we have $w\in A_{p/p_w}(\mathbb{R}^d)$. 
Therefore, the set $I_{w,p}:=\{r \in[p/2,\infty):w\in A_r(\mathbb{R}^d)\}$ is either $[p/2,\infty)$ or $(q,\infty)$ with $p/2\leq q< p$. This implies that the constant $R_{p,d}^w$ is well-defined (finite).

We do not know if $w \in A_{p/R_{p,d}^{w}}(\bR^d)$ for each $w \in A_p(\bR^d)$ in general since there is no guarantee that $p/R_{p,d}^{w} \in I_{w,p}$.
However, there exists a constant $p_0 \in (1,2]$ such that $w \in A_{p/p_0}(\bR^d)$, $p_0 \leq R_{p,d}^w \wedge p$, and 
$$
\left\lfloor\frac{d}{p_0}\right\rfloor = \left\lfloor\frac{d}{R_{p,d}^w}\right\rfloor.
$$
To prove this claim, note that $\left\lfloor d/p\right\rfloor$ is left-continuous and piecewise-constant with respect to $p$.
Thus, there exists a $p_0 \in \left(1,R_{p,d}^{w} \right)$ such that $\left\lfloor d/p_0\right\rfloor = \left\lfloor d/R_{p,d}^w\right\rfloor$ and $w \in A_{p/p_0}(\bR^d)$.
It remains to show that $p_0 \leq R_{p,d}^{w} \wedge p$.

If $p \geq 2$, then it is obvious that $p_0< 2$, so we only need to consider the case $p \in (1,2)$.
However, it should be noted that $A_p(\bR^d)$ is only defined for $p>1$, and $A_1(\bR^d)$-class is not introduced in this paper (see Definition \eqref{def weight}).
Therefore, for any $p_0 \in (1,2)$ such that $w \in A_{p/p_0}(\bR^d)$, it is clear that $p_0$ is less than $p$.
\end{rem}

\begin{example}
The easiest examples of a function in $A_p(\bR^d)$ are polynomials, $w_{\alpha}(x):=|x|^{\alpha}$. It is well-known that $w_{\alpha}\in A_p(\bR^d)$ if and only if $-d < \alpha < d(p-1)$ (cf. \cite[Example 7.1.7]{grafakos2014classical}). This implies that for $p_0>1$,
$$
p_0 < \frac{p}{\frac{\alpha}{d} + 1} = \frac{pd}{\alpha+d}\quad \Longleftrightarrow\quad w_{\alpha}\in A_{p/p_0}(\bR^d).
$$
Therefore, for $w_{\alpha}(x):=|x|^{\alpha}$ with $-d < \alpha < d(p-1)$,  one can easily check that the exact value of $R_{p,d}^{w_{\alpha}}$ is given by $\frac{pd}{\alpha+d}$.
\end{example}

Next, we introduce two types of weighted $L_p$-spaces.
\begin{defn}
Let $p,q \in (1,\infty)$. 
\begin{enumerate}[(i)]
\item
For  $w_1\in A_q(\bR)$ and $w_2\in A_{p}(\bR^d)$ , we write $f\in L_q(\bR,w_1;L_p(\bR^d,w_2))$ if
\begin{equation*}
\|f\|_{L_q(\bR,w_1;L_p(\bR^d,w_2))}:=\left(\int_{-\infty}^{\infty}\left(\int_{\bR^d}|f(t,x)|^pw_2(x)\mathrm{d}x\right)^{q/p}w_1(t)\mathrm{d}t\right)^{1/q}<\infty.
\end{equation*}
\item
For   $w\in A_{p}(\bR^{d+1})$ , we write $f\in L_p(\bR^{d+1},w)$ if
\begin{equation*}
    \|f\|_{L_p(\bR^{d+1},w)}:=\left( \int_{-\infty}^\infty\int_{\bR^{d}}|f(t,x)|^pw(t,x)\mathrm{d}x \mathrm{d}t\right)^{1/p}<\infty.
\end{equation*}
\end{enumerate}
Note that even if $p=q \in (1,\infty)$, there is no inclusion between $L_q(\bR,w_1;L_p(\bR^d,w_2))$ and $L_p(\bR^{d+1},w)$.
\end{defn}
Next, we introduce Sobolev spaces (Bessel potentials) which fit our weighted spaces. 
Recall that $\cS(\bR^d)$ and  $\cS'(\bR^d)$ denote the Schwartz space   and the space of tempered distributions on $\bR^d$, respectively.
\begin{defn} 
						\label{defn solu space}
Let $p\in(1,\infty)$.
\begin{enumerate}[(i)]
    \item For $\nu\in\bR$ and $w\in A_p(\bR^d)$, $H_p^{\nu}(\bR^d,w)$ denotes the set of all  $f \in \cS'(\bR^d)$  satisfying
$$
\|f\|_{H_p^{\nu}(\bR^d,w)}:=\|(1-\Delta)^{\nu/2}f\|_{L_p(\bR^d,w)}<\infty,
$$
where
$$
(1-\Delta)^{\nu/2} f(x) := \cF^{-1} \left[(1+|\cdot|^2)^{\nu/2}\cF [f] \right](x).
$$
If $w\equiv1$, then we omit $w$.
\item For $\nu\in[0,\infty)$ and $w\in A_p(\bR^{d+1})$, $\bH_p^{\nu}((0,T)\times\bR^d,w)$ denotes the set of all $\cS'(\bR^d)$-valued measurable functions $f$ on $(0,T)$ satisfying
$$
\|f\|_{\bH_p^{\nu}((0,T)\times\bR^d,w)}:=\|f\|_{L_p((0,T)\times\bR^d,w)}+\|(-\Delta)^{\nu}f\|_{L_p((0,T)\times\bR^d,w)}<\infty,
$$
where
$$
(-\Delta)^{\nu/2} f(t,x) := \cF^{-1} \left[|\cdot|^{\nu}\cF [f(t,\cdot)] \right](x).
$$
If $\nu<0$, then $\bH_p^{\nu}((0,T)\times\bR^d,w)$ denotes the set of all $\cS'(\bR^d)$-valued measurable functions $f$ on $(0,T)$ satisfying
$$
\|f\|_{\bH_p^{\nu}((0,T)\times\bR^d,w)}:=\|(1-\Delta)^{\nu/2}f\|_{L_p((0,T)\times\bR^d,w)}<\infty.
$$
\end{enumerate}
\end{defn}
\begin{rem}
						\label{rem 2021-12-29}
Generally, there is no guarantee that two norms  
$$
\|f\|_{L_p((0,T)\times\bR^d,w)}+\|(-\Delta)^{\nu}f\|_{L_p((0,T)\times\bR^d,w)}\quad \text{and}\quad\|(1-\Delta)^{\nu/2}f\|_{L_p((0,T)\times\bR^d,w)}
$$
are equivalent due to the effect of weights.
However, if our weight $w$ additionally satisfies 
\begin{equation}
\label{21.12.05.14.03}
\esssup_{t\in[0,T]}[w(t,\cdot)]_{A_p(\bR^d)}=: N_0<\infty,    
\end{equation}
then two norms are equivalent, which will be shown in Appendix \ref{append}.
In particular, we show that the polynomial $w(t,x):=(t^2+|x|^2)^{\alpha/2}$ with $-d<\alpha<d(p-1)$ satisfies \eqref{21.12.05.14.03} (see Lemma \ref{lem 2021-12-29}).

\end{rem}

\begin{defn}
\label{21.01.08.11.02}
Let $p\in(1,\infty)$. 
 \begin{enumerate}[(i)]
    \item  $C_p^{\infty}([0,T]\times\bR^d)$ denotes the set of all $\cB([0,T]\times\bR^d)$-measurable functions $f$ on $[0,T]\times\bR^d$ such that for any multi-index $\alpha$ with respect to the space variable, their derivatives
\begin{equation*}
D^{\alpha}_{x}f\in L_{\infty}([0,T];L_2(\bR^d)\cap L_p(\bR^d)).
\end{equation*}
\item $C_p^{1,\infty}([0,T]\times\bR^d)$ denotes the set of all $f\in C_p^{\infty}([0,T]\times\bR^d)$ such that for any multi-index $\alpha$, their derivatives
\begin{equation*}
D^{\alpha}_{x}f\in C([0,T]\times \bR^d)\quad \text{and}\quad \p_tf\in C_p^{\infty}([0,T]\times\bR^d),
\end{equation*}
where $\p_tf(t)$ denotes the right derivative and the left derivative at $t=0$ and $t=T$, respectively.
\end{enumerate}
\end{defn}
\begin{rem}
\label{21.01.25.16.01}
($i$) In our previous work~\cite{choi2020maximal}, we introduced similar smooth function spaces.
The operators discussed in~\cite{choi2020maximal} have integral representations and are $L_1$-bounded.
However, in the present article, we address general pseudo-differential operators that do not possess $L_1$-integrability and lack integral representations.
Therefore, it becomes necessary to consider a larger class of functions than those presented in~\cite{choi2020maximal}, in order to approximate solutions.

($ii$) Let $f \in C_p^{\infty}([0,T]\times\bR^d)$. Note that for any ($d$-dimensional) multi-index $\alpha$ and  almost all $t\in[0,T]$,
$$
D^{\alpha}_xf(t,\cdot)\in C(\bR^d)
$$
due to the Sobolev Embedding theorem (\textit{e.g.} \cite[Theorem 13.1.8]{krylov2008lectures}).
\end{rem}

\begin{defn}[Solution]
\label{21.03.01.15.51}
Let $\nu\in\bR$, $p,q\in(1,\infty)$, $w\in A_p(\bR^{d+1})$, $w_1\in A_q(\bR)$, and $w_2\in A_p(\bR^d)$.  For a given 
$$f\in \bH_p^{\nu}((0,T)\times\bR^d,w)\quad(resp.\,\, f\in L_q((0,T),w_1;H_p^{\nu}(\bR^d,w_2))\,),$$
we say that $u\in\bH_p^{\nu+\gamma}((0,T)\times\bR^d,w)$ (resp. $u\in L_q((0,T),w_1;H_p^{\nu+\gamma}(\bR^d,w_2))$\,) is a solution to the Cauchy problem
\begin{equation*}
\begin{cases}
\p_tu(t,x)=\psi(t,-i\nabla)u(t,x)+f(t,x),\quad &(t,x)\in(0,T)\times\bR^d,\\
u(0,x)=0,\quad & x\in\bR^d,
\end{cases}
\end{equation*}
if there exists a sequence of functions $u_n\in C_p^{1,\infty}([0,T]\times\bR^d)$ such that $u_n(0,\cdot)=0$,
\begin{equation*}
\begin{gathered}
\p_tu_n-\psi(t,-i\nabla)u_n\to f\quad\text{in}\quad \bH_p^{\nu}((0,T)\times\bR^d,w),\quad (resp.\,\,L_q((0,T),w_1;H_p^{\nu}(\bR^d,w_2))\,),
\end{gathered}
\end{equation*}
and
$$
u_n\to u \quad\text{in}\quad \bH_p^{\nu+\gamma}((0,T)\times\bR^d,w),\quad (resp.\,\,L_q((0,T),w_1;H_p^{\nu+\gamma}(\bR^d,w_2))\,)
$$
as $n\to\infty$.
\end{defn}

\begin{defn}[Ellipticity condition of a symbol]
\label{20.11.18.13.07}
We say that a symbol $\psi$ satisfies the ellipticity condition (with ($\gamma$, $\kappa$)) if there exists a $\gamma\in(0,\infty)$ and $\kappa\in(0,1]$ such that
$$
\Re [-\psi(t,\xi)]\geq \kappa |\xi|^{\gamma},\quad \forall (t,\xi)\in \bR\times\bR^d,
$$
where $\Re[z]$ denotes the real part of the complex number $z$.
\end{defn}

\begin{defn}[regular upper bounds of a symbol]
					\label{regular upper}
Let $n \in \bN$. We say that a symbol $\psi$ has {\bf a $n$-times regular upper bound} (with ($\gamma$, $M$)) if there exist positive constants $\gamma$ and $M$ 
such that 
$$
|D^{\alpha}_{\xi}\psi(t,\xi)|\leq M|\xi|^{\gamma-|\alpha|},\quad \forall (t,\xi)\in\bR\times(\bR^{d}\setminus\{0\}),
$$
for any ($d$-dimensional) multi-index $\alpha$ with $|\alpha| \leq n$. 
\end{defn}

Recall the regularity constant for a weigh  $w \in A_p(\bR^d)$, \textit{i.e.},
$$
R_{p,d}^{w} := \sup \{ p_0 \in (1,2] :  w \in A_{p/p_0}(\bR^d) \}.
$$
Here are the main results of this article and we prove them in Section \ref{pf main theorems}.
\begin{thm}
\label{21.01.08.17.12}
Let $p\in(1,\infty)$ and $w\in A_p(\bR^{d+1})$.
Suppose that $\psi(t,\xi)$ is a symbol satisfying the ellipticity condition with $(\gamma,\kappa)$ (Definition \ref{20.11.18.13.07}) and having a $( \lfloor d/R_{p,d+1}^{w}\rfloor+2)$-times regular upper bound with $(\gamma,M)$ (Definition \ref{regular upper}).
Then for any $f\in L_p((0,T)\times\bR^d,w)$, the Cauchy problem \eqref{21.01.07.15.22}  has a unique solution $u\in\bH_p^{\gamma}((0,T)\times\bR^d,w)$ with estimates
    \begin{equation}
							\label{a priori 1}
        \begin{gathered}
        \|\partial_tu\|_{L_p((0,T)\times\bR^d,w)}+\|(-\Delta)^{\gamma/2}u\|_{L_p((0,T)\times\bR^d,w)}\leq N\|f\|_{L_p((0,T)\times\bR^d,w)},\\
        \|u\|_{\bH_p^{\gamma}((0,T)\times\bR^d,w)}\leq N(1+T)\|f\|_{L_p((0,T)\times\bR^d,w)},
        \end{gathered}
    \end{equation}
where $N=N(d,p,\gamma,\kappa,M,[w]_{A_p(\bR^{d+1})})$.
Additionally assume that $w(t,\cdot)\in A_p(\bR^d)$ for almost all $t\in[0,T]$ and
\begin{equation*}
   \esssup_{t\in[0,T]}[w(t,\cdot)]_{A_p(\bR^d)}=:N_0<\infty. 
\end{equation*}
Then, for all $\nu\in\bR$ and $f\in \bH_p^{\nu}((0,T)\times\bR^d,w)$, the Cauchy problem \eqref{21.01.07.15.22}  has a unique solution $u\in\bH_p^{\nu+\gamma}((0,T)\times\bR^d,w)$ with estimates
    \begin{equation}
								\label{a priori 2}
        \begin{gathered}
        \|\partial_tu\|_{\bH_p^{\nu}((0,T)\times\bR^d,w)}+\|(-\Delta)^{\gamma/2}u\|_{\bH_p^{\nu}((0,T)\times\bR^d,w)}\leq N\|f\|_{\bH_p^{\nu}((0,T)\times\bR^d,w)},\\
        \|u\|_{\bH_p^{\nu+\gamma}((0,T)\times\bR^d,w)}\leq N(1+T)\|f\|_{\bH_p^{\nu}((0,T)\times\bR^d,w)},
        \end{gathered}
    \end{equation}
where $N=N(d,p,\gamma,\kappa,M,[w]_{A_p(\bR^{d+1})},\nu,N_0)$.
\end{thm}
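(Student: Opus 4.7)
The plan is to reduce the $L_p(w)$-well-posedness of \eqref{21.01.07.15.22} to the weighted boundedness of the parabolic singular integral operator associated with the fundamental solution, and then apply Theorem \ref{20.12.17.11.23}. Formally, for nice $f$ the solution is given by the Duhamel formula
\[
u(t,x)=\int_0^t\!\!\int_{\bR^d}p(t,s,x-y)f(s,y)\,\mathrm dy\,\mathrm ds,
\]
so that $(-\Delta)^{\gamma/2}u=\cT_1 f$ with kernel $K(t,s,x):=\bigl((-\Delta)^{\gamma/2}p\bigr)(t,s,x)$. Working from a smooth $f$ produced by the existence result in Section \ref{21.11.30.14.20}, I would first check that $u\in C_p^{1,\infty}([0,T]\times\bR^d)$ and that the representation above holds pointwise.

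The core step is to verify that $K$ satisfies the kernel bounds \eqref{2021 11 18 10}--\eqref{2021 11 18 11} (with $\varepsilon=1$) for all multi-indices $\alpha$ of length at most $\lfloor d/R_{p,d+1}^w\rfloor+1$. The ellipticity condition provides the Gaussian-type decay $|\exp(\int_s^t\psi(r,\xi)\,\mathrm dr)|\le e^{-\kappa(t-s)|\xi|^\gamma}$, while the $(\lfloor d/R_{p,d+1}^w\rfloor+2)$-times regular upper bound on $\psi$ permits us to differentiate the integrand in $\xi$ the required number of times with the correct homogeneous scaling. Writing $|x|^n K$ as $n$ integrations by parts in the Fourier integral defining $K$, and combining with the auxiliary $L^2$ and pointwise estimates of the fundamental solution proved in Section \ref{21.11.30.14.21}, both \eqref{2021 11 18 10} and the $L^2$-type estimate \eqref{2021 11 18 11} will follow (the latter from Plancherel applied to $(|x|^n\partial_t^m D^\alpha_x K)$). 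Granting these, Theorem \ref{20.12.17.11.23} yields
\[
\|(-\Delta)^{\gamma/2}u\|_{L_p((0,T)\times\bR^d,w)}\le N\|f\|_{L_p((0,T)\times\bR^d,w)}.
\]
To control $\partial_tu$, I would use the equation $\partial_tu=\psi(t,-i\nabla)u+f$ together with the fact that, thanks to the regular upper bound on $\psi$ and the weighted Mikhlin-Kurtz-Wheeden multiplier theorem (exactly what the exponent $\lfloor d/R_{p,d+1}^w\rfloor+2$ is tailored to), $\psi(t,-i\nabla)$ is a bounded operator from $\{(-\Delta)^{\gamma/2}v\in L_p(w)\}$ into $L_p(w)$, giving the $\partial_t$-bound in \eqref{a priori 1}. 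The remaining $\|u\|_{L_p(w)}$-bound comes from integrating $u(t,x)=\int_0^t\partial_s u(s,x)\,\mathrm ds$ via Minkowski's inequality on $(0,T)$, producing the factor $(1+T)$.

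Existence for arbitrary $f\in L_p((0,T)\times\bR^d,w)$ then follows by density of smooth functions in weighted $L_p$-spaces combined with the a~priori estimate, and uniqueness is immediate from the estimate applied to the difference of two solutions. For the extension to general $\nu\in\bR$ under the additional uniform $A_p$-condition, I would exploit that $(1-\Delta)^{\nu/2}$ commutes with $\psi(t,-i\nabla)$ (both are Fourier multipliers in $x$), so setting $v:=(1-\Delta)^{\nu/2}u$ reduces the problem with data $f\in\bH_p^\nu((0,T)\times\bR^d,w)$ to the $\nu=0$ case with data $(1-\Delta)^{\nu/2}f\in L_p((0,T)\times\bR^d,w)$; the equivalence of the two $\bH_p^\gamma$-norms pointed out in Remark \ref{rem 2021-12-29} then translates the conclusion back to \eqref{a priori 2}.

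The main obstacle is the kernel estimate step: matching the precise number of required smooth derivatives of $\psi$ to $\lfloor d/R_{p,d+1}^w\rfloor+2$ is delicate because one must simultaneously control pointwise bounds on $|x|^nD^\alpha_xK$ and their $L^2$-in-$x$ analogues for every $|\alpha|\le\lfloor d/R_{p,d+1}^w\rfloor+1$, and the sharp-function/maximal-function machinery in Section \ref{21.11.30.14.22} only closes up when the exponent $p_0$ witnessing $w\in A_{p/p_0}(\bR^{d+1})$ is strictly below the regularity constant; keeping track of the inequalities between $p_0$, $R_{p,d+1}^w$, and the number of integrations by parts available is where all the bookkeeping lives.
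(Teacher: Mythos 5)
Your overall strategy—reduce to weighted boundedness of the parabolic singular integral via Theorem \ref{20.12.17.11.23}, verify the kernel bounds from Theorem \ref{21.01.13.14.11}, then conclude by density/a-priori-estimate—matches the paper's route closely. However, there is a genuine gap in how you obtain the $\partial_t u$ estimate in the first part of the theorem (the $\nu=0$ case, where only $w\in A_p(\bR^{d+1})$ is assumed).

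You propose to bound $\partial_tu=\psi(t,-i\nabla)u+f$ by applying the Kurtz--Wheeden multiplier theorem \emph{in the spatial variable} to transfer $\psi(t,-i\nabla)u$ to $(-\Delta)^{\gamma/2}u$ slice-by-slice in $t$. That multiplier theorem lives in $L_p(\bR^d,w(t,\cdot))$ and therefore requires $w(t,\cdot)\in A_p(\bR^d)$ with a bound that is essentially uniform in $t$—precisely the content of Assumption \ref{21.01.14.15.33}, which is \emph{not} imposed in the first half of the theorem. And this is not a technicality that disappears: as Lemma \ref{lem 2021-12-29} illustrates, $w(t,x)=(t^2+|x|^2)^{\alpha/2}$ lies in $A_p(\bR^{d+1})$ for $-d-1<\alpha<(d+1)(p-1)$, yet the slices $w(t,\cdot)$ are $A_p(\bR^d)$ only in the strictly narrower range $-d<\alpha<d(p-1)$. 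The paper avoids this issue entirely: in Theorem \ref{21.03.01.17.40}, the quantity $\psi(t,-i\nabla)\cK_{0,T}f$ is rewritten as the parabolic singular integral $\int_0^t\int_{\bR^d}\partial_t p(t,s,x-y)f(s,y)\,\mathrm{d}y\,\mathrm{d}s$, whose kernel $\partial_t p$ satisfies the $m=1$ regular condition of Theorem \ref{21.01.13.14.11}, and is then bounded in $L_p(\bR^{d+1},w)$ via Theorem \ref{20.12.17.11.23} with the full space-time weight. You should replace your multiplier-theorem step by this kernel representation for $\partial_t p$; otherwise the $\nu=0$ estimate is not established in the stated generality. (Your reduction of the general-$\nu$ case to $\nu=0$ via $(1-\Delta)^{\nu/2}$ is fine, \emph{there} the uniform slice assumption is available and matches the paper's use of Lemma \ref{21.02.24.16.49} and Theorem \ref{21.03.01.14.59-2}.)
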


\begin{thm}
\label{21.01.08.17.12-2}
Let $p,q\in(1,\infty)$, $w_1\in A_q(\bR)$, and $w_2\in A_p(\bR^d)$. 
Suppose that $\psi(t,\xi)$ is a symbol satisfying the ellipticity condition with $(\gamma,\kappa)$ (Definition \ref{20.11.18.13.07}) and having a $\left( \lfloor d/R_{q,1}^{w_1}\rfloor\vee\lfloor d/R_{p,d}^{w_2}\rfloor+2\right)$-times regular upper bound with $(\gamma,M)$ (Definition \ref{regular upper}).
Then, for any  $f\in L_q((0,T),w_1;H_p^{\nu}(\bR^d,w_2))$, the Cauchy problem \eqref{21.01.07.15.22} has a unique solution $u\in L_q((0,T),w_1;H_p^{\nu+\gamma}(\bR^d,w_2))$ with estimates
    \begin{equation*}
        \begin{gathered}
        \|\partial_tu\|_{L_q((0,T),w_1;H_p^{\nu}(\bR^d,w_2))}+\|(-\Delta)^{\gamma/2}u\|_{L_q((0,T),w_1;H_p^{\nu}(\bR^d,w_2))}\leq N\|f\|_{L_q((0,T),w_1;H_p^{\nu}(\bR^d,w_2))},\\
        \|u\|_{L_q((0,T),w_1;H_p^{\nu+\gamma}(\bR^d,w_2))}\leq N(1+T)\|f\|_{L_q((0,T),w_1;H_p^{\nu}(\bR^d,w_2))},
        \end{gathered}
    \end{equation*}
where $N=N(d,p,\nu,\gamma,\kappa,M,[w_1]_{A_q(\bR)},[w_2]_{A_p(\bR^{d})})$.
\end{thm}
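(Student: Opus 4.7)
The plan is to follow the same strategy used for Theorem~\ref{21.01.08.17.12}, but with the weighted $L_p$-bound for parabolic singular integrals replaced by its mixed-norm counterpart~\eqref{2021 11 18 31}. The proof consists of (i) a reduction to $\nu=0$, (ii) an a priori estimate for smooth solutions via parabolic singular integrals, and (iii) a density argument. For step (i), $(1-\Delta)^{\nu/2}$ is a Fourier multiplier in $x$ only, so it commutes with $\partial_t$ and with $\psi(t,-i\nabla)$ on $\cS'(\bR^d)$-valued functions, and acts isometrically $H_p^{\nu+\gamma}(\bR^d,w_2)\to H_p^{\gamma}(\bR^d,w_2)$. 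Applying it to both sides of \eqref{21.01.07.15.22} reduces the problem to the case $\nu=0$, where the target space becomes $L_q((0,T),w_1;H_p^{\gamma}(\bR^d,w_2))$.

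For step (ii), I would start from smooth data $f\in C_p^{\infty}([0,T]\times\bR^d)$. Existence of a classical solution $u\in C_p^{1,\infty}([0,T]\times\bR^d)$ together with the representation
\begin{equation*}
(-\Delta)^{\gamma/2} u(t,x)=\int_0^t\int_{\bR^d}K(t,s,x-y)f(s,y)\,\mathrm{d}y\,\mathrm{d}s,\qquad K:=(-\Delta)^{\gamma/2}p,
\end{equation*}
is provided by Section~\ref{21.11.30.14.20}. The key analytic step is to verify the kernel estimates \eqref{2021 11 18 10}--\eqref{2021 11 18 11} with $m=0$, $\varepsilon=0$ for all multi-indices $\alpha$ of order at most
$$
n_0:=\lfloor d/R_{q,1}^{w_1}\rfloor\vee\lfloor d/R_{p,d}^{w_2}\rfloor+2.
$$
The ellipticity condition supplies the exponential decay $|\exp(\int_s^t\psi\,\mathrm{d}r)|\le e^{-\kappa(t-s)|\xi|^\gamma}$, and the $n_0$-times regular upper bound controls the $\xi$-derivatives of $|\xi|^\gamma\exp(\int_s^t\psi\,\mathrm{d}r)$ by appropriate powers of $|\xi|$; the pointwise and $L_2$ bounds on $K$ then follow from integration by parts in the Fourier inversion formula and Plancherel, with constants independent of $t,s$.

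Applying Theorem~\ref{20.12.17.11.23} and its mixed-norm conclusion \eqref{2021 11 18 31} to this kernel yields
$$
\|(-\Delta)^{\gamma/2}u\|_{L_q((0,T),w_1;L_p(\bR^d,w_2))}\le N\|f\|_{L_q((0,T),w_1;L_p(\bR^d,w_2))},
$$
and the equation combined with the regular upper bound on $\psi$ controls $\partial_t u=\psi(t,-i\nabla)u+f$ in the same norm. The bound on $\|u\|$ itself follows by integrating $\partial_t u$ from $0$, producing the factor $(1+T)$. For step (iii), a density argument based on the material in Appendix~\ref{append} extends existence and the a priori estimates from smooth $f$ to arbitrary $f\in L_q((0,T),w_1;H_p^\nu(\bR^d,w_2))$; uniqueness is then immediate from the a priori bound and Definition~\ref{21.03.01.15.51}.

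The main obstacle is the careful accounting that matches the symbol-smoothness count $n_0$ to the kernel hypotheses of Theorem~\ref{20.12.17.11.23}. Since the supremum in the definition of the regularity constant need not be attained, I would invoke the observation after Remark~\ref{21.02.23.13.03} to choose, separately for $w_1\in A_q(\bR)$ and $w_2\in A_p(\bR^d)$, parameters $p_0^{(1)}\le R_{q,1}^{w_1}\wedge q$ and $p_0^{(2)}\le R_{p,d}^{w_2}\wedge p$ with $\lfloor 1/p_0^{(1)}\rfloor=\lfloor 1/R_{q,1}^{w_1}\rfloor$ and $\lfloor d/p_0^{(2)}\rfloor=\lfloor d/R_{p,d}^{w_2}\rfloor$, and feed these into the sharp/maximal-function estimate \eqref{2021 11 18 20} that underlies Theorem~\ref{20.12.17.11.23}. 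The ``$+2$'' in $n_0$ reflects the floor function together with the additional derivative needed when upgrading pointwise kernel control to the $L_2$-bound \eqref{2021 11 18 11} through Plancherel, and both endpoints must be tracked simultaneously because the mixed-norm setting couples the spatial and temporal regularity demands through the two independent weights.
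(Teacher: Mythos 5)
Your proposal follows essentially the same approach as the paper: establish the kernel bounds (Theorem~\ref{21.01.13.14.11}) and the $L_2$-bound (Theorem~\ref{21.03.01.17.32}) so that $\cK_{\varepsilon,T}$ falls under the weighted extrapolation Theorem~\ref{20.12.17.11.23}(ii), then close the argument by density using the material in Appendix~\ref{append}, exactly as the paper does via the unified $\bB_0(T),\bB_1(T)$ notation in Section~\ref{pf main theorems}. One slip: in your last paragraph the floor condition for the time weight should be $\lfloor d/p_0^{(1)}\rfloor=\lfloor d/R_{q,1}^{w_1}\rfloor$, not $\lfloor 1/p_0^{(1)}\rfloor=\lfloor 1/R_{q,1}^{w_1}\rfloor$ (the latter is $0=0$ for every admissible $p_0^{(1)}>1$ and imposes no constraint); the relevant numerator is always $d$ because it is the kernel's spatial regularity that must be matched, regardless of which weight's regularity constant is being tested.
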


\begin{rem}
					\label{opti rem}
If our weight is a constant function (\textit{i.e.}, without weight), the differentiability $(  \lfloor d/R_{p,d+1}^{w}\rfloor+2)$ for the symbol 
$\psi(t,\xi)$ in Theorem  \ref{21.01.08.17.12} is not optimal.
Indeed, if $w$ is a constant, then 
$$
\left\lfloor \frac{d}{R_{p,d+1}^{w}}\right\rfloor+2=\left\lfloor \frac{d}{2} \right\rfloor +2.
$$
However, if there is no weight term in the estimate, $\left\lfloor \frac{d}{2} \right\rfloor + 1$ is enough differentiability for the symbol $\psi(t,\xi)$ to obtain \eqref{a priori 1} as shown in \cite{kim2015parabolic} (or see Theorem \ref{20.12.21.13.23}). 
 Moreover, if one considers a weight depending only on the time variable $t$,  then the differentiability $ \lfloor d/R_{w,p}\rfloor+2$ can be relaxed to 
$\lfloor d/R_{w,p}\rfloor+1$. We will show this result in our next paper based on $A_2$-theory for operator-valued Calder\'on-Zygmund theorem 
(cf. see \cite[Chapter 3.4]{emiel21}).
For a general $A_p$-weight $w$ depending on both the time variable $t$ and space variable $x$, we do not know yet whether the regular constant 
$ \lfloor d/R_{p,d+1}^{w}\rfloor+2$ for the symbol is optimal to guarantee \eqref{a priori 1}, which seems to be an interesting open problem. 
\end{rem}

For applications, we believe that the most important example of weights is a polynomial type. 
We give explicit statements for the reader's convenience when weights are given by a polynomial type. 
\begin{thm}
								\label{poly thm}
Let $p\in(1,\infty)$  and  $\alpha \in (-d-1, (d+1)(p-1))$.
Suppose that $\psi(t,\xi)$ is a symbol satisfying the ellipticity condition with $(\gamma,\kappa)$ and having a $\left( \left\lfloor \frac{d(\alpha+d+1)}{p(d+1)}   \right\rfloor+2\right)$-times regular upper bound with $(\gamma,M)$ (Definition \ref{regular upper}).

Then, for any $f\in L_p((0,T)\times\bR^d,(t^2 + |x|^2)^{\alpha/2})$, the Cauchy problem \eqref{21.01.07.15.22}  has a unique solution $u\in\bH_p^{\gamma}((0,T)\times\bR^d,(t^2 + |x|^2)^{\alpha/2})$ with estimates
\begin{align*}
\int_0^T \int_{\bR^d} |(-\Delta)^{\gamma/2} u(t,x) |^p (t^2 + |x|^2)^{\alpha/2} \mathrm{d}x\mathrm{d}t
\leq N\int_0^T \int_{\bR^d} |f(t,x)|^p (t^2 + |x|^2)^{\alpha/2} \mathrm{d}x\mathrm{d}t
\end{align*}
and
\begin{align*}
\int_0^T \int_{\bR^d} | u(t,x) |^p (t^2 + |x|^2)^{\alpha/2} \mathrm{d}x\mathrm{d}t
\leq N T\int_0^T \int_{\bR^d} |f(t,x)|^p (t^2 + |x|^2)^{\alpha/2} \mathrm{d}x\mathrm{d}t,
\end{align*}
where $N=N(d,p,\gamma,\kappa,M,\alpha)$.
\end{thm}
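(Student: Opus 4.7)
The plan is to recognize Theorem \ref{poly thm} as a direct application of Theorem \ref{21.01.08.17.12} with the specific weight $w(t,x) = (t^2+|x|^2)^{\alpha/2}$, so the main work is simply to verify the $A_p$ membership and to compute the regularity constant $R_{p,d+1}^{w}$ for this polynomial weight.

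First, I would observe that, writing $(t,x) \in \bR^{d+1}$, the weight equals $|(t,x)|^{\alpha}$, i.e., it falls into the polynomial family $w_{\alpha}(y)=|y|^{\alpha}$ on $\bR^{d+1}$ discussed in the example following Definition \ref{regular exponent}. That example gives two facts I need. On the one hand, $w \in A_p(\bR^{d+1})$ precisely when $-(d+1) < \alpha < (d+1)(p-1)$, which is exactly the range hypothesized. On the other hand, the regularity constant is
\[
R_{p,d+1}^{w} \;=\; \frac{p(d+1)}{\alpha + d + 1},
\]
so that
\[
\Bigl\lfloor \frac{d}{R_{p,d+1}^{w}} \Bigr\rfloor + 2 \;=\; \Bigl\lfloor \frac{d(\alpha + d + 1)}{p(d+1)} \Bigr\rfloor + 2,
\]
which matches the smoothness count imposed on $\psi$ in the statement.

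Given these two facts, the hypotheses of Theorem \ref{21.01.08.17.12} are met with this weight. Applying that theorem yields a unique solution $u \in \bH_p^{\gamma}((0,T)\times\bR^d, w)$ together with the estimate on $\|\partial_t u\|_{L_p(w)} + \|(-\Delta)^{\gamma/2}u\|_{L_p(w)}$, which is precisely the first displayed inequality of Theorem \ref{poly thm}. All constants depend only on $d,p,\gamma,\kappa,M$ and $[w]_{A_p(\bR^{d+1})}$, and since the latter is determined by $d,p,\alpha$ alone, the final constant is $N=N(d,p,\gamma,\kappa,M,\alpha)$ as claimed.

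For the second inequality, with the sharper prefactor $NT$ rather than the generic $N(1+T)$ furnished by Theorem \ref{21.01.08.17.12}, I would use the zero initial condition. Writing $u(t,x) = \int_0^t \p_s u(s,x)\mathrm{d}s$ for the approximating smooth sequence supplied by Definition \ref{21.03.01.15.51}, H\"older's inequality in $s$ gives $|u(t,x)|^p \leq T^{p-1}\int_0^T|\p_s u(s,x)|^p\mathrm{d}s$, so that after integration in $(t,x)$ against the weight and a Fubini swap,
\[
\int_0^T\!\int_{\bR^d}|u(t,x)|^p w(t,x)\mathrm{d}x\mathrm{d}t \;\leq\; T^{p-1}\int_0^T\!\int_{\bR^d}|\p_s u(s,x)|^p \Bigl(\int_0^T w(t,x)\mathrm{d}t\Bigr)\mathrm{d}x\mathrm{d}s.
\]
A direct computation with $w(t,x)=(t^2+|x|^2)^{\alpha/2}$, splitting into the regimes $|x|\lesssim T$ and $|x|\gtrsim T$, shows $\int_0^T w(t,x)\mathrm{d}t \leq N\, T\, \esssup_{s\in(0,T)} w(s,x)$ up to an absorbable factor, which combined with the first estimate applied to $\p_t u$ gives the desired $NT$ factor. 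Passing to the limit in the approximation then finishes the proof.

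The only potentially delicate step is this last weight bookkeeping for the time integral of $w$, since the $t$-dependence of the weight prevents a naive use of Minkowski; but the explicit polynomial form makes the required pointwise comparison elementary, and nothing deeper than the main theorem is invoked.
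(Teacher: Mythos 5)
Your derivation of the first estimate is correct and is exactly the intended route: identify $w(t,x)=(t^2+|x|^2)^{\alpha/2}=|(t,x)|^\alpha$ as the radial polynomial weight $w_\alpha$ on $\bR^{d+1}$, read off from the example following Definition~\ref{regular exponent} that $w_\alpha\in A_p(\bR^{d+1})$ iff $-(d+1)<\alpha<(d+1)(p-1)$ with $R_{p,d+1}^{w_\alpha}=\tfrac{p(d+1)}{\alpha+d+1}$, match the smoothness count, and invoke Theorem~\ref{21.01.08.17.12}. That part matches the paper's (unwritten but obvious) proof.

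The second half, however, has a genuine gap. After H\"older and Fubini you need, for \emph{every} $s\in(0,T)$,
\[
T^{p-1}\int_0^T (t^2+|x|^2)^{\alpha/2}\,\mathrm{d}t \;\le\; N\,T\,(s^2+|x|^2)^{\alpha/2},
\]
and this is false for a weight that genuinely depends on $t$. For $\alpha>0$ take $|x|$ and $s$ small: the right-hand side tends to $0$ while the left-hand side stays bounded away from $0$ (it is at least of order $T^{p-1}\int_0^T t^\alpha\,\mathrm{d}t$). Replacing $w(s,x)$ by $\esssup_{s'\in(0,T)}w(s',x)$, as you suggest, does not help either, because $\esssup_{s'}w(s',x)$ is not comparable to $w(s,x)$ uniformly in $s$ (again fails for $\alpha>0$ near $s=0$, $|x|$ small, and for $\alpha<0$ near $|x|=0$). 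In addition, even granting the weight bound, your chain produces the factor $T^{p-1}\cdot T=T^p$, not $T$, so the bookkeeping does not match the target either.

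The workable route does not go through the zero initial condition at all. The $L_p(w)$ bound on $u$ is already what the scheme gives for free: $u=\cK_{0,T}f$, and the weighted extrapolation theorem (Theorem~\ref{20.12.17.11.23}, via Theorem~\ref{21.03.01.17.40} with $\varepsilon=0$) gives $\|u\|_{L_p((0,T)\times\bR^d,w)}\le NT\|f\|_{L_p((0,T)\times\bR^d,w)}$ directly, with $N$ independent of $T$; raising to the $p$-th power is then the whole argument. (Note this yields $\|u\|_{L_p(w)}^p\le N T^p\|f\|_{L_p(w)}^p$; the bare factor $T$ in the statement's second display appears to be a slip in the exponent, and the same issue appears in Theorem~\ref{poly thm 2}.) In any case, there is no need for a pointwise time-integral comparison of the weight, which is the step that actually breaks in your write-up.
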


\begin{thm}
								\label{poly thm 2}
Let $p,q\in(1,\infty)$,  $\alpha_1 \in (-1, q-1)$, and $\alpha_2 \in (-d , d(p-1))$. 
Suppose that $\psi(t,\xi)$ is a symbol satisfying the ellipticity condition with $(\gamma,\kappa)$ and having a $\left( \left\lfloor\frac{d(\alpha_1+1)}{q}\right\rfloor\vee\left\lfloor\frac{\alpha_2+d}{p}\right\rfloor+2\right)$-times regular upper bound with $(\gamma,M)$ (Definition \ref{regular upper}).
Then, for any  $f\in L_q((0,T),t^{\alpha_1};L_p(\bR^d,|x|^{\alpha_2}))$, the Cauchy problem \eqref{21.01.07.15.22} has a unique solution $u\in L_q((0,T),t^{\alpha_1};H_p^{\gamma}(\bR^d,|x|^{\alpha_2}))$ with estimates

\begin{align*}
\int_0^T \left(\int_{\bR^d} |(-\Delta)^{\gamma/2} u(t,x) |^p |x|^{\alpha_2} \mathrm{d}x \right)^{q/p} t^{\alpha_1}\mathrm{d}t
\leq N\int_0^T \left(\int_{\bR^d} |f(t,x) |^p |x|^{\alpha_2} \mathrm{d}x \right)^{q/p} t^{\alpha_1}\mathrm{d}t
\end{align*}
and
\begin{align*}
\int_0^T \left(\int_{\bR^d} | u(t,x) |^p |x|^{\alpha_2} \mathrm{d}x \right)^{q/p} t^{\alpha_1}\mathrm{d}t
\leq N T\int_0^T \left(\int_{\bR^d} |f(t,x) |^p |x|^{\alpha_2} \mathrm{d}x \right)^{q/p} t^{\alpha_1}\mathrm{d}t
\end{align*}
where $N=N(d,p,\gamma,\kappa,M, \alpha_1, \alpha_2)$.
\end{thm}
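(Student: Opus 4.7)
The plan is to deduce Theorem \ref{poly thm 2} as a direct specialization of Theorem \ref{21.01.08.17.12-2} to the polynomial Muckenhoupt weights $w_1(t) = |t|^{\alpha_1}$ and $w_2(x) = |x|^{\alpha_2}$ (these agree with the weights appearing in the statement on the relevant domains $(0,T)$ and $\bR^d$). By the example immediately following Definition \ref{regular exponent}, the ranges $-1 < \alpha_1 < q-1$ and $-d < \alpha_2 < d(p-1)$ assumed here are exactly the conditions characterizing membership $w_1 \in A_q(\bR)$ and $w_2 \in A_p(\bR^d)$, so the weight hypotheses of Theorem \ref{21.01.08.17.12-2} are met.

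Next I would compute the two regularity constants explicitly using the same example, which gives
$$R_{q,1}^{w_1} = \frac{q}{\alpha_1+1}, \qquad R_{p,d}^{w_2} = \frac{pd}{\alpha_2+d}.$$
Substituting into the integer appearing in Theorem \ref{21.01.08.17.12-2},
$$\left\lfloor \frac{d}{R_{q,1}^{w_1}}\right\rfloor \vee \left\lfloor \frac{d}{R_{p,d}^{w_2}}\right\rfloor + 2 = \left\lfloor \frac{d(\alpha_1+1)}{q}\right\rfloor \vee \left\lfloor \frac{\alpha_2+d}{p}\right\rfloor + 2,$$
which is precisely the smoothness exponent imposed on $\psi$ in the hypothesis of Theorem \ref{poly thm 2}. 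Thus Theorem \ref{21.01.08.17.12-2} with $\nu = 0$ immediately yields the existence of a unique solution $u \in L_q((0,T),t^{\alpha_1};H_p^{\gamma}(\bR^d,|x|^{\alpha_2}))$, the stated $L_q(L_p)$-estimate on $(-\Delta)^{\gamma/2} u$, and the auxiliary bound $\|u\|_{L_qL_p} \leq N(1+T)\|f\|_{L_qL_p}$.

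All that remains is to sharpen the $(1+T)$-factor to $T$ in the bound on $u$ itself. For this I would use the initial condition $u(0,\cdot) = 0$ to write $u(t,x) = \int_0^t \partial_s u(s,x)\,ds$, apply Minkowski's inequality in $L_p(\bR^d,|x|^{\alpha_2})$, then H\"older in $s$ with exponent pair $(q, q/(q-1))$ against the weight pair $(s^{\alpha_1}, s^{-\alpha_1/(q-1)})$; integrability of the dual weight on $(0,T)$ is guaranteed precisely by $\alpha_1 < q-1$ and produces
$$\|u(t,\cdot)\|_{L_p(|x|^{\alpha_2})} \lesssim t^{(q-1-\alpha_1)/q}\,\|\partial_t u\|_{L_q((0,T),t^{\alpha_1};L_p(|x|^{\alpha_2}))}.$$
Raising to the $q$th power, multiplying by $t^{\alpha_1}$, and integrating over $(0,T)$ yields a clean factor of $T^q$, which combined with $\|\partial_t u\|_{L_qL_p} \leq N\|f\|_{L_qL_p}$ from Theorem \ref{21.01.08.17.12-2} finishes the proof. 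I anticipate no real obstacle: the substantive computation is the explicit evaluation of $R_{q,1}^{w_1}$ and $R_{p,d}^{w_2}$, and the sharper $T$-factor is a one-line Hardy-type exploitation of the zero initial trace.
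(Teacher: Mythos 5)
Your proposal is correct in substance and follows essentially the same route the paper intends: Theorem \ref{poly thm 2} is meant as a direct specialization of Theorem \ref{21.01.08.17.12-2} to the power weights $w_1(t)=|t|^{\alpha_1}$, $w_2(x)=|x|^{\alpha_2}$, and your computation of the regularity constants from the Example following Definition \ref{regular exponent} correctly matches the claimed smoothness exponent $\lfloor d(\alpha_1+1)/q\rfloor\vee\lfloor(\alpha_2+d)/p\rfloor+2$.

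One discrepancy worth pointing out: your Hardy-type sharpening produces
$\int_0^T\|u(t,\cdot)\|_{L_p(|x|^{\alpha_2})}^q\,t^{\alpha_1}\,dt\le N\,T^{q}\int_0^T\|f(t,\cdot)\|_{L_p(|x|^{\alpha_2})}^q\,t^{\alpha_1}\,dt$,
i.e.\ a factor $T^q$, whereas the theorem as printed carries only $T$. You present the $T^q$ as if it "finishes the proof," but $T^q\not\le T$ when $T>1$, so you have not verified the theorem literally as stated. That said, I believe your power of $T$ is the correct one: $\|u\|_{L_q(L_p)}\le N\,T\,\|f\|_{L_q(L_p)}$ is exactly what the paper's own machinery gives (take $\varepsilon=0$ in $\cK_{\varepsilon,T}$ and use Theorems \ref{21.03.01.17.32} and \ref{20.12.17.11.23}), and a trivial example (heat equation with time-independent $f$) confirms that the ratio $\|u\|^q/\|f\|^q$ scales like $T^q$ for small $T$, not like $T$. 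So the $T$ on the right-hand side of Theorem \ref{poly thm 2} appears to be a slip for $T^q$. You should state this explicitly rather than let it slide. In fact, once one observes $\|u\|_{L_q(L_p)}\le NT\|f\|_{L_q(L_p)}$ directly from the $\varepsilon=0$ a priori estimate, the Hardy step is unnecessary; but your one-line Hölder argument from $u(t)=\int_0^t\partial_s u\,ds$ is a perfectly valid alternative.

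A second, more delicate point you should be aware of (though it is inherited from the paper): Definition \ref{regular exponent} caps $R_{p,d}^{w}$ at $2$ (the sup runs over $p_0\in(1,2]$). The Example nonetheless reports $R_{p,d}^{w_\alpha}=pd/(\alpha+d)$ without this cap, which is only exact when $pd/(\alpha+d)\le 2$, i.e.\ $\alpha\ge d(p-2)/2$. For sufficiently negative $\alpha_2$ (or $\alpha_1$), the uncapped formula $\lfloor(\alpha_2+d)/p\rfloor$ can drop below $\lfloor d/2\rfloor$, understating the smoothness actually required by Theorem \ref{21.01.08.17.12-2}. Your proof follows the paper's Example verbatim and is therefore internally consistent with the paper; just note that in the extreme regime the floor term should really read $\lfloor(\alpha_2+d)/p\vee d/2\rfloor$.
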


\mysection{The Cauchy problems with smooth functions}
\label{21.11.30.14.20}

Recall the target equation
\begin{equation}
				\label{21.01.12.12.19}
\begin{cases}
\p_tu(t,x)=\psi(t,-i\nabla)u(t,x)+f(t,x),\quad &(t,x)\in(0,T)\times\mathbb{R}^d,\\
u(0,x)=0,\quad & x\in\mathbb{R}^d,
\end{cases}
\end{equation}
where 
$$
\psi(t,-i\nabla)u(t,x):=\cF^{-1}\left[\psi(t,\cdot)\cF[u](t,\cdot)\right](x).
$$
Then the fundamental solution to \eqref{21.01.12.12.19} is given by
\begin{align*}
p(t,s,x):=1_{0 < s< t} \cdot \frac{1}{(2\pi)^{d/2}}\int_{\bR^d} \exp\left(\int_{s}^t\psi(r,\xi)\mathrm{d}r\right)\mathrm{e}^{ix\cdot\xi}\mathrm{d}\xi,
\end{align*}
that is, the integral operator 
\begin{align}
						\label{k defn}
\cK f (t,x) := \int_{-\infty}^{t}\int_{\bR^d}p(t,s,x-y)1_{0<s<T}f(s,y)\mathrm{d}y\mathrm{d}s
\end{align}
becomes a solution to \eqref{21.01.12.12.19} if $f$ is a sufficiently smooth function. 
In this section, we prove that the function $p(t,s,x)$ becomes the fundamental solution to \eqref{21.01.12.12.19} indeed.
We prove some preliminaries first.

\begin{lem}
\label{21.01.25.16.54}
Let $p\in(1,\infty)$.
\begin{enumerate}[(i)]
    \item For any $f\in C_p^{\infty}([0,T]\times\bR^d)$,
    $$
    \int_0^tf(s,x)\mathrm{d}s\in C_p^{1,\infty}([0,T]\times\bR^d).
    $$
    \item For all $\nu\in\bR$, $(1-\Delta)^{\nu/2}$ is a bijective linear operator on $C_p^{\infty}([0,T]\times\bR^d)$.
    \item Suppose that $\psi(t,\xi)$ has a $\left(\lfloor \frac{d}{2} \rfloor+1\right) $-times regular upper bound with $(\gamma,M)$.
    Then $\psi(t,-i\nabla)$ is a bounded linear operator on $C_p^{\infty}([0,T]\times\bR^d)$.
\end{enumerate}

\end{lem}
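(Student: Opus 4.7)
The plan is to treat the three claims in order, each by a mostly formal argument.

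For (i), set $F(t,x):=\int_0^t f(s,x)\,\mathrm{d}s$. Commuting $D_x^\alpha$ with the $s$-integral and applying Minkowski's inequality for integrals yields
\[
\|D_x^\alpha F(t,\cdot)\|_{L_p(\bR^d)}\leq T\,\esssup_{s\in[0,T]}\|D_x^\alpha f(s,\cdot)\|_{L_p(\bR^d)},
\]
and the analogous $L_2$ bound, so $F\in C_p^\infty$. For joint continuity of $D_x^\alpha F$ on $[0,T]\times\bR^d$, the Sobolev embedding recalled in Remark \ref{21.01.25.16.01}(ii) applied to $D_x^{\alpha+\beta}f(s,\cdot)$ with $|\beta|>d/p$ supplies a uniform-in-$s$ bound of $D_x^\alpha f(s,\cdot)$ in $C_b(\bR^d)$; joint continuity in $(t,x)$ then follows by the triangle inequality together with the dominated convergence theorem. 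Finally $\p_t F = f \in C_p^\infty$ by construction, so $F\in C_p^{1,\infty}$.

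For (ii), note that $(1-\Delta)^{\nu/2}$, viewed as a Fourier multiplier on $\cS'(\bR^d)$, commutes with every $D_x^\alpha$, so it is enough to check that it maps $L_p(\bR^d)\cap L_2(\bR^d)$ boundedly into itself when acting on sufficiently smooth data. Pick a positive integer $k$ with $2k>\nu$ and factor
\[
(1-\Delta)^{\nu/2}=(1-\Delta)^{-(2k-\nu)/2}\circ(1-\Delta)^k.
\]
The right-hand factor is a polynomial differential operator that sends each $D_x^\alpha f$ into a linear combination of higher-order derivatives of $f$, all of which lie in $L_p\cap L_2$ by hypothesis. The left-hand factor is convolution with the Bessel kernel $G_{2k-\nu}\in L_1(\bR^d)$, hence bounded on both $L_p$ and $L_2$ by Young's inequality. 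Iterating with $D_x^\alpha f$ in place of $f$ then shows $(1-\Delta)^{\nu/2}$ sends $C_p^\infty$ into itself, and bijectivity is immediate from $(1-\Delta)^{-\nu/2}(1-\Delta)^{\nu/2}=\mathrm{Id}$ on $\cS'(\bR^d)$.

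For (iii), I would factor
\[
\psi(t,-i\nabla)=T_{m(t,\cdot)}\circ(1-\Delta)^{\gamma/2},\qquad m(t,\xi):=\frac{\psi(t,\xi)}{(1+|\xi|^2)^{\gamma/2}},
\]
where $T_{m(t,\cdot)}$ denotes the $t$-parametrized Fourier multiplier with symbol $m(t,\cdot)$. Combining the regular upper bound on $\psi$ with $|D_\xi^\beta(1+|\xi|^2)^{-\gamma/2}|\leq C_\beta(1+|\xi|^2)^{-(\gamma+|\beta|)/2}$ via Leibniz's rule, and splitting the analysis into the regions $|\xi|\leq 1$ and $|\xi|\geq 1$, furnishes the Mikhlin-type bound
\[
|D_\xi^\alpha m(t,\xi)|\leq C_\alpha M|\xi|^{-|\alpha|}\qquad\text{for all }|\alpha|\leq\lfloor d/2\rfloor+1,
\]
uniformly in $t\in\bR$. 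Mikhlin's multiplier theorem then supplies the $L_p$- and $L_2$-boundedness of $T_{m(t,\cdot)}$ uniformly in $t$; since $T_{m(t,\cdot)}$ also commutes with every $D_x^\alpha$, it sends $C_p^\infty$ into itself, and composition with $(1-\Delta)^{\gamma/2}$ from part (ii) completes the argument. The only non-routine step is verifying the Mikhlin bound near $\xi=0$, where the possibly singular factor $|\xi|^{\gamma-|\alpha|}$ inherited from $\psi$ (when $|\alpha|>\gamma$) must be absorbed by the $|\xi|^{|\alpha|}$ weight demanded by Mikhlin; this works out because $|\xi|^{|\alpha|}\cdot|\xi|^{\gamma-|\alpha|}=|\xi|^\gamma\leq 1$ on $|\xi|\leq 1$.
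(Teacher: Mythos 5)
Your proposal is correct and follows essentially the same overall strategy as the paper: reduce each operator to a Fourier multiplier that commutes with $D_x^\alpha$, establish $L_p\cap L_2$-boundedness uniformly in $t$, and conclude stability of $C_p^\infty$. The differences are in the intermediate factorizations. In (ii), the paper appeals directly to the inequality $\|(1-\Delta)^{\nu/2}g\|_{L_q}\leq N\sum_{|\beta|\leq\lfloor|\nu|\rfloor+1}\|D^\beta g\|_{L_q}$ (a standard consequence of Mikhlin's theorem), whereas you factor through a polynomial differential operator composed with convolution by a Bessel kernel in $L_1$ and use Young's inequality; both are routine and equivalent in content. In (iii), the paper writes $m(t,\xi)=|\xi|^{-\gamma}\psi(t,\xi)$ and then separately invokes an embedding to control $(-\Delta)^{\gamma/2}$, while you write $m(t,\xi)=\psi(t,\xi)(1+|\xi|^2)^{-\gamma/2}$ so as to reuse part (ii) directly; the homogeneous versus inhomogeneous normalization is cosmetic, and your verification of the Mikhlin bound near $\xi=0$ (absorbing $|\xi|^{\gamma-|\alpha|}$ into $|\xi|^{-|\alpha|}$ since $\gamma>0$) is the right point to check. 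Your treatment of joint continuity in (i) simply makes explicit the Sobolev-embedding step that the paper cites via its Remark~\ref{21.01.25.16.01}(ii).
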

\begin{proof} ($i$) 
Put 
$$
g(t,x):=\int_0^tf(s,x)\mathrm{d}s.
$$
By the dominated convergence theorem and Minkowski's inequality,
\begin{equation}
\label{21.01.25.16.15}
\|D^{\alpha}_xg(t_2,\cdot)-D^{\alpha}_xg(t_1,\cdot)\|_{L_2(\bR^d)\cap L_p(\bR^d)}\leq |t_2-t_1|\|D^{\alpha}_xf\|_{L_{\infty}([0,T];L_2(\bR^d)\cap L_p(\bR^d))}
\end{equation}
for all positive numbers $t_1$ and $t_2$. 
This implies $g\in C_p^{\infty}([0,T]\times\bR^d)$ and by Remark \ref{21.01.25.16.01}($ii$) and \eqref{21.01.25.16.15}, $g$ is continuous on $[0,T]\times\bR^d$. The fundamental theorem of calculus yields for almost all $t\in[0,T]$,
$$
\p_tg(t,x)=f(t,x),\quad \forall x\in \bR^d.
$$
Therefore, $g\in C_p^{1,\infty}([0,T]\times\bR^d)$.

($ii$) Let $f\in C_p^{\infty}([0,T]\times\bR^d)$ and $\nu \in \bR$. 
It is well-known that $(1-\Delta)^{\nu/2}$ is a bijective mapping on the Schwartz class (cf. \cite[Chapter 13]{krylov2008lectures}).
Thus it suffices to show
$$
(1-\Delta)^{\nu/2}f\in C_p^{\infty}([0,T]\times\bR^d)
$$
since $\nu$ is an arbitrary real number, 
Let $\alpha$ be a ($d$-dimensional) multi-index $\alpha$ (with respect to space variable).
By a well-known property of the operator $(1-\Delta)^{\nu/2}$ in $L_p$-spaces, 
\begin{align*}
\|D^{\alpha}_x(1-\Delta)^{\nu/2}f\|_{L_{\infty}([0,T];L_2(\bR^d)\cap L_p(\bR^d))}&=\|(1-\Delta)^{\nu/2}D^{\alpha}_xf\|_{L_{\infty}([0,T];L_2(\bR^d)\cap L_p(\bR^d))}\\
&\leq N\sum_{|\beta|\leq|\alpha|+\lfloor|\nu|\rfloor+1}\|D^{\beta}f\|_{L_{\infty}([0,T];L_2(\bR^d)\cap L_p(\bR^d))},   
\end{align*}
where $N$ is independent of $f$. This certainly implies $(1-\Delta)^{\nu/2}f\in C_p^{\infty}([0,T]\times\bR^d)$.

($iii$) Let $m(t,\xi):=|\xi|^{-\gamma}\psi(t,\xi)$. 
Since $\psi$ has a $\left(\lfloor \frac{d}{2} \rfloor+1\right) $-times regular upper bound with $\gamma$ and $M$, we have
$$
|D^{\alpha}_{\xi}m(t,\xi)|\leq N(\alpha,\gamma,M)|\xi|^{-|\alpha|},\quad \forall |\alpha|\leq \left\lfloor\frac{d}{2}\right\rfloor+1, \quad \forall (t,\xi) \in [0,T] \times (\bR^d\setminus\{0\})
$$
by a simple multiplicative law of derivatives. 
Thus, using Mikhlin's multiplier theorem (\textit{e.g.} \cite[Theorem 6.2.7]{grafakos2014classical}), for all $f\in C_p^{\infty}([0,T]\times\bR^d)$,  ($d$-dimensional) multi-index $\alpha$, and   $s\in[0,T]$, we have
\begin{align*}
    \|D^{\alpha}\psi(t,-i\nabla)f(s,\cdot)\|_{L_2(\bR^d)\cap L_p(\bR^d)}\leq N\|(-\Delta)^{\gamma/2}D^{\alpha}f(s,\cdot)\|_{L_2(\bR^d)\cap L_p(\bR^d)},
\end{align*}
where $N=N(d,p,\gamma,M)$. 
Moreover, since $p\in(1,\infty)$, by a well-known embedding theorem which can be easily obtained from Mikhlin's multiplier theorem, we have
\begin{align*}
\|(-\Delta)^{\gamma/2}D^{\alpha}f(s,\cdot)\|_{L_2(\bR^d)\cap L_p(\bR^d)}&\leq N\sum_{|\beta|\leq |\alpha|+\lfloor\gamma\rfloor+1}\|D^{\beta}f(s,\cdot)\|_{L_2(\bR^d)\cap L_p(\bR^d)}\\
&\leq N\sum_{|\beta|\leq|\alpha|+ \lfloor\gamma\rfloor+1}\|D^{\beta}f\|_{L_{\infty}([0,T];L_2(\bR^d)\cap L_p(\bR^d))},    
\end{align*}
where $N$ is independent of $f$. The lemma is proved.
\end{proof}

\begin{thm}
\label{21.01.08.11.03}
Let $p\in(1,\infty)$ and $f\in C_p^{\infty}([0,T]\times\bR^d)$.
Assume that $\psi(t,\xi)$ satisfies the ellipticity condition with $(\gamma,\kappa)$ and has a $\left(\lfloor \frac{d}{2} \rfloor  +1 \right) $-times regular upper bound with $(\gamma,M)$.
Then, there exists a unique solution $u\in C_p^{1,\infty}([0,T]\times\bR^d)$  to the Cauchy problem
\begin{equation}
					\label{21.01.08.11.06}
\begin{cases}
\p_tu(t,x)=\psi(t,-i\nabla)u(t,x)+f(t,x),\quad &(t,x)\in(0,T)\times\mathbb{R}^d,\\
u(0,x)=0,\quad & x\in\mathbb{R}^d,
\end{cases}
\end{equation}
Moreover, the solution $u$ has the following representation :
$$
u(t,x)=\cK f(t,x) ,\quad  \forall (t,x)\in [0,T]\times\bR^d,
$$
where $\cK f$ is defined in \eqref{k defn}.
\end{thm}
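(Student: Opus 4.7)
The plan is to define the candidate $u=\cK f$ via the Fourier representation
\[
\widehat{u}(t,\xi) = \int_0^t \exp\Bigl(\int_s^t \psi(r,\xi)\,\mathrm{d}r\Bigr)\,\cF[f(s,\cdot)](\xi)\,\mathrm{d}s,
\]
verify that it lies in $C_p^{1,\infty}([0,T]\times\bR^d)$ and solves \eqref{21.01.08.11.06}, and establish uniqueness via an $L_2$-energy estimate. The ellipticity condition gives the fundamental pointwise bound $\bigl|\exp\bigl(\int_s^t\psi(r,\xi)\,\mathrm{d}r\bigr)\bigr|\leq e^{-\kappa(t-s)|\xi|^\gamma}$, which makes the Fourier integral absolutely convergent; Fubini (justified by the exponential decay and the fact that $\cF[f(s,\cdot)](\xi)$ is rapidly decreasing in $\xi$ uniformly in $s$) then gives $u=\cK f$ pointwise, so the two representations agree.

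\emph{Spatial regularity.} For each multi-index $\alpha$, the $L_2$-norm of $D^\alpha_x u(t,\cdot)$ is controlled directly on the Fourier side: by Plancherel, Cauchy--Schwarz in $s$, and the elliptic bound,
\[
\|D^\alpha_x u(t,\cdot)\|_{L_2(\bR^d)}^2 \leq t\int_0^t \int_{\bR^d}|\xi|^{2|\alpha|}e^{-2\kappa(t-s)|\xi|^\gamma}|\cF[f(s,\cdot)](\xi)|^2\,\mathrm{d}\xi\,\mathrm{d}s \leq t^2\|D^\alpha_x f\|_{L_\infty([0,T];L_2(\bR^d))}^2.
\]
For the $L_p$-norm, using the $(\lfloor d/2\rfloor+1)$-times regular upper bound on $\psi$ and the Faa di Bruno formula, one verifies that the multiplier $\xi\mapsto (i\xi)^\alpha \exp(\int_s^t\psi\,\mathrm{d}r)$ satisfies Mikhlin's condition with a constant uniformly integrable in $s$; the spurious $(t-s)$-powers arising from differentiation are absorbed by the exponential $e^{-\kappa(t-s)|\xi|^\gamma}$. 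Applying Mikhlin's multiplier theorem and integrating in $s$ yields $D^\alpha_x u\in L_\infty([0,T];L_p(\bR^d))$. Joint continuity of $D^\alpha_x u$ on $[0,T]\times\bR^d$ follows from dominated convergence and the Sobolev embedding noted in Remark~\ref{21.01.25.16.01}(ii), so $u\in C_p^\infty([0,T]\times\bR^d)$.

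\emph{Time-derivative and the PDE.} For each fixed $\xi$, the map $t\mapsto \widehat{u}(t,\xi)$ is absolutely continuous, and differentiating under the integral with $F(t,s,\xi):=\exp(\int_s^t\psi\,\mathrm{d}r)$ satisfying $F(t,t,\xi)=1$ and $\p_t F(t,s,\xi)=\psi(t,\xi)F(t,s,\xi)$ for a.e.\ $t$ yields
\[
\p_t \widehat{u}(t,\xi) = \cF[f(t,\cdot)](\xi) + \psi(t,\xi)\,\widehat{u}(t,\xi).
\]
Inverting the Fourier transform in $x$ and invoking Lemma~\ref{21.01.25.16.54}(iii), which ensures $\psi(t,-i\nabla)u\in C_p^\infty$, yields $\p_t u = \psi(t,-i\nabla)u + f$ in $C_p^\infty$. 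Combined with $u(0,\cdot)=0$ and Lemma~\ref{21.01.25.16.54}(i) applied to $u(t,\cdot)=\int_0^t \p_r u(r,\cdot)\,\mathrm{d}r$, this promotes $u$ to $C_p^{1,\infty}([0,T]\times\bR^d)$.

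\emph{Uniqueness.} If $v\in C_p^{1,\infty}$ solves the homogeneous equation with $v(0,\cdot)=0$, Plancherel and the ellipticity give
\[
\tfrac{1}{2}\tfrac{\mathrm{d}}{\mathrm{d}t}\|v(t,\cdot)\|_{L_2(\bR^d)}^2 = \Re\int_{\bR^d}\psi(t,\xi)|\widehat v(t,\xi)|^2\,\mathrm{d}\xi \leq -\kappa\int_{\bR^d}|\xi|^\gamma|\widehat v(t,\xi)|^2\,\mathrm{d}\xi\leq 0,
\]
hence $v\equiv 0$. The main obstacle is the $L_p$-boundedness step for the Fourier multiplier $(i\xi)^\alpha\exp(\int_s^t\psi\,\mathrm{d}r)$: one must carefully estimate its $\xi$-derivatives using only the finite regular upper bound on $\psi$, tracking how the $(t-s)$-powers arising from the chain rule combine with $|\xi|^\gamma$-powers so that the products $((t-s)|\xi|^\gamma)^k e^{-\kappa(t-s)|\xi|^\gamma}$ remain uniformly bounded while reproducing the $|\xi|^{-|\beta|}$ decay that Mikhlin's theorem requires.
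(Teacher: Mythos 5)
Your proposal follows essentially the same route as the paper: define $u=\cK f$ via the Fourier representation, verify that it solves the equation on the Fourier side, obtain $C_p^{1,\infty}$-regularity by combining $L_p$-bounds in $x$ with Lemma~\ref{21.01.25.16.54}(i) for the $t$-regularity, and prove uniqueness by Fourier-transforming the homogeneous equation. Your uniqueness argument via the $L_2$-energy inequality is a valid variant of the paper's Gronwall argument; both start from the ODE $\partial_t\widehat{v}(t,\xi)=\psi(t,\xi)\widehat{v}(t,\xi)$ with $\widehat{v}(0,\xi)=0$.

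One step as you state it would fail. You claim the multiplier $\xi\mapsto(i\xi)^\alpha\exp\bigl(\int_s^t\psi(r,\xi)\,\mathrm{d}r\bigr)$ satisfies Mikhlin's condition with constant integrable in $s$. That is false as soon as $|\alpha|\geq\gamma$: the $|\xi|^{|\alpha|}$ factor cannot be absorbed by $e^{-\kappa(t-s)|\xi|^\gamma}$ without paying a factor $(t-s)^{-|\alpha|/\gamma}$ (maximize $|\xi|^{|\alpha|}e^{-\kappa(t-s)|\xi|^\gamma}$ at $|\xi|\sim(t-s)^{-1/\gamma}$), and $(t-s)^{-|\alpha|/\gamma}$ is not integrable near $s=t$ once $|\alpha|\geq\gamma$. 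The "absorption" you invoke is correct for the $(t-s)^k|\xi|^{k\gamma}$ terms produced by differentiating $\exp(\int_s^t\psi)$ (this is precisely Lemma~\ref{20.11.19.13.24}), but not for the bare $(i\xi)^\alpha$ factor. The fix is elementary and is what the paper implicitly does: commute the derivative onto the data, $D^\alpha_x u(t,\cdot)=\cK(D^\alpha_x f)(t,\cdot)$ (valid because $f\in C_p^\infty$), so the multiplier acting on $D^\alpha_x f(s,\cdot)$ is just $\exp(\int_s^t\psi\,\mathrm{d}r)$, whose Mikhlin constant is uniformly bounded in $(s,t)$; integrating over $s\in[0,t]$ then contributes only a factor $T$. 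The paper invokes instead the kernel bound $\sup_{s<t}\|p(t,s,\cdot)\|_{L_1(\bR^d)}\leq N$ from [Corollary~3.6, kim2018lp] together with Minkowski's inequality, which yields the same uniform-in-$p$ estimate via Young's inequality; the two are essentially interchangeable here. With this correction your argument closes.
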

\begin{proof}
\textbf{(Uniqueness)} Suppose that $u\in C_p^{1,\infty}([0,T]\times\bR^d)$ is a solution of the Cauchy problem
\begin{equation}
\label{21.01.25.16.51}
\begin{cases}
\p_tu(t,x)=\psi(t,-i\nabla)u(t,x),\quad &(t,x)\in(0,T)\times\bR^d,\\
u(0,x)=0,\quad & x\in\bR^d.
\end{cases}
\end{equation}
Taking the Fourier transform to \eqref{21.01.25.16.51} and using the fundamental theorem of calculus, we have
$$
|\cF[u(t,\cdot)](\xi)|\leq M|\xi|^{\gamma}\int_0^t|\cF[u(s,\cdot)](\xi)|\mathrm{d}s.
$$
By Gronwall's inequality, we conclude that $u=0$. From the linearity of equation \eqref{21.01.08.11.06}, we obtain the uniqueness. 
\vspace{3mm}

\textbf{(Existence)} Define
\begin{equation*}
u(t,x):=\cK f(t,x).
\end{equation*}
First, we claim that $u\in C_p^{\infty}([0,T]\times\bR^d)$.
In  virtue of the dominated convergence theorem,  for a ($d$-dimensional) multi-index $\alpha$ and $(t,x)\in[0,T]\times\bR^d$,
\begin{equation*}
    D^{\alpha}_xu(t,x)=D^{\alpha}_x\cK f(t,x)=\cK (D^{\alpha}_xf)(t,x).
\end{equation*}
Note that by \cite[Corollary 3.6]{kim2018lp},
\begin{equation}
\label{21.01.09.19.45}
\sup_{s<t}\int_{\bR^d}|p(t,s,x)|\mathrm{d}x\leq N,    
\end{equation}
where $N=N(d,\gamma,\kappa,M)$. Using Minkowski's inequality and \eqref{21.01.09.19.45}, for all positive numbers $t_1$ and $t_2$, we have
\begin{align*}
\|\cK(D^{\alpha}_xf)(t_2,\cdot)-\cK(D^{\alpha}_xf)(t_1,\cdot)\|_{L_p(\bR^d)}&\leq \left|\int_{t_1}^{t_2}\|p(t,s,\cdot)\|_{L_1(\bR^d)}\|D^{\alpha}f(s,\cdot)\|_{L_p(\bR^d)}\mathrm{d}s\right|\\
&\leq N|t_2-t_1|\|D^{\alpha}f\|_{L_{\infty}([0,T];L_p(\bR^d))},
\end{align*}
where $N=N(d,\gamma,\kappa,M)$. Thus, since  $f\in C_p^{\infty}([0,T]\times\bR^d)$, we conclude that $u\in C_p^{\infty}([0,T]\times\bR^d)$. Next, define
$$
v(t,x):=\int_{0}^t \left(\psi(s,-i\nabla)u(s,x)+f(s,x) \right) \mathrm{d}s.
$$
By Lemma \ref{21.01.25.16.54}($i$), $v\in C_p^{1,\infty}([0,T]\times\bR^d)$.
We claim that 
$$
u(t,x)=v(t,x),\quad\forall(t,x)\in[0,T]\times\bR^d,
$$
which obviously completes the proof. For $(t,x)\in[0,T]\times\bR^d$, applying Fubini's theorem, we have
\begin{equation*}
    \begin{aligned}
    v(t,x)-\int_0^tf(s,x)\mathrm{d}s&=\int_{0}^t\cF^{-1}\left[\psi(s,\cdot)\int_0^s\cF[f(l,\cdot)]\mathrm{e}^{\int_{l}^s\psi(r,\cdot)\mathrm{d}r}\mathrm{d}l\right](x)\mathrm{d}s\\
    &=\int_0^t\cF^{-1}\left[\cF[f(l,\cdot)](\mathrm{e}^{\int_{l}^t\psi(r,\cdot)\mathrm{d}r}-1)\right](x)\mathrm{d}l=u(t,x)-\int_0^tf(s,x)\mathrm{d}s.
    \end{aligned}
\end{equation*}
The theorem is proved.
\end{proof}

\mysection{An $L_2$-boundedness and estimates of the fundamental solution}
\label{21.11.30.14.21}

In this section, we obtain $L_2$-estimates of the Cauchy problem and prove some properties of the fundamental solution.
Recall 
\begin{align*}
p(t,s,x):=1_{0 < s< t} \cdot \frac{1}{(2\pi)^{d/2}}\int_{\bR^d} \exp\left(\int_{s}^t\psi(r,\xi)\mathrm{d}r\right)\mathrm{e}^{ix\cdot\xi}\mathrm{d}\xi.
\end{align*}
Let $f\in C_p^{\infty}([0,T]\times\bR^d)$. Then by Theorem \ref{21.01.08.11.03},
\begin{align*}
u(t,x)=\cK f (t,x) := \int_{-\infty}^{t}\int_{\bR^d}p(t,s,x-y)1_{0<s<T}f(s,y)\mathrm{d}y\mathrm{d}s
\end{align*}
becomes the solution to \eqref{21.01.08.11.06}. Moreover, one can easily check that for any $\varepsilon\in[0,1]$,
\begin{equation}
    \label{eqn 2020012102}
    \begin{aligned}
(-\Delta)^{\frac{\varepsilon \gamma}{2}}u(t,x)
&=(-\Delta)^{\frac{\varepsilon \gamma}{2}} \cK f (t,x)  \\
&=\int_{-\infty}^{t} \left[\int_{\bR^d}P_\varepsilon (t,s,x-y)1_{0<s<T}f(s,y)\mathrm{d}y \right] \mathrm{d}s \\
&:=\int_{-\infty}^{t} \left[\int_{\bR^d}(-\Delta)^{\frac{\varepsilon\gamma}{2}} p(t,s,x-y)1_{0<s<T}f(s,y)\mathrm{d}y \right] \mathrm{d}s.
\end{aligned}
\end{equation}

Thus to show the boundedness of $(-\Delta)^{\frac{\varepsilon \gamma}{2}}u$ in an $L_p$-space, it is required to estimate the kernel $P_\varepsilon$.

\begin{lem}
\label{20.11.19.13.24}
Let $n\in \bN$ and assume that $\psi(t,\xi)$ satisfies the ellipticity condition with $(\gamma,\kappa)$ and has a $n$-times regular upper bound with $(\gamma,M)$.
Then there exists a positive constant $N=N(n,M)$ such that for all $t>s$ and $\xi \in \bR^d$,
\begin{equation}
\label{21.01.06.14.58}
\left|D^n_{\xi}\left(\exp\left(\int_{s}^t\psi(r,\xi)\mathrm{d}r\right)\right)\right|\leq N|\xi|^{-n}\exp(-\kappa(t-s)|\xi|^{\gamma})\sum_{k=1}^n|t-s|^{k}|\xi|^{k\gamma},
\end{equation}
where $D^n_{\xi}$ denotes a ($d$-dimensional) partial derivative with respect to the variable $\xi$ whose order is $n$. 
\end{lem}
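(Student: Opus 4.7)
The plan is to set $\Psi(\xi) := \int_s^t \psi(r,\xi)\,\mathrm{d}r$ and apply Fa\`a di Bruno's formula to the composition $e^{\Psi(\xi)}$. Since $D^n_\xi$ denotes a partial derivative of total order $n$, Fa\`a di Bruno's formula expresses
$$
D^n_\xi e^{\Psi(\xi)} \;=\; e^{\Psi(\xi)} \sum_{k=1}^{n} \sum_{\pi \in \Pi_k} c_\pi \prod_{B \in \pi} D^{|B|}_\xi \Psi(\xi),
$$
where $\Pi_k$ is the set of partitions of the $n$ derivative slots into $k$ nonempty blocks and $c_\pi$ is a combinatorial constant depending only on $n$. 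This reduces the task to estimating the scalar factor $e^{\Psi(\xi)}$ together with each individual derivative $D^m_\xi \Psi(\xi)$ for $m \in \{1,\ldots,n\}$.

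For the exponential factor, the ellipticity condition $\Re[\psi(r,\xi)] \leq -\kappa|\xi|^\gamma$ integrates to $\Re[\Psi(\xi)] \leq -\kappa(t-s)|\xi|^\gamma$, so $|e^{\Psi(\xi)}| \leq \exp(-\kappa(t-s)|\xi|^\gamma)$. For the derivatives of $\Psi$, since the block sizes appearing in Fa\`a di Bruno lie in $\{1,\ldots,n\}$ and the $n$-times regular upper bound gives $|D^m_\xi \psi(r,\xi)| \leq M|\xi|^{\gamma-m}$ for every such $m$, pulling the derivative under the integral in $r$ yields $|D^m_\xi \Psi(\xi)| \leq M(t-s)|\xi|^{\gamma-m}$. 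For a partition into $k$ blocks whose sizes sum to $n$, the product therefore contributes $M^k (t-s)^k |\xi|^{k\gamma-n}$.

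Combining these bounds and absorbing the finitely many combinatorial constants $c_\pi$ together with the powers of $M$ into a single constant $N = N(n,M)$ gives
$$
|D^n_\xi e^{\Psi(\xi)}| \leq N \exp\bigl(-\kappa(t-s)|\xi|^\gamma\bigr) \sum_{k=1}^{n} (t-s)^k |\xi|^{k\gamma - n},
$$
which is exactly \eqref{21.01.06.14.58} after factoring $|\xi|^{-n}$ out of the sum. I do not expect a genuine obstacle; the argument is entirely mechanical once Fa\`a di Bruno is invoked. The one point deserving attention is the matching of hypothesis and conclusion: the blocks in Fa\`a di Bruno's expansion can have any size from $1$ up to $n$, so the assumption of an $n$-times regular upper bound on $\psi$ is exactly what is needed to control every term of the expansion uniformly, no more and no less.
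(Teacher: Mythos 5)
Your proof is correct, but it takes a genuinely different route from the paper. The paper proceeds by induction on $n$: the base case $n=1$ is immediate from the bounds $|D_\xi\psi|\leq M|\xi|^{\gamma-1}$ and $\Re[\psi]\leq -\kappa|\xi|^\gamma$, and for the inductive step the authors write $D_\xi^{k+1}e^{\Psi}=D_\xi^{k}\bigl(e^{\Psi}\cdot D_\xi^{1}\Psi\bigr)$ (with $\Psi=\int_s^t\psi(r,\cdot)\,\mathrm{d}r$) and expand via Leibniz's product rule, feeding the already-established bounds for orders $0,\dots,k$ back in and re-summing the resulting double sum. You instead bypass the induction entirely by invoking the multivariate Fa\`a di Bruno formula over set partitions of the $n$ derivative slots; since $\exp'=\exp$, the combinatorial weights are trivial and the rest is the same elementary bookkeeping: a block of size $m$ contributes $M(t-s)|\xi|^{\gamma-m}$, so a partition into $k$ blocks contributes $M^{k}(t-s)^{k}|\xi|^{k\gamma-n}$ regardless of how the $n$ slots are distributed. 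Your version is a cleaner one-shot argument once Fa\`a di Bruno is on the table; the paper's induction is more self-contained, calling only on Leibniz's rule. Both correctly observe that every block has size at most $n$, which is exactly why the $n$-times regular upper bound is the right hypothesis — you flag this point explicitly, and it is worth keeping.
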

\begin{proof}
We use mathematical induction. 
The case $n=1$ is clear since $|D_\xi \psi(t,\xi)|$ has a upper bound $N|\xi|^{\gamma-1}$ and $|\psi(t,\xi)|$ has both lower and upper bounds which are given by $|\xi|^\gamma$ multiplied by positive constants. 
Next, suppose that \eqref{21.01.06.14.58} holds for all $1,2,\cdots,k$. Then, by Leibniz's product rule and the hypothesis of the induction,
\begin{align*}
    &\left|D^{k+1}_{\xi}\left(\exp\left(\int_{s}^t\psi(r,\xi)\mathrm{d}r\right)\right)\right|=\left|D^{k}_{\xi}\left(\exp\left(\int_{s}^t\psi(r,\xi)\mathrm{d}r\right)\int_{s}^{t}D_{\xi}^1\psi(r,\xi)\mathrm{d}r\right)\right|\\
    &\leq N\sum_{j=0}^k\left|D^{j}_{\xi}\left(\exp\left(\int_{s}^t\psi(r,\xi)\mathrm{d}r\right)\right)\int_{s}^{t}D_{\xi}^{k-j+1}\psi(r,\xi)\mathrm{d}r\right|\\
    &\leq N\sum_{j=0}^k\left(|\xi|^{-j}\exp(-\kappa(t-s)|\xi|^{\gamma})\sum_{l=1}^j|t-s|^{l}|\xi|^{l\gamma}|t-s||\xi|^{\gamma-k+j-1}\right)\\
    &= N|\xi|^{-k-1}\exp(-\kappa(t-s)|\xi|^{\gamma})\sum_{j=0}^k\sum_{l=1}^j|t-s|^{l+1}|\xi|^{(l+1)\gamma}\\
    &\leq N|\xi|^{-k-1}\exp(-\kappa(t-s)|\xi|^{\gamma})\sum_{l=1}^{k+1}|t-s|^{l}|\xi|^{l\gamma},
\end{align*}
where $N=N(k,M)$. The lemma is proved.
\end{proof}
\begin{thm}
\label{21.01.13.14.11}
Let $n\in \bN$, $\alpha$ be a ($d$-dimensional) multi-index, and $m\in\{0,1\}$.
Assume that $\psi(t,\xi)$ satisfies the ellipticity condition with $(\gamma,\kappa)$ and  has a $n$-times regular upper bound with $(\gamma,M)$.
Then there exists a  positive constant $N=N(n,\kappa,\gamma,M,\varepsilon,m,|\alpha|)$ such that for all $s<t$,
\begin{equation*}
    \begin{gathered}
    \sup_{x\in\bR^d}|x|^{n} \left(|\p_t^mD^{\alpha}_xP_{\varepsilon}(t,s,x)| \right) \leq N|t-s|^{-(m+\varepsilon)-\frac{(d+|\alpha|-n)}{\gamma}},\\
    \left(\int_{\bR^d}|x|^{2n}|\p_t^mD^{\alpha}_xP_{\varepsilon}(t,s,x)|^2\mathrm{d}x\right)^{1/2}\leq N|t-s|^{-(m+\varepsilon)-\frac{(d+|\alpha|-n)}{\gamma}+\frac{d}{2\gamma}},
    \end{gathered}
\end{equation*}
\end{thm}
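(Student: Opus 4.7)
The plan is to work in Fourier space, using the identity
\[
\cF[\p_t^m D_x^\alpha P_\varepsilon(t,s,\cdot)](\xi) = (i\xi)^\alpha |\xi|^{\varepsilon\gamma} \psi(t,\xi)^m \exp\!\left(\int_s^t \psi(r,\xi)\mathrm{d}r\right),\qquad m\in\{0,1\},
\]
where the factor $\psi(t,\xi)$ when $m=1$ comes from differentiating only the upper limit $t$ of the integral in the exponent. Using $|x|^n\le N_d \sum_{|\beta|=n}|x^\beta|$, both claimed estimates reduce to bounding $\|x^\beta \p_t^m D_x^\alpha P_\varepsilon(t,s,\cdot)\|$ for each multi-index $\beta$ with $|\beta|=n$. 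The identity $x^\beta e^{ix\cdot\xi}=(-i)^{|\beta|}D_\xi^\beta e^{ix\cdot\xi}$ combined with integration by parts then gives
\[
x^\beta \p_t^m D_x^\alpha P_\varepsilon(t,s,x) = \frac{i^{|\alpha|+|\beta|}}{(2\pi)^{d/2}} \int_{\bR^d} e^{ix\cdot\xi}\, D_\xi^\beta\!\left[\xi^\alpha |\xi|^{\varepsilon\gamma} \psi(t,\xi)^m e^{\int_s^t\psi(r,\xi)\mathrm{d}r}\right]\mathrm{d}\xi,
\]
from which the sup bound follows by the triangle inequality and the $L^2$ bound follows from Plancherel's theorem applied in $\xi$.

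The technical heart is a pointwise bound on the integrand above. Distributing $D_\xi^\beta$ across the three factors $|\xi|^{\varepsilon\gamma}$, $\psi(t,\xi)^m$, and $\exp(\int_s^t\psi(r,\xi)\mathrm{d}r)$ via the Leibniz rule, I estimate each piece using: (a) the elementary bound $|D_\xi^{\beta_1}|\xi|^{\varepsilon\gamma}|\le N|\xi|^{\varepsilon\gamma-|\beta_1|}$ for $\xi\ne 0$; (b) the $n$-times regular upper bound on $\psi$, which (combined with a trivial product rule when $m=1$) gives $|D_\xi^{\beta_2}\psi(t,\xi)^m|\le N|\xi|^{m\gamma-|\beta_2|}$ for $|\beta_2|\le n$; and (c) Lemma \ref{20.11.19.13.24} for the exponential factor. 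Together with the $\xi^\alpha$ prefactor, this yields
\[
\left|\xi^\alpha D_\xi^\beta\!\left[|\xi|^{\varepsilon\gamma}\psi(t,\xi)^m e^{\int_s^t\psi(r,\xi)\mathrm{d}r}\right]\right| \le N|\xi|^{\varepsilon\gamma+m\gamma+|\alpha|-|\beta|}e^{-\kappa(t-s)|\xi|^\gamma}\sum_{k=0}^{|\beta|}(t-s)^k|\xi|^{k\gamma},
\]
where the $k=0$ term captures the contribution in which no derivatives land on the exponential, and the higher $k$ terms come from Lemma \ref{20.11.19.13.24}.

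The proof then finishes with the change of variables $\eta=(t-s)^{1/\gamma}\xi$, which rescales each $(t-s)^k|\xi|^{k\gamma}$ to $|\eta|^{k\gamma}$, converts the common factor $|\xi|^{\varepsilon\gamma+m\gamma+|\alpha|-n}$ into $(t-s)^{-(m+\varepsilon)-(|\alpha|-n)/\gamma}|\eta|^{\varepsilon\gamma+m\gamma+|\alpha|-n}$, and picks up $(t-s)^{-d/\gamma}$ from the measure. All remaining $\eta$-integrals reduce to finite absolute constants of Gaussian type, controlled by $e^{-\kappa|\eta|^\gamma}$ at infinity. Integrating in $\xi$ for the sup estimate (and squaring then integrating for the $L^2$ estimate) produces the asserted exponents $-(m+\varepsilon)-(d+|\alpha|-n)/\gamma$ and $-(m+\varepsilon)-(d+|\alpha|-n)/\gamma+d/(2\gamma)$ respectively. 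The main obstacle is the algebraic bookkeeping in the Leibniz expansion and confirming that the resulting constant depends only on the parameters claimed; the structure of Lemma \ref{20.11.19.13.24} is tailor-made so that the factors of $(t-s)$ balance the polynomial growth in $|\xi|$ exactly after rescaling.
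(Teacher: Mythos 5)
Your proposal matches the paper's proof essentially line by line: both pass to Fourier space, bound $|x|^n$ by a finite sum of monomials $|x^\beta|$ with $|\beta|=n$, move the resulting $D_\xi^\beta$ onto the symbol via integration by parts, expand by the Leibniz rule and control the exponential factor via Lemma~\ref{20.11.19.13.24}, and close the sup bound by integrating in $\xi$ (equivalently, after the rescaling $\eta=(t-s)^{1/\gamma}\xi$) and the $L^2$ bound by Plancherel. Your explicit inclusion of a $k=0$ term in the final Leibniz sum---the contribution in which no $\xi$-derivatives land on the exponential---is the careful bookkeeping; the paper's intermediate display \eqref{20.11.19.16.39} starts the sum at $k=1$ and leaves that case implicit.
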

\begin{proof}
It is well-known that
$$
|x|\simeq |x^1|+\cdots+|x^d|~ \quad \forall x \in \bR^d.
$$
In particular, we have,
\begin{equation}
\label{20.11.19.16.20}
 |x|^n\leq N(n)(|x^1|^n+\cdots+|x^d|^n),\quad n\in\bN.   
\end{equation}
By \eqref{20.11.19.16.20} and elementary properties of the Fourier transform,
\begin{align*}
    |x|^{n}|\p_t^mD_x^{\alpha}P_{\varepsilon}(t,s,x)|&\leq N\sum_{i=1}^d|x^i|^n|\p_t^mD^{\alpha}_xP_{\varepsilon}(t,s,x)|\\
    &\leq N\sum_{|\beta|=n}\int_{\bR^d}\left|D_{\xi}^{\beta}\left(\xi^{\alpha}|\xi|^{\varepsilon\gamma}\p_t^m\exp\left(\int_{s}^t\psi(r,\xi)\mathrm{d}r\right)\right)\right|\mathrm{d}\xi.
\end{align*}
By Leibniz's product rule,
\begin{align*}
&D_{\xi}^{\beta}\left(\xi^{\alpha}|\xi|^{\varepsilon\gamma}\p_t^m\exp\left(\int_{s}^t\psi(r,\xi)\mathrm{d}r\right)\right)\\
 &=D_{\xi}^{\beta}\left(\psi(t,\xi)^m\xi^{\alpha}|\xi|^{\varepsilon\gamma}\exp\left(\int_{s}^t\psi(r,\xi)\mathrm{d}r\right)\right)\\
 &=\sum_{\beta=\beta_0+\beta_1}c_{\beta_0,\beta_1}D^{\beta_0}_{\xi}(\psi(t,\xi)^m\xi^{\alpha}|\xi|^{\varepsilon\gamma})D^{\beta_1}_{\xi}\left(\exp\left(\int_{s}^t\psi(r,\xi)\mathrm{d}r\right)\right).
\end{align*}
By Definition \ref{20.11.18.13.07} and Lemma \ref{20.11.19.13.24},
\begin{equation}
\label{20.11.19.16.39}
\begin{aligned}
&\left|D_{\xi}^{\beta}\left(\psi(t,\xi)^m\xi^{\alpha}|\xi|^{\varepsilon\gamma}\exp\left(\int_{s}^t\psi(r,\xi)\mathrm{d}r\right)\right)\right|\\
&\leq N|\xi|^{(m+\varepsilon)\gamma+|\alpha|-n}\exp(-\kappa(t-s)|\xi|^{\gamma})\sum_{k=1}^{n}(t-s)^k|\xi|^{k\gamma}.   
\end{aligned}
\end{equation}
Therefore,
$$
\sum_{|\beta|=n}\int_{\bR^d}\left|D_{\xi}^{\beta}\left(\xi^{\alpha}|\xi|^{\varepsilon\gamma}\p_t^m\exp\left(\int_{s}^t\psi(r,\xi)\mathrm{d}r\right)\right)\right|\mathrm{d}\xi\leq N(t-s)^{-(m+\varepsilon)-\frac{(d+|\alpha|-n)}{\gamma}},
$$
where $N=N(n,\kappa,\gamma,\varepsilon,m,|\alpha|)$. Similarly, by \eqref{20.11.19.16.20}, \eqref{20.11.19.16.39}, and Plancherel's theorem,
\begin{align*}
\int_{\bR^d}|x|^{2n}|\p_t^mD_x^{\alpha} P_{\varepsilon}(t,s,x)|^2\mathrm{d}x&\leq N\int_{\bR^d}|x^i|^{2n}|\psi(t,-i\nabla)^mD_x^{\alpha} P_{\varepsilon}(t,s,x)|^2\mathrm{d}x\\
&\leq N\sum_{|\beta|=n}\int_{\bR^d}\left|D^{\beta}_{\xi}\left(\psi(t,\xi)^m\xi^{\alpha}|\xi|^{\varepsilon\gamma}\exp\left(\int_{s}^t\psi(r,\xi)\mathrm{d}r\right)\right)\right|^2\mathrm{d}\xi\\
&\leq N|t-s|^{-2(m+\varepsilon)-\frac{(d+2|\alpha|-2n)}{\gamma}},
\end{align*}
where $N=N(n,\kappa,\gamma,\varepsilon,m,|\alpha|)$.
The theorem is proved.
\end{proof}

Recalling \eqref{eqn 2020012102} and handling estimates of various regularities of the solution simultaneously, we introduce the following family of  iterated integral operators 
\begin{equation*}
  \cK_{\varepsilon,T} f(t,x):=\int_{-\infty}^{t} \left[ \int_{\bR^d}\left(1_{\varepsilon\in[0,1)}1_{|t-s|<T}+1_{\varepsilon=1}\right)P_{\varepsilon}(t,s,x-y)f(s,y)\mathrm{d}y \right]\mathrm{d}s,
\end{equation*}
where $f \in C_c^\infty(\bR^{d+1})$.  It is remarkable that $P_\varepsilon(t,\cdot,\cdot)$ is integrable on $(0,t) \times \bR^d$ for all $t>0$ and $\varepsilon \in [0,1)$. Thus the integral operator $  \cK_{\varepsilon,T} f$ is well-defined in a point-wise sense. 
However, $P_1(t,\cdot,\cdot)$ is not integrable on $(0,t) \times \bR^d$ for any $t>0$.
In this sense, we say that the operator $\cK_{1,T}f$ is (parabolic) singular. However, one can understand the function $\cK_{1,T}f(t,x)$ in a point-wise sense if one regards the integral as an iterated integral. Indeed,
\begin{align*}
  \cK_{1,T} f(t,x)
  &:=\int_{-\infty}^{t} \left[ \int_{\bR^d} P_1(t,s,x-y)f(s,y)\mathrm{d}y \right]\mathrm{d}s \\
  &=\int_{-\infty}^{t} \left[ \int_{\bR^d}p(t,s,x-y)\left((-\Delta)_y^{\frac{\gamma}{2}}f\right)(s,y)\mathrm{d}y \right]\mathrm{d}s \\
  &=\int_{-\infty}^{t} \int_{\bR^d}p(t,s,x-y)\left((-\Delta)_y^{\frac{\gamma}{2}}f\right)(s,y)\mathrm{d}y \mathrm{d}s.
\end{align*}
Moreover, since the mapping $f \mapsto \cK_{\varepsilon,T} f$ is a pseudo-differential operator as well, the $L_2$-boundedness can be easily obtained based on Plancherel's theorem without any regularity condition on the symbol.
\begin{thm}
\label{21.03.01.17.32}
Assume that the symbol $\psi$ satisfies the ellipticity condition with $(\gamma,\kappa)$. 
Then there exists a constant $N=N(\kappa)$ such that
$$
\|\cK_{\varepsilon,T} f\|_{L_2(\bR^{d+1})}\leq NT^{1-\varepsilon}\|f\|_{L_2(\bR^{d+1})}
\quad \forall f\in C_c^{\infty}(\bR^{d+1}).
$$
\end{thm}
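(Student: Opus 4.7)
The strategy is to reduce the $L_2$-bound to a one-dimensional estimate in the time variable by taking the spatial Fourier transform, and then to apply Plancherel's theorem together with a Schur/Young-type inequality with constants uniform in $\xi$. Since $P_\varepsilon(t,s,\cdot)$ is a Fourier multiplier with symbol $|\xi|^{\varepsilon\gamma}\exp\!\bigl(\int_s^t\psi(r,\xi)\,\mathrm{d}r\bigr)\mathbf{1}_{s<t}$, for each fixed $\xi$ we have
\begin{align*}
\cF_x[\cK_{\varepsilon,T}f](t,\xi)
=\int_{-\infty}^{t}\bigl(\mathbf{1}_{\varepsilon<1}\mathbf{1}_{|t-s|<T}+\mathbf{1}_{\varepsilon=1}\bigr)|\xi|^{\varepsilon\gamma}\exp\!\Bigl(\int_{s}^{t}\psi(r,\xi)\,\mathrm{d}r\Bigr)\cF_x[f](s,\xi)\,\mathrm{d}s,
\end{align*}
so by Plancherel it suffices to establish, uniformly in $\xi\in\bR^d$,
\begin{align*}
\int_{\bR}\bigl|\cF_x[\cK_{\varepsilon,T}f](t,\xi)\bigr|^{2}\mathrm{d}t
\;\leq\; N^{2}T^{2(1-\varepsilon)}\int_{\bR}\bigl|\cF_x[f](s,\xi)\bigr|^{2}\mathrm{d}s.
\end{align*}

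The core observation is that the ellipticity condition yields the pointwise majorant
\begin{align*}
\Bigl|\exp\!\Bigl(\int_{s}^{t}\psi(r,\xi)\,\mathrm{d}r\Bigr)\Bigr|
\leq \exp\bigl(-\kappa(t-s)|\xi|^{\gamma}\bigr)\qquad (s<t),
\end{align*}
so for fixed $\xi$ the kernel $K_\xi(t,s)$ of the time operator is dominated in absolute value by the convolution kernel
\begin{align*}
h_{\xi}(u):=|\xi|^{\varepsilon\gamma}\,\mathbf{1}_{u>0}\,e^{-\kappa u|\xi|^{\gamma}}\,\bigl(\mathbf{1}_{\varepsilon<1}\mathbf{1}_{u<T}+\mathbf{1}_{\varepsilon=1}\bigr).
\end{align*}
By Young's convolution inequality, the $L_2(\mathrm{d}t)\to L_2(\mathrm{d}t)$ norm of the time operator is bounded by $\|h_{\xi}\|_{L_1(\bR)}$, so the task reduces to showing $\|h_{\xi}\|_{L_1}\leq N(\kappa)\,T^{1-\varepsilon}$ uniformly in $\xi$.

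The computation of $\|h_{\xi}\|_{L_1}$ is the only step requiring a case split, and it is the closest thing to a technical obstacle, though entirely elementary. When $\varepsilon=1$ one has $\|h_\xi\|_{L_1}=|\xi|^{\gamma}\int_0^\infty e^{-\kappa u|\xi|^\gamma}\mathrm{d}u=1/\kappa=(1/\kappa)T^{0}$. When $\varepsilon\in[0,1)$ one has $\|h_{\xi}\|_{L_1}=|\xi|^{\varepsilon\gamma}\int_0^T e^{-\kappa u|\xi|^\gamma}\mathrm{d}u$, which I will split according to whether $T|\xi|^{\gamma}\leq 1$ or $T|\xi|^{\gamma}>1$: in the first regime the exponential is bounded above by $1$ and the integral is at most $T$, giving $T|\xi|^{\varepsilon\gamma}\leq T\cdot T^{-\varepsilon}=T^{1-\varepsilon}$; in the second regime the integral is at most $1/(\kappa|\xi|^{\gamma})$, giving $|\xi|^{(\varepsilon-1)\gamma}/\kappa\leq T^{1-\varepsilon}/\kappa$ because $|\xi|^{-\gamma}<T$. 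Combining these two cases with the preceding reductions and invoking Plancherel once more in $\xi$ completes the argument.
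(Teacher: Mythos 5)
Your proof is correct and takes essentially the same route as the paper: pass to the spatial Fourier side via Plancherel, reduce for each fixed $\xi$ to an $L_2(\mathrm{d}t)\to L_2(\mathrm{d}t)$ bound on a time operator dominated by convolution with an explicit kernel, and show the $L_1$ norm of that kernel is at most $N(\kappa)\,T^{1-\varepsilon}$ by the same three-regime split ($\varepsilon=1$; $T|\xi|^\gamma\leq 1$; $T|\xi|^\gamma>1$). The only difference is cosmetic: you package the reduction as Young's convolution inequality after pointwise kernel domination, whereas the paper arrives at the same $L_1$ bound by applying Minkowski's integral inequality directly to the iterated integral.
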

\begin{proof}
 By Plancherel's theorem, Fubini's theorem, and Minkowski's inequality,
\begin{align*}
    &\int_{-\infty}^{\infty}\int_{\bR^d}|\cK_{\varepsilon,T}f(t,x)|^2\mathrm{d}x\mathrm{d}t\\
    &= \int_{-\infty}^{\infty}\int_{\bR^d}\left|\int_0^t|\xi|^{\varepsilon\gamma}\exp\left(\int_s^t\psi(r,\xi)\mathrm{d}r\right)\cF[f(s,\cdot)](\xi)h(\varepsilon,t-s,T)\mathrm{d}s\right|^2\mathrm{d}\xi \mathrm{d}t\\
    &\leq \int_{-\infty}^{\infty}\int_{\bR^d}\left(\int_0^t|\xi|^{\varepsilon\gamma}\exp\left(-\kappa(t-s)\right|\xi|^{\gamma})|\cF[f(s,\cdot)](\xi)|h(\varepsilon,t-s,T)\mathrm{d}s\right)^2\mathrm{d}\xi \mathrm{d}t\\
    &=\int_{-\infty}^{\infty}\int_{\bR^d}\left(\int_0^t|\xi|^{\varepsilon\gamma}\exp\left(-\kappa s\right|\xi|^{\gamma})|\cF[f(t-s.\cdot)](\xi)|h(\varepsilon,s,T)\mathrm{d}s\right)^2\mathrm{d}\xi \mathrm{d}t \\
    &\leq \int_{\bR^d}\left(\int_{0}^{\infty}|\xi|^{\varepsilon\gamma}\exp\left(-2\kappa s\right|\xi|^{\gamma})h(\varepsilon,s,T)\left(\int_s^{\infty}|\cF[f(t-s,\cdot)](\xi)|^2\mathrm{d}t\right)^{1/2}\mathrm{d}s\right)^{2}\mathrm{d}\xi\\
    &\leq \int_{\bR^d}\int_{-\infty}^{\infty}|\cF[f(t,\cdot)](\xi)|^2\mathrm{d}t\left(\int_{0}^{\infty}|\xi|^{\varepsilon\gamma}\exp\left(-2\kappa s\right|\xi|^{\gamma})h(\varepsilon,s,T)\mathrm{d}s\right)^{2}\mathrm{d}\xi,
\end{align*}
where
$$
h(\varepsilon,s,T):=1_{\varepsilon\in[0,1)}1_{0<s<T}+1_{\varepsilon=1}.
$$
By changing variables,
\begin{align*}
    \int_{0}^{\infty}|\xi|^{\varepsilon\gamma}\exp\left(-2\kappa s\right|\xi|^{\gamma})h(\varepsilon,s,T)\mathrm{d}s=N|\xi|^{(\varepsilon-1)\gamma}\int_0^{\infty}\mathrm{e}^{-s}h(\varepsilon,s,T|\xi|^{\gamma})\mathrm{d}s,
\end{align*}
where $N=N(\kappa)$. If $\varepsilon=1$, then
$$
|\xi|^{(\varepsilon-1)\gamma}\int_0^{\infty}\mathrm{e}^{-s}h(\varepsilon,s,T|\xi|^{\gamma})\mathrm{d}s=\int_0^{\infty}\mathrm{e}^{-s}\mathrm{d}s=1.
$$
If $\varepsilon\in[0,1)$, then we split the estimate into two cases, $T|\xi|^{\gamma}\geq1$ and $0<T|\xi|^{\gamma}<1$. 
If $T|\xi|^{\gamma}\geq1$, then
$$
|\xi|^{(\varepsilon-1)\gamma}\int_0^{\infty}\mathrm{e}^{-s}h(\varepsilon,s,T|\xi|^{\gamma})\mathrm{d}s=|\xi|^{(\varepsilon-1)\gamma}\int_0^{T|\xi|^{\gamma}}\mathrm{e}^{-s}\mathrm{d}s\leq |\xi|^{(\varepsilon-1)\gamma}\leq T^{1-\varepsilon}.
$$
If $0<T|\xi|^{\gamma}<1$, then
$$
|\xi|^{(\varepsilon-1)\gamma}\int_0^{\infty}\mathrm{e}^{-s}h(\varepsilon,s,T|\xi|^{\gamma})\mathrm{d}s=|\xi|^{(\varepsilon-1)\gamma}\int_0^{T|\xi|^{\gamma}}\mathrm{e}^{-s}\mathrm{d}s\leq T|\xi|^{\varepsilon\gamma}\leq T^{1-\varepsilon}.
$$
Therefore, by Plancherel's theorem,
$$
\int_{-\infty}^{\infty}\int_{\bR^d}|\cK_{\varepsilon,T}f(t,x)|^2\mathrm{d}x\mathrm{d}t\leq NT^{2(1-\varepsilon)}\int_{-\infty}^{\infty}\int_{\bR^d}|\cF[f(t,\cdot)](\xi)|^2\mathrm{d}\xi \mathrm{d}t=NT^{2(1-\varepsilon)}\|f\|_{L_2(\bR^{d+1})}^2,
$$
where $N=N(\kappa)$. The theorem is proved.
\end{proof}

\mysection{Weighted boundedness for a family of parabolic singular integral operators}
									\label{section para sing}

Let $u$ be a solution to \eqref{21.01.08.11.06} with a smooth $f$.
Due to the solution representation given in Theorem \ref{21.01.08.11.03}, we have
\begin{align*}
u(t,x) = \cK f (t,x) = \int_{-\infty}^{t}\int_{\bR^d}p(t,s,x-y)1_{0<s<T}f(s,y)\mathrm{d}y\mathrm{d}s
\end{align*}
and
\begin{align*}
(-\Delta)^{\frac{\varepsilon \gamma}{2}}u(t,x)
&=\int_{-\infty}^{t} \left[\int_{\bR^d}P_\varepsilon (t,s,x-y)1_{0<s<T}f(s,y)\mathrm{d}y \right] \mathrm{d}s \\
&:=\int_{-\infty}^{t} \left[\int_{\bR^d}(-\Delta)^{\frac{\varepsilon\gamma}{2}} p(t,s,x-y)1_{0<s<T}f(s,y)\mathrm{d}y \right] \mathrm{d}s.
\end{align*}
Therefore, to obtain a priori estimates \eqref{a priori 1} and \eqref{a priori 2}, we need to show the boundedness of the above integral operators in weighted $L_p$-spaces. For future applications and specifying kernel estimates used in the proofs, we consider kernels in a general setting.
More precisely, for $\varepsilon \in [0,1]$, we consider a family of complex-valued measurable functions $K_{\varepsilon}(t,s,x)$ defined on $\bR\times\bR\times\bR^d $. Then we can consider corresponding (iterated) integral operators given by
\begin{align}
								\label{eqn 2021012101}
\int_0^t \left[\int_{\bR^d}\left(1_{\varepsilon\in[0,1)}1_{0<t-s<T}+1_{\varepsilon=1}\right) K_{\varepsilon}(t,s,y) f(s,x-y)\mathrm{d}y\right] \mathrm{d}s \qquad \forall f \in C_c^\infty(\bR^{d+1}).
\end{align}
We say that the integral operator is (parabolic) singular if $\int_0^t \int_{\bR^d} K_{\varepsilon}(t,s,y)\mathrm{d}y \mathrm{d}s=\infty$ for some $t >0$. 
It is remarkable that \eqref{eqn 2021012101} is well-defined even though the kernel $K_{\varepsilon}(t,s,y)$ is singular.
A typical example of these kernels are  classical heat kernels, \textit{i.e.},
$$
K_\varepsilon(t,x,y)=\Delta^\varepsilon \left( (2\pi t)^{d/2} \mathrm{e}^{- |x|^2/(2t)} \right).
$$
In particular, the kernel $K_1(t,x,y)=\Delta \left( (2\pi t)^{d/2} \mathrm{e}^{- |x|^2/(2t)} \right)$ is singular. 

Note that if the kernel $K_\varepsilon(t,s,x)$ is not singular and the given weight is a constant, then the boundedness of the operator \eqref{eqn 2021012101} is easily obtained on the basis of the generalized Minkowski inequality. 
However, if a general weight is given, then the boundedness becomes non-trivial even though the kernel is not singular. 
Therefore, we have to show the boundedness of the operators for all $\varepsilon \in [0,1]$ based on weighted estimates, and the cut-off function $1_{|t-s|<T}$ is necessary to overcome the lack of the integrability of the kernels at large $t$ when they are not singular. 
For a simplification of the notation, we denote
    $$
    h(\varepsilon,s,T):=1_{\varepsilon\in[0,1)}1_{0<s<T}+1_{\varepsilon=1}
    $$
and
    $$
    \cT_{\varepsilon,T} f(t,x):=\int_{-\infty}^{t}\int_{\bR^d}h(\varepsilon,t-s,T)K_{\varepsilon}(t,s,x-y)f(s,y)\mathrm{d}y\mathrm{d}s.
    $$
Since we can obtain a more exact upper bound of the integral operator related to $T$ with the help of the cut-off function $h(\varepsilon,t-s,T)$, we separated it from the kernel $K_{\varepsilon}(t,s,x-y)$.

\begin{defn}[Regularity condition on $K_\varepsilon$]
									\label{regular condition}
For $k \in \bN$, we say that a kernel $K_\varepsilon(t,s,x)$ (or the operator $\cT_{\varepsilon,T}$) satisfies the $k$-times regular condition with $(\gamma,N_1,N_2)$ if there exist positive constants $\gamma$, $N_1$, and $N_2$ such that for all $(n,m,|\alpha|)\in \left\{0,1,2,\cdots, k \right\}\times\{0,1\}\times \{0,1,2\}$ and $s<t$,
    \begin{align*}
       \sup_{x \in \bR^d}|x|^n|\p_t^mD^{\alpha}_xK_{\varepsilon}(t,s,x)| 
       \leq N_1 |t-s|^{-(m+\varepsilon)-\frac{(d+|\alpha|-n)}{\gamma}}  
        \end{align*}
and
\begin{align*}
        \left(\int_{\bR^d}|x|^{2n}|\p_t^mD^{\alpha}_xK_{\varepsilon}(t,s,x)|^2\mathrm{d}x\right)^{1/2}\leq N_2|t-s|^{-(m+\varepsilon)-\frac{(d+|\alpha|-n)}{\gamma}+\frac{d}{2\gamma}}.
\end{align*}        
\end{defn}

Recall that for any Muckenhoupt's weight $w$, the constant $ R_{p,d}^{w}$ denotes the  regularity constant of the weight $w$ (Definition \ref{regular exponent}).

\begin{thm}[A weighted extrapolation theorem]
				\label{20.12.17.11.23}
Assume that there exists a constant $N_3>0$ such that
    $$
    \|\cT_{\varepsilon,T} f\|_{L_2(\bR^{d+1})}\leq N_3T^{1-\varepsilon}\|f\|_{L_2(\bR^{d+1})} \quad \forall f \in C_c^\infty(\bR^{d+1}).
    $$
\begin{enumerate}[(i)]
    \item Let $p \in (1,\infty)$ and $w \in A_p(\bR^{d+1})$.  
    Suppose that $\cT_{\varepsilon,T}$ satisfies the $( \lfloor d/R_{p,d+1}^{w}\rfloor+2)$-times regular condition.
    
    Then, there exists a constant $N=N(d,p, [w]_{A_p(\bR^{d+1})},\gamma,\varepsilon,N_1,N_2,N_3)$ such that
    $$
    \|\cT_{\varepsilon,T} f\|_{L_p(\bR^{d+1},w)}\leq NT^{1-\varepsilon}\| f\|_{L_p(\bR^{d+1},w)} \quad \forall f \in C_c^\infty(\bR^{d+1}).
    $$
    \item Let $p,q \in (1,\infty)$, $w_1\in A_q(\bR)$, and $w_2\in A_p(\bR^{d})$.
    Suppose that $\cT_{\varepsilon,T}$ satisfies the $ \left( \lfloor d/R_{q,1}^{w_1}\rfloor\vee \lfloor d/R_{p,d}^{w_2}\rfloor+2 \right)$-times regular condition.
    
    Then, there exists a constant $N=N(d,p,q,[w_1]_{A_q(\bR)},[w_2]_{A_p(\bR^d)},\gamma,\varepsilon,N_1,N_2,N_3)$ such that
    $$
    \|\cT_{\varepsilon,T} f\|_{L_q(\bR,w_1;L_p(\bR^d,w_2))}\leq NT^{1-\varepsilon}\| f\|_{L_q(\bR,w_1;L_p(\bR^d,w_2))}.
    $$
\end{enumerate}
\end{thm}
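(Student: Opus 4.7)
The plan is to reduce both parts of the theorem to a single pointwise sharp-function estimate, and then run the classical Fefferman--Stein / Hardy--Littlewood machinery in weighted (and mixed-norm) $L_p$ spaces, as foreshadowed by estimate (2021 11 18 20) in the introduction. Define the parabolic sharp function
$(\cT_{\varepsilon,T}f)^{\sharp}(t,x)=\sup_{Q\ni(t,x)}\aint_{Q}|\cT_{\varepsilon,T}f-(\cT_{\varepsilon,T}f)_{Q}|$, where the supremum runs over parabolic cylinders $Q=Q_{r}(t_0,x_0)=(t_0-r^{\gamma},t_0)\times B_{r}(x_0)$ adapted to the order $\gamma$ of the operator, and let $\bM$ be the corresponding parabolic Hardy--Littlewood maximal function. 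The key pointwise estimate I want to establish is: for any $p_0\in(1,2]$ such that $K_{\varepsilon}$ satisfies the $(\lfloor d/p_0\rfloor+2)$-times regular condition,
\begin{equation*}
(\cT_{\varepsilon,T}f)^{\sharp}(t,x)\leq NT^{1-\varepsilon}\bigl(\bM|f|^{p_0}(t,x)\bigr)^{1/p_0},\qquad f\in C_c^{\infty}(\bR^{d+1}).
\end{equation*}

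To prove the pointwise bound, fix a parabolic cylinder $Q=Q_{r}$ and split $f=f_1+f_2$ with $f_1=f\mathbf{1}_{Q^{*}}$ on a suitable dilate $Q^{*}=Q_{cr}$ and $f_2=f-f_1$. For $f_1$, discard the averaging and use the $L_2$-boundedness of $\cT_{\varepsilon,T}$ (with the factor $T^{1-\varepsilon}$) followed by Hölder with exponent $2/p_0\ge 1$, producing an upper bound by $T^{1-\varepsilon}(\bM|f|^{p_0}(t,x))^{1/p_0}$. For $f_2$, the contribution to the sharp average is controlled by the oscillation of the kernel: I would express $\cT_{\varepsilon,T}f_2(t,x)-\cT_{\varepsilon,T}f_2(t',x')$ via the mean value theorem in both the $t$ and $x$ variables and bound it using the pointwise and $L_2$ kernel estimates from Definition \ref{regular condition}. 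Writing $f_2$'s support as a dyadic union of parabolic annuli around $Q$, the bounds
$|x|^{n}|\partial_t^{m}D_x^{\alpha}K_{\varepsilon}|\lesssim|t-s|^{-(m+\varepsilon)-(d+|\alpha|-n)/\gamma}$
and their $L_2$-counterparts allow summation of geometric tails once the exponent $n$ is large enough; balancing the $|\alpha|\in\{0,1,2\}$ derivative (needed for oscillation in both $t$ and $x$) against $n$ so that the sum in each annulus converges and produces a factor $\bM|f|^{p_0}(t,x)^{1/p_0}$ leads precisely to the threshold $n\leq\lfloor d/p_0\rfloor+2$.

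With the sharp-function estimate in hand, part (i) is immediate: by Remark \ref{21.02.23.13.03} applied to $w\in A_p(\bR^{d+1})$, pick $p_0\in(1,2]$ with $p_0\le R_{p,d+1}^{w}$, $\lfloor d/p_0\rfloor=\lfloor d/R_{p,d+1}^{w}\rfloor$, and $w\in A_{p/p_0}(\bR^{d+1})$; then the hypothesized $(\lfloor d/R_{p,d+1}^{w}\rfloor+2)$-times regular condition is exactly what the pointwise bound requires. Parabolic Fefferman--Stein in the weighted space ($\|g\|_{L_p(w)}\lesssim\|g^{\sharp}\|_{L_p(w)}$ for $w\in A_p$) combined with the Muckenhoupt $A_{p/p_0}$ maximal inequality $\|(\bM|f|^{p_0})^{1/p_0}\|_{L_p(w)}\lesssim\|f\|_{L_p(w)}$ yields the conclusion. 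For part (ii), choose $p_0\in(1,2]$ with $p_0\le R_{q,1}^{w_1}\wedge R_{p,d}^{w_2}$ and $\lfloor d/p_0\rfloor=\lfloor d/R_{q,1}^{w_1}\rfloor\vee\lfloor d/R_{p,d}^{w_2}\rfloor$; the parabolic maximal operator is dominated by the iterated composition of the one-dimensional and $d$-dimensional strong maximal operators, which are bounded on $L_q(\bR,w_1)$ and $L_p(\bR^d,w_2)$ respectively. The analogous Fefferman--Stein equivalence in mixed-norm spaces (handled variable-by-variable using the product $A_p$ structure) then closes the argument.

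The main obstacle is the far-field contribution in the pointwise sharp estimate: one must carefully weigh the parabolic oscillation (second-order in $x$ for Hölder-type cancellation across $Q$, first-order in $t$ across the time interval of length $r^{\gamma}$) against the dyadic spatial decay given by $n$ in the kernel hypothesis, and simultaneously accommodate the singularity $|t-s|^{-(m+\varepsilon)}$ which for $\varepsilon=1$ prevents any crude use of Minkowski. This is where the interplay of the pointwise and $L_2$ kernel bounds becomes essential: the pointwise bound handles annuli where the spatial scale dominates, while the $L_2$ bound and Cauchy--Schwarz are used where the time scale dominates, and it is precisely this split that forces the threshold $\lfloor d/p_0\rfloor+2$ in the regularity condition.
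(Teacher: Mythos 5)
The overall strategy you describe — reduce to a pointwise sharp-function estimate $(\cT_{\varepsilon,T}f)^{\sharp}\leq NT^{1-\varepsilon}(\bM|f|^{p_0})^{1/p_0}$ with $p_0\in(1,2]$ chosen so that $w\in A_{p/p_0}$ and $\lfloor d/p_0\rfloor=\lfloor d/R^w_{p,d+1}\rfloor$, then conclude via weighted Fefferman--Stein and the $A_{p/p_0}$ maximal inequality — is exactly the one in the paper. Your treatment of the far-field part of $f$ (oscillation of the kernel via the mean-value theorem in both $t$ and $x$, dyadic annuli balanced against the $n$-weighted pointwise and $L_2$ kernel bounds) is also in the right spirit.

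However, there is a genuine gap in your handling of the near part $f_1=f\mathbf{1}_{Q^*}$. You propose to discard the averaging, apply the $L_2$-boundedness of $\cT_{\varepsilon,T}$, and then use Hölder with exponent $2/p_0\geq 1$ to land on $(\bM|f|^{p_0})^{1/p_0}$. Trace that step: since $p_0\leq 2$, Jensen gives $(\aint_Q|\cT f_1|^{p_0})^{1/p_0}\leq(\aint_Q|\cT f_1|^2)^{1/2}$, and the $L_2$ bound then produces $T^{1-\varepsilon}(\aint_{Q^*}|f|^2)^{1/2}\leq T^{1-\varepsilon}(\bM|f|^2)^{1/2}$. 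But $(\bM|f|^2)^{1/2}\geq(\bM|f|^{p_0})^{1/p_0}$ when $p_0<2$, so this is the wrong side of the inequality; the $L_2$ assumption alone only controls the near part by $(\bM|f|^2)^{1/2}$. That would prove the theorem only for weights $w\in A_{p/2}$, not for all $w\in A_p$. Since the regularity constant $R^w_{p,d+1}$ can be strictly less than $2$, the case $p_0<2$ is essential, and for it you need the \emph{unweighted} $L_{p_0}$-boundedness of $\cT_{\varepsilon,T}$, which is not an automatic consequence of the $L_2$ bound. The paper supplies exactly this missing intermediate step (Theorem~\ref{20.12.21.13.23}): it verifies a H\"ormander-type cancellation condition for the kernel relative to a parabolic dyadic filtration (Lemma~\ref{20.12.21.13.02}) and invokes the Calder\'on--Zygmund theorem of Krylov to pass from $L_2$ to $L_{p_0}$ for $p_0\in(1,2)$; only then is the near-part estimate $\aint_Q|\cT_{\varepsilon,T}f_1|^{p_0}\lesssim T^{p_0(1-\varepsilon)}\bM|f|^{p_0}$ (Lemma~\ref{20.11.15.15.14}) available. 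To repair your argument you would need to insert this unweighted $L_{p_0}$ result, derived from a H\"ormander condition on $K_\varepsilon$ (which the $(\lfloor d/2\rfloor+1)$-times regular condition already guarantees), before running the good-$\lambda$/Fefferman--Stein step.
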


\mysection{Proof of Theorem \ref{20.12.17.11.23}}
\label{21.11.30.14.22}

Recall that for each $\varepsilon \in [0,1]$,  $K_{\varepsilon}(t,s,x)$ is a complex-valued function defined on $\bR\times\bR\times\bR^d$.
We fix a complex-valued function $K_{\varepsilon}(t,s,x)$ defined on $\bR\times\bR\times\bR^d$ with some $\varepsilon \in [0,1]$ throughout the section. We start the proof with kernel estimates.  
In this section, $\alpha=(\alpha_1,\ldots, \alpha_d)$ denotes a ($d$-dimensional) multi-index and $|\alpha|=\alpha_1+ \cdots + \alpha_d$. 

\subsection{Estimates of a family of parabolic singular kernels}

\begin{thm}
						\label{20.12.24.12.47}
Let $k$ be an integer such that $k>\lfloor d/2\rfloor$ and suppose that $K_{\varepsilon}$ satisfies the $k$-times regular condition with $(\gamma,N_1,N_2)$ (Definition \ref{regular condition}).
\begin{enumerate}[(i)]
    \item Let $p\in[2,\infty]$, $(n,m,|\alpha|)\in\{0,1,2,\cdots k\}\times\{0,1\}\times \{0,1,2\}$, and $\delta\in[0,1)$ be a constant satisfying $n+\delta\leq k$. Then there exists a positive constant $N=N(p,N_1,N_2)$ such that
\begin{equation}
					\label{21.01.12.17.23}
    \left\||\cdot|^{n+\delta}\p_t^mD_x^{\alpha}K_{\varepsilon}(t,s,\cdot)\right\|_{L_p(\bR^d)}\leq N|t-s|^{-(m+\varepsilon)-\frac{(d+|\alpha|-n-\delta)}{\gamma}+\frac{d}{p\gamma}},\quad(c/\infty:=0).
\end{equation}

\item Let $p\in[1,2)$ and $(m,|\alpha|)\in\times\{0,1\}\times \{0,1,2\}$.
Then there exists a positive constant $N=N(p,N_1,N_2)$ such that 
\begin{equation}
\label{21.01.13.13.21}
    \left\|\p_t^mD_x^{\alpha}K_{\varepsilon}(t,s,\cdot)\right\|_{L_p(\bR^d)}\leq N|t-s|^{-(m+\varepsilon)-\frac{(d+|\alpha|)}{\gamma}+\frac{d}{p\gamma}}.
\end{equation}

\end{enumerate}
\end{thm}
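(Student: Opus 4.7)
Let me write $\tau := |t-s|$ and introduce the natural length scale $\rho := \tau^{1/\gamma}$. The key observation is that the hypotheses in Definition \ref{regular condition} provide two families of bounds parametrized by $n$, namely the pointwise bounds with exponents $A_n := \tau^{-(m+\varepsilon) - (d+|\alpha|-n)/\gamma}$ and the $L^2$-bounds with exponents $B_n := \tau^{-(m+\varepsilon) - (d+|\alpha|-n)/\gamma + d/(2\gamma)}$; both satisfy $A_{n+1}/A_n = B_{n+1}/B_n = \rho$. This makes $\rho$ the scale on which the bounds at consecutive indices balance exactly, and splitting every integral at $|x| = \rho$ will be the mechanism throughout.

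For part (i) I would first handle the endpoints $p = \infty$ and $p = 2$ and then interpolate. When $\delta \in (0, 1)$, the condition $n + \delta \leq k$ forces $n + 1 \leq k$, so both adjacent indices are available. For $p = \infty$, writing $|x|^{n+\delta}|\partial_t^m D_x^\alpha K_\varepsilon|$ as $|x|^\delta \cdot (|x|^n |\partial_t^m D_x^\alpha K_\varepsilon|)$ on $\{|x| \leq \rho\}$ and as $|x|^{-(1-\delta)} \cdot (|x|^{n+1}|\partial_t^m D_x^\alpha K_\varepsilon|)$ on $\{|x| > \rho\}$ yields the bounds $\rho^\delta A_n$ and $\rho^{-(1-\delta)} A_{n+1}$; because $A_{n+1} = \rho A_n$ these two values coincide, and the common value $\rho^\delta A_n = \tau^{-(m+\varepsilon)-(d+|\alpha|-n-\delta)/\gamma}$ is the stated right-hand side. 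The identical decomposition applied to $\int |x|^{2(n+\delta)} |\partial_t^m D_x^\alpha K_\varepsilon|^2 dx$ gives the case $p = 2$, with $\rho^\delta B_n$ as the balanced value; the case $\delta = 0$ follows directly from the hypothesis. For intermediate $p \in (2, \infty)$ I invoke the log-convexity inequality $\|g\|_p \leq \|g\|_2^{2/p}\|g\|_\infty^{1-2/p}$ applied to $g := |x|^{n+\delta}\partial_t^m D_x^\alpha K_\varepsilon$; since the $L^\infty$-exponent has no $d$-term and the $L^2$-exponent contributes $d/(2\gamma)$, the convex combination reproduces $d/(p\gamma)$ automatically while leaving the remaining terms untouched.

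For part (ii) with $p \in [1, 2)$, a purely pointwise split would require an index $\sigma$ with $\sigma p > d$, forcing $\sigma > d$ at $p = 1$ and potentially exceeding $k$. I would instead combine the same decomposition $\{|x| \leq \rho\} \cup \{|x| > \rho\}$ with Hölder's inequality at the pair $(2/p, 2/(2-p))$. Hölder on the inner region gives $\int_{|x|\leq\rho}|f|^p dx \leq |B_\rho|^{1-p/2}\|f\|_{L^2}^p$; on the outer region the same device applied to the pair $(|x|^\sigma|f|, |x|^{-\sigma})$ gives $\int_{|x|>\rho}|f|^p dx \leq \||x|^\sigma f\|_{L^2}^p \bigl(\int_{|x|>\rho}|x|^{-2\sigma p/(2-p)} dx\bigr)^{(2-p)/2}$. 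The outer integral converges precisely when $\sigma > d(2-p)/(2p)$, and the worst case $p = 1$ requires $\sigma > d/2$, so $\sigma = \lfloor d/2 \rfloor + 1$ works uniformly in $p \in [1,2)$ and is available since $k > \lfloor d/2 \rfloor$. Substituting the $L^2$-bounds $B_0$ and $B_\sigma$ and exploiting $B_\sigma = \rho^\sigma B_0$ shows that both pieces produce the same exponent $\tau^{-p(m+\varepsilon) - p(d+|\alpha|)/\gamma + d/\gamma}$; extracting the $p$-th root yields \eqref{21.01.13.13.21}.

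The only genuinely delicate step is the Hölder decomposition in part (ii), where one must simultaneously make the outer integral converge for every $p \in [1, 2)$ and respect the budget $\sigma \leq k$; the choice $\sigma = \lfloor d/2 \rfloor + 1$ is precisely what the hypothesis $k > \lfloor d/2\rfloor$ permits, and this is what forces that hypothesis in the first place. Everything else reduces to the scale identification $A_{n+1}/A_n = B_{n+1}/B_n = \rho$ and routine bookkeeping of the exponent in $\tau$.
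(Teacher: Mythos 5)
Your proof is correct and rests on the same underlying mechanism as the paper's, namely interpolation and spatial splitting at the scale $\rho = |t-s|^{1/\gamma}$ where the given $L_2$ and $L_\infty$ bounds at consecutive weight-indices balance. There are, however, genuine differences in how the pieces are assembled, worth flagging because each has its advantages.

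In part (i), case $\delta\in(0,1)$, the paper never splits the domain: it interpolates \emph{pointwise} between the indices $n$ and $n+1$ via the identity
$|x|^{q(n+\delta)}|\partial_t^mD_x^\alpha K_\varepsilon|^q = \bigl(|x|^{qn}|\partial_t^mD_x^\alpha K_\varepsilon|^q\bigr)^{1-\delta}\bigl(|x|^{q(n+1)}|\partial_t^mD_x^\alpha K_\varepsilon|^q\bigr)^{\delta}$
and then applies H\"older globally. You instead split at $|x|=\rho$ and use the weight $n$ on the inner ball and $n+1$ outside. Both produce the identical value because $A_{n+1}/A_n = B_{n+1}/B_n = \rho$, as you observe; your version is slightly more geometric, theirs slightly more algebraic. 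For $p\in(2,\infty)$ with $\delta=0$, the paper's factorization $\int|g|^p = \int|g|^2|g|^{p-2}$ is just the log-convexity bound $\|g\|_p^p\le\|g\|_\infty^{p-2}\|g\|_2^2$ written out, so that step coincides with yours.

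In part (ii) the divergence is larger. The paper first reduces to $p=1$ via the log-convexity inequality $\|g\|_p^p\le\|g\|_1^{2-p}\|g\|_2^{2(p-1)}$, and then proves $p=1$ by splitting at $\rho$ with the $L_\infty$ bound on the inner ball (times volume) and the $L_2$ H\"older trick with the pair $(|x|^{-d_2},|x|^{d_2}|K_\varepsilon|)$, $d_2=\lfloor d/2\rfloor+1$, on the exterior. You handle all $p\in[1,2)$ in one pass, using the $L_2$-based H\"older at exponents $\bigl(2/p,2/(2-p)\bigr)$ on \emph{both} regions; the inner piece then uses the plain $L_2$ norm of the kernel rather than the $L_\infty$ norm. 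Your uniform treatment is a bit cleaner, avoids the interpolation step, and isolates precisely where the hypothesis $k>\lfloor d/2\rfloor$ is consumed (the convergence of the outer integral at the worst case $p=1$). The paper's route is slightly more modular since the $p=1$ case is a self-contained Calder\'on--Zygmund-type estimate. Either way, the exponent in $\tau$ comes out equal on both pieces, as it must, because both sides of the split are tied to the same scale $\rho$.

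Both proofs are complete and correct. Your write-up could be tightened by noting explicitly (as the paper does) that the $\delta=0,\;p\in\{2,\infty\}$ endpoints of (i) are literally the hypothesis, but that is cosmetic.
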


\begin{proof} $(i)$ We divide the proof into two cases.

\vspace{2mm}

\textbf{Case 1.} $\delta=0$.

\vspace{2mm}

If $p=2$ or $p=\infty$, then obviously \eqref{21.01.12.17.23} holds  since $K_{\varepsilon}$ satisfies the $k$-times regular condition with $(\gamma,N_1,N_2)$. If $p\in(2,\infty)$, then
\begin{align*}
  &\int_{\bR^d}|x|^{pn}|\p_t^mD_x^{\alpha}K_{\varepsilon}(t,s,x)|^{p}\mathrm{d}x\\
  &=\int_{\bR^d}|x|^{2n}|\p_t^mD_x^{\alpha}K_{\varepsilon}(t,s,x)|^{2}|x|^{(p-2)n}|\p_t^mD_x^{\alpha}K_{\varepsilon}(t,s,x)|^{p-2}\mathrm{d}x\\
  &\leq N(p,N_2)|t-s|^{(p-2)\left(-(m+\varepsilon)-\frac{(d+|\alpha|-n)}{\gamma}\right)}\int_{\bR^d}|x|^{2n}|\p_t^mD_x^{\alpha}K_{\varepsilon}(t,s,x)|^{2}\mathrm{d}x\\
  &\leq N(p,N_1,N_2)|t-s|^{(p-2)\left(-(m+\varepsilon)-\frac{(d+|\alpha|-n)}{\gamma}\right)}|t-s|^{-2(m+\varepsilon)-\frac{(d+2|\alpha|-2n)}{\gamma}}\\
  &=N(p,N_1,N_2)|t-s|^{-p(m+\varepsilon)-\frac{p(d+|\alpha|-n)}{\gamma}+\frac{d}{\gamma}}.
\end{align*}

\vspace{2mm}

\textbf{Case 2.} $\delta\in(0,1)$.

\vspace{2mm}

We use the result from Case 1 repeatedly, \textit{i.e.}, we apply \eqref{21.01.12.17.23} with $\delta=0$ several times for the proof of this case. 
Observe that for any $q\in[1,\infty)$,
\begin{equation}
\label{21.01.12.17.26}
    \begin{aligned}
    &|x|^{q(n+\delta)}|\p_t^mD^{\alpha}_xK_{\varepsilon}(t,s,x)|^q\\
    &=\left(|x|^{qn}|\p_t^mD^{\alpha}_xK_{\varepsilon}(t,s,x)|^q\right)^{1-\delta}\times\left(|x|^{q(n+1)}|\p_t^mD^{\alpha}_xK_{\varepsilon}(t,s,x)|^q\right)^{\delta}.
    \end{aligned}
\end{equation}
Using \eqref{21.01.12.17.26} with $q=1$, \eqref{21.01.12.17.23} holds for $p=\infty$. 
Thus it only remains to consider the case $p \in [2,\infty)$. 
Taking integrals and applying H\"older's inequality to \eqref{21.01.12.17.26} with $q=p$, we have
\begin{align*}
    &\left(\int_{\bR^d}|x|^{p(n+\delta)}|\p_t^mD^{\alpha}_xK_{\varepsilon}(t,s,x)|^p\mathrm{d}x\right)^{1/p}\\
    &\leq \left(\int_{\bR^d}|x|^{p(n+1)}|\p_t^mD^{\alpha}_xK_{\varepsilon}(t,s,x)|^p\mathrm{d}x\right)^{\delta/p}\times\left(\int_{\bR^d}|x|^{pn}|\p_t^mD^{\alpha}_xK_{\varepsilon}(t,s,x)|^p\mathrm{d}x\right)^{(1-\delta)/p}\\
    &\leq N(p,N_1,N_2)|t-s|^{-(m+\varepsilon)-\frac{(d+|\alpha|-n-\delta)}{\gamma}+\frac{d}{p\gamma}}.
\end{align*}
$(ii)$ The case $p=2$ is obvious due to the assumption. Moreover, we claim that it is sufficient to show that \eqref{21.01.13.13.21} holds for $p=1$. 
Indeed, for $p \in (1,2)$ there exists a $\lambda \in (0,1)$ such that $p=\lambda+2(1-\lambda)$.
Thus  taking integrals and applying H\"older's inequality to
\begin{equation*}
    \begin{aligned}
    |\p_t^mD^{\alpha}_xK_{\varepsilon}(t,s,x)|^p=|\p_t^mD^{\alpha}_xK_{\varepsilon}(t,s,x)|^{\lambda}\times|\p_t^mD^{\alpha}_xK_{\varepsilon}(t,s,x)|^{2(1-\lambda)},
    \end{aligned}
\end{equation*}
we obtain \eqref{21.01.13.13.21}.
Thus, we focus on showing \eqref{21.01.13.13.21} with $p=1$. 
Observe that 
$$
\int_{\bR^d}|\p_t^mD^{\alpha}_xK_{\varepsilon}(t,s,x)|\mathrm{d}x=\int_{|x|\leq |t-s|^{1/\gamma}}|\p_t^mD^{\alpha}_xK_{\varepsilon}(t,s,x)|\mathrm{d}x + \int_{|x|> |t-s|^{1/\gamma}}|\p_t^mD^{\alpha}_xK_{\varepsilon}(t,s,x)|\mathrm{d}x.
$$
By $(i)$ with $p=\infty$,
$$
\int_{|x|\leq |t-s|^{1/\gamma}}|\p_t^mD^{\alpha}_xK_{\varepsilon}(t,s,x)|\mathrm{d}x\leq N(d,N_1)|t-s|^{-(m+\varepsilon)-\frac{(d+|\alpha|)}{\gamma}+\frac{d}{\gamma}}.
$$
Put $d_2:=\lfloor d/2\rfloor+1$ and note that $d_2\leq k$.
Then by H\"older's inequality and $(i)$,
\begin{align*}
   &\int_{|x|> |t-s|^{1/\gamma}}|\p_t^mD^{\alpha}_xK_{\varepsilon}(t,s,x)|\mathrm{d}x\\
   &\leq \left(\int_{|x|> |t-s|^{1/\gamma}}|x|^{-2d_2}\mathrm{d}x\right)^{1/2} \left(\int_{|x|> |t-s|^{1/\gamma}}|x|^{2d_2}|\p_t^mD^{\alpha}_xK_{\varepsilon}(t,s,x)|^2\mathrm{d}x\right)^{1/2}\\
   &\leq N(d,N_2)|t-s|^{-\frac{d_2}{\gamma}+\frac{d}{2\gamma}}\times|t-s|^{-(m+\varepsilon)-\frac{(d+|\alpha|-d_2)}{\gamma}+\frac{d}{2\gamma}}\\
   &=N(d,N_2)|t-s|^{-(m+\varepsilon)-\frac{(d+|\alpha|)}{\gamma}+\frac{d}{\gamma}}.
\end{align*}
The theorem is proved.
\end{proof}

\subsection{Auxiliary estimates for integral operators}
									\label{sharp maximal sec}

In this subsection, we introduce an equivalence of weighted $L_p$-norms of sharp and maximal functions with filtration of partitions (cf. \cite{dong18Apweights}) and obtain auxiliary estimates in an appropriate partition that fits our integral operators $\cT_{\varepsilon,T}$.
\begin{defn}
Let $\cB_0=\cB_0(\bR^{d+1})$ be a collection of all Borel sets $A\subseteq\bR^{d+1}$ such that $|A|<\infty$. We say that a collection $\cP\subseteq\cB_0$ is a partition of $\bR^{d+1}$ if and only if elements of $\cP$ are countable, pairwise disjoint, and
$$ \bigcup_{A\in\cP}A=\bR^{d+1}.$$
\end{defn}
\begin{defn}
\label{filtration}
Let $\{\cP_n\}_{n\in\bZ}$ be a sequence of partitions of $\bR^{d+1}_+$. We say that $\{\cP_n\}_{n\in\bZ}$ is a filtration of partitions on $\bR^{d+1}$ if the followings are satisfied ;
\begin{enumerate}[(i)]
\item The partitions are finer as $n\to\infty$. That is,
$$\inf _{A\in\cP_n}|A|\to\infty\quad\text{as}\quad n\to-\infty$$
and for any $f\in L_{1}(\bR^{d+1})$
$$ \lim_{n\to\infty}f_{|n}(t,x):=\lim_{n\to\infty}\frac{1}{|A_n(t,x)|}\int_{A_n(t,x)}f(s,y)\mathrm{d}y\mathrm{d}s=f(t,x)\quad(a.e.),$$
where $A_n(t,x)$ is the elements of $\cP_n$ containing $(t,x)$.
\item For each $n\in\bZ$ and $A\in\cP_n$, there is a (unique) $A'\in\cP_{n-1}$ such that $A\subseteq A'$ and
$$ |A'|\leq N_0|A|,$$
where $N_0$ is a constant independent of $n$, $A$ and $A'$.
\end{enumerate}
\end{defn}
An example of filtration of partitions that will be used in the proof of Theorem \ref{20.12.24.12.47} is given in the following lemma. 
Recall that a nonnegative function $d$ defined on $S \times S$ with a set $S$ is called a quasi-metric on $S$ if 
$$
d(s_1,s_2) =0 \qquad \text{iff}~ s_1=s_2,
$$
$$
d(s_1,s_2) =d(s_2,s_1) \qquad \forall s_1,s_2 \in S,
$$
and there exists a positive constant $N_d$ such that
$$
d(s_1, s_2) \leq N_d \left( d(s_1,s_3) + d(s_3,s_2)\right) \quad \forall s_1,s_2,s_3 \in S.
$$
\begin{lem}
					\label{20.12.22.19.10}
Let $\gamma\in(0,\infty)$ and $n\in\bZ$.
Define
\begin{align*}
    D_n(i_1,\cdots,i_d):=[i_12^{-n},(i_1+1)2^{-n})\times\cdots\times[i_d2^{-n},(i_d+1)2^{-n}),
\end{align*}
\begin{align*}
    \cP_n^{\gamma}:=\{[i_02^{-n\gamma},(i_0+1)2^{-n\gamma})\times D_n(i_1,\cdots,i_d):i_0,\cdots,i_d\in\bZ\},
\end{align*}
and
\begin{align*}
    d_{\gamma}((t,x),(s,y)):=|t-s|^{\frac{1}{\gamma}}+|x-y|,\quad B_b^{\gamma}(t,x):=\{(s,y):d_{\gamma}((t,x),(s,y))<b\}.
\end{align*}
Then
\begin{enumerate}[(i)]
    \item $\{\cP_n^{\gamma}\}_{n\in\bZ}$ is a filtration of partitions.
    \item $d_{\gamma}$ is a quasi-metric on $\bR^{d+1}$.
    \item $B_{b}^{\gamma}$ satisfies the doubling property. More precisely,
    $$
    |B_{2b}^{\gamma}(t,x)|=2^{\gamma+d}|B_{b}^{\gamma}(t,x)|.
    $$
    \item for each $A\in\cP_n^{\gamma}$, there exist constants $\delta=\delta(\gamma)\in(1/2,1)$ and $N=N(d)$ such that $A\subseteq B_{N\delta^n}^{\gamma}(t,x)$ for all $(t,x)\in A$.
    
\end{enumerate}
\end{lem}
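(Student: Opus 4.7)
The plan is to verify each of the four items by direct computation from the explicit definitions. For (i), I will observe that each cell of $\cP_n^\gamma$ is a product of dyadic intervals of length $2^{-n\gamma}$ (time) and $2^{-n}$ (each space coordinate), so $\cP_n^\gamma$ is a countable, pairwise disjoint Borel cover of $\bR^{d+1}$, every cell has volume $2^{-n(\gamma+d)}$, and therefore $\inf_{A\in\cP_n^\gamma}|A|\to\infty$ as $n\to-\infty$. The almost-everywhere convergence $f_{|n}\to f$ for $f\in L_1$ will follow from the Lebesgue differentiation theorem, since the Euclidean diameter of $A_n(t,x)$ tends to $0$ as $n\to\infty$. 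For the nesting condition, the spatial dyadic cells nest canonically with ratio $2$, and the time cells nest with ratio $2^\gamma$, giving the uniform volume bound $|A'|/|A|=2^{\gamma+d}$.

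For (ii), non-negativity, symmetry, and the identity-of-indiscernibles property are immediate. The quasi-triangle inequality reduces to the elementary fact $(a+b)^{1/\gamma}\le C_\gamma(a^{1/\gamma}+b^{1/\gamma})$ for $a,b\ge 0$, with $C_\gamma:=\max(1,2^{1/\gamma-1})$, combined with the Euclidean triangle inequality for the space variable.

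For (iii), I would compute $|B_b^\gamma(t,x)|$ directly by Fubini and translation invariance: integrating first in $y$ over $\{|x-y|<b-|t-s|^{1/\gamma}\}$ yields $c_d(b-|t-s|^{1/\gamma})_+^d$, with $c_d$ the volume of the unit Euclidean ball in $\bR^d$, and the substitution $r=b^\gamma u^\gamma$ in the remaining $s$-integral gives $|B_b^\gamma(t,x)|=2c_d\gamma b^{\gamma+d}\int_0^1(1-u)^d u^{\gamma-1}\mathrm{d}u$, a constant multiple of $b^{\gamma+d}$. The doubling identity $|B_{2b}^\gamma|=2^{\gamma+d}|B_b^\gamma|$ is then immediate.

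For (iv), any two points $(t,x),(s,y)$ of a cell $A\in\cP_n^\gamma$ satisfy $|t-s|\le 2^{-n\gamma}$ and $|x-y|\le\sqrt{d}\cdot 2^{-n}$, whence $d_\gamma((t,x),(s,y))\le(1+\sqrt{d})\cdot 2^{-n}$; thus $A\subseteq B_{N\cdot 2^{-n}}^\gamma(t,x)$ for every $(t,x)\in A$ with $N=1+\sqrt{d}$, and on the range of $n$ relevant to the sharp-maximal-function application of Section \ref{21.11.30.14.22} one may replace $2^{-n}$ by $\delta^n$ for any fixed $\delta\in(1/2,1)$ at the cost of enlarging $N$ by a factor depending on $\gamma$. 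The main technical point I expect is the nesting verification in (i) when $\gamma\notin\bN$: the literal intervals $[i\cdot 2^{-n\gamma},(i+1)\cdot 2^{-n\gamma})$ at consecutive levels need not nest as sets, and one must either slightly perturb the time endpoints to enforce nesting (with volume ratio still bounded by $2^{\gamma+d}$) or invoke the weaker containment property that actually suffices for the proof of Theorem \ref{20.12.17.11.23}.
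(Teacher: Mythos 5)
Your proposal is correct in essence and, in (iii) and (iv), takes a genuinely different and arguably cleaner route than the paper's. For (iii) the paper uses the scaling $(s,y)\mapsto(s/2^{\gamma},y/2)$, which maps $B_{2b}^{\gamma}(t,x)$ bijectively onto $B_b^{\gamma}(t/2^{\gamma},x/2)$ and multiplies Lebesgue measure by $2^{\gamma+d}$; your Fubini computation of $|B_b^{\gamma}|$ as an explicit multiple of $b^{\gamma+d}$ recovers the same identity. For (iv), the paper estimates the Euclidean diameter $\sqrt{4^{-n\gamma}+d\,4^{-n}}$ and then asserts the $d_{\gamma}$-ball containment, whereas you bound the $d_{\gamma}$-diameter of a cell directly: $d_{\gamma}((t,x),(s,y))\leq(2^{-n\gamma})^{1/\gamma}+\sqrt{d}\,2^{-n}=(1+\sqrt{d})\,2^{-n}$ for $(t,x),(s,y)\in A\in\cP_n^{\gamma}$. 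This is the natural quantity here and gives $A\subseteq B_{N\delta^n}^{\gamma}(t,x)$ with $\delta=1/2$ and $N=1+\sqrt{d}$ uniformly in $n\in\bZ$ and $\gamma>0$; note that a Euclidean diameter bound does not directly yield such a $d_{\gamma}$-ball containment, so your route is more to the point. Your concluding remark about replacing $2^{-n}$ by $\delta^n$ for some $\delta\in(1/2,1)$ is unnecessary and would in fact fail for large negative $n$ (where $\delta^n<2^{-n}$); simply take $\delta=1/2$, consistent with the paper's own $\delta=2^{-\gamma}\vee 2^{-1}=1/2$ when $\gamma\geq1$, reading the stated range $(1/2,1)$ as $[1/2,1)$.

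Your closing concern about (i) is genuine, and the paper does not address it; the proof there only asserts that the filtration property ``can be easily checked.'' When $2^{\gamma}\notin\bN$, the family of time intervals $\{[i_02^{-n\gamma},(i_0+1)2^{-n\gamma})\}_{i_0\in\bZ}$ at level $n$ is not refined by the family at level $n-1$: for $\gamma=3/2$ and $n=1$, the level-$1$ interval $[2\cdot 2^{-3/2},3\cdot 2^{-3/2})\approx[0.71,1.06)$ straddles the level-$0$ boundary at $t=1$, so Definition \ref{filtration}(ii) fails. (The precise criterion is $2^{\gamma}\in\bN$, not $\gamma\in\bN$ as you wrote.) The standard repair, in the spirit of your suggestion, is to take time side lengths $2^{-m_n}$ with $m_n:=\lceil n\gamma\rceil$: successive levels then nest since $m_n\geq m_{n-1}$ are integers, the volume ratio is at most $2^{\lceil\gamma\rceil+d}$ because $m_n-m_{n-1}\in\{\lfloor\gamma\rfloor,\lceil\gamma\rceil\}$, and $2^{-m_n}$ agrees with $2^{-n\gamma}$ within a factor of $2$, so parts (iii)--(iv) and the subsequent use in Theorem \ref{20.12.21.16.37} are unaffected.
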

\begin{proof}
($i$) It can be easily checked that $\{\cP_n^{\gamma}\}_{n\in\bZ}$ satisfies Definition \ref{filtration}.

($ii$) For $\gamma>0$, there exists a constant $N=N(\gamma)\geq 1$ such that
$$
|t-r|^{\frac{1}{\gamma}}\leq N(|t-s|^{\frac{1}{\gamma}}+|s-r|^{\frac{1}{\gamma}}),\quad \forall t,s,r\in\bR,
$$
and this certainly implies $d_{\gamma}$ is a quasi-metric on $\bR^{d+1}$.

($iii$) Using the change of variable formula,
\begin{align*}
 |B_{2b}^{\gamma}(t,x)|&=\int_{\bR^{d+1}}1_{B_{2b}^{\gamma}(t,x)}(s,y)\mathrm{d}y\mathrm{d}s=\int_{\bR^{d+1}}1_{B_{b}^{\gamma}(t/2^{\gamma},x/2)}(s/2^{\gamma},y/2)\mathrm{d}y\mathrm{d}s\\
 &=2^{\gamma+d}\int_{\bR^{d+1}}1_{B_{b}^{\gamma}(t/2^{\gamma},x/2)}(s,y)\mathrm{d}y\mathrm{d}s=2^{\gamma+d}|B_{b}^{\gamma}(t/2^{\gamma},x/2)| =2^{\gamma+d}|B_{b}^{\gamma}(t,x)|.
\end{align*}

($iv$) Let $\delta:=(2^{-\gamma}\vee2^{-1})\in(1/2,1)$. Note that for $A\in\cP_n^{\gamma}$,
$$
diam(A)=\sqrt{4^{-n\gamma}+d4^{-n}}< N(d)\delta^{n},
$$
and this certainly implies the result. The lemma is proved.
\end{proof}

For $(t,x)\in\bR\times\bR^d$ and $b,\gamma>0$, denote
$$
Q_b^{\gamma}(t,x):=(t-b^{\gamma},t+b^{\gamma})\times \{ y \in \bR^d : |x-y| < b\}.
$$
\begin{thm}
\label{20.12.21.16.37}
Let $p,q\in(1,\infty)$, $\gamma\in(0,\infty)$, $w_1\in A_q(\bR)$, $w_2\in A_p(\bR^d)$, and $w\in A_p(\bR^{d+1})$. Suppose that
$$
[w_1]_{A_q(\bR)}\leq K_0,\quad [w_2]_{A_p(\bR^d)}\leq K_1,\quad [w]_{A_p(\bR^{d+1})}\leq K_2.
$$
Then
\begin{enumerate}[(i)]
    \item for any $f\in L_q(\bR,w_1;L_p(\bR^d,w_2))$ (resp. $f\in L_p(\bR^{d+1},w)$),
    \begin{equation*}
        \|f\|_{L_q(\bR,w_1;L_p(\bR^d,w_2))}\leq N \|f^{\sharp}\|_{L_q(\bR,w_1;L_p(\bR^d,w_2))}\quad (resp.\, \|f\|_{L_p(\bR^{d+1},w)}\leq N \|f^{\sharp}\|_{L_p(\bR^{d+1},w)}),
    \end{equation*}
    where $N=N(d,\gamma,p,q,K_0,K_1)$ (resp. $N=N(d,\gamma,p,K_2$))  and 
    $$
    f^{\sharp}(t,x):=\sup_{(t,x)\in Q_b^{\gamma}}\frac{1}{|Q_b^{\gamma}|^2}\int_{Q_b^{\gamma}}\int_{Q_b^{\gamma}}|f(s_1,y_1)-f(s_0,y_0)|\mathrm{d}y_1\mathrm{d}s_1\mathrm{d}y_0\mathrm{d}s_0.
    $$
    \item for any $f\in L_q(\bR,w_1;L_p(\bR^d,w_2))$ (resp. $f\in L_p(\bR^{d+1},w)$),
    \begin{equation*}
        \|\bM f\|_{L_q(w',\bR;L_p(w,\bR^d))}\leq  N\|f\|_{L_q(w',\bR;L_p(w,\bR^d))}\quad (resp.\, \|\bM f\|_{L_p(w'',\bR^{d+1})}\leq N \|f\|_{L_p(w'',\bR^{d+1})}),
    \end{equation*}
    where $N=N(d,\gamma,p,q,K_0,K_1)$ (resp. $N=N(d,\gamma,p,K_2$)) and 
    \begin{equation*}
        \begin{gathered}
        \bM f(t,x):=\sup_{(t,x)\in Q^{\gamma}_b}\frac{1}{|Q^{\gamma}_b|}\int_{Q^{\gamma}_b}|f(s,y)|\mathrm{d}y\mathrm{d}s.
        \end{gathered}
    \end{equation*}
\end{enumerate}
\end{thm}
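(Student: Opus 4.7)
The plan is to reduce both parts to classical Muckenhoupt and Fefferman--Stein theorems, with Lemma~\ref{20.12.22.19.10} supplying the underlying space-of-homogeneous-type structure via the quasi-metric $d_\gamma$ and the dyadic filtration $\{\cP_n^\gamma\}$ of $\bR^{d+1}$ by anisotropic rectangles of side lengths $(2^{-n\gamma},2^{-n},\dots,2^{-n})$. The parabolic cube $Q_b^\gamma(t,x)$, the $d_\gamma$-ball $B_b^\gamma(t,x)$, and the rectangle $A_n(t,x)\in\cP_n^\gamma$ with $2^{-n}\sim b$ are pairwise comparable up to constants depending only on $d$ and $\gamma$ (indeed $B_b^\gamma\subset Q_b^\gamma\subset B_{2b}^\gamma$ is immediate from the definitions), so one can freely pass between maximal and sharp functions built from any of them.

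For (ii), the single-weight case follows from the Muckenhoupt theorem on $(\bR^{d+1},d_\gamma,\mathrm{d}x\mathrm{d}t)$: transferring $[w]_{A_p(\bR^{d+1})}$ to the dyadic rectangles via the filtration $\{\cP_n^\gamma\}$ gives the weighted $L_p$ bound on the dyadic maximal, hence on $\bM$. In the mixed-norm case, the product structure of $Q_b^\gamma$ yields the pointwise estimate $\bM f(t,x)\le c\,M^{(t)}(M^{(x)}f)(t,x)$, where $M^{(t)}$ is the one-dimensional Hardy--Littlewood maximal in $t$ (the $\gamma$-scaled intervals reduce to ordinary ones after the change of variable $r\mapsto r^\gamma$) and $M^{(x)}$ is the $d$-dimensional maximal in $x$. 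Applying the scalar Muckenhoupt theorem with $w_2\in A_p(\bR^d)$ at fixed $t$, then the vector-valued Muckenhoupt theorem for $M^{(t)}$ with $w_1\in A_q(\bR)$ (which reduces to the scalar one via Minkowski's integral inequality applied to $t\mapsto\|f(t,\cdot)\|_{L_p(\bR^d,w_2)}$), gives the desired bound.

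For (i), I would introduce the dyadic sharp and maximal functions
\begin{align*}
f_{\cP}^\sharp(t,x)&:=\sup_{n\in\bZ}\frac{1}{|A_n(t,x)|^2}\iint_{A_n(t,x)\times A_n(t,x)}|f(s_1,y_1)-f(s_0,y_0)|\,\mathrm{d}y_1\mathrm{d}s_1\mathrm{d}y_0\mathrm{d}s_0,\\
\bM^{\cP}f(t,x)&:=\sup_{n\in\bZ}\aint_{A_n(t,x)}|f(s,y)|\,\mathrm{d}y\,\mathrm{d}s,
\end{align*}
where $A_n(t,x)$ is the rectangle in $\cP_n^\gamma$ containing $(t,x)$. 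By Lemma~\ref{20.12.22.19.10}(iv) each $A_n(t,x)\subset B_{N\delta^n}^\gamma(t,x)$, which lies in a parabolic cube of comparable size, yielding the pointwise bounds $f_{\cP}^\sharp\le c\,f^\sharp$ and $\bM^{\cP}f\le c\,\bM f$, while $|f|\le\bM^{\cP}f$ a.e.\ by the differentiation theorem along the filtration. A good-$\lambda$ inequality of the form $|\{\bM^{\cP}f>\lambda\alpha\}|_w\le\varepsilon|\{\bM^{\cP}f>\alpha\}|_w+|\{f_{\cP}^\sharp>\beta\alpha\}|_w$, combined with the weighted boundedness of $\bM$ just established in (ii), then yields $\|\bM^{\cP}f\|_{L_p(\bR^{d+1},w)}\le N\|f_{\cP}^\sharp\|_{L_p(\bR^{d+1},w)}$ and, after an analogous mixed-norm good-$\lambda$ argument, the companion bound for $L_q(\bR,w_1;L_p(\bR^d,w_2))$. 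Chaining these with $|f|\le\bM^{\cP}f$ delivers (i). This dyadic-martingale approach is essentially that of Dong~\cite{dong18Apweights}.

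The main obstacle will be the mixed-norm Fefferman--Stein step: since $w_1\otimes w_2$ is not in $A_p(\bR^{d+1})$ in general, the single-weight good-$\lambda$ argument cannot be invoked directly. The resolution is to exploit the product structure of $\cP_n^\gamma$---each $A\in\cP_n^\gamma$ is a time interval times a spatial cube---to carry out the good-$\lambda$ argument one variable at a time, invoking the vector-valued scalar-weighted maximal estimates of (ii) at the inner stage and iterating Fubini together with the Lebesgue differentiation theorem along the filtration.
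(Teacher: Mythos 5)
The paper's actual proof is much shorter than your proposal: it treats \cite[Theorem 2.1 and Section~2]{dong18Apweights} as a black box, verifies via Lemma~\ref{20.12.22.19.10} that $\{\cP_n^{\gamma}\}_{n\in\bZ}$ satisfies the hypotheses of that theorem, and then proves only the two pointwise comparisons $f^{\#}_{dy}\leq N f^{\sharp}$ and $\bM f\leq N\cM f$ needed to transfer the cited result from dyadic rectangles and quasi-metric balls to the parabolic cubes $Q_b^{\gamma}$. You use the same filtration and the same pointwise comparisons (your Lemma~\ref{20.12.22.19.10}(iv) argument is exactly the paper's), but instead of citing the Dong--Kim machinery you try to re-derive it: good-$\lambda$ inequalities for (i), iterated maximal operators for (ii). In spirit the routes coincide; the difference is that the paper outsources the hard analytic content.

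The gap in your re-derivation is in part (ii), mixed-norm case. You write that the bound for $M^{(t)}$ acting on an $L_p(\bR^d,w_2)$-valued function ``reduces to the scalar one via Minkowski's integral inequality applied to $t\mapsto\|f(t,\cdot)\|_{L_p(\bR^d,w_2)}$.'' This is not correct. Minkowski gives, for each fixed interval $I$,
$$
\Bigl(\int_{\bR^d}\Bigl(\aint_I|g(s,x)|\,\mathrm{d}s\Bigr)^p w_2(x)\,\mathrm{d}x\Bigr)^{1/p}\leq \aint_I\Bigl(\int_{\bR^d}|g(s,x)|^p w_2(x)\,\mathrm{d}x\Bigr)^{1/p}\,\mathrm{d}s,
$$
but in $\|M^{(t)}g(t,\cdot)\|_{L_p(w_2)}$ the supremum over $I\ni t$ sits \emph{inside} the $x$-integral, and by a Fatou-type comparison the inequality between ``sup inside'' and ``sup outside'' goes the wrong way: one has $\int\sup_I\geq\sup_I\int$, not $\leq$. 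What is actually needed is a weighted vector-valued Fefferman--Stein maximal inequality (Andersen--John type), which is a genuine theorem; it is precisely part of what \cite[Section~2]{dong18Apweights} supplies, and the paper simply cites it. You correctly flag the mixed-norm Fefferman--Stein step as ``the main obstacle,'' but the proposed fix ---iterating good-$\lambda$ one variable at a time using (ii) ``via Minkowski''--- does not close the gap as written, because the inner-variable maximal estimate you need is the vector-valued one, which is not reducible to the scalar case by Minkowski alone.
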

\begin{proof}
By Lemma \ref{20.12.22.19.10}, the filtration $\{\cP_n^{\gamma}\}_{n\in\bZ}$ satisfies \cite[Theorem 2.1]{dong18Apweights}. Therefore, by \cite[Section 2]{dong18Apweights}, we obtain the results if we prove that there exists a constant $N$ such that
$$
f^{\#}_{dy}\leq Nf^{\sharp},\quad \bM f\leq N\cM f,
$$
where
$$
f^{\#}_{dy}(t,x):=\sup_{n\in\bZ}\aint_{A_n(t,x)}|f(s,y)-f_{|n}(t,x)|\mathrm{d}y\mathrm{d}s,\quad \cM f(t,x):=\sup_{(t,x)\in B_b^{\gamma}}\aint_{B_b^{\gamma}}|f(s,y)|\mathrm{d}y\mathrm{d}s.
$$

Since $(t,x)\in A_n(t,x)$, by Lemma \ref{20.12.22.19.10} ($iv$) and the definition of $Q_b^{\gamma}$, 
$$
A_n(t,x)\subseteq B_{N\delta^n}^{\gamma}(t,x)\subseteq Q_{N\delta^n}^{\gamma}(t,x).
$$
One can observe that
$$
|A_n(t,x)|=2^{-n(\gamma+d)},\quad |Q_{N\delta^n}(t,x)|=N_1(d,\gamma)\delta^{n(\gamma+d)}.
$$
Since $\delta=(2^{-\gamma}\vee 2^{-1})$, we have $1\leq 2\delta \leq 1+2^{1-\gamma}$. Thus, $|Q_{N\delta^n}(t,x)|\leq N_2(d,\gamma)|A_n(t,x)|$,
and this implies
\begin{align*}
f^{\#}_{dy}(t,x)\leq N(d,\gamma)f^{\sharp}(t,x).
\end{align*}

Note that $Q_{b/2}^{\gamma}\subseteq B_b^{\gamma}\subseteq Q_b^{\gamma}$, and this implies
$$
\bM f(t,x)\leq N(d,\gamma)\cM f(t,x).
$$
The theorem is proved.
\end{proof}

\begin{lem}
\label{20.11.16.14.53}
Let $f\in C_c(\bR^d)$ and $g$ be a complex-valued continuously differentiable function on $\bR^d$ such that 
$$\lim_{|z|\to\infty}|g(z)|=0.
$$
Suppose that $x,y\in\bR^d$, $|x-y|\leq R_1$ and $f(y-z)=0$ for $|z|\leq R_2$. Then for any $p_0\in(1,\infty)$ and its H\"older conjugate $p_0'$, \textit{i.e.}, $1/p_0+1/p_0'=1$,
$$
|f\ast g(y)|\leq N\int_{R_2}^{\infty}\left(r^{d}\int_{\bS^{d-1}}|\nabla g(r \omega)\cdot \omega|^{p_0'}\sigma(\mathrm{d}\omega)\right)^{1/p_0'}\left(\int_{B_{R_1+r}(x)}|f(z)|^{p_0}\mathrm{d}z\right)^{1/p_0}\mathrm{d}r,
$$
where $N=N(d,p_0)$ and $\bS^{d-1}$ denotes the $d-1$-dimensional unit sphere in $\bR^d$. 
\end{lem}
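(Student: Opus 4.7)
The plan is to reduce the convolution to a one-dimensional radial identity using the fundamental theorem of calculus along rays, then apply H\"older in both the spherical and radial variables.

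First I would write out the convolution in polar coordinates, using the support assumption to restrict the radial variable:
\begin{equation*}
f\ast g(y)=\int_{|z|>R_2}f(y-z)g(z)\,\mathrm{d}z=\int_{\bS^{d-1}}\int_{R_2}^{\infty}r^{d-1}f(y-r\omega)g(r\omega)\,\mathrm{d}r\,\sigma(\mathrm{d}\omega).
\end{equation*}
Since $g$ is $C^1$ and vanishes at infinity, the fundamental theorem of calculus along the ray $t\mapsto t\omega$ gives
\begin{equation*}
g(r\omega)=-\int_{r}^{\infty}\nabla g(s\omega)\cdot\omega\,\mathrm{d}s.
\end{equation*}
Substituting this into the convolution and swapping the $r$ and $s$ integrations by Fubini (the region is $R_2<r<s<\infty$), I obtain
\begin{equation*}
f\ast g(y)=-\int_{\bS^{d-1}}\int_{R_2}^{\infty}\bigl(\nabla g(s\omega)\cdot\omega\bigr)\Bigl[\int_{R_2}^{s}r^{d-1}f(y-r\omega)\,\mathrm{d}r\Bigr]\,\mathrm{d}s\,\sigma(\mathrm{d}\omega).
\end{equation*}

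Next, for each fixed $s$ I would take absolute values and apply H\"older's inequality in $\omega$ with exponents $p_0'$ and $p_0$, which isolates the desired factor $\int_{\bS^{d-1}}|\nabla g(s\omega)\cdot\omega|^{p_0'}\sigma(\mathrm{d}\omega)$. To handle the remaining term
\begin{equation*}
\Bigl(\int_{\bS^{d-1}}\bigl|\int_{R_2}^{s}r^{d-1}f(y-r\omega)\,\mathrm{d}r\bigr|^{p_0}\sigma(\mathrm{d}\omega)\Bigr)^{1/p_0},
\end{equation*}
I would apply H\"older once more on the inner $r$-integral, splitting $r^{d-1}=r^{(d-1)/p_0'}\cdot r^{(d-1)/p_0}$, to extract $\bigl(\int_{R_2}^{s}r^{d-1}\,\mathrm{d}r\bigr)^{1/p_0'}\lesssim s^{d/p_0'}$ times $\bigl(\int_{R_2}^{s}r^{d-1}|f(y-r\omega)|^{p_0}\,\mathrm{d}r\bigr)^{1/p_0}$.

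Finally, I would undo the polar change of variables to convert $\int_{\bS^{d-1}}\int_{R_2}^{s}r^{d-1}|f(y-r\omega)|^{p_0}\,\mathrm{d}r\,\sigma(\mathrm{d}\omega)=\int_{R_2<|z|<s}|f(y-z)|^{p_0}\,\mathrm{d}z$, and use the geometric fact that $|z|<s$ with $|x-y|\leq R_1$ forces $y-z\in B_{R_1+s}(x)$; combined with translation invariance this bounds the right-hand side by $\int_{B_{R_1+s}(x)}|f(z)|^{p_0}\,\mathrm{d}z$. The factor $s^{d/p_0'}$ can be absorbed as $(s^d)^{1/p_0'}$ into the spherical integral of $|\nabla g|^{p_0'}$, yielding the claimed bound after renaming $s\mapsto r$.

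The only subtle step will be keeping the double H\"older application consistent so that the radial weight $r^{d-1}\,\mathrm{d}r$ matches the $r^d$ factor inside the $1/p_0'$-power appearing in the statement; everything else is Fubini and change of variables. The decay hypothesis $|g(z)|\to 0$ is used exactly once, in the radial fundamental theorem of calculus, and the compact support of $f$ ensures every interchange of integration is justified.
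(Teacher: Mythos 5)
Your proof is correct and follows essentially the same route as the paper: both pass the derivative from $f$'s radial primitive onto $g$ (you via the fundamental theorem of calculus on $g$ plus Fubini, the paper via integration by parts in the radial variable, which is the same manipulation), and the two successive H\"older applications you perform in $\omega$ and then in $r$ amount to the single joint H\"older over $\sigma(\mathrm{d}\omega)\times\rho^{d-1}\,\mathrm{d}\rho$ used in the paper.
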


\begin{proof}
By polar coordinates and Fubini's theorem,
\begin{align*}
    f\ast g(y)&=\int_{|z|>R_2}f(y-z)g(z)\mathrm{d}z=\int_{R_2}^{\infty}\int_{\bS^{d-1}}f(y-rw)g(rw)r^{d-1}\sigma(dw)\mathrm{d}r\\
    &=\int_{\bS^{d-1}}\int_{R_2}^{\infty}g(rw)\left(\frac{d}{\mathrm{d}r}\int_{R_2}^{r}f(y-\rho w)\rho^{d-1}\mathrm{d}\rho\right)\mathrm{d}r\sigma(dw)
\end{align*}
Due to the integration by parts,
\begin{equation*}
    \int_{R_2}^{\infty}g(rw)\left(\frac{d}{\mathrm{d}r}\int_{R_2}^{r}f(y-\rho w)\rho^{d-1}\mathrm{d}\rho\right)\mathrm{d}r=-\int_{R_2}^{\infty} \nabla g(rw)\cdot w \left(\int_{R_2}^rf(y-\rho w)\rho^{d-1}\mathrm{d}\rho\right) \mathrm{d}r.
\end{equation*}
By Fubini's theorem and H\"older's inequality,
\begin{align*}
    |f\ast g(y)|&\leq \int_{R_2}^{\infty}\int_{\bS^{d-1}}\int_{R_2}^r| \nabla g(rw)\cdot w||f(y-\rho w)|\rho^{d-1}\mathrm{d}\rho \sigma(dw)\mathrm{d}r\leq \int_{R_2}^{\infty}I_1(r) I_2(y,r)\mathrm{d}r
\end{align*}
where
\begin{equation*}
    \begin{gathered}
    I_1(r):=\left(\int_{R_2}^r G_{p_0}(r)\rho^{d-1}\mathrm{d}\rho \right)^{1/p_0'},\\ I_2(y,r):=\left(\int_{\bS^{d-1}}\int_{R_2}^r|f(y-\rho w)|^{p_0}\rho^{d-1}\mathrm{d}\rho \sigma(dw)\right)^{1/p_0}.
    \end{gathered}
\end{equation*}
Obviously,
$$
I_1(r)\leq N(d,p_0)\left(r^{d}\int_{\bS^{d-1}}|\nabla g(r \omega)\cdot \omega|^{p_0'}\sigma(\mathrm{d}\omega)\right)^{1/p_0'}
$$
and
\begin{align*}
I_2(y,r)^{p_0}&=\int_{|z|\leq r}|f(y-z)|^{p_0}\mathrm{d}z= \int_{|y-z|\leq r}|f(z)|^{p_0}\mathrm{d}z\leq \int_{|x-z|\leq R_1+r}|f(z)|^{p_0}\mathrm{d}z=\int_{B_{R_1+r}(x)}|f(z)|^{p_0}\mathrm{d}z.
\end{align*}
The lemma is proved.
\end{proof}

\begin{lem}
\label{20.12.17.20.21}
Let $p_0\in(1,2]$ and $f\in C_c(\bR^d)$.
Suppose that $K_{\varepsilon}$ satisfies the $\left(\left\lfloor\frac{d}{p_0}\right\rfloor+2\right)$-times regular condition with $(\gamma,N_1,N_2)$ (Definition \ref{regular condition}).
Then for any  $(m,|\alpha|)\in\{0,1\}\times\{0,1,2\}$, there exists a positive constant $N=N(d,p_0,\varepsilon,N_1,N_2)$ such that
\begin{enumerate}[(i)]
    \item for all $a_0,a_1\in(0,\infty)$,
\begin{equation*}
    \begin{aligned}
    &\int_{a_1}^{\infty}\left(\int_{B_{a_0+\lambda}(x)}|f(z)|^{p_0}\mathrm{d}z\right)^{1/p_0}\left(\lambda^d \int_{\bS^{d-1}}|\p_t^mD_x^{\alpha} K_{\varepsilon}(t,s,\lambda \omega)|^{p_0'}\sigma(\mathrm{d}\omega) \right)^{1/p_0'}\mathrm{d}\lambda\\
    &\leq N\left(\int_{a_1}^{\infty}\lambda^{-p_0d(p_0)-1}\left(\int_{B_{a_0+\lambda}(x)}|f(z)|^{p_0}\mathrm{d}z\right)\mathrm{d}\lambda \right)^{1/p_0}\times|t-s|^{-(m+\varepsilon)-\frac{(d+|\alpha|-d(p_0)-1)}{\gamma}+\frac{d}{p_0'\gamma}},
    \end{aligned}
\end{equation*}

\item for any $a_0\in(0,\infty)$,
\begin{equation}
\label{20.12.27.16.23}
    \begin{aligned}
    &\left(\int_{0}^{\infty}\left(\int_{B_{a_0+\lambda}(x)}|f(z)|^{p_0}\mathrm{d}z\right)^{1/p_0} \left(\lambda^d \int_{\bS^{d-1}}|\p_t^mD_x^{\alpha} K_{\varepsilon}(t,s,\lambda \omega)|^{p_0'}\sigma(\mathrm{d}\omega) \right)^{1/p_0'}\mathrm{d}\lambda \right)^{p_0}\\
    &\leq N\Bigg(\int_{0}^{\infty}|t-s|^{-\frac{d}{\gamma}}\left(|t-s|^{-\frac{1}{\gamma}}1_{\lambda\leq |t-s|^{\frac{1}{\gamma}}}+|t-s|^{\frac{p_0d(p_0)}{\gamma}}\lambda^{-p_0d(p_0)-1}1_{|t-s|^{\frac{1}{\gamma}}\leq \lambda}\right)\\
    &\qquad \qquad \times\left(\int_{B_{a_0+\lambda}(x)}|f(z)|^{p_0}\mathrm{d}z\right)\mathrm{d}\lambda\Bigg) \times |t-s|^{-p_0(m+\varepsilon)-p_0\frac{(|\alpha|-1)}{\gamma}},
    \end{aligned}
\end{equation}

\item for all $|t-s|\geq a^{\gamma}>0$ and $c_0>0$,
\begin{equation}
\label{20.12.27.17.17}
\begin{aligned}
    \int_{0}^{\infty}(c_0a+\lambda)^d |t-s|^{-\frac{d}{\gamma}}\left(|t-s|^{-\frac{1}{\gamma}}1_{\lambda\leq |t-s|^{\frac{1}{\gamma}}}+|t-s|^{\frac{p_0d(p_0)}{\gamma}}\lambda^{-p_0d(p_0)-1}1_{|t-s|^{\frac{1}{\gamma}}\leq \lambda}\right)\mathrm{d}\lambda \leq N,
\end{aligned}    
\end{equation}
\end{enumerate}
where $N=N(d,p_0,c_0)$ and
$$
d(p_0) :=\left\lfloor\frac{d}{p_0}\right\rfloor+1
$$
and $p_0'$ is the H\"older conjugate of $p_0$, \textit{i.e.}, $1/p_0+1/p_0'=1$.

\end{lem}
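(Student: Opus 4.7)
All three estimates share a common structure: the outer $\lambda$-integral is handled by H\"older's inequality with a carefully chosen weight, while the angular kernel factor is ultimately converted (by polar coordinates) to a weighted $L_{p_0'}(\bR^d)$-norm of a derivative of $K_\varepsilon$, which is exactly what Theorem~\ref{20.12.24.12.47} controls. Since $p_0\in(1,2]$, the conjugate $p_0'\in[2,\infty)$, so Theorem~\ref{20.12.24.12.47}$(i)$ applies; and since the hypothesized regularity is $\lfloor d/p_0\rfloor+2=d(p_0)+1$, we have room to take $n=d(p_0)+1$, $\delta=0$ in that theorem.

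For $(i)$, I would write $A(\lambda)B(\lambda)=(A(\lambda)\lambda^{-d(p_0)-1/p_0})\cdot(B(\lambda)\lambda^{d(p_0)+1/p_0})$ and apply H\"older's inequality with exponents $p_0,p_0'$ on $(a_1,\infty)$. The first factor produces exactly $\bigl(\int_{a_1}^\infty\lambda^{-p_0 d(p_0)-1}\int_{B_{a_0+\lambda}(x)}|f|^{p_0}\,d\lambda\bigr)^{1/p_0}$. Using $p_0'/p_0=p_0'-1$, the second factor becomes
\[
\Bigl(\int_{a_1}^{\infty}\lambda^{d-1+p_0'(d(p_0)+1)}\int_{\bS^{d-1}}|\p_t^m D_x^\alpha K_\varepsilon(t,s,\lambda\omega)|^{p_0'}\sigma(\mathrm{d}\omega)\,\mathrm{d}\lambda\Bigr)^{1/p_0'},
\]
which by polar coordinates is bounded by $\bigl\||\cdot|^{d(p_0)+1}\p_t^m D_x^\alpha K_\varepsilon(t,s,\cdot)\bigr\|_{L_{p_0'}(\bR^d)}$. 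Applying Theorem~\ref{20.12.24.12.47}$(i)$ with $n=d(p_0)+1$, $\delta=0$, $p=p_0'$ yields the declared factor $|t-s|^{-(m+\varepsilon)-(d+|\alpha|-d(p_0)-1)/\gamma+d/(p_0'\gamma)}$.

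For $(ii)$, I would split the $\lambda$-integral at the parabolic threshold $\lambda=|t-s|^{1/\gamma}$. On $\lambda\le|t-s|^{1/\gamma}$, the $k$-regular condition with $n=0$ gives the pointwise bound $|\p_t^m D_x^\alpha K_\varepsilon|\le N|t-s|^{-(m+\varepsilon)-(d+|\alpha|)/\gamma}$; H\"older's inequality with Lebesgue measure on $[0,|t-s|^{1/\gamma}]$ and the identity $1-1/p_0'=1/p_0$ then produce exactly the first indicator term in \eqref{20.12.27.16.23}, with residual $|t-s|$-power $-p_0(m+\varepsilon)-p_0(|\alpha|-1)/\gamma-(d+1)/\gamma$, matching the RHS. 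On $\lambda>|t-s|^{1/\gamma}$, I would simply invoke part~$(i)$ with $a_1=|t-s|^{1/\gamma}$; raising to the $p_0$-th power and collecting exponents via $p_0\cdot d/(p_0'\gamma)=d/\gamma-d/(p_0\gamma)$ transforms the prefactor into $|t-s|^{-p_0(m+\varepsilon)-p_0(|\alpha|-1)/\gamma+p_0 d(p_0)/\gamma-d/\gamma}$, which is the second indicator term.

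For $(iii)$, I would again split at $\lambda=|t-s|^{1/\gamma}$, exploiting the hypothesis $|t-s|\ge a^\gamma$, i.e.\ $a\le|t-s|^{1/\gamma}$. On $[0,|t-s|^{1/\gamma}]$ one has $(c_0 a+\lambda)^d\le(c_0+1)^d|t-s|^{d/\gamma}$, so the integrand is $\le N|t-s|^{-1/\gamma}$ and integrates to a constant. On $[|t-s|^{1/\gamma},\infty)$ one has $(c_0 a+\lambda)^d\le N(d,c_0)\lambda^d$, so the integrand is $\le N|t-s|^{(p_0 d(p_0)-d)/\gamma}\lambda^{d-p_0 d(p_0)-1}$; since $d(p_0)=\lfloor d/p_0\rfloor+1>d/p_0$ we have $p_0 d(p_0)-d>0$, so the $\lambda$-integral converges and yields exactly $(p_0 d(p_0)-d)^{-1}|t-s|^{(d-p_0 d(p_0))/\gamma}$, cancelling the prefactor. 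The main obstacle throughout is keeping track of the $|t-s|^{1/\gamma}$-scaling against the interplay of the three exponents $m+\varepsilon$, $|\alpha|$ and $d(p_0)$; once the H\"older weight $\lambda^{-p_0d(p_0)-1}$ in $(i)$ is correctly identified, the rest is careful arithmetic.
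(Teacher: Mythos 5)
Your proposal is correct and follows essentially the same route as the paper: H\"older's inequality with the weight $\lambda^{-d(p_0)-1/p_0}$, conversion of the angular factor to a weighted $L_{p_0'}$-norm via polar coordinates, and an appeal to Theorem~\ref{20.12.24.12.47}$(i)$ with $n=d(p_0)+1$, $\delta=0$, $p=p_0'$ for part $(i)$; splitting at $\lambda=|t-s|^{1/\gamma}$ and invoking $(i)$ with $a_1=|t-s|^{1/\gamma}$ for $(ii)$; and the direct split for $(iii)$. The only small variation is that on $\lambda\le|t-s|^{1/\gamma}$ in $(ii)$ you use the pointwise ($L_\infty$) kernel bound and H\"older with Lebesgue measure, whereas the paper applies Theorem~\ref{20.12.24.12.47}$(i)$ with $n+\delta=1/p_0'$; both yield the identical exponent, and your stated intermediate identity $p_0\cdot d/(p_0'\gamma)=d/\gamma-d/(p_0\gamma)$ is a harmless slip (it should read $p_0 d/(p_0'\gamma)=(p_0-1)d/\gamma$) since your final exponent $-p_0(m+\varepsilon)-p_0(|\alpha|-1)/\gamma+p_0d(p_0)/\gamma-d/\gamma$ is correct.
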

\begin{proof} 
For simpler notations, we denote
$$
\mu:=d(p_0)+\frac{1}{p_0}=\left\lfloor\frac{d}{p_0}\right\rfloor+1+\frac{1}{p_0}>\frac{d+1}{p_0},
$$
\begin{equation*}
    H(r,\lambda):=r^{-\frac{d}{\gamma}}\left(r^{-\frac{1}{\gamma}}1_{\lambda\leq r^{\frac{1}{\gamma}}}+r^{\frac{p_0d(p_0)}{\gamma}}\lambda^{-p_0d(p_0)-1}1_{r^{\frac{1}{\gamma}}\leq \lambda}\right),
\end{equation*}
and
$$
K_{\varepsilon,m,\alpha}^{p_0'}(t,s,\lambda):=\int_{\bS^{d-1}}|\p_t^mD_x^{\alpha} K_{\varepsilon}(t,s,\lambda w)|^{p_0'}\sigma(\mathrm{d}\omega).
$$
($i$) By H\"older's inequality and Lemma \ref{20.12.24.12.47},
\begin{align*}
&\int_{a_1}^{\infty}\left(\int_{B_{a_0+\lambda}(x)}|f(z)|^{p_0}\mathrm{d}z\right)^{1/p_0}(\lambda^dK_{\varepsilon,m,\alpha}^{p_0'}(t,s,\lambda))^{1/p_0'}\mathrm{d}\lambda\\
&\leq\left(\int_{a_1}^{\infty}\lambda^{-p_0\mu}\int_{B_{a_0+\lambda}(x)}|f(z)|^{p_0}\mathrm{d}z\mathrm{d}\lambda \right)^{1/p_0}\left(\int_{a_1}^{\infty}\lambda^{d+p_0'\mu}K_{\varepsilon,m,\alpha}^{p_0'}(t,s,\lambda)\mathrm{d}\lambda \right)^{1/p_0'}\\
&\leq\left(\int_{a_1}^{\infty}\lambda^{-p_0\mu}\int_{B_{a_0+\lambda}(x)}|f(z)|^{p_0}\mathrm{d}z\mathrm{d}\lambda \right)^{1/p_0}\left(\int_{\bR^d}|z|^{1+p_0'\mu}|\p_t^mD_x^{\alpha}K_{\varepsilon}(t,s,z)|^{p_0'} \mathrm{d}z\right)^{1/p_0'}\\
&\leq N\left(\int_{a_1}^{\infty}\lambda^{-p_0d(p_0)-1}\int_{B_{a_0+\lambda}(x)}|f(z)|^{p_0}\mathrm{d}z\mathrm{d}\lambda \right)^{1/p_0}|t-s|^{-(m+\varepsilon)-\frac{(d+|\alpha|-d(p_0)-1)}{\gamma}+\frac{d}{p_0'\gamma}},
\end{align*}
where $N=N(d,p_0,N_1,N_2)$.

($ii$) Decompose the integrals in \eqref{20.12.27.16.23} into two parts,
\begin{align*}
  &\int_{0}^{\infty}\left(\int_{B_{c_0a+\lambda}(x)}|f(z)|^{p_0}\mathrm{d}z\right)^{1/p_0} (\lambda^dK_{\varepsilon,m,\alpha}^{p_0'}(t,s,\lambda))^{1/p_0'}\mathrm{d}\lambda\\
  &=\int_{0}^{|t-s|^{1/\gamma}}\cdots \mathrm{d}\lambda+\int_{|t-s|^{1/\gamma}}^{\infty}\cdots \mathrm{d}\lambda=:I_{1}(t,s)+I_{2}(t,s).
\end{align*}
By H\"older's inequality and Lemma \ref{20.12.24.12.47},
\begin{align*}
&I_1(t,s)\leq\left(\int_{0}^{|t-s|^{1/\gamma}}\int_{B_{a_0+\lambda}(x)}|f(z)|^{p_0}\mathrm{d}z\mathrm{d}\lambda \right)^{1/p_0}\left(\int_{0}^{|t-s|^{1/\gamma}}\lambda^{d}K_{\varepsilon,m,\alpha}^{p_0'}(t,s,\lambda)\mathrm{d}\lambda \right)^{1/p_0'}\\
&\leq\left(\int_{0}^{|t-s|^{1/\gamma}}\int_{B_{a_0+\lambda}(x)}|f(z)|^{p_0}\mathrm{d}z\mathrm{d}\lambda \right)^{1/p_0}\left(\int_{\bR^d}|z||\p_t^mD_x^{\alpha}K_{\varepsilon}(t,s,z)|^{p_0'} \mathrm{d}z\right)^{1/p_0'}\\
&\leq N\left(\int_{0}^{|t-s|^{1/\gamma}}\int_{B_{a_0+\lambda}(x)}|f(z)|^{p_0}\mathrm{d}z\mathrm{d}\lambda \right)^{1/p_0}|t-s|^{-(m+\varepsilon)-\frac{(d+|\alpha|-1/p_0')}{\gamma}+\frac{d}{p_0'\gamma}}\\
&=N\left(\int_{0}^{|t-s|^{1/\gamma}}|t-s|^{-\frac{(d+1)}{\gamma}}\int_{B_{a_0+\lambda}(x)}|f(z)|^{p_0}\mathrm{d}z\mathrm{d}\lambda \right)^{1/p_0}|t-s|^{-(m+\varepsilon)-\frac{(|\alpha|-1)}{\gamma}}
\end{align*}
where $N=N(d,p_0,N_1,N_2)$. For $I_2$, by $(i)$ with $a_1=|t-s|^{1/\gamma}$,
\begin{align*}
I_2(t,s)\leq N\left(\int_{|t-s|^{1/\gamma}}^{\infty}|t-s|^{\frac{p_0d(p_0)-d}{\gamma}}\lambda^{-p_0d(p_0)-1}\int_{B_{a_0+\lambda}(x)}|f(z)|^{p_0}\mathrm{d}z\mathrm{d}\lambda \right)^{1/p_0}|t-s|^{-(m+\varepsilon)-\frac{(|\alpha|-1)}{\gamma}},    
\end{align*}
where $N=N(d,p_0,c_0,N_1,N_2)$.

($iii$) Decompose the integral in \eqref{20.12.27.17.17} into two parts,
\begin{align*}
  &\int_{0}^{\infty}(c_0a+\lambda)^d H(|t-s|,\lambda)\mathrm{d}\lambda=\int_{0}^{|t-s|^{1/\gamma}}\cdots \mathrm{d}\lambda+\int_{|t-s|^{1/\gamma}}^{\infty}\cdots \mathrm{d}\lambda=:I_{3}(t,s)+I_{4}(t,s).
\end{align*}
By the definition of $H$,
$$
I_3(t,s)=\int_0^{|t-s|^{1/\gamma}}(c_0a+\lambda)^d|t-s|^{-\frac{(d+1)}{\gamma}}\mathrm{d}\lambda\leq N(d,c_0)\int_0^{|t-s|^{1/\gamma}}|t-s|^{-\frac{1}{\gamma}}\mathrm{d}\lambda=N(d,c_0),
$$
and
\begin{align*}
    I_4(t,s)&=|t-s|^{-\frac{(p_0d(p_0)-d)}{\gamma}}\int_{|t-s|^{1/\gamma}}^{\infty}(c_0a+\lambda)^\mathrm{d}\lambda^{-p_0d(p_0)-1}\mathrm{d}\lambda\\
    &\leq N(d,c_0)|t-s|^{-\frac{(p_0d(p_0)-d)}{\gamma}}\int_{|t-s|^{1/\gamma}}^{\infty}\lambda^{d-p_0d(p_0)-1}\mathrm{d}\lambda=N(d,p_0,c_0).
\end{align*}
The lemma is proved.
\end{proof}

\begin{lem}
\label{20.12.28.16.02}
Let  $p_0\in(1,2]$, $t_0\in\bR$, $b>0$, $s\in (t_0-b^{\gamma},t_0+b^{\gamma})$, and $f \in C_c^\infty(\bR^{d+1})$.
Then for any $(t,x)\in Q_{b}^{\gamma}(t_0,0)$,
\begin{align*}
    &\int_{-\infty}^{t_0-2b^{\gamma}}\left(\int_{0}^{\infty}H(|s-r|,\lambda)\int_{B_{2b+\lambda}(x)}|f(r,z)|^{p_0}\mathrm{d}z\mathrm{d}\lambda \right)^{1/p_0}h_0(m,|\alpha|,\varepsilon,s-r,T) \mathrm{d}r\\
    & \leq N(T^{1-\varepsilon}h_1(0,1,\varepsilon)+T^{1-\varepsilon}b^{-1}h_1(0,2,\varepsilon)+b^{-\gamma}h_1(1,1,1))(\bM |f|^{p_0}(t,x))^{1/p_0},
\end{align*}
where $N=N(d,p_0,\varepsilon,N_1,N_2)$ and
\begin{equation*}
    \begin{gathered}
    h_0(m,|\alpha|,\varepsilon,s-r,T):=|s-r|^{-(m+\varepsilon)-\frac{(|\alpha|-1)}{\gamma}}h_1(m,|\alpha|,\varepsilon)(1_{\varepsilon\in[0,1)}1_{|s-r|\leq T}+1_{\varepsilon=1}),\\
    h_1(m,|\alpha|,\varepsilon):=1_{(m,|\alpha|)=(0,1)}1_{\varepsilon\in[0,1)}+1_{(m,|\alpha|)=(0,2)}+1_{(m,|\alpha|)=(1,1)}1_{\varepsilon=1},\\
    H(r,\lambda):=r^{-\frac{d}{\gamma}}\left(r^{-\frac{1}{\gamma}}1_{\lambda\leq r^{\frac{1}{\gamma}}}+r^{\frac{p_0d(p_0)}{\gamma}}\lambda^{-p_0d(p_0)-1}1_{r^{\frac{1}{\gamma}}\leq \lambda}\right).
    \end{gathered}
\end{equation*}
\end{lem}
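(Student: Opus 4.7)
The plan is to establish the bound via a careful dyadic decomposition of the time integral combined with H\"older's inequality and the parabolic maximal function. First I would decompose the time domain $(-\infty, t_0-2b^\gamma]$ into dyadic intervals $J_k := (t_0 - 2^{k+1}b^\gamma, t_0 - 2^k b^\gamma]$ for $k \geq 1$, so that for $r \in J_k$ and $s \in (t_0 - b^\gamma, t_0 + b^\gamma)$ one has $|s-r| \simeq 2^k b^\gamma$ and $|s-r|^{1/\gamma} \gtrsim b$. This inequality is crucial: it lets me bound the spatial ball $B_{2b+\lambda}(x)$ by a multiple of either $|s-r|^{1/\gamma}$ or $\lambda$.

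For the inner integral $G(r) := \int_0^\infty H(|s-r|,\lambda)\int_{B_{2b+\lambda}(x)}|f(r,z)|^{p_0}\,dz\,d\lambda$, I would split $H$ into its two pieces (small $\lambda \leq |s-r|^{1/\gamma}$ and large $\lambda > |s-r|^{1/\gamma}$) and exploit the subadditivity $(a+b)^{1/p_0} \leq a^{1/p_0} + b^{1/p_0}$, valid since $p_0 \geq 1$. A further dyadic decomposition in $\lambda \in [2^j|s-r|^{1/\gamma}, 2^{j+1}|s-r|^{1/\gamma}]$ for $j \geq 0$ would yield, after integrating in $\lambda$, a pointwise estimate of the form
\[
G(r)^{1/p_0} \lesssim \sum_{j\geq 0} 2^{-j d(p_0)}\,|s-r|^{-d/(p_0\gamma)}\Bigl(\int_{B_{c\,2^j|s-r|^{1/\gamma}}(x)}|f(r,z)|^{p_0}\,dz\Bigr)^{1/p_0},
\]
where the decay factor $2^{-jd(p_0)}$ arises from the $\lambda^{-p_0 d(p_0)-1}$ tail of $H$.

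Next, on each $J_k$ and at level $j$, I would apply H\"older's inequality to the $r$-integration,
\[
\int_{J_k}\Bigl(\int_{B_{R_{jk}}(x)}|f(r,z)|^{p_0}\,dz\Bigr)^{1/p_0}\,dr \;\leq\; |J_k|^{1-1/p_0}\Bigl(\int_{J_k\times B_{R_{jk}}(x)}|f(r,z)|^{p_0}\,dz\,dr\Bigr)^{1/p_0},
\]
with $R_{jk} := c\,2^j 2^{k/\gamma} b$. The region $J_k \times B_{R_{jk}}(x)$ embeds in a parabolic cylinder $Q^\gamma_{R^*_{jk}}(t,x)$ for an appropriate $R^*_{jk} \simeq R_{jk}$ (chosen so that $(R^*_{jk})^\gamma \gtrsim |t-r| \simeq 2^k b^\gamma$ and $R^*_{jk} \geq R_{jk}$), so the double integral is bounded by $|Q^\gamma_{R^*_{jk}}|\cdot \bM|f|^{p_0}(t,x)$. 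Power matching then reduces the contribution from each pair $(j,k)$ to $N\,h_0(m,|\alpha|,\varepsilon,2^k b^\gamma,T)\cdot(2^k b^\gamma)\cdot 2^{j[(d+\gamma)/p_0 - d(p_0)]}\cdot(\bM|f|^{p_0}(t,x))^{1/p_0}$.

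Finally I would sum the dyadic contributions. The $j$-sum is geometric and rests on the kernel decay encoded in $H$. The $k$-sum splits according to the three nonvanishing cases of $h_1$: for $h_1(0,1,\varepsilon)$, the cutoff $|s-r|\leq T$ restricts the sum and yields $\sum_k (2^k b^\gamma)^{1-\varepsilon}\mathbf{1}_{2^k b^\gamma\leq T} \lesssim T^{1-\varepsilon}$; for $h_1(0,2,\varepsilon)$, the extra factor $|s-r|^{-1/\gamma}$ combined with $b^\gamma \leq T$ (forced by the indicator) produces $T^{1-\varepsilon}b^{-1}$; and for $h_1(1,1,1)$ (where there is no cutoff), the sum $\sum_k (2^k b^\gamma)^{-1}$ is geometric and yields $b^{-\gamma}$. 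The main obstacle will be the careful bookkeeping of all the dyadic exponents (in $|s-r|$, $b$, $2^j$, $2^k$) to verify that they cancel cleanly against the factors coming from $H$, $|J_k|^{1-1/p_0}$, and $|Q^\gamma_{R^*_{jk}}|^{1/p_0}$, so that the clean per-level factor $2^k b^\gamma$ emerges uniformly and the case-by-case summation produces exactly the stated right-hand side.
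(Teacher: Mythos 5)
Your pointwise expansion
\[
G(r)^{1/p_0}\lesssim\sum_{j\geq0}2^{-j d(p_0)}\,|s-r|^{-d/(p_0\gamma)}\Bigl(\int_{B_{c\,2^j|s-r|^{1/\gamma}}(x)}|f(r,z)|^{p_0}\,\mathrm{d}z\Bigr)^{1/p_0}
\]
is fine, and your bookkeeping of the $(j,k)$-contribution is internally consistent, so the issue is precisely in the $j$--sum you then declare convergent. After embedding $J_k\times B_{R_{jk}}(x)$ (with $R_{jk}\simeq 2^j 2^{k/\gamma}b$) into the parabolic cylinder $Q^\gamma_{R^\ast_{jk}}(t,x)$ of radius $R^\ast_{jk}\simeq R_{jk}$, you pay $|Q^\gamma_{R^\ast_{jk}}|^{1/p_0}\simeq 2^{j(d+\gamma)/p_0}\bigl(2^{k/\gamma}b\bigr)^{(d+\gamma)/p_0}$, and against the decay $2^{-jd(p_0)}$ this gives the factor $2^{j[(d+\gamma)/p_0-d(p_0)]}$ you correctly compute. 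But
$p_0 d(p_0)-d=p_0\bigl(\lfloor d/p_0\rfloor+1\bigr)-d\leq p_0\leq 2$, so $(d+\gamma)/p_0-d(p_0)\geq(\gamma-2)/p_0$, and the geometric series $\sum_j 2^{j[(d+\gamma)/p_0-d(p_0)]}$ diverges once $\gamma\geq p_0 d(p_0)-d$, which happens whenever $\gamma\geq 2$. Since $\gamma$ is the arbitrary order of the pseudo-differential operator, this is a genuine gap, not a constant-tracking nuisance: the anisotropy between the rectangle $J_k\times B_{R_{jk}}(x)$ (space radius a factor $2^j$ larger than the parabolic scale $2^{k/\gamma}b$) and the cylinder $Q^\gamma_\rho$ forces you to inflate the time extent by $2^{j\gamma}$ just to fit a $Q^\gamma$ around the rectangle, and the kernel tail encoded in $H$ is calibrated to pay only a factor $\lambda^{-p_0 d(p_0)-1}$, which after the $\lambda$-integration (or the $1/p_0$-th root) is not enough to absorb $2^{j\gamma/p_0}$.

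The paper takes a different route that avoids the $\lambda$-dyadic decomposition entirely. After H\"older's inequality moves the $1/p_0$-th power inside the $r$-integral, the authors perform an integration by parts (Abel summation) in the time variable: writing $F(r,\lambda)=\int_r^{t_0-2b^\gamma}\int_{B_{2b+\lambda}(x)}|f(\varphi,z)|^{p_0}\,\mathrm{d}z\,\mathrm{d}\varphi$, they pass the $r$-derivative onto the kernel, use the pointwise bound $|\partial_r(H h_0)|\lesssim|s-r|^{-1}H h_0$, and then invoke $F(r,\lambda)\leq\bM|f|^{p_0}(t,x)\,(2b+\lambda)^d|t_0+b^\gamma-r|$ before carrying out $\int_0^\infty(2b+\lambda)^d H(|s-r|,\lambda)\,\mathrm{d}\lambda\leq N$ from Lemma \ref{20.12.17.20.21}$(iii)$. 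The entire $\lambda$-dependence is thus traded against $(2b+\lambda)^d$, not against the cylinder volume $(2b+\lambda)^{d+\gamma}$; the dyadic $j$-sum you run is implicitly replaced by the single convergent integral of Lemma \ref{20.12.17.20.21}$(iii)$, which converges precisely because it needs only $d<p_0 d(p_0)$, not $d+\gamma<p_0 d(p_0)$. If you want to rescue a dyadic-decomposition proof, you would need to bound $\int_{J_k\times B_{R_{jk}}(x)}|f|^{p_0}$ by $\bM|f|^{p_0}(t,x)$ times the \emph{rectangle} volume rather than the cylinder volume (so the $j$-factor becomes $2^{jd/p_0-jd(p_0)}$, which is summable); but that step is exactly the nontrivial claim hidden inside the paper's Abel-summation estimate and deserves a justification in its own right, so you cannot assert it for free.
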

\begin{proof}
Let $(t,x)\in Q_{b}^{\gamma}(t_0,0)$. By H\"older's inequality,
\begin{equation}\label{2021040302}
\begin{aligned}
    &\int_{-\infty}^{t_0-2b^{\gamma}}\left(\int_{0}^{\infty}H(|s-r|,\lambda)\int_{B_{2b+\lambda}(x)}|f(r,z)|^{p_0}\mathrm{d}z\mathrm{d}\lambda \right)^{1/p_0}h_0(m,|\alpha|,\varepsilon,s-r,T) \mathrm{d}r\\
    &\leq \left(\int_{-\infty}^{t_0-2b^{\gamma}}\int_{0}^{\infty}H(|s-r|,\lambda)h_0(m,|\alpha|,\varepsilon,s-r,T)\int_{B_{2b+\lambda}(x)}|f(r,z)|^{p_0}\mathrm{d}z\mathrm{d}\lambda \mathrm{d}r\right)^{1/p_0}\\
    &\qquad\times\left(\int_{-\infty}^{t_0-2b^{\gamma}}h(m,|\alpha|,\varepsilon,s-r,T)\mathrm{d}r\right)^{1/p_0'}.
\end{aligned}    
\end{equation}

Due to the integration by parts formula,
\begin{equation}\label{2021040301}
    \begin{aligned}
    &\int_{-\infty}^{t_0-2b^{\gamma}}H(|s-r|,\lambda)h_0(m,|\alpha|,\varepsilon,s-r,T)\int_{B_{2b+\lambda}(x)}|f(r,z)|^{p_0}\mathrm{d}z \mathrm{d}r\\
    &\leq \int_{-\infty}^{t_0-2b^{\gamma}}\frac{\p}{\p r}\left(H(|s-r|,\lambda)h_0(m,|\alpha|,\varepsilon,s-r,T)\right)\left(\int_{r}^{t_0-2b^{\gamma}}\int_{B_{2b+\lambda}(x)}|f(\varphi,z)|^{p_0}\mathrm{d}z\mathrm{d}\varphi\right)\mathrm{d}r\\
    &\leq \bM|f|^{p_0}(t,x)(2b+\lambda)^d\times\left(\int_{-\infty}^{t_0-2b^{\gamma}}\frac{\p}{\p r}\left(H(|s-r|,\lambda)h_0(m,|\alpha|,\varepsilon,s-r,T)\right)|t_0+b^{\gamma}-r|\mathrm{d}r\right).
\end{aligned}
\end{equation}

Note that if $r\in(-\infty,t_0-2b^{\gamma})$ and $s\in(t_0-b^{\gamma},t_0+b^{\gamma})$, then
$$
1<\frac{t_0+b^{\gamma}-r}{s-r}<3.
$$
Also, observe that
$$
\frac{\p}{\p r}\left(H(|s-r|,\lambda)h_0(m,|\alpha|,\varepsilon,s-r,T)\right)=N|s-r|^{-1}H(|s-r|,\lambda)h_0(m,|\alpha|,\varepsilon,s-r,T),
$$
where $N=N(d,p_0,\gamma,\varepsilon)$. Thus from \eqref{2021040301}
\begin{align}
								\notag
   &\int_{-\infty}^{t_0-2b^{\gamma}} \int_0^\infty H(|s-r|,\lambda)h_0(m,|\alpha|,\varepsilon,s-r,T)\int_{B_{2b+\lambda}(x)}|f(r,z)|^{p_0}\mathrm{d}z\mathrm{d}\lambda \mathrm{d}r\\
								\notag
   &\leq N\bM|f|^{p_0}(t,x)\int_{-\infty}^{t_0-2b^{\gamma}}h_0(m,|\alpha|,\varepsilon,s-r,T)\left(\int_{0}^{\infty}(2b+\lambda)^dH(|s-r|,\lambda)\mathrm{d}\lambda\right)\mathrm{d}r\\
								\label{2021040303}
   &\leq N\bM|f|^{p_0}(t,x)\int_{-\infty}^{t_0-2b^{\gamma}}h_0(m,|\alpha|,\varepsilon,s-r,T)\mathrm{d}r,
\end{align}
where $N=N(d,p_0,\gamma,\varepsilon,N_1,N_2)$. Therefore, combining \eqref{2021040302} and \eqref{2021040303}, we have
\begin{align*}
    &\int_{-\infty}^{t_0-2b^{\gamma}}\left(\int_{0}^{\infty}H(|s-r|,\lambda)\int_{B_{2b+\lambda}(x)}|f(r,z)|^{p_0}\mathrm{d}z\mathrm{d}\lambda \right)^{1/p_0}h_0(m,|\alpha|,\varepsilon,s-r,T) \mathrm{d}r\\
    &\leq N(\bM|f|^{p_0}(t,x))^{1/p_0}\int_{-\infty}^{t_0-2b^{\gamma}}h_0(m,|\alpha|,\varepsilon,s-r,T)\mathrm{d}r,
\end{align*}
where $N=N(d,p_0,\gamma,\varepsilon,N_1,N_2)$. 
It only remains to observe 
\begin{align*}
\int_{-\infty}^{t_0-2b^{\gamma}}h_0(m,|\alpha|,\varepsilon,s-r,T)\mathrm{d}r&\leq N(T^{1-\varepsilon}h_1(0,1,\varepsilon)+T^{1-\varepsilon}b^{-1}h_1(0,2,\varepsilon)+b^{-\gamma}h_1(1,1,1))
\end{align*}
where $N=N(d,\gamma,\varepsilon)$. The lemma is proved.
\end{proof}

\subsection{$L_p$ estimates without weights for $p \in (1,2)$}

Recall
    $$
    h(\varepsilon,t-s,T):=1_{\varepsilon\in[0,1)}1_{|t-s|<T}+1_{\varepsilon=1}
    $$
and
    $$
    \cT_{\varepsilon,T} f(t,x):=\int_{-\infty}^{t}\int_{\bR^d}h(\varepsilon,t-s,T)K_{\varepsilon}(t,s,x-y)f(s,y)\mathrm{d}y\mathrm{d}s.
    $$

In this subsection, we prove the $L_p$-boundeness of $\cT_{\varepsilon,T}$ without weights. 
More precisely, we show the following theorem:
\begin{thm}
						\label{20.12.21.13.23}
Suppose that $K_{\varepsilon}$ satisfies the $\left(\left\lfloor\frac{d}{2}\right\rfloor+1\right)$-times regular condition with $(\gamma,N_1,N_2)$ (Definition \ref{regular condition}) and  there exists a constant $N_3>0$ such that
    $$
    \|\cT_{\varepsilon,T} f\|_{L_2(\bR^{d+1})}\leq N_3T^{1-\varepsilon}\|f\|_{L_2(\bR^{d+1})}, \quad \forall f \in C_c^\infty(\bR^{d+1}).
    $$
Then for $p\in(1,2)$, there exists a constant $N=N(d,p,\gamma,\varepsilon,N_1,N_2,N_3)$ such that
$$
\|\cT_{\varepsilon,T}f\|_{L_p(\bR^{d+1})}\leq NT^{1-\varepsilon}\|f\|_{L_p(\bR^{d+1})}, \quad \forall f \in C_c^\infty(\bR^{d+1}).
$$
\end{thm}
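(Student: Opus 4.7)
The plan is to treat the cases $\varepsilon\in[0,1)$ and $\varepsilon=1$ separately, since $\cT_{\varepsilon,T}$ is genuinely singular only when $\varepsilon=1$.

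For $\varepsilon\in[0,1)$, Theorem \ref{20.12.24.12.47}$(ii)$ applied with $p=1$, $m=0$, $|\alpha|=0$ provides the integrable bound $\|K_{\varepsilon}(t,s,\cdot)\|_{L_1(\bR^d)}\leq N|t-s|^{-\varepsilon}$. Combined with the cut-off $h(\varepsilon,t-s,T)=1_{|t-s|<T}$, Minkowski's integral inequality in the spatial variable followed by Young's convolution inequality in the temporal variable (noting that $r\mapsto r^{-\varepsilon}1_{0<r<T}$ has $L_1(\bR_+)$-norm $T^{1-\varepsilon}/(1-\varepsilon)$) yields
\[
\|\cT_{\varepsilon,T}f\|_{L_p(\bR^{d+1})}\leq \frac{N}{1-\varepsilon}\, T^{1-\varepsilon}\|f\|_{L_p(\bR^{d+1})}
\]
for every $p\in[1,\infty]$, with no singular-integral analysis required.

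For $\varepsilon=1$, one has $T^{1-\varepsilon}=1$ and the operator is a genuine parabolic singular integral, so the plan is to carry out a Calder\'on--Zygmund decomposition using the quasi-metric $d_\gamma$ and the filtration $\{\cP_n^\gamma\}_{n\in\bZ}$ of Lemma \ref{20.12.22.19.10}. Given $f\in L_1(\bR^{d+1})\cap L_2(\bR^{d+1})$ and $\alpha>0$, decompose $f=g+b$ with $b=\sum_j b_j$, each $b_j$ supported in a disjoint cube $Q_j\in\cP_{n_j}^\gamma$ satisfying $\int b_j=0$ and $\int_{Q_j}|b_j|\leq N\alpha|Q_j|$, $\|g\|_{L_\infty}\leq N\alpha$, and $\sum_j|Q_j|\leq N\alpha^{-1}\|f\|_{L_1}$. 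The good part is controlled by Chebyshev's inequality and the hypothesised $L_2$-bound via $\|g\|_{L_2}^2\leq\|g\|_{L_\infty}\|g\|_{L_1}\lesssim \alpha\|f\|_{L_1}$, giving $|\{|\cT_{1,T}g|>\alpha/2\}|\leq N\alpha^{-1}\|f\|_{L_1}$. For the bad part, let $Q_j^\ast$ be a fixed $d_\gamma$-dilation of $Q_j$ (so $|Q_j^\ast|\leq N|Q_j|$); using $\int b_j=0$ together with the H\"ormander-type estimate
\[
\int_{(Q_j^\ast)^c}|K_1(t,s,x-y)-K_1(t,s_j,x-y_j)|\,\mathrm{d}x\,\mathrm{d}t\leq N\quad\forall(s,y)\in Q_j,
\]
one obtains $\int_{(\bigcup_j Q_j^\ast)^c}|\cT_{1,T}b|\leq N\|f\|_{L_1}$; the measure of $\bigcup_j Q_j^\ast$ is $\lesssim\alpha^{-1}\|f\|_{L_1}$ by the doubling property (Lemma \ref{20.12.22.19.10}$(iii)$). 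Combining produces the weak-type $(1,1)$ bound for $\cT_{1,T}$, and Marcinkiewicz interpolation against the given $L_2$-bound yields $\|\cT_{1,T}f\|_{L_p}\leq N\|f\|_{L_p}$ for all $p\in(1,2)$.

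The main obstacle is verifying the H\"ormander-type estimate. Writing the increment as a time part plus a space part via the fundamental theorem of calculus and bounding each pointwise by $\partial_sK_1$ and $D_xK_1$ respectively---both available from the $(\lfloor d/2\rfloor+1)$-times regular condition with $(m,|\alpha|)\in\{(1,0),(0,1)\}$---reduces the matter to estimating $\int_{|x|>R}|\partial_t^m D_x^\alpha K_1(t,s,x)|\,\mathrm{d}x$ for $R\sim d_\gamma((t,x),Q_j)$. One splits this integral at the parabolic scale $|x|\sim|t-s|^{1/\gamma}$: the inner part uses the pointwise bound, while the outer part is handled by H\"older's inequality against the weighted $L_2$-bound on $|x|^{\lfloor d/2\rfloor+1}\partial_t^m D_x^\alpha K_1$ (this is exactly the device from the proof of Theorem \ref{20.12.24.12.47}$(ii)$). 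The resulting $t$-integration, over $|t-s_j|$ larger than the temporal scale of $Q_j$, is convergent due to the improved spatial decay supplied by these moment bounds, producing a constant uniform in $Q_j$ as required.
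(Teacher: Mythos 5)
Your proposal is correct, but it takes a genuinely different route from the paper's. The paper handles all $\varepsilon\in[0,1]$ uniformly: it first establishes in Lemma \ref{20.12.21.13.02} a H\"ormander-type condition (with constant $NT^{1-\varepsilon}$) for the kernel $h(\varepsilon,t-s,T)K_\varepsilon(t,s,x)$ relative to the filtration $\{\cP_n^\gamma\}$, concluding that this kernel is a Calder\'on--Zygmund kernel in the sense of Krylov, and then simply cites Krylov's abstract Calder\'on--Zygmund theorem (\cite[Theorem 4.1]{krylov2001calderon}), which already packages the decomposition, the weak-type $(1,1)$ estimate, and Marcinkiewicz interpolation. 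You do two things differently. For $\varepsilon\in[0,1)$ you bypass singular-integral theory altogether: the bound $\|K_\varepsilon(t,s,\cdot)\|_{L_1(\bR^d)}\lesssim|t-s|^{-\varepsilon}$ from Theorem \ref{20.12.24.12.47}$(ii)$ makes the dominating temporal kernel $r^{-\varepsilon}1_{0<r<T}$ integrable with $L_1$-norm $T^{1-\varepsilon}/(1-\varepsilon)$, so Minkowski plus Young give the estimate for every $p\in[1,\infty]$; this is more elementary than the paper's route and even stronger in the range of $p$, at the cost of a constant blowing up like $(1-\varepsilon)^{-1}$ (which is permitted, and indeed the paper's constants already have this behaviour hidden inside). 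For $\varepsilon=1$ you unpack the abstract theorem by hand: Calder\'on--Zygmund decomposition along the parabolic filtration, the $L_2$ hypothesis for the good part, a cancellation-plus-H\"ormander argument for the bad part, and Marcinkiewicz interpolation. The technical core of that case---verifying the H\"ormander condition by splitting the increment into a spatial and a temporal part via the fundamental theorem of calculus and then splitting the resulting spatial integral at the parabolic scale---is exactly the mechanism carried out in the paper's Lemma \ref{20.12.21.13.02}, so your sketch there is the right argument even if you leave the bookkeeping implicit. In short: the paper buys conciseness by quoting Krylov once for all $\varepsilon$; you buy transparency (and a trivial argument for the subcritical range) at the cost of essentially reproving the Calder\'on--Zygmund theorem for $\varepsilon=1$.
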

The proof of the theorem will be given in the last of this section.
\bigskip

We first show that our kernel satisfies the important condition so-called H\"ormander's condition to guarantee the  $L_p$-boundedness.
Recall 
\begin{align*}
    \cP_n^{\gamma}:=\{[i_02^{-n\gamma},(i_0+1)2^{-n\gamma})\times D_n(i_1,\cdots,i_d):i_0,\cdots,i_d\in\bZ\},
\end{align*}
and $\{\cP_n^{\gamma}\}_{n\in\bZ}$ is a filtration of partitions by Lemma \ref{20.12.22.19.10}.
\begin{lem}[H\"ormander's condition for the kernel $K_\varepsilon$]
		\label{20.12.21.13.02}
Suppose that $K_\varepsilon$ satisfies the $\left(\left\lfloor\frac{d}{2}\right\rfloor+1\right)$-times regular condition $(\gamma,N_1,N_2)$ (Definition \ref{regular condition}).
Then for each $A\in \cup_{n\in\bZ}\cP_n^{\gamma}$, there exist constant $N=N(d,\gamma,\varepsilon,N_1,N_2)$ and closed set $A^*$ such that $\overline{A}\subset A^*$, $|A|\leq N|A^*|$ and
\begin{align}
								\label{2021040310}
\int_{\bR^{d+1}\setminus A^*}|K_{\varepsilon}(t,s_0,x-y_0)h(\varepsilon,t-s_0,T)-K_{\varepsilon}(t,s_1,x-y_1)h(\varepsilon,t-s_1,T)|\mathrm{d}x\mathrm{d}t
\leq NT^{1-\varepsilon}
\end{align}
whenever $(s_0,y_0),(s_1,y_1)\in A$.
\end{lem}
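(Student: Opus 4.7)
The plan is to set $b=2^{-n}$ as the spatial side of $A\in\cP_n^\gamma$, let $(t_A,x_A)$ be the center of $A$, and take
\begin{equation*}
A^*=\{(t,x):|t-t_A|<(C_0 b)^\gamma \text{ and } |x-x_A|<C_0 b\}
\end{equation*}
for a constant $C_0=C_0(d,\gamma)>1$ chosen large enough that $\overline{A}\subset A^*$; the $d_\gamma$-quasi-triangle inequality of Lemma \ref{20.12.22.19.10} then gives $|A^*|\leq N(d,\gamma,C_0)|A|$ and, for any $(s_0,y_0),(s_1,y_1)\in A$ and $(t,x)\notin A^*$, a lower bound $|t-s_i|^{1/\gamma}+|x-y_i|\geq c_0 b$ for $i=0,1$ and some $c_0=c_0(d,\gamma)>0$. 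Moreover $|s_0-s_1|\leq b^\gamma$ and $|y_0-y_1|\leq b\sqrt{d}$.

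For $\varepsilon\in[0,1)$ no cancellation is needed. The triangle inequality combined with the $L_1$ kernel bound of Lemma \ref{20.12.24.12.47}(ii) (at $p=1$, $m=|\alpha|=0$) yields, via the substitution $r=t-s_i$ and the standing convention $K_\varepsilon(t,s,\cdot)=0$ for $s\geq t$,
\begin{align*}
\int_{\bR^{d+1}}|K_\varepsilon(t,s_i,x-y_i)|\,1_{|t-s_i|<T}\,\mathrm{d}x\,\mathrm{d}t
&=\int_{|t-s_i|<T}\|K_\varepsilon(t,s_i,\cdot)\|_{L_1(\bR^d)}\,\mathrm{d}t\\
&\leq N\int_0^T r^{-\varepsilon}\,\mathrm{d}r\leq NT^{1-\varepsilon},
\end{align*}
and summing the contributions from $i=0,1$ gives the claim (the region $A^*$ can simply be discarded).

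For $\varepsilon=1$, where $h\equiv 1$ and the bound above diverges, one must use the full cancellation. I would decompose
\begin{align*}
K_1(t,s_0,x-y_0)-K_1(t,s_1,x-y_1)&=[K_1(t,s_0,x-y_0)-K_1(t,s_0,x-y_1)]\\
&\quad+[K_1(t,s_0,x-y_1)-K_1(t,s_1,x-y_1)]
\end{align*}
and partition $\bR^{d+1}\setminus A^*$ into dyadic parabolic annuli $R_j=\{(t,x):2^j c_0 b\leq|t-t_A|^{1/\gamma}+|x-x_A|<2^{j+1}c_0 b\}$, $j\geq 0$. The spatial increment is bounded via the fundamental theorem of calculus in $y$ by $|y_0-y_1|\sup_\lambda|\nabla_x K_1(t,s_0,x-y_\lambda)|$; on each $R_j$, a further split into the sub-regimes $|x-y_0|\geq|t-s_0|^{1/\gamma}$ (handled by the pointwise weighted bound from Lemma \ref{20.12.24.12.47}(i) with $p=\infty$) and $|x-y_0|<|t-s_0|^{1/\gamma}$ (handled by the weighted $L_2$ estimate from the regular condition combined with H\"older's inequality), with the weight exponent $n+\delta$ taken just above $d/2$ within the allowed range $n+\delta\leq k=\lfloor d/2\rfloor+1$, produces a geometric decay $2^{-j\mu}$ whose sum over $j\geq 0$ is $O(1)$. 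For the temporal increment, the same dyadic scheme applies, but since the regular condition provides only $\p_t$-bounds, not $\p_s$-bounds, I would use an integration by parts in $t$ in the spirit of Lemma \ref{20.12.28.16.02} to convert the $s$-increment into a controllable form involving $\p_t K_1$; the factor $|s_0-s_1|\leq b^\gamma$ then plays the analogous scaling-balancing role.

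The main obstacle is precisely this temporal increment in the $\varepsilon=1$ case: $\p_s K_1$ is not directly controlled by the regular condition, and the exponent $k=\lfloor d/2\rfloor+1$ is exactly on the edge of what is needed for the weighted $L_2$ estimates to close. Using the fractional-weight parameter $\delta\in[0,1)$ in Lemma \ref{20.12.24.12.47}(i) to access the critical threshold $d/2$, together with the integration-by-parts trick in $t$ to bypass $\p_s K_1$, is what makes the argument work at the stated minimal regularity.
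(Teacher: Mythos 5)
Your treatment of the non-singular case $\varepsilon\in[0,1)$ is correct and in fact simpler than the paper's: the $L_1$ bound $\|K_\varepsilon(t,s,\cdot)\|_{L_1(\bR^d)}\leq N|t-s|^{-\varepsilon}$ from Theorem \ref{20.12.24.12.47}(ii) (with $p=1$, $m=|\alpha|=0$) integrated against the cut-off $1_{0<t-s<T}$ gives $NT^{1-\varepsilon}$ with no cancellation at all, whereas the paper still exploits the $y$-cancellation for the spatial increment even when $\varepsilon<1$. Your choice of $A^*$ is essentially the paper's. For $\varepsilon=1$ your dyadic parabolic annuli are a workable alternative to the paper's partition of $\bR^{d+1}\setminus A^*$ into the ``near-time, far-space'' slab $A_1$ and the two ``far-time'' half-spaces $A_2,A_3$; note, though, that after the FTC in $y$ the paper only needs $\|\nabla_x K_1(t,s,\cdot)\|_{L_1}\leq N|t-s|^{-1-1/\gamma}$ (again Theorem \ref{20.12.24.12.47}(ii)) integrated over $|t-s|\geq cb^\gamma$, so the fractional-weight and $L_\infty$/$L_2$ sub-regime machinery you describe is only required on the single slab $A_1$, and there only with the integer weight $\lfloor d/2\rfloor+1$.

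The genuine gap is the temporal increment at $\varepsilon=1$. The FTC in $s$ produces a factor $\p_sK_1(t,s_\theta,\cdot)$, and you correctly observe that Definition \ref{regular condition} supplies only $\p_t^m$-bounds. But the fix you propose --- ``an integration by parts in $t$ in the spirit of Lemma \ref{20.12.28.16.02}'' to turn the $s$-increment into something involving $\p_tK_1$ --- does not work: there is no integration-by-parts identity that converts $\p_sK_1$ into $\p_tK_1$ unless the kernel depends only on $t-s$, in which case $\p_sK_1=-\p_tK_1$ identically; the time-measurable kernel $P_1(t,s,x)$ here is precisely \emph{not} of this form. What actually closes this step is the observation that, for the concrete kernel, $\p_sP_\varepsilon(t,s,\cdot)=-\psi(s,-i\nabla)P_\varepsilon(t,s,\cdot)$ satisfies the same bounds as $\p_tP_\varepsilon(t,s,\cdot)=\psi(t,-i\nabla)P_\varepsilon(t,s,\cdot)$ (re-run Theorem \ref{21.01.13.14.11} with $\psi(s,\xi)$ in place of $\psi(t,\xi)$), so the regular condition of Definition \ref{regular condition} also holds with $\p_s$ in place of $\p_t$. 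The paper's own proof applies the FTC in $s$ and then writes $\p_tK_1$ for the resulting term; read literally against Definition \ref{regular condition} this is an abuse that silently relies on the same extra fact. Without supplying that fact (or replacing your integration-by-parts idea with it), your estimate of the temporal increment does not go through.
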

\begin{proof}
Let
\begin{equation*}
    \begin{gathered}
    A=[i_02^{-n\gamma},(i_0+1)2^{-n\gamma})\times[0,2^{-n})^d\in \cP_n^{\gamma}
    \end{gathered}
\end{equation*}
and put
$$
    A^*=[i_02^{-n\gamma}-2^{-n\gamma},i_02^{-n\gamma}+2^{-n\gamma+1}]\times \overline{B_{2^{-(n-1)}\sqrt{d}}(0)}.
$$
We show that \eqref{2021040310} holds with a positive constant $N$. 
Denote
\begin{equation*}
    \begin{gathered}
    I(s_0,y_0,s_1,y_1):=\int_{\bR^{d+1}\setminus A^*}|K_{\varepsilon}(t,s_0,x-y_0)h(\varepsilon,t-s_0,T)-K_{\varepsilon}(t,s_1,x-y_1)h(\varepsilon,t-s_1,T)|\mathrm{d}x\mathrm{d}t,\\
     A_1:=(i_02^{-n\gamma}-2^{-n\gamma},i_02^{-n\gamma}+2^{-n\gamma+1})\times (\bR^{d}\setminus \overline{B_{2^{-(n-1)}\sqrt{d}}(0)}),\\
     A_2:=(-\infty,i_02^{-n\gamma}-2^{-n\gamma})\times \bR^d,\quad A_3:= (i_02^{-n\gamma}+2^{-n\gamma+1},\infty)\times \bR^d.
    \end{gathered}
\end{equation*}
Set
\begin{align*}
I(s_0,y_0,s_1,y_1)&= \left(\int_{A_1}\cdots \mathrm{d}x \mathrm{d}t\right)+\left(\int_{A_2}\cdots \mathrm{d}x\mathrm{d}t\right)+\left(\int_{A_3}\cdots \mathrm{d}x\mathrm{d}t\right)\\
&=:I_1(s_0,y_0,s_1,y_1)+I_2(s_0,y_0,s_1,y_1)+I_3(s_0,y_0,s_1,y_1).
\end{align*}
and with the triangle inequality ($i=1,2,3$),
\begin{align*}
    I_i(s_0,y_0,s_1,y_1)&\leq \int_{A_i}|K_{\varepsilon}(t,s_0,x-y_0)h(\varepsilon,t-s_0,T)-K_{\varepsilon}(t,s_0,x-y_1)h(\varepsilon,t-s_0,T)|\mathrm{d}x\mathrm{d}t\\
    &\quad+\int_{A_i}|K_{\varepsilon}(t,s_0,x-y_1)h(\varepsilon,t-s_0,T)-K_{\varepsilon}(t,s_1,x-y_1)h(\varepsilon,t-s_1,T)|\mathrm{d}x\mathrm{d}t\\
    &=:I_{i,1}(s_0,y_0,y_1)+I_{i,2}(s_0,s_1,y_1).
\end{align*}
Choose a sequence of nonnegative functions  $\eta_m\in C_c^{\infty}(\bR^d)$ so that $\eta_m\uparrow 1$ as $m\to\infty$ and $\eta_m(-x)=\eta_m(x)$ for all $x\in\bR^d$.
First, we prove that $I_1\leq NT^{1-\varepsilon}$.
If $x\in \bR^{d}\setminus \overline{B_{2^{-(n-1)}\sqrt{d}}(0)}$ and $y\in [0,2^{-n})^d$, then $|x-y|>2^{-n}\sqrt{d}$. By H\"older's inequality and Definition \ref{regular condition},
\begin{align*}
    \int_{|x|\geq 2^{-(n-1)}\sqrt{d}}|K_{\varepsilon}(t,s,x-y)|\mathrm{d}x&\leq \int_{|x|\geq 2^{-n}\sqrt{d}}|K_{\varepsilon}(t,s,x)|\mathrm{d}x\\
    &\leq \left(\int_{|x|\geq 2^{-n}\sqrt{d}}|x|^{-2d(2)}\mathrm{d}x\right)^{1/2}\left(\int_{|x|\geq 2^{-n}\sqrt{d}}|x|^{2d(2)}|K_{\varepsilon}(t,s,x)|^2\mathrm{d}x\right)^{1/2}\\
    &\leq N(2^{-n}\sqrt{d})^{-d(2)+\frac{d}{2}}|t-s|^{-\varepsilon-\frac{(d-d(2))}{\gamma}+\frac{d}{2\gamma}},
\end{align*}
where $N=N(d,N_2)$. Thus, since $s\in [i_02^{-n\gamma},(i_0+1)2^{-n\gamma})$,
\begin{align*}
    I_1(s_0,y_0,s_1,y_1)&\leq N2^{nd(2)-\frac{nd}{2}}\int_{i_02^{-n\gamma}-2^{-n\gamma}}^{i_02^{-n\gamma}+2^{-n\gamma+1}}|t-s|^{^{-\varepsilon-\frac{(d-d(2))}{\gamma}+\frac{d}{2\gamma}}}(1_{0<t-s<T}1_{\varepsilon\in[0,1)}+1_{\varepsilon=1})\mathrm{d}t\\
    &\leq NT^{1-\varepsilon}2^{nd(2)-\frac{nd}{2}}\int_{i_02^{-n\gamma}-2^{-n\gamma}}^{i_02^{-n\gamma}+2^{-n\gamma+1}}|t-s|^{^{-1-\frac{(d-d(2))}{\gamma}+\frac{d}{2\gamma}}}\mathrm{d}t\\
    &\leq NT^{1-\varepsilon}2^{nd(2)-\frac{nd}{2}}2^{n(d-d(2))-\frac{nd}{2}}=NT^{1-\varepsilon},
\end{align*}
where $N=N(d,\gamma,N_2)$.
\smallskip

Next we show that $I_{2,1}+I_{3,1}\leq NT^{1-\varepsilon}$.
By similarity, we only estimate  $I_{3,1}$. By the fundamental theorem of calculus, Fubini's theorem and Theorem \ref{20.12.24.12.47},
\begin{equation*}
\begin{aligned}
    \int_{\bR^d}|K_{\varepsilon}(t,s_0,x-y_0)-K_{\varepsilon}(t,s_0,x-y_1)|\mathrm{d}x&\leq |y_1-y_0|\int_{\bR^d}\int_{0}^{1}|\nabla_x K_{\varepsilon}(t,s_0,x-y_{\theta})|\mathrm{d}\theta \mathrm{d}x\\
    & \leq 2^{-n}\sqrt{d}\int_{\bR^d}|\nabla_xK_{\varepsilon}(t,s_0,x)|\mathrm{d}x\leq N2^{-n}|t-s_0|^{-\varepsilon-\frac{1}{\gamma}},
\end{aligned}    
\end{equation*}
where $N=N(d,N_1,N_2)$ and $y_{\theta}:=\theta y_0+(1-\theta)y_1$. Therefore, since $s_0\in [i_02^{-n\gamma},(i_0+1)2^{-n\gamma})$,
\begin{align*}
I_{3,1}(s_0,y_0,y_1)&\leq N2^{-n}\int_{i_02^{-n\gamma}+2^{-n\gamma+1}}^{\infty}|t-s_0|^{-\varepsilon-\frac{1}{\gamma}}(1_{0<t-s_0<T}1_{\varepsilon\in[0,1)}+1_{\varepsilon=1})\mathrm{d}t\\
&\leq N T^{1-\varepsilon}2^{-n}\int_{i_02^{-n\gamma}+2^{-n\gamma+1}}^{\infty}|t-s_0|^{-1-\frac{1}{\gamma}}\mathrm{d}t=NT^{1-\varepsilon},
\end{align*}
where $N=N(d,\gamma,\varepsilon,N_1,N_2)$. 
Finally we claim that $I_{2,2}+I_{3,2}\leq NT^{1-\varepsilon}$.
Due to similarity, we only estimate $I_{3,2}$. For estimating $I_{3,2}$, we consider the two cases: $\varepsilon\in[0,1)$ and $\varepsilon=1$.
\smallskip

\textbf{Case 1.} $\varepsilon\in[0,1)$.

\smallskip

By Fubini's theorem and Theorem \ref{20.12.24.12.47},
\begin{equation*}
\begin{aligned}
I_{3,2}(s_0,s_1,y_1)&\leq\sum_{i=1}^2\int_{i_02^{-n\gamma}+2^{-n\gamma+1}}^{\infty}\left(\int_{\bR^d}|K_{\varepsilon}(t,s_i,x)|\mathrm{d}x\right)h(\varepsilon,t-s_i,T)\mathrm{d}t\\
&\leq N\sum_{i=1}^2\int_{i_02^{-n\gamma}+2^{-n\gamma+1}}^{\infty}|t-s_i|^{-\varepsilon}1_{0<t-s_i<T}\mathrm{d}t\\
&\leq N\sum_{i=1}^2\int_{s_i}^{s_i+T}|t-s_i|^{-\varepsilon}\mathrm{d}t=NT^{1-\varepsilon},
\end{aligned}    
\end{equation*}
where $N=N(d,\varepsilon,N_1,N_2)$.

\smallskip

\textbf{Case 2.} $\varepsilon=1$.

\smallskip

By the fundamental theorem of calculus, Fubini's theorem, and Theorem \ref{20.12.24.12.47},
\begin{equation*}
\begin{aligned}
    \int_{\bR^d}|K_{1}(t,s_0,x-y_1)-K_{1}(t,s_1,x-y_1)|\mathrm{d}x&\leq |s_1-s_0|\int_{\bR^d}\int_{0}^{1}|\p_t K_{1}(t,s_{\theta},x-y_1)|\mathrm{d}\theta \mathrm{d}x\\
    & \leq 2^{-n\gamma}\int_{0}^1\int_{\bR^d}|\p_tK_{1}(t,s_{\theta},x)|\mathrm{d}x\mathrm{d}\theta\\
    &\leq N2^{-n\gamma}\int_0^1|t-s_{\theta}|^{-2}\mathrm{d}\theta
\end{aligned}    
\end{equation*}
where $N=N(d,N_1,N_2)$ and $s_{\theta}:=(1-\theta)s_0+\theta s_1$. One can observe that for $s\in[i_02^{-n\gamma},(i_0+1)2^{-n\gamma})$,
$$
\frac{1}{2}\leq \frac{|t-s|}{|t-s_{\theta}|}\leq 2,
$$
Therefore,
\begin{align*}
    I_{3,2}(s_0,s_1,y_1)\leq N2^{-n\gamma}\int_{i_02^{-n\gamma}+2^{-n\gamma+1}}^{\infty}|t-s|^{-2}\mathrm{d}t\leq N,
\end{align*}
where $N=N(d,\gamma,N_1,N_2)$. The theorem is proved.
\end{proof}

\vspace{3mm}
{\bf Proof of Theorem \ref{20.12.21.13.23}}
\vspace{3mm}

By Lemma \ref{20.12.21.13.02}, $K_{\varepsilon}$ is a Calder\'on-Zygmund kernel relative to $\cP_n^{\gamma}$ (See \cite[Definition 3.1]{krylov2001calderon}). Therefore, by \cite[Theorem 4.1]{krylov2001calderon}, we obtain the desired result. The theorem is proved.

\subsection{Sharp-Maximal function estimates and proof of Theorem \ref{20.12.17.11.23}}
Throughout this subsection, we fix a $p_0 \in (1,2]$ and an operator $\cT_{\varepsilon,T}$ satisfying the $\left(\left\lfloor\frac{d}{p_0}\right\rfloor+2\right)$-times regular condition with $(\gamma,N_1,N_2)$. Recall
$$
Q_b^{\gamma}(t,x)=(t-b^{\gamma},t+b^{\gamma})\times \{ y \in \bR^d : |x-y| < b\}.
$$
\begin{lem}
\label{20.11.15.15.14}
Let  $t_0\in\bR$ and $b>0$.
Assume that $f\in C_c^{\infty}(\bR^{d+1})$ has a support in $(t_0-3b^{\gamma},t_0+3b^{\gamma})\times B_{3b}(0)$. 
Then for any $(t,x)\in Q_b^{\gamma}(t_0,0)$,
$$
\aint_{Q_b^{\gamma}(t_0,0)}|\cT_{\varepsilon,T} f(s,y)|^{p_0}\mathrm{d}y\mathrm{d}s\leq NT^{p_0(1-\varepsilon)} \bM|f|^{p_0}(t,x),
$$
where $N=N(d,p_0,\gamma,\varepsilon,N_1,N_2,N_3)$.
\end{lem}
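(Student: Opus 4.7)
The plan is to derive the estimate from the global $L^{p_0}$-boundedness of $\cT_{\varepsilon,T}$ combined with a trivial cylinder-inclusion argument. First, I observe that for any $p_0\in(1,2]$, the operator satisfies
$$
\|\cT_{\varepsilon,T} g\|_{L_{p_0}(\bR^{d+1})}\leq N T^{1-\varepsilon}\|g\|_{L_{p_0}(\bR^{d+1})},\quad g\in C_c^\infty(\bR^{d+1}),
$$
with $N=N(d,p_0,\gamma,\varepsilon,N_1,N_2,N_3)$. For $p_0=2$ this is exactly the hypothesis of Theorem \ref{20.12.17.11.23}, while for $p_0\in(1,2)$ it is Theorem \ref{20.12.21.13.23}, whose $(\lfloor d/2\rfloor+1)$-times regular assumption is weaker than the $(\lfloor d/p_0\rfloor+2)$-times regular condition standing in this subsection (since $p_0\leq 2$).

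Next, I use the support condition on $f$. For any $(t,x)\in Q_b^\gamma(t_0,0)$ and any $(s,y)\in(t_0-3b^\gamma,t_0+3b^\gamma)\times B_{3b}(0)$ the triangle inequality gives $|s-t|<4b^\gamma$ and $|x-y|<4b$, hence
$$
\mathrm{supp}(f)\subset Q_{Cb}^\gamma(t,x),\qquad C:=C(\gamma):=\max\{4,\,4^{1/\gamma}\}.
$$
Since $|Q_{Cb}^\gamma(t,x)|$ and $|Q_b^\gamma(t_0,0)|$ are both comparable to $b^{\gamma+d}$ (with ratio depending only on $d$ and $\gamma$), this inclusion yields
$$
\|f\|_{L_{p_0}(\bR^{d+1})}^{p_0}=\int_{Q_{Cb}^\gamma(t,x)}|f(s,y)|^{p_0}\mathrm{d}y\mathrm{d}s\leq N|Q_b^\gamma(t_0,0)|\,\bM|f|^{p_0}(t,x).
$$

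Combining the two ingredients,
\begin{align*}
\aint_{Q_b^\gamma(t_0,0)}|\cT_{\varepsilon,T}f(s,y)|^{p_0}\mathrm{d}y\mathrm{d}s
&\leq \frac{1}{|Q_b^\gamma(t_0,0)|}\|\cT_{\varepsilon,T}f\|_{L_{p_0}(\bR^{d+1})}^{p_0}\\
&\leq \frac{N T^{p_0(1-\varepsilon)}}{|Q_b^\gamma(t_0,0)|}\|f\|_{L_{p_0}(\bR^{d+1})}^{p_0}
\leq N T^{p_0(1-\varepsilon)}\bM|f|^{p_0}(t,x),
\end{align*}
which is the claimed bound. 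There is no genuine obstacle: the only nontrivial input is the global $L^{p_0}$-bound with the correct $T^{1-\varepsilon}$ gain, which is already available. The role of this lemma is purely packaging — converting the global boundedness into the localized, maximal-function-dominated form needed to estimate the sharp function of $\cT_{\varepsilon,T}f$ on cubes in which $f$ is supported near the base point.
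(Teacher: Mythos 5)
Your proof is correct and follows essentially the same route as the paper: invoke the global unweighted $L_{p_0}$-bound (from the $L_2$-hypothesis when $p_0=2$, and from Theorem \ref{20.12.21.13.23} when $p_0\in(1,2)$, which needs only the weaker $(\lfloor d/2\rfloor+1)$-regularity), then use the support assumption on $f$ together with the cylinder inclusion $\mathrm{supp}(f)\subset Q_{Cb}^\gamma(t,x)$ to pass to $\bM|f|^{p_0}(t,x)$. You have merely spelled out the last cylinder-inclusion step, which the paper leaves to the reader.
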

\begin{proof}
By Theorem \ref{20.12.21.13.23},
\begin{align*}
    \aint_{Q_b^{\gamma}(t_0,0)}|\cT_{\varepsilon,T} f(s,y)|^{p_0}\mathrm{d}y\mathrm{d}s&\leq NT^{p_0(1-\varepsilon)}|Q_b^{\gamma}(t_0,0)|^{-1} \int_{\bR^{d+1}}|f(s,y)|^{p_0}\mathrm{d}y\mathrm{d}s \\
    &=NT^{p_0(1-\varepsilon)} |Q_b^{\gamma}(t_0,0)|^{-1}    \int_{t_0-3b^{\gamma}}^{t_0+3b^{\gamma}}\int_{B_{3b}(0)}|f(s,y)|^{p_0}\mathrm{d}y\mathrm{d}s\\
    &\leq NT^{p_0(1-\varepsilon)}\bM|f|^{p_0}(t,x),
\end{align*}
where $N=N(d,p_0,\gamma,\varepsilon,N_1,N_2,N_3)$. The lemma is proved.
\end{proof}

\begin{lem}
\label{20.12.21.16.26}
Let $t_0\in\bR$ and $b>0$.
Assume that $f\in C_c^{\infty}(\bR^{d+1})$ has a support in $(t_0-3b^{\gamma},\infty)\times \bR^d$. Then for any $(t,x)\in Q_b^{\gamma}(t_0,0)$,
$$
\aint_{Q_b^{\gamma}(t_0,0)}|\cT_{\varepsilon,T} f(s,y)|^{p_0}\mathrm{d}s\mathrm{d}y\leq NT^{p_0(1-\varepsilon)}\bM|f|^{p_0}(t,x),
$$
where $N=N(d,p_0,\gamma,\varepsilon,N_1,N_2,N_3)$.
\end{lem}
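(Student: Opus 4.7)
The strategy is to split $f$ according to its support structure, reducing most of the estimate to Lemma~\ref{20.11.15.15.14} and handling the spatial-tail piece pointwise via kernel regularity. I would write $f = f_1 + f_2 + f_3$, where
\begin{equation*}
f_1 := f\cdot 1_{(t_0-3b^\gamma,\,t_0+3b^\gamma)\times B_{3b}(0)},\quad
f_2 := f\cdot 1_{(t_0-3b^\gamma,\,t_0+3b^\gamma)\times(\bR^d\setminus B_{3b}(0))},\quad
f_3 := f\cdot 1_{[t_0+3b^\gamma,\,\infty)\times\bR^d}.
\end{equation*}
Since $\cT_{\varepsilon,T}$ integrates only over $r<s$ and every $(s,y)\in Q_b^\gamma(t_0,0)$ satisfies $s<t_0+b^\gamma<t_0+3b^\gamma$, the contribution of $f_3$ vanishes identically on the cube. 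The piece $f_1$ is supported in precisely the region handled by Lemma~\ref{20.11.15.15.14}, so
\begin{equation*}
\aint_{Q_b^\gamma(t_0,0)}|\cT_{\varepsilon,T}f_1(s,y)|^{p_0}\,ds\,dy \;\leq\; N T^{p_0(1-\varepsilon)}\,\bM|f_1|^{p_0}(t,x) \;\leq\; N T^{p_0(1-\varepsilon)}\,\bM|f|^{p_0}(t,x).
\end{equation*}

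For $f_2$, whose effective support satisfies $|y-z|\geq 3b-b=2b$ for every $(s,y)\in Q_b^\gamma(t_0,0)$, I aim for the pointwise bound $|\cT_{\varepsilon,T}f_2(s,y)|\leq NT^{1-\varepsilon}(\bM|f|^{p_0}(t,x))^{1/p_0}$. For each fixed $r$ in the effective range $(t_0-3b^\gamma,\,s)$ (causality forces $r<s$), I would apply Lemma~\ref{20.11.16.14.53} to the spatial convolution of $K_\varepsilon(s,r,\cdot)$ and $f_2(r,\cdot)$ with $x=0$, $R_1=b$, $R_2=2b$, and then bound the resulting radial integral by Lemma~\ref{20.12.17.20.21}(ii) with $a_0=2b$ (using the inclusion $B_{b+\lambda}(0)\subset B_{2b+\lambda}(x)$ valid for $x\in B_b(0)$). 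The derivative indices $(m,|\alpha|)=(0,1)$ fit the case $\varepsilon\in[0,1)$; for $\varepsilon=1$ one switches to $(m,|\alpha|)=(0,2)$ or $(1,1)$ (requiring a second radial integration by parts, or the analogous identity applied to $\p_t K_1$), which is legitimate under the $(\lfloor d/p_0\rfloor+2)$-times regular hypothesis and mirrors the three terms in the right-hand side of Lemma~\ref{20.12.28.16.02}.

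The remaining time integration over $r\in(t_0-3b^\gamma,\,s)$ is the main obstacle: Lemma~\ref{20.12.28.16.02} handles integrals over $(-\infty,\,t_0-2b^\gamma)$, but its integration-by-parts step rests on the comparability $|s-r|\sim|t_0+b^\gamma-r|$, which fails near $r=s$. I would therefore split the $r$-integral at $t_0-2b^\gamma$. On the far piece $r\leq t_0-2b^\gamma$, Lemma~\ref{20.12.28.16.02} applies verbatim. On the near piece $r\in[t_0-2b^\gamma,\,s]$ the interval has length at most $3b^\gamma$, so I would use H\"older's inequality in $r$ together with the crude maximal-function bound
\begin{equation*}
\int_{t_0-2b^\gamma}^{s}\!\int_{B_{2b+\lambda}(x)}|f(r,z)|^{p_0}\,dz\,dr \;\leq\; N(2b+\lambda)^d\,b^\gamma\,\bM|f|^{p_0}(t,x)
\end{equation*}
and the uniform $\lambda$-estimate $\int_0^\infty(2b+\lambda)^d H(|s-r|,\lambda)\,d\lambda\leq N$ from Lemma~\ref{20.12.17.20.21}(iii). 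The desired factor $T^{1-\varepsilon}$ emerges on the near piece through the cut-off $h(\varepsilon,s-r,T)$ via the elementary inequality $\min(b^\gamma,T)^{1-\varepsilon}\leq T^{1-\varepsilon}$, and on the far piece directly from Lemma~\ref{20.12.28.16.02}. Averaging the resulting pointwise bound over $Q_b^\gamma(t_0,0)$ then closes the argument.
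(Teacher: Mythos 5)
Your decomposition $f=f_1+f_2+f_3$ and the treatment of $f_1$ (via Lemma~\ref{20.11.15.15.14}) and $f_3$ (vanishes by causality since $s<t_0+b^\gamma$) match the paper's reduction exactly: the paper uses a temporal cut-off $\eta$ and a spatial cut-off $\zeta$ to reduce, without loss of generality, to $f$ supported in $(t_0-3b^\gamma,t_0+3b^\gamma)\times(\bR^d\setminus B_{2b}(0))$. The gap is in your treatment of the remaining piece $f_2$.

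The paper handles $f_2$ \emph{without any split of the $r$-integral and without Lemma~\ref{20.12.28.16.02}}. After Lemma~\ref{20.11.16.14.53} (with $R_2=b$, $|y-z|\le 2b\Rightarrow f=0$) the radial integral starts at $\lambda=b$, and the paper then applies Lemma~\ref{20.12.17.20.21}\textbf{(i)} with $a_1=b$, producing the factor $\left(\int_{b}^\infty\lambda^{-p_0 d(p_0)-1}\int_{B_{2b+\lambda}(x)}|f|^{p_0}\,\mathrm{d}z\,\mathrm{d}\lambda\right)^{1/p_0}\,|s-r|^{-\varepsilon-(d-d(p_0))/\gamma+d/(p_0'\gamma)}$. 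The cut-off $h(\varepsilon,s-r,T)$ trades one power of $\varepsilon$ for $T^{1-\varepsilon}$; since $d(p_0)>d/p_0$, the exponent on $|s-r|$ after this trade is strictly greater than $-1$, so H\"older in $r$ together with Fubini over $s\in(t_0-b^\gamma,t_0+b^\gamma)$ finishes the proof in one stroke. The spatial-tail decay from $\lambda\ge b$ is what makes the whole $r$-integral over $(t_0-3b^\gamma,s)$ harmless, including $r$ near $s$.

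You instead apply Lemma~\ref{20.12.17.20.21}\textbf{(ii)}, whose bound necessarily rewrites the radial integral as an $\int_0^\infty(\cdots)H(|s-r|,\lambda)\,\mathrm{d}\lambda$. This throws away the information that $\lambda\ge 2b$ on the original $\lambda$-range, and reintroduces the $H$-kernel, whose $\lambda\to0$ contribution scales like $|s-r|^{-(d+1)/\gamma}$ for $\lambda\le|s-r|^{1/\gamma}$. To control it on the near piece $r\in[t_0-2b^\gamma,s]$ you invoke the bound $\int_0^\infty(2b+\lambda)^dH(|s-r|,\lambda)\,\mathrm{d}\lambda\le N$ from Lemma~\ref{20.12.17.20.21}\textbf{(iii)}; but that estimate is proved \emph{only under the hypothesis} $|s-r|\ge a^\gamma$ (here $a=2b$), which fails precisely as $r\to s$. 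For $|s-r|\ll(2b)^\gamma$ the quantity $\int_0^\infty(2b+\lambda)^dH(|s-r|,\lambda)\,\mathrm{d}\lambda$ in fact blows up like $|s-r|^{-d/\gamma}$, so the claimed near-piece bound is unsubstantiated. The annular-support structure of $f_2$ would rescue this if you retained the $\lambda\ge 2b$ restriction (i.e.\ used part (i) with $a_1=2b$), but once you pass to part (ii) you cannot recover it: for $x$ in $Q_b^\gamma(t_0,0)$ near the spatial boundary, $\int_{B_{2b+\lambda}(x)}|f_2|^{p_0}\,\mathrm{d}z$ is nonzero for arbitrarily small $\lambda>0$.

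The architectural confusion behind this is instructive: the integration-by-parts machinery (Lemma~\ref{20.12.28.16.02}) and the switch to $(m,|\alpha|)=(0,2)$ or $(1,1)$ belong to the \emph{far-past} lemma (Lemma~\ref{20.12.21.16.24}), where $f$ is supported in $(-\infty,t_0-2b^\gamma)$ and one needs difference/cancellation estimates with the comparability $|s-r|\sim|t_0+b^\gamma-r|$. The present lemma is the \emph{near-past, spatial-tail} lemma, and its entire mechanism is the $\lambda^{-p_0d(p_0)-1}$ decay from part (i). Replacing your step~4 by the paper's direct H\"older--Fubini computation after part (i) fixes the argument.
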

\begin{proof}
Choose a $\eta\in C^{\infty}(\bR)$ satisfying
\begin{itemize}
    \item $\eta(s)\in[0,1]$ for all $s\in\bR$
    \item  $\eta(s)=1$ for all $s\leq t_0+2b^{\gamma}$
    \item $\eta(s)=0$ for all $s\geq t_0+3b^{\gamma}$.
\end{itemize}
Observe that if $(t,x)\in Q_b^{\gamma}(t_0,0)$, then 
$$
\cT_{\varepsilon,T} (f\eta)(t,x)=\cT_{\varepsilon,T} f(t,x) \quad\text{and}\quad \bM|f\eta|^{p_0}(t,x)\leq \bM|f|^{p_0}(t,x).
$$
Thus we may assume that $f(s,y)=0$ if $|s-t_0|\geq 3b^{\gamma}$. 
Next choose a $\zeta\in C^{\infty}(\bR^d)$ satisfying
\begin{itemize}
    \item $\zeta(y)\in[0,1]$ for all $y\in\bR^d$
    \item  $\zeta(y)=1$ for all $y\in B_{2b}(0)$
    \item   $\zeta(y)=0$ for all $y\in \bR^d\setminus B_{5b/2}(0)$.
\end{itemize}
Note that $\cT_{\varepsilon,T} f=\cT_{\varepsilon,T} (f\zeta)+\cT_{\varepsilon,T} (f(1-\zeta))$ and $\cT(f\zeta)$ can be estimated by Lemma \ref{20.11.15.15.14}.
Thus it suffices to estimate  $\cT_{\varepsilon,T} (f(1-\zeta))$ and we may assume that $f(s,y)=0$ if $|y|<2b$.
In total, we assume that $f(s,y)=0$ if $|s-t_0|\geq 3b^{\gamma}$ or $|y|<2b$ without loss of generality.
Hence if $(s,y)\in Q_b^{\gamma}(t_0
,0)$ and $|z|<b$, then $|y-z|\leq 2b$ and $f(r,y-z)=0$. By Lemma \ref{20.11.16.14.53} and H\"older's inequality,
\begin{align*}
 \left|\int_{\bR^d}K_{\varepsilon}(s,r,y-z)f(r,z)\mathrm{d}z\right|&=\left|\int_{|z|\geq b}K_{\varepsilon}(s,r,z)f(r,y-z)\mathrm{d}z\right|\\
 &\leq N \int_{b}^{\infty}(\lambda^dK_{\varepsilon,0,\alpha}^{p_0'}(s,r,\lambda))^{1/p_0'}\left(\int_{B_{2b+\lambda}(x)}|f(r,z)|^{p_0}\mathrm{d}z\right)^{1/p_0}\mathrm{d}\lambda,
\end{align*}
where $N=N(d,p_0)$, $|\alpha|=1$, and
$$
K_{\varepsilon,0,\alpha}^{p_0'}(s,r,\lambda):=\int_{\bS^{d-1}}|D_x^{\alpha} K_{\varepsilon}(s,r,\lambda w)|^{p_0'}\sigma(dw).
$$
Recalling
    $$
    h(\varepsilon,t-s,T)=1_{\varepsilon\in[0,1)}1_{|t-s|<T}+1_{\varepsilon=1},
    $$
we also have
$$
|s-r|^{-\varepsilon-\frac{(d-d(p_0))}{\gamma}+\frac{d}{p_0'\gamma}}h(\varepsilon,s-r,T)\leq T^{1-\varepsilon}|s-r|^{-1-\frac{(d-d(p_0))}{\gamma}+\frac{d}{p_0'\gamma}}.
$$
By Lemma \ref{20.12.17.20.21}$(i)$ and H\"older's inequality,
\begin{align*}
    |\cT_{\varepsilon,T}(s,y)|&\leq T^{1-\varepsilon}\int_{t_0-3b^{\gamma}}^{s}\left(\int_{b}^{\infty}\lambda^{-p_0d(p_0)-1}\int_{B_{2b+\lambda}(x)}|f(r,z)|^{p_0}\mathrm{d}z\mathrm{d}\lambda \right)^{1/p_0}|s-r|^{-1-\frac{(d-d(p_0))}{\gamma}+\frac{d}{p_0'\gamma}}\mathrm{d}r\\
    &\leq T^{1-\varepsilon}\left(\int_{b}^{\infty}\lambda^{-p_0d(p_0)-1}\int_{t_0-3b^{\gamma}}^{s}\int_{B_{2b+\lambda}(x)}|f(r,z)|^{p_0}|s-r|^{-1-\frac{(d-d(p_0))}{\gamma}+\frac{d}{p_0'\gamma}}\mathrm{d}z \mathrm{d}r \mathrm{d}\lambda \right)^{1/p_0}\\
    &\qquad \times \left(\int_{t_0-3b^{\gamma}}^{s}|s-r|^{-1-\frac{(d-d(p_0))}{\gamma}+\frac{d}{p_0'\gamma}}\mathrm{d}r\right)^{1/p_0'}\\
    &\leq NT^{1-\varepsilon}b^{-\frac{(d-d(p_0))}{p_0'}+\frac{d}{p_0'^2}}\\
    &\qquad\times\left(\int_{b}^{\infty}\lambda^{-p_0d(p_0)-1}\int_{t_0-3b^{\gamma}}^{s}\int_{B_{2b+\lambda}(x)}|f(r,z)|^{p_0}|s-r|^{-1-\frac{(d-d(p_0))}{\gamma}+\frac{d}{p_0'\gamma}}\mathrm{d}z \mathrm{d}r \mathrm{d}\lambda \right)^{1/p_0}.
\end{align*}
By virtue of Fubini's theorem,
\begin{align*}
    &\int_{t_0-b^{\gamma}}^{t_0+b^{\gamma}}|\cT_{\varepsilon,T}(s,y)|^{p_0}\mathrm{d}s\\
    &\leq NT^{p_0(1-\varepsilon)}b^{-(p_0-1)(d-d(p_0))+\frac{(p_0-1)d}{p_0'}}\\
    &\quad\times\left(\int_{b}^{\infty}\lambda^{-p_0d(p_0)-1}\int_{t_0-3b^{\gamma}}^{t_0+b^{\gamma}}\int_{B_{2b+\lambda}(x)}|f(r,z)|^{p_0}\int_{r}^{t_0+b^{\gamma}}|s-r|^{-1-\frac{(d-d(p_0))}{\gamma}+\frac{d}{p_0'\gamma}}\mathrm{d}s\mathrm{d}z \mathrm{d}r \mathrm{d}\lambda\right)\\
    &\leq NT^{p_0(1-\varepsilon)}b^{-p_0(d-d(p_0))+\frac{p_0d}{p_0'}}\left(\int_{b}^{\infty}\lambda^{-p_0d(p_0)-1}\int_{t_0-3b^{\gamma}}^{t_0+b^{\gamma}}\int_{B_{2b+\lambda}(x)}|f(r,z)|^{p_0}\mathrm{d}z \mathrm{d}r \mathrm{d}\lambda\right)\\
    &\leq NT^{p_0(1-\varepsilon)}b^{-p_0(d-d(p_0))+\frac{p_0d}{p_0'}+\gamma-p_0d(p_0)+d}\bM|f|^{p_0}(t,x)\leq NT^{p_0(1-\varepsilon)}b^{\gamma}\bM|f|^{p_0}(t,x).
\end{align*}
Therefore,
\begin{align*}
    \aint_{Q_b^{\gamma}(t_0,0)}|\cT f(s,y)|^{p_0}\mathrm{d}y\mathrm{d}s&\leq N T^{p_0(1-\varepsilon)}\bM|f|^{p_0}(t,x),
\end{align*}
where $N=N(d,p_0,\gamma,\varepsilon,N_1,N_2,N_3)$. The lemma is proved.
\end{proof}

\begin{lem}
\label{20.12.21.16.24}
Let $t_0\in\bR$ and $b>0$.
Assume that $f\in C_c^{\infty}(\bR^{d+1})$ has a support in $(-\infty,t_0-2b^{\gamma})\times \bR^d$. Then for any $(t,x)\in Q_b^{\gamma}(t_0,0)$,
$$
\sup_{(s_1,y_1),(s_2,y_2)\in Q_b^{\gamma}(t_0,0)}|\cT_{\varepsilon,T} f(s_1,y_1)-\cT_{\varepsilon,T} f(s_2,y_2)|^{p_0}\leq NT^{p_0(1-\varepsilon)}\bM|f|^{p_0}(t,x),
$$
where $N=N(d,p_0,\gamma,\varepsilon,N_1,N_2,N_3)$.
\end{lem}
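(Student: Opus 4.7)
The plan is to split the increment into a spatial and a temporal part and to reduce each part, in turn, to the three regularity triples $(m,|\alpha|)\in\{(0,1),(0,2),(1,1)\}$ covered by Lemma~\ref{20.12.28.16.02}. Fix $(t,x),(s_1,y_1),(s_2,y_2)\in Q_b^\gamma(t_0,0)$. Since $f$ vanishes for $r\geq t_0-2b^\gamma$, the $r$-integrals defining $\cT_{\varepsilon,T}f(s_i,y_j)$ reduce to $(-\infty,t_0-2b^\gamma)$, so one may write
\begin{align*}
\cT_{\varepsilon,T}f(s_1,y_1)-\cT_{\varepsilon,T}f(s_2,y_2) = I_{\mathrm{sp}}+I_{\mathrm{tm}},
\end{align*}
with $I_{\mathrm{sp}}=\cT_{\varepsilon,T}f(s_1,y_1)-\cT_{\varepsilon,T}f(s_1,y_2)$ and $I_{\mathrm{tm}}=\cT_{\varepsilon,T}f(s_1,y_2)-\cT_{\varepsilon,T}f(s_2,y_2)$. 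Note $|y_1-y_2|\leq 2b$ and $|s_1-s_2|\leq 2b^\gamma$.

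For $I_{\mathrm{sp}}$, I would use the fundamental theorem of calculus along $y_\theta=\theta y_1+(1-\theta)y_2$ to extract a factor $|y_1-y_2|\leq 2b$ and an integrand containing $\nabla_y K_\varepsilon(s_1,r,y_\theta-z)$; then I would apply Lemma~\ref{20.11.16.14.53} pointwise in $(\theta,r)$ with $g(z)=\nabla_y K_\varepsilon(s_1,r,z)$, $R_1=2b$, and $R_2=0$ (the proof of that lemma goes through with $R_2=0$ since the boundary term at the origin in the radial integration-by-parts vanishes, and the gradient of $K_\varepsilon$ satisfies the required continuity and decay by the regularity assumption). This introduces the spatial Hessian of $K_\varepsilon$, corresponding to $(m,|\alpha|)=(0,2)$. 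Lemma~\ref{20.12.17.20.21}(ii) then bounds the resulting $\lambda$-integral by the integrand that appears in Lemma~\ref{20.12.28.16.02}, with the time factor $|s_1-r|^{-\varepsilon-1/\gamma}h(\varepsilon,s_1-r,T)=h_0(0,2,\varepsilon,s_1-r,T)$. Invoking Lemma~\ref{20.12.28.16.02} with $(m,|\alpha|)=(0,2)$, only the middle term $T^{1-\varepsilon}b^{-1}$ on the right-hand side survives, giving $|I_{\mathrm{sp}}|\leq N\cdot b\cdot T^{1-\varepsilon}b^{-1}(\bM|f|^{p_0}(t,x))^{1/p_0}=NT^{1-\varepsilon}(\bM|f|^{p_0}(t,x))^{1/p_0}$.

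For $I_{\mathrm{tm}}$ the cut-off $h(\varepsilon,\cdot,T)$ forces a case split. When $\varepsilon=1$ the cut-off is identically one, so I would apply the fundamental theorem of calculus in $s$ to produce a factor $|s_1-s_2|\leq 2b^\gamma$ and the integrand $\partial_s K_1(s_\theta,r,y_2-z)$; running the same pipeline with $g=\partial_s K_1$ (so $(m,|\alpha|)=(1,1)$) and invoking Lemma~\ref{20.12.28.16.02} with $(1,1)$ (whose only surviving term is $b^{-\gamma}$) yields $|I_{\mathrm{tm}}|\leq Nb^\gamma\cdot b^{-\gamma}(\bM|f|^{p_0}(t,x))^{1/p_0}=NT^{1-\varepsilon}(\bM|f|^{p_0}(t,x))^{1/p_0}$. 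When $\varepsilon\in[0,1)$ the indicator $1_{|s-r|\leq T}$ is discontinuous in $s$, so a time-derivative argument is unavailable; instead I would simply bound $|I_{\mathrm{tm}}|\leq|\cT_{\varepsilon,T}f(s_1,y_2)|+|\cT_{\varepsilon,T}f(s_2,y_2)|$ and estimate each summand by running the same pipeline with $g=K_\varepsilon(s_i,r,\cdot)$ (not a derivative), producing $(m,|\alpha|)=(0,1)$ and, via Lemma~\ref{20.12.28.16.02} (whose only surviving term in this case is $T^{1-\varepsilon}$), the bound $T^{1-\varepsilon}(\bM|f|^{p_0}(t,x))^{1/p_0}$.

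Combining the two contributions, raising to the $p_0$-th power, and taking the supremum over $(s_1,y_1),(s_2,y_2)\in Q_b^\gamma(t_0,0)$ gives the claim. The main obstacle is the $\varepsilon\in[0,1)$ case of $I_{\mathrm{tm}}$, where the sharp $T$-cut-off blocks the natural time-derivative argument that handles the singular case $\varepsilon=1$; the remedy is to forget the difference structure in time and control the two values of $\cT_{\varepsilon,T}f$ separately by the ``kernel'' (rather than the derivative) instance of Lemma~\ref{20.11.16.14.53}. A pleasant feature is that the three triples $(0,1),(0,2),(1,1)$ and the three factors $T^{1-\varepsilon}h_1(0,1,\varepsilon)$, $T^{1-\varepsilon}b^{-1}h_1(0,2,\varepsilon)$, $b^{-\gamma}h_1(1,1,1)$ produced by Lemma~\ref{20.12.28.16.02} correspond one-to-one to the three sub-cases, with each geometric scaling factor being absorbed exactly by the corresponding increment.
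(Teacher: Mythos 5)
Your decomposition, case split on $\varepsilon$, and mapping of the three sub-cases to the triples $(0,2)$, $(0,1)$, $(1,1)$ of Lemma~\ref{20.12.28.16.02} all match the paper's proof exactly, including the choice $R_1=2b$, $R_2=0$ in Lemma~\ref{20.11.16.14.53} and the observation that the sharp $T$-cut-off blocks a time-derivative argument when $\varepsilon\in[0,1)$. The argument is correct and essentially identical to the paper's.
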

\begin{proof}
Recall that
\begin{equation*}
    \begin{gathered}
    h(\varepsilon,s-r,T):=1_{\varepsilon\in[0,1)}1_{|s-r|\leq T}+1_{\varepsilon=1},\\
    H(r,\lambda):=r^{-\frac{d}{\gamma}}\left(r^{-\frac{1}{\gamma}}1_{\lambda\leq r^{\frac{1}{\gamma}}}+r^{\frac{p_0d(p_0)}{\gamma}}\lambda^{-p_0d(p_0)-1}1_{r^{\frac{1}{\gamma}}\leq \lambda}\right),\\
    K_{\varepsilon,m,\alpha}^{p_0'}(s,r,\lambda):=\int_{\bS^{d-1}}|\p_t^mD_x^2 K_{\varepsilon}(s,r,\lambda w)|^{p_0'}\sigma(dw).
    \end{gathered}
\end{equation*}
By the triangle inequality,
\begin{align*}
    |\cT_{\varepsilon,T}f(s_1,y_1)-\cT_{\varepsilon,T}f(s_2,y_2)|&\leq |\cT_{\varepsilon,T}f(s_1,y_1)-\cT_{\varepsilon,T}f(s_1,y_2)|+|\cT_{\varepsilon,T}f(s_1,y_2)-\cT_{\varepsilon,T}f(s_2,y_2)|\\
    &=: I+II.
\end{align*}
We claim that
$$
I+II\leq NT^{1-\varepsilon}(\bM|f|^{p_0}(t,x))^{1/p_0}.
$$

First, we estimate $I$. By the fundamental theorem of calculus, Lemma \ref{20.11.16.14.53} with $R_1=2b$ and $R_2=0$ and Lemma \ref{20.12.17.20.21} $(ii)$,
\begin{equation*}
\begin{aligned}
    &I=|\cT_{\varepsilon,T} f(s_1,y_1)-\cT_{\varepsilon,T} f(s_1,y_2)|\\
    &= \left|\int_{-\infty}^{t_0-2b^{\gamma}}\int_{\bR^d}K_{\varepsilon}(s_1,r,y_1-y)-K_{\varepsilon}(s_1,r,y_2-y)f(r,y)h(\varepsilon,s_1-r,T)\mathrm{d}y\mathrm{d}r\right|\\
    &= |y_2-y_1|\left|\int_{-\infty}^{t_0-2b^{\gamma}}\int_{\bR^d}\int_{0}^{1}\nabla_x K(s_1,r,y_{\theta}-y)f(r,y)h(\varepsilon,s_1-r,T)\mathrm{d}\theta \mathrm{d}y\mathrm{d}r\right|\\
    & \leq 2b\left|\int_{0}^1\int_{-\infty}^{t_0-2b^{\gamma}}\int_{\bR^d}\nabla_xK_{\varepsilon}(s_1,r,y)f(r,y_{\theta}-y)h(\varepsilon,s_1-r,T) \mathrm{d}y\mathrm{d}r\mathrm{d}\theta\right|\\
    &\leq Nb\int_{-\infty}^{t_0-2b^{\gamma}}\int_{0}^{\infty}\left(\int_{B_{2b+\lambda}(x)}|f(r,z)|^{p_0}\right)^{1/p_0}(\lambda^dK_{\varepsilon,0,\alpha}^{p_0'}(s_1,r,\lambda))^{1/p_0'}\mathrm{d}\lambda h(\varepsilon,s_1-r,T)\mathrm{d}r\\
    &\leq N\int_{-\infty}^{t_0-2b^{\gamma}}\left(\int_{0}^{\infty}H(|s_1-r|,\lambda)\int_{B_{2b+\lambda}(x)}|f(r,z)|^{p_0}\mathrm{d}z\mathrm{d}\lambda \right)^{1/p_0}|s_1-r|^{-\varepsilon-\frac{1}{\gamma}}h(\varepsilon,s_1-r,T)\mathrm{d}r,
\end{aligned}    
\end{equation*}
where $N=N(d,p_0,\gamma,\varepsilon,N_1,N_2)$, $y_{\theta}:=(1-\theta)y_1+\theta y_2$ and $|\alpha|=2$. By Lemma \ref{20.12.28.16.02} with $(m,|\alpha|)=(0,2)$,
$$
I=|\cT_{\varepsilon,T} f(s_1,y_1)-\cT_{\varepsilon,T} f(s_1,y_2)|\leq NT^{1-\varepsilon}(\bM|f|^{p_0}(t,x))^{1/p_0}.
$$
For $II$, we consider the two cases: $\varepsilon\in[0,1)$ and $\varepsilon=1$.
\smallskip

\textbf{Case 1.} $\varepsilon\in[0,1)$.
\smallskip

By Lemma \ref{20.11.16.14.53} with $R_1=2b$ and $R_2=0$ and Lemma \ref{20.12.17.20.21}, for $(s,y)\in Q_b^{\gamma}(t_0,0)$,
\begin{align*}
 &|\cT_{\varepsilon,T}f(s,y)|=\left|\int_{-\infty}^{t_0-2b^{\gamma}}\int_{\bR^d}K_{\varepsilon}(s,r,y-z)f(r,z)1_{|s-r|<T}\mathrm{d}z\mathrm{d}r\right|\\
 &\leq N \int_{-\infty}^{t_0-2b^{\gamma}}\int_{0}^{\infty}(\lambda^dK_{\varepsilon,0,\alpha}^{p_0'}(s,r,\lambda))^{1/p_0'}\left(\int_{B_{2b+\lambda}(x)}|f(r,z)|^{p_0}\mathrm{d}z\right)^{1/p_0}1_{|s-r|<T}\mathrm{d}\lambda  \mathrm{d}r\\
 &\leq N\int_{-\infty}^{t_0-2b^{\gamma}}\left(\int_{0}^{\infty}H(|s-r|,\lambda)\int_{B_{2b+\lambda}(x)}|f(r,z)|^{p_0}\mathrm{d}z\mathrm{d}\lambda \right)^{1/p_0}|s-r|^{-\varepsilon}1_{|s-r|<T} \mathrm{d}r,
\end{align*}
where $N=N(d,p_0,\varepsilon,N_1,N_2)$ and $|\alpha|=1$. By Lemma \ref{20.12.28.16.02} with $(m,|\alpha|)=(0,1)$,
$$
II\leq |\cT_{\varepsilon,T} f(s_1,y_2)|+|\cT_{\varepsilon,T} f(s_2,y_2)|\leq NT^{1-\varepsilon}(\bM|f|^{p_0}(t,x))^{1/p_0}.
$$

\textbf{Case 2.} $\varepsilon=1$.
\smallskip

By the fundamental theorem of calculus, Lemma \ref{20.11.16.14.53} with $R_1=2b$ and $R_2=0$, and Lemma \ref{20.12.17.20.21},
\begin{equation*}
\begin{aligned}
&|\cT_{1,T} f(s_1,y_2)-\cT_{1,T} f(s_2,y_2)|\\
&= \left|\int_{-\infty}^{t_0-2b^{\gamma}}\int_{\bR^d}K_{1}(s_1,r,y_2-y)-K_{1}(s_2,r,y_2-y)f(r,y)\mathrm{d}y\mathrm{d}r\right|\\
&= |s_2-s_1|\left|\int_{-\infty}^{t_0-2b^{\gamma}}\int_{\bR^d}\int_{0}^{1}\p_t K_{1}(s_{\theta},r,y_2-y)f(r,y)\mathrm{d}\theta \mathrm{d}y\mathrm{d}r\right|\\
& \leq 2b^{\gamma}\left|\int_{0}^1\int_{-\infty}^{t_0-2b^{\gamma}}\int_{\bR^d}\p_t K_{1}(s_{\theta},r,y)f(r,y_2-y) \mathrm{d}y \mathrm{d}r\mathrm{d}\theta\right|\\
&\leq Nb^{\gamma}\int_{0}^1\int_{-\infty}^{t_0-2b^{\gamma}}\int_{0}^{\infty}\left(\int_{B_{2b+\lambda}(x)}|f(r,z)|^{p_0}\right)^{1/p_0}(\lambda^dK_{1,1,\alpha}^{p_0'}(s_{\theta},r,\lambda))^{1/p_0'}\mathrm{d}\lambda \mathrm{d}r\mathrm{d}\theta\\
&\leq Nb^{\gamma}\int_0^1\int_{-\infty}^{t_0-2b^{\gamma}}\left(\int_{0}^{\infty}H(|s_{\theta}-r|,\lambda)\int_{B_{2b+\lambda}(x)}|f(r,z)|^{p_0}\mathrm{d}z\mathrm{d}\lambda \right)^{1/p_0}|s_{\theta}-r|^{-2}\mathrm{d}r\mathrm{d}\theta,
\end{aligned}    
\end{equation*}
where $N=N(d,p_0,\gamma,\varepsilon,N_1,N_2)$, $s_{\theta}:=(1-\theta)s_1+\theta s_2$ and $|\alpha|=1$. By Lemma \ref{20.12.28.16.02} with $(m,|\alpha|)=(1,1)$,
$$
|\cT_{1,T} f(s_1,y_2)-\cT_{1,T} f(s_2,y_2)|\leq N(\bM|f|^{p_0}(t,x))^{1/p_0}.
$$
The lemma is proved.
\end{proof}

\begin{thm}
\label{20.12.21.16.38}
There exists a constant $N=N(d,p_0,\gamma,\varepsilon,N_1,N_2,N_3)$ such that for any $f\in C_c^{\infty}(\bR^{d+1})$,
$$
(\cT_{\varepsilon,T}f)^{\sharp}(t,x)\leq NT^{1-\varepsilon}(\bM|f|^{p_0}(t,x))^{1/p_0}.
$$
\end{thm}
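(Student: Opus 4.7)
The plan is to reduce the sharp function estimate to the two key lemmas already proven in this subsection (Lemma \ref{20.12.21.16.26} and Lemma \ref{20.12.21.16.24}) via a past-versus-future decomposition of the data relative to the cube being averaged over. The sharp function $(\cT_{\varepsilon,T}f)^\sharp(t,x)$ is a supremum of double averages of the oscillation of $\cT_{\varepsilon,T}f$ over parabolic cubes $Q_b^\gamma(t_0,x_0)\ni(t,x)$; since all hypotheses on $K_\varepsilon$ depend only on $|t-s|$ and on $x$ (not on the spatial center), we may translate $x_0\to 0$ and restrict attention to cubes of the form $Q:=Q_b^\gamma(t_0,0)$.

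Fix such a cube $Q$ containing $(t,x)$ and split $f=f_1+f_2$, where
\begin{equation*}
f_1:=f\,\mathbf{1}_{(-\infty,\,t_0-2b^\gamma)\times\bR^d},\qquad f_2:=f-f_1=f\,\mathbf{1}_{[t_0-2b^\gamma,\,\infty)\times\bR^d}.
\end{equation*}
By linearity $\cT_{\varepsilon,T}f=\cT_{\varepsilon,T}f_1+\cT_{\varepsilon,T}f_2$, and the triangle inequality applied pointwise inside the double integral defining $f^\sharp$ yields
\begin{equation*}
\frac{1}{|Q|^2}\iint_{Q\times Q}|\cT_{\varepsilon,T}f(s_1,y_1)-\cT_{\varepsilon,T}f(s_0,y_0)|
\le \sup_{Q\times Q}\bigl|\cT_{\varepsilon,T}f_1(s_1,y_1)-\cT_{\varepsilon,T}f_1(s_0,y_0)\bigr|+2\aint_Q|\cT_{\varepsilon,T}f_2|.
\end{equation*}

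For the first term, $f_1$ is supported in $(-\infty,t_0-2b^\gamma)\times\bR^d$, so Lemma \ref{20.12.21.16.24} applied to $f_1$ and the pointwise bound $\bM|f_1|^{p_0}\le \bM|f|^{p_0}$ give
\begin{equation*}
\sup_{Q\times Q}|\cT_{\varepsilon,T}f_1(s_1,y_1)-\cT_{\varepsilon,T}f_1(s_0,y_0)|\le N\,T^{1-\varepsilon}\bigl(\bM|f|^{p_0}(t,x)\bigr)^{1/p_0}.
\end{equation*}
For the second term, $f_2$ has support in $(t_0-2b^\gamma,\infty)\times\bR^d\subset(t_0-3b^\gamma,\infty)\times\bR^d$, so Lemma \ref{20.12.21.16.26} applies; combining it with Jensen's inequality (since $p_0\ge 1$) yields
\begin{equation*}
\aint_Q|\cT_{\varepsilon,T}f_2|\le\Bigl(\aint_Q|\cT_{\varepsilon,T}f_2|^{p_0}\Bigr)^{1/p_0}\le N\,T^{1-\varepsilon}\bigl(\bM|f|^{p_0}(t,x)\bigr)^{1/p_0}.
\end{equation*}
Taking the supremum over all admissible cubes $Q\ni(t,x)$ gives the claim.

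The proof is short because all the analytic work has been absorbed into the two preceding lemmas; the only genuine content here is choosing the splitting threshold $t_0-2b^\gamma$ so that both pieces land in the correct regime simultaneously. The delicate point to double-check is this threshold matching: Lemma \ref{20.12.21.16.24} demands strict support in the past of $t_0-2b^\gamma$, while Lemma \ref{20.12.21.16.26} demands support in the future of $t_0-3b^\gamma$, and the common cutoff $t_0-2b^\gamma$ satisfies both. No further obstruction arises.
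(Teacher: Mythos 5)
Your decomposition is structurally the same as the paper's: split $f$ into a ``past'' piece and a ``future'' piece relative to $t_0-2b^\gamma$, apply Lemma \ref{20.12.21.16.24} to the past piece, and Lemma \ref{20.12.21.16.26} plus Jensen to the future piece, then translate to reduce to $x_0=0$. However there is a genuine gap in the way you perform the split. You set $f_1 = f\,\mathbf{1}_{(-\infty,t_0-2b^\gamma)\times\bR^d}$ and $f_2 = f - f_1$. This sharp indicator cutoff makes $f_1$ and $f_2$ discontinuous in $t$, so neither belongs to $C_c^\infty(\bR^{d+1})$, which is the hypothesis under which Lemmas \ref{20.12.21.16.24} and \ref{20.12.21.16.26} are stated (and in which they invoke Lemma \ref{20.11.16.14.53}, H\"older's inequality, and integration by parts in the proof of Lemma \ref{20.12.28.16.02}). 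You flag a ``delicate point,'' but misidentify it: the issue is not whether a single cutoff time can simultaneously lie in both required support regions (it can), but whether the truncated data remain admissible inputs for the two lemmas — and with an indicator they do not.

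Moreover, if you try to repair this by mollifying the indicator, your cutoff position $t_0-2b^\gamma$ leaves zero slack on the side needed by Lemma \ref{20.12.21.16.24}: any smooth cutoff that is $1$ on $(-\infty,t_0-2b^\gamma)$ must transition on an interval extending past $t_0-2b^\gamma$, and then the ``past'' piece is no longer supported in $(-\infty,t_0-2b^\gamma)\times\bR^d$. The fix, which is what the paper does, is to pick a smooth $\eta\in C^\infty(\bR)$ with $\eta\equiv1$ on $(-\infty,t_0-8b^\gamma/3)$ and $\eta\equiv0$ on $[t_0-7b^\gamma/3,\infty)$, and split $f=f\eta+f(1-\eta)$. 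Then $f\eta$ is smooth with support in $(-\infty,t_0-7b^\gamma/3)\subset(-\infty,t_0-2b^\gamma)$ and $f(1-\eta)$ is smooth with support in $(t_0-8b^\gamma/3,\infty)\subset(t_0-3b^\gamma,\infty)$, so both lemmas apply and your remaining arithmetic goes through unchanged.
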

\begin{proof}
Let $b>0$, $t_0\in\bR$, and $(t,x)\in Q_b^{\gamma}(t_0,0)$.
Choose a $\eta\in C^{\infty}(\bR)$ satisfying
\begin{itemize}
    \item  $\eta(t)\in[0,1]$ for all $t\in \bR$.
    \item  $\eta(t)=1$ for all $t<t_0-8b^{\gamma}/3$
    \item  $\eta(t)=0$ for all $t\geq t_0-7b^{\gamma}/3$.
\end{itemize}
Put $\eta^*:=1-\eta$. Then
\begin{align*}
    |\cT_{\varepsilon,T} f(s_1,y_1)-\cT_{\varepsilon,T} f(s_0,y_0)|&\leq |\cT_{\varepsilon,T} (f\eta)(s_1,y_1)-\cT_{\varepsilon,T} (f\eta)(s_0,y_0)|\\
    &\quad +|\cT_{\varepsilon,T} (f\eta^*)(s_1,y_1)-\cT_{\varepsilon,T} (f\eta^*)(s_0,y_0)|.
\end{align*}
By Lemma \ref{20.12.21.16.24},
\begin{align*}
\aint_{Q_b^{\gamma}(t_0,0)}\aint_{Q_b^{\gamma}(t_0,0)}|\cT_{\varepsilon,T} (f\eta)(s_1,y_1)-\cT_{\varepsilon,T} (f\eta)(s_0,y_0)|^{p_0}\mathrm{d}y_1\mathrm{d}s_1\mathrm{d}y_0\mathrm{d}s_0\leq NT^{p_0(1-\varepsilon)}\bM|f|^{p_0}(t,x)
\end{align*}
and by Lemma \ref{20.12.21.16.26},
\begin{align*}
 &\aint_{Q_b^{\gamma}(t_0,0)}\aint_{Q_b^{\gamma}(t_0,0)}|\cT_{\varepsilon,T} (f\eta^*)(s_1,y_1)-\cT_{\varepsilon,T} (f\eta^*)(s_0,y_0)|^{p_0}\mathrm{d}y_1\mathrm{d}s_1\mathrm{d}y_0\mathrm{d}s_0\\
 &\leq 2\aint_{Q_b^{\gamma}(t_0,0)}|\cT_{\varepsilon,T} (f\eta^*)(s,y)|^{p_0}\mathrm{d}s\mathrm{d}y\leq NT^{p_0(1-\varepsilon)}\bM|f|^{p_0}(t,x).   
\end{align*}
Hence,
$$
\aint_{Q_b^{\gamma}(t_0,0)}\aint_{Q_b^{\gamma}(t_0,0)}|\cT_{\varepsilon,T} f(s_1,y_1)-\cT_{\varepsilon,T} f(s_0,y_0)|^{p_0}\mathrm{d}y_1\mathrm{d}s_1\mathrm{d}y_0\mathrm{d}s_0\leq NT^{p_0(1-\varepsilon)}\bM|f|^{p_0}(t,x),
$$
where $N=N(d,p_0,\gamma,\varepsilon,N_1,N_2,N_3)$. For $x_0\in\bR^{d}$, denote 
$$
\tau_{x_0}f(t,x):=f(t,x_0+x).
$$
Since $\cT_{\varepsilon,T}$ and $\tau_{x_0}$ are commutative,
\begin{align*}
    &\aint_{Q_b^{\gamma}(t_0,x_0)}\aint_{Q_b^{\gamma}(t_0,x_0)}|\cT_{\varepsilon,T} f(s_1,y_1)-\cT_{\varepsilon,T} f(s_0,y_0)|^{p_0}\mathrm{d}y_1\mathrm{d}s_1\mathrm{d}y_0\mathrm{d}s_0\\
    &=\aint_{Q_b^{\gamma}(t_0,0)}\aint_{Q_b^{\gamma}(t_0,0)}|\cT_{\varepsilon,T} (\tau_{x_0}f)(t,x)-\cT_{\varepsilon,T} (\tau_{x_0}f)(s,y)|^{p_0}\mathrm{d}x\mathrm{d}t\mathrm{d}y\mathrm{d}s\\
    &\leq NT^{p_0(1-\varepsilon)}\bM|\tau_{x_0}f|^{p_0}(t,x)=NT^{p_0(1-\varepsilon)}\bM|f|^{p_0}(t,x_0+x).
\end{align*}
Therefore, by Jensen's inequality, for $(t,x)\in Q_b^{\gamma}(t_0,0)$ and $x_0\in\bR^{d}$
\begin{align*}
  &\left(\aint_{Q_b^{\gamma}(t_0,x_0)}\aint_{Q_b^{\gamma}(t_0,x_0)}|\cT_{\varepsilon,T} f(s_1,y_1)-\cT_{\varepsilon,T} f(s_0,y_0)|\mathrm{d}y_1\mathrm{d}s_1\mathrm{d}y_0\mathrm{d}s_0\right)^{p_0}\\
  &\leq \aint_{Q_b^{\gamma}(t_0,x_0)}\aint_{Q_b^{\gamma}(t_0,x_0)}|\cT_{\varepsilon,T} f(s_1,y_1)-\cT_{\varepsilon,T} f(s_0,y_0)|^{p_0}\mathrm{d}y_1\mathrm{d}s_1\mathrm{d}y_0\mathrm{d}s_0\\
  &\leq N T^{p_0(1-\varepsilon)}\bM|f|^{p_0}(t,x_0+x),
\end{align*}
where $N=N(d,p_0,\gamma,\varepsilon,N_1,N_2,N_3)$. Taking the supremum on both sides with respect to all $Q_b^{\gamma}$ containing $(t,x_0+x)$, we obtain the desired result. The theorem is proved.
\end{proof}

\vspace{3mm}
{\bf Proof of Theorem \ref{20.12.17.11.23}}
\vspace{3mm}

Because of the similarity, we only prove $(i)$. Let $w\in A_p(\bR^{d+1})$.
Choose a $p_0 \in (1,R_{p,d+1}^w] \subseteq (1,2]$ so that $w \in A_{p/p_0}(\bR^{d+1})$ and $\lfloor d/R_{p,d+1}^w \rfloor =  \lfloor d/p_0 \rfloor$.
Then, it is obvious that $\cT_{\varepsilon,T}$ satisfies satisfies $(\lfloor d/p_0\rfloor+2)$ times regular condition with $(\gamma,N_1,N_2)$. 
Thus, by Theorem \ref{20.12.21.16.38},
\begin{align}
							\label{2021041101}
\|(\cT_{\varepsilon,T}f)^{\sharp}\|_{L_p(\bR^{d+1},w)}\leq NT^{1-\varepsilon}\|(\bM|f|^{p_0})^{1/p_0}\|_{L_p(\bR^{d+1},w)},
\end{align}
where $N=N(d,p,\gamma,\varepsilon,N_1,N_2,N_3,[w]_{A_p(\bR^{d+1})})$. 
Moreover, recalling  $w\in A_{p/p_0}(\bR^{d+1})$ and applying Theorem \ref{20.12.21.16.37}(ii) and Remark \ref{21.02.23.13.03}, we have
\begin{align}
							\label{2021041102}
\|(\bM|f|^{p_0})^{1/p_0}\|_{L_p(\bR^{d+1},w)}&=\|\bM|f|^{p_0}\|_{L_{p/p_0}(\bR^{d+1},w)}\leq N\||f|^{p_0}\|_{L_{p/p_0}(\bR^{d+1},w)}=N\|f\|_{L_p(\bR^{d+1},w)},
\end{align}
where $N=N(d,p,\gamma,[w]_{A_p(\bR^{d+1})})$. 
Finally, using Theorem \ref{20.12.21.16.37}(i),  \eqref{2021041101}, and \eqref{2021041102}, we obtain
\begin{align*}
    \|\cT_{\varepsilon,T}f\|_{L_p(\bR^{d+1},w)}&\leq N\|(\cT_{\varepsilon,T}f)^{\sharp}\|_{L_p(\bR^{d+1},w)}\leq NT^{1-\varepsilon}\|(\bM|f|^{p_0})^{1/p_0}\|_{L_p(\bR^{d+1},w)}\leq NT^{1-\varepsilon}\|f\|_{L_p(\bR^{d+1},w)},
\end{align*}
where $N=N(d,p,\gamma,\varepsilon,N_1,N_2,N_3,[w]_{A_p(\bR^{d+1})})$. The theorem is proved.

\mysection{Proof of Theorems \ref{21.01.08.17.12} and \ref{21.01.08.17.12-2}}
											\label{pf main theorems}

Recall the kernel 
\begin{equation*}
    \begin{gathered}
    p(t,s,x):=\frac{1}{(2\pi)^{d/2}}\int_{\bR^d} \exp\left(\int_{s}^t\psi(r,\xi)\mathrm{d}r\right)\mathrm{e}^{ix\cdot\xi}\mathrm{d}\xi \cdot 1_{t>s\geq0},\\ P_{\varepsilon}(t,s,x):=(-\Delta)^{\varepsilon\gamma/2}_xp(t,s,x),
    \end{gathered}
\end{equation*}
and the operator
$$
    \cK_{\varepsilon,T} f(t,x):=\int_{-\infty}^{t}\int_{\bR^d}\left(1_{\varepsilon\in[0,1)}1_{|t-s|<T}+1_{\varepsilon=1}\right)P_{\varepsilon}(t,s,x-y)f(s,y)\mathrm{d}y\mathrm{d}s.
$$
Throughout the section, we fix $\varepsilon \in [0,1]$, $\nu\in\bR$, $p,q\in(1,\infty)$, $w\in A_p(\bR^{d+1})$, $w_1\in A_q(\bR)$, and $w_2\in A_p(\bR^d)$. 
If $\nu\neq0$, then we additionally assume that $w(t,\cdot)\in A_p(\bR^d)$ for almost all $t\in[0,T]$ and
$$
\esssup_{t\in[0,T]}[w(t,\cdot)]_{A_p(\bR^d)}=:N_0<\infty.
$$

Below, to treat two types of weighted spaces (time-space mixed or not) simultaneously, we introduce a unified notation.
We use the notation $\bB_0(T)$ and $\bB_1(T)$ to denote either
$$
(\bB_0(T),\bB_1(T))=\left(\bH_p^{\nu}((0,T)\times\bR^d,w), \bH_p^{\nu + \gamma}((0,T)\times\bR^d,w)\right)
$$
or
$$
(\bB_0(T),\bB_1(T))=\left( L_q((0,T),w_1;H_p^{\nu}(\bR^d,w_2)), u\in L_q((0,T),w_1;H_p^{\nu+\gamma}(\bR^d,w_2))\right).
$$

\begin{thm}[A priori estimate]
\label{21.03.01.17.40}
Suppose that $\psi(t,\xi)$ satisfies the ellipticity condition with $(\gamma,\kappa)$ and has a $(\lfloor d/R_{p,d+1}^w\rfloor+2)$-times (resp. $(\lfloor d/R_{q,1}^{w_1}\rfloor\vee\lfloor d/R_{p,d}^{w_2}\rfloor+2)$-times) regular upper bound with $(\gamma,M)$. Then, for $f\in \bB_0(T)$,
\begin{align*}
    \|\cK_{\varepsilon,T}(1_{[0,T]}f)\|_{\bB_1(T)}&\leq NT^{1-\varepsilon}\|f\|_{\bB_0(T)},\\
    \|\psi(\cdot,-i\nabla)\cK_{0,T}(1_{[0,T]}f)\|_{\bB_1(T)}&\leq N\|f\|_{\bB_0(T)},
\end{align*}
where $N=N(d,p,\gamma,\kappa,M,[w]_{A_p(\bR^{d+1})},\nu,N_0)$ (resp. $N=N(d,p,q,\nu,\gamma,\kappa,M,[w_1]_{A_p(\bR^{d})},[w_2]_{A_p(\bR^{d})})$).
\end{thm}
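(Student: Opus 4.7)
The strategy is to view both $\cK_{\varepsilon,T}$ and $\psi(\cdot,-i\nabla)\cK_{0,T}$ as instances of the general parabolic singular integral operator $\cT_{\varepsilon,T}$ from Section 5 so that the weighted extrapolation theorem (Theorem \ref{20.12.17.11.23}) can be invoked, after first reducing the general-$\nu$ case to $\nu=0$ via the Bessel potential. Since $(1-\Delta)^{\nu/2}$ is a spatial Fourier multiplier that commutes with $\cK_{\varepsilon,T}$, $\psi(t,-i\nabla)$, and $(-\Delta)^{(\nu+\gamma)/2}$, and since under the hypothesis $\esssup_t[w(t,\cdot)]_{A_p(\bR^d)}=N_0<\infty$ the Appendix provides the norm equivalence $\|\cdot\|_{\bH_p^\nu((0,T)\times\bR^d,w)}\simeq \|(1-\Delta)^{\nu/2}\cdot\|_{L_p((0,T)\times\bR^d,w)}$ with constants depending on $(p,\nu,N_0)$, conjugation by $(1-\Delta)^{\nu/2}$ reduces the general-$\nu$ estimate to its $\nu=0$ counterpart. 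The $L_q(L_p)$ setting requires no such extra assumption because $w_2$ does not depend on $t$.

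For the base case $\nu=0$ of the first estimate, the kernel of $\cK_{\varepsilon,T}$ is $P_\varepsilon$. Theorem \ref{21.01.13.14.11} shows that whenever $\psi$ has a $k$-times regular upper bound with $(\gamma,M)$, the kernel $P_\varepsilon$ satisfies the $k$-times regular condition of Definition \ref{regular condition} with constants $(N_1,N_2)$ depending only on $(\kappa,\gamma,M,\varepsilon,k)$, while Theorem \ref{21.03.01.17.32} supplies the $L_2$-hypothesis of Theorem \ref{20.12.17.11.23}. Taking $k=\lfloor d/R_{p,d+1}^w\rfloor+2$ in the space--time weighted case (respectively $k=\lfloor d/R_{q,1}^{w_1}\rfloor\vee\lfloor d/R_{p,d}^{w_2}\rfloor+2$ in the $L_q(L_p)$ case), Theorem \ref{20.12.17.11.23} yields $\|\cK_{\varepsilon,T}(1_{[0,T]}f)\|_{L_p(w)}\leq NT^{1-\varepsilon}\|f\|_{L_p(w)}$. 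The $(-\Delta)^{\gamma/2}$-part of the $\bH_p^\gamma$-norm is handled by the same machinery: on $(0,T)\times\bR^d$ the cutoff inside $\cK_{\varepsilon,T}$ is automatic for $1_{[0,T]}f$, and $(-\Delta)^{\gamma/2}\cK_{\varepsilon,T}(1_{[0,T]}f)$ is an operator whose kernel is still covered by Theorem \ref{21.01.13.14.11} (multiplying the symbol by $|\xi|^\gamma$ preserves the estimates), to which Theorem \ref{20.12.17.11.23} applies once more.

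For the second estimate, observe the pointwise identity
$$
\psi(t,-i\nabla)_x\, p(t,s,x)=\partial_t p(t,s,x),\qquad t>s,
$$
since both sides equal $\cF^{-1}\bigl[\psi(t,\xi)\exp\bigl(\int_s^t\psi(r,\xi)\,\mathrm{d}r\bigr)\bigr](x)$. Hence $\psi(\cdot,-i\nabla)\cK_{0,T}$ is the integral operator with kernel $\partial_t p(t,s,x)\cdot 1_{0<t-s<T}$, and Theorem \ref{21.01.13.14.11} applied with $(m,\varepsilon)=(1,0)$ gives exactly the regular condition of Definition \ref{regular condition} with effective parameter $\varepsilon=1$. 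An $L_2$-bound uniform in $T$ follows from a Plancherel computation mirroring the proof of Theorem \ref{21.03.01.17.32}: the symbol $\psi(t,\xi)\exp(\int_s^t\psi(r,\xi)\,\mathrm{d}r)$ is pointwise dominated by $|\xi|^\gamma e^{-\kappa(t-s)|\xi|^\gamma}$, whose $s$-integral is an absolute constant. Invoking Theorem \ref{20.12.17.11.23} with $\varepsilon=1$ then produces the desired $T$-independent bound. The main obstacle is the Bessel-potential reduction of the first paragraph: one must verify that $(1-\Delta)^{\nu/2}$ commutes with $\cK_{\varepsilon,T}$ at the level of the relevant tempered-distribution-valued function classes, and that the norm equivalence on $\bH_p^\nu$ holds with a constant uniform in time, both being the technical content of the Appendix.
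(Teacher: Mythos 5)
Your proposal takes essentially the same route as the paper: both identify $\cK_{\varepsilon,T}$ and $\psi(\cdot,-i\nabla)\cK_{0,T}$ as instances of the abstract singular-integral operator $\cT_{\varepsilon,T}$, supply the regular condition from Theorem \ref{21.01.13.14.11} and the $L_2$-bound from (the method of) Theorem \ref{21.03.01.17.32}, use the identity $\psi(t,-i\nabla)p=\partial_t p$ to realize $\partial_t p$ as a $\cT_{1,T}$-kernel, invoke Theorem \ref{20.12.17.11.23}, and conjugate by $(1-\Delta)^{\nu/2}$ to reduce to $\nu=0$ (a step the paper leaves implicit and you spell out).

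One step in your write-up would not go through as stated. You assert that $(-\Delta)^{\gamma/2}\cK_{\varepsilon,T}$ is an operator ``whose kernel is still covered by Theorem \ref{21.01.13.14.11}'' because multiplying the symbol by $|\xi|^\gamma$ preserves the estimates. It does not: multiplying by $|\xi|^\gamma$ increases the temporal singularity exponent from $(m+\varepsilon)$ to $(m+\varepsilon+1)$, which is the profile of an operator of effective order $\varepsilon+1>1$ as soon as $\varepsilon>0$, and $\varepsilon+1$ lies outside the admissible range $[0,1]$ in Definition \ref{regular condition} and Theorem \ref{20.12.17.11.23}. In fact the displayed inequality should be read (and the corollaries in Section \ref{pf main theorems} use it) as a $\bB_0(T)\to\bB_0(T)$ bound
$$
\|\cK_{\varepsilon,T}(1_{[0,T]}f)\|_{\bB_0(T)}\leq NT^{1-\varepsilon}\|f\|_{\bB_0(T)},\qquad \varepsilon\in[0,1],
$$
with the $\bB_1(T)$ estimate for $u=\cK_{0,T}f$ recovered by combining the $\varepsilon=0$ and $\varepsilon=1$ instances. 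There is no separate $(-\Delta)^{\gamma/2}\cK_{\varepsilon,T}$ estimate to prove, and attempting one for $\varepsilon>0$ is exactly where your argument would fail.
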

\begin{proof}
Due to Theorem \ref{21.01.13.14.11} and \ref{21.03.01.17.32}, the family of the operators $\cK_{\varepsilon,T}$ is a particular case of $\cT_{\varepsilon,T}$ in Theorem \ref{20.12.17.11.23}. For the second estimate, we can demonstrate that
\begin{align*}
    \psi(t,-i\nabla)\cK_{0,T}(1_{[0,T]}f)(t,x)&=\int_0^t\int_{\bR^d}\psi(t,-i\nabla)p(t,s,x-y)f(s,y)\mathrm{d}y\mathrm{d}s\\
    &=\int_0^t\int_{\bR^d}\partial_tp(t,s,x-y)f(s,y)\mathrm{d}y\mathrm{d}s,
\end{align*}
where $\partial_tp$ is also a specific instance of a $\cT_{1,T}$ operator as defined in Theorem \ref{20.12.17.11.23}. Therefore, these estimates can be easily derived using Theorem \ref{20.12.17.11.23}. The theorem is proved.
\end{proof}

\begin{corollary}[Uniqueness of a solution]
Let $f \in \bB_0(T)$. Suppose that $\psi(t,\xi)$ satisfies the ellipticity condition with $(\gamma,\kappa)$ and has a $(\lfloor d/R_{p,d+1}^w\rfloor+2)$-times (or $( \lfloor d/R_{q,1}^{w_1}\rfloor\vee\lfloor d/R_{p,d}^{w_2}\rfloor+2)$-times) regular upper bound with $(\gamma,M)$. Then, the Cauchy problem
\begin{equation}
\label{21.03.01.17.53}
\begin{cases}
\p_tu(t,x)=\psi(t,-i\nabla)u(t,x)+f(t,x),\quad &(t,x)\in(0,T)\times\mathbb{R}^d,\\
u(0,x)=0,\quad & x\in\mathbb{R}^d,
\end{cases}
\end{equation}
has at most one solution in the sense of Definition \ref{21.03.01.15.51}.
\end{corollary}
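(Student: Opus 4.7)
The plan is to subtract two putative solutions, approximate the difference by smooth functions coming from Definition \ref{21.03.01.15.51}, apply the explicit solution formula from Theorem \ref{21.01.08.11.03} to those smooth approximants, and finally close the argument with the a priori estimate of Theorem \ref{21.03.01.17.40}.

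Concretely, let $u_1,u_2 \in \bB_1(T)$ be two solutions of \eqref{21.03.01.17.53}. By Definition \ref{21.03.01.15.51}, there exist sequences $u_{1,n},u_{2,n} \in C_p^{1,\infty}([0,T]\times\bR^d)$ with $u_{i,n}(0,\cdot)=0$ such that
\[
\partial_t u_{i,n}-\psi(t,-i\nabla)u_{i,n}\to f \quad \text{in } \bB_0(T),\qquad u_{i,n}\to u_i \quad \text{in } \bB_1(T),
\]
for $i=1,2$. Setting $w_n := u_{1,n}-u_{2,n}$ and $f_n := \partial_t w_n - \psi(t,-i\nabla)w_n$, we have $w_n \in C_p^{1,\infty}([0,T]\times\bR^d)$, $w_n(0,\cdot)=0$, $f_n \to 0$ in $\bB_0(T)$, and $w_n \to u_1-u_2$ in $\bB_1(T)$.

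Next I would verify that $f_n \in C_p^\infty([0,T]\times\bR^d)$: by definition of $C_p^{1,\infty}$ one has $\partial_t w_n \in C_p^\infty$, and Lemma \ref{21.01.25.16.54}(iii) together with the regular upper bound on $\psi$ gives $\psi(t,-i\nabla)w_n \in C_p^\infty$. Therefore Theorem \ref{21.01.08.11.03} applies to $f_n$ and yields, by uniqueness of the $C_p^{1,\infty}$-solution to the Cauchy problem with right-hand side $f_n$, the identity
\[
w_n(t,x) = \cK f_n(t,x) = \cK_{0,T}(1_{[0,T]}f_n)(t,x), \qquad (t,x)\in [0,T]\times\bR^d.
\]

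Finally, I would invoke the a priori bound in Theorem \ref{21.03.01.17.40} with $\varepsilon=0$ to obtain
\[
\|w_n\|_{\bB_1(T)} = \|\cK_{0,T}(1_{[0,T]}f_n)\|_{\bB_1(T)} \leq NT\,\|f_n\|_{\bB_0(T)} \longrightarrow 0 \quad \text{as } n\to\infty.
\]
Combined with $w_n \to u_1-u_2$ in $\bB_1(T)$, this forces $u_1=u_2$, proving uniqueness.

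The only genuinely delicate point in this proposal is the second step: establishing that $f_n$ belongs to $C_p^\infty([0,T]\times\bR^d)$ so that the smooth-data existence/representation result of Theorem \ref{21.01.08.11.03} is legitimately applicable, and hence $w_n=\cK f_n$ as pointwise functions. Once this identification is in hand, the rest is essentially a direct application of the weighted $L_p$-boundedness of $\cK_{0,T}$ proved in Section \ref{21.11.30.14.22} and summarized in Theorem \ref{21.03.01.17.40}.
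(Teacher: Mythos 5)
Your proposal is correct and follows essentially the same route as the paper: use Definition \ref{21.03.01.15.51} to produce $C_p^{1,\infty}$-approximants, identify each via the representation in Theorem \ref{21.01.08.11.03}, and pass to the limit using the boundedness of $\cK_{0,T}$ from $\bB_0(T)$ to $\bB_1(T)$ furnished by Theorem \ref{21.03.01.17.40}. The only cosmetic difference is that you subtract the two approximating sequences and drive the difference $w_n=\cK_{0,T}(1_{[0,T]}f_n)$ to zero, while the paper keeps both sequences and shows each converges to $\cK_{0,T}f$; the two are interchangeable by linearity.
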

\begin{proof}

Let $u,v\in \bB_1(T)$ be solutions to Cauchy problem \eqref{21.03.01.17.53}. 
Then by the definition of solution, there exist $u_n,v_n\in C_p^{1,\infty}([0,T]\times\bR^d)$ such that $u_n(0,\cdot),v_n(0,\cdot)=0$,
\begin{align*}
\p_tu_n-\psi(t,-i\nabla)u_n \to f,\quad \p_tv_n-\psi(t,-i\nabla)v_n\to f\quad\text{in}\quad \bB_0(T)
\end{align*}
and
\begin{align*}
u_n\to u \quad v_n\to v\quad\text{in}\quad \bB_1(T).
\end{align*}
Denote
\begin{equation*}
    \begin{gathered}
    f_n:=\p_tu_n-\psi(t,-i\nabla)u_n\in C_p^{\infty}([0,T]\times\bR^d),\\
    g_n:=\p_tv_n-\psi(t,-i\nabla)v_n\in C_p^{\infty}([0,T]\times\bR^d).
    \end{gathered}
\end{equation*}
Then by Theorem \ref{21.01.08.11.03}, we have
\begin{align}
								\label{20210617 eqn 01}
    u_n(t,x)=\cK_{0,T} f_n(t,x),\quad v_n(t,x)=\cK_{0,T} g_n(t,x).
\end{align}
Recall that the operator $\cK_{0,T}$ is continuous from $\bB_0(T)$ to $\bB_1(T)$ due to Theorem \ref{21.03.01.17.40}.
Therefore, taking limits in \eqref{20210617 eqn 01}, we conclude that $u=\cK_{0,T} f =v$ in $\bB_1(T)$ since both $f_n$ and $g_n$ converge to $f$ in $\bB_0(T)$. 
The corollary is proved.
\end{proof}

\begin{corollary}[Existence of a solution]
Let $f \in \bB_0(T)$.
Suppose that $\psi(t,\xi)$ satisfies the ellipticity condition with $(\gamma,\kappa)$ and has a $( \lfloor d/R_{p,d+1}^w\rfloor+2)$-times (or $( \lfloor d/R_{q,1}^{w_1}\rfloor\vee\lfloor d/R_{p,d}^{w_2}\rfloor+2)$-times) regular upper bound with $(\gamma,M)$. Then, the Cauchy problem
\begin{equation}
\label{21.03.01.18.00}
\begin{cases}
\p_tu(t,x)=\psi(t,-i\nabla)u(t,x)+f(t,x),\quad &(t,x)\in(0,T)\times\mathbb{R}^d,\\
u(0,x)=0,\quad & x\in\mathbb{R}^d,
\end{cases}
\end{equation}
has a solution $u\in \bB_1(T)$ and $u$ has the integral representation
$$
u(t,x):=\cK_{0,T}f(t,x)=\int_{0}^{t}\int_{\bR^d}p(t,s,x-y)f(s,y)\mathrm{d}y\mathrm{d}s.
$$
\end{corollary}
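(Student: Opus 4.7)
The plan is to construct the candidate solution $u := \cK_{0,T}f$ as the limit in $\bB_1(T)$ of smooth approximants, and then verify that it fits the solution framework of Definition \ref{21.03.01.15.51}. The two main ingredients are already in place: the integral representation of smooth solutions from Theorem \ref{21.01.08.11.03}, and the a priori bound of Theorem \ref{21.03.01.17.40} which tells us that $\cK_{0,T}$ (after multiplication by $1_{[0,T]}$) is a continuous operator from $\bB_0(T)$ into $\bB_1(T)$, and that $\psi(\cdot,-i\nabla)\cK_{0,T}(1_{[0,T]}f)$ is controlled in $\bB_0(T)$. So the key step is a density/approximation argument.

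First, I would establish that $C_c^\infty((0,T)\times \bR^d)$ (and hence $C_p^\infty([0,T]\times\bR^d)$) is dense in $\bB_0(T)$. For the mixed space $L_q((0,T),w_1;H_p^\nu(\bR^d,w_2))$ this follows from the usual mollification procedure in $x$ combined with truncation-mollification in $t$, using that $w_1 \in A_q(\bR)$ and $w_2 \in A_p(\bR^d)$ make the Hardy--Littlewood maximal function bounded on the respective weighted spaces. For $\bH_p^\nu((0,T)\times\bR^d,w)$ with $w\in A_p(\bR^{d+1})$ one uses the full $(d{+}1)$-dimensional mollification; when $\nu \neq 0$ the additional assumption $\esssup_t [w(t,\cdot)]_{A_p(\bR^d)} \leq N_0$ guarantees the equivalence of the two norms noted in Remark \ref{rem 2021-12-29}, so that the Bessel potential $(1-\Delta)^{\nu/2}$ maps the dense subclass into itself (cf.\ Appendix \ref{append}). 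Pick a sequence $f_n \in C_p^\infty([0,T]\times\bR^d)$ with $f_n \to f$ in $\bB_0(T)$.

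Next, for each $n$ Theorem \ref{21.01.08.11.03} produces $u_n := \cK f_n \in C_p^{1,\infty}([0,T]\times\bR^d)$ with $u_n(0,\cdot)=0$ and $\partial_t u_n - \psi(t,-i\nabla)u_n = f_n$ pointwise. By Theorem \ref{21.03.01.17.40} applied to the difference $f_n - f_m$,
\begin{equation*}
\|u_n - u_m\|_{\bB_1(T)} = \|\cK_{0,T}(1_{[0,T]}(f_n - f_m))\|_{\bB_1(T)} \leq N T^{1-\varepsilon} \|f_n - f_m\|_{\bB_0(T)},
\end{equation*}
with $\varepsilon = 0$, so $\{u_n\}$ is Cauchy in $\bB_1(T)$ and converges to some $u \in \bB_1(T)$. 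By continuity of $\cK_{0,T}$ we identify $u = \cK_{0,T}f$. Meanwhile $\partial_t u_n - \psi(t,-i\nabla)u_n = f_n \to f$ in $\bB_0(T)$ by construction, so $u_n$ is exactly the approximating sequence required in Definition \ref{21.03.01.15.51}, proving that $u$ is a solution.

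The one delicate point I anticipate is the density claim in the first paragraph: for a general $A_p(\bR^{d+1})$ weight the Bessel potential machinery and Schwartz-class approximation in $\bH_p^\nu$ have to be handled with some care, especially for $\nu<0$ where the definition uses $(1-\Delta)^{\nu/2}$. This is where the extra hypothesis \eqref{21.12.05.14.03} really earns its keep, reducing the problem to slicewise mollification in $x$ followed by mollification in $t$. Everything else is routine once the a priori estimates and Theorem \ref{21.01.08.11.03} are in hand; the representation formula for $u$ then drops out automatically from passing to the limit in $u_n = \cK f_n$.
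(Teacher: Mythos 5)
Your proposal is correct and follows essentially the same route as the paper's proof: approximate $f$ by smooth functions (density from the Appendix), solve for each approximant via Theorem \ref{21.01.08.11.03} as $u_n = \cK_{0,T}f_n$, use the a priori bound of Theorem \ref{21.03.01.17.40} to show $\{u_n\}$ is Cauchy in $\bB_1(T)$, and pass to the limit, with $\partial_t u_n - \psi(t,-i\nabla)u_n = f_n \to f$ in $\bB_0(T)$ supplying the convergence required by Definition \ref{21.03.01.15.51}. The only cosmetic difference is that you sketch density via mollification while the paper proves it by a Hahn--Banach/duality argument (Theorem \ref{21.03.01.15.34}), but both parties treat density as an established ingredient rather than re-deriving it here.
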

\begin{proof} By Theorem \ref{21.03.01.15.34}, there exists a sequence $\{f_n\}_{n=1}^{\infty}\subseteq C_c^{\infty}((0,T)\times\bR^d)$ such that $f_n\to f$ in $\bB_0(T)$. 
For each $f_n$, due to Theorem  \ref{21.01.08.11.03}, there exists a unique solution $u_n$ to \eqref{21.03.01.18.00} such that
$$
u_n(t,x)=\cK_{0,T}f_n(t,x)\in C_p^{1,\infty}([0,T]\times\bR^d).
$$
By Theorem  \ref{21.03.01.17.40} and well-known embedding property of the Sobolev space (cf. Theorem \ref{21.02.23.17.15}(iii) and (vii)), 
\begin{equation*}
    \begin{aligned}
    &\|(-\Delta)^{\frac{\varepsilon\gamma}{2}}u_n\|_{\bB_0(T)}=\|\cK_{\varepsilon,T}f_n\|_{\bB_0(T)}\leq N T^{1-\varepsilon}\|f_n\|_{\bB_0(T)} \quad \forall \varepsilon\in[0,1],\\
    &\|\partial_tu_n\|_{\bB_0(T)}\leq\|\psi(\cdot,-i\nabla)\cK_{\varepsilon,T}f_n\|_{\bB_0(T)}+\|f_n\|_{\bB_0(T)}\leq N\|f_n\|_{\bB_0(T)},
    \end{aligned}
\end{equation*}
where $N$ is independent of $n$, $\varepsilon$ and $T$. 
Thus, due to the linearity of the equation and the operator, we have
$$
\|\partial_tu_n - \partial_tu_m\|_{\bB_0(T)}+\|\psi(\cdot,-i\nabla)u_n - \psi(\cdot,-i\nabla)u_m\|_{\bB_0(T)}+\|u_n - u_m\|_{\bB_1(T)}\leq N(1+T)\|f_n-f_m\|_{\bB_0(T)}
$$
for all natural numbers $n$ and $m$. 
Finally, since $\bB_0(T)$ and $\bB_1(T)$ are Banach spaces (see Theorem \ref{21.02.23.17.15} $(i)$ and \ref{21.03.01.14.59} $(i)$), there exists a $u\in \bB_1(T)$ such that $u_n(0,\cdot)=0$, $u_n\to u$ in $\bB_1(T)$ and 
$$
\partial_tu_n-\psi(t,-i\nabla)u_n\to f\quad \text{in}\quad \bB_0(T).
$$
The corollary is proved.
\end{proof}

\appendix

\mysection{Properties of function spaces}
															\label{append}

We prove some properties of Sobolev spaces with $A_p$-weights for the completeness of our paper.
Most properties can be easily deduced from weighted theories in \cite{fackler2020weighted, grafakos2014classical, kurtz1979results, miller1982weighted} and classical Sobolev space theories. Note that most approximations based on Sobolev mollifiers are non-trivial in weighted spaces since our weights are neither bounded above nor bounded below in general.

\begin{thm}
				\label{21.02.23.17.15}
Let $p\in(1,\infty)$, $w\in A_p(\bR^d)$, and $\nu\in\bR$.
\begin{enumerate}[(i)]
    \item $H_p^{\nu}(\bR^d,w)$ is a Banach space equipped with the norm 
$$
\|f\|_{H_p^{\nu}(\bR^d,w)}:=\|(1-\Delta)^{\nu/2}f\|_{L_p(\bR^d,w)}
$$
    \item For $\nu_0\in\bR$, $(1-\Delta)^{\nu_0/2}$ is a bijective isometry from $H_p^{\nu}(\bR^d,w)$ to $H_p^{\nu+\nu_0}(\bR^d,w)$.
    \item If $\nu_1\geq \nu_0$, then $H_p^{\nu_1}(\bR^d,w)$ is continuously embedded into $H_p^{\nu_0}(\bR^d,w)$. More precisely, there exists a positive constant $N$ such that
    $$
    \|f\|_{H_p^{\nu_0}(\bR^d,w)}\leq N\|f\|_{H_p^{\nu_1}(\bR^d,w)},\quad \forall f\in \cS(\bR^d).
    $$
    \item If $\nu$ is a nonnegative integer, then
    $$
    \|f\|_{H_p^{\nu}(\bR^d,w)}\simeq \sum_{|\alpha|\leq \nu}\|D^{\alpha}f\|_{L_p(\bR^d,w)}.
    $$
    \item $C_c^\infty(\bR^{d})$ is dense in $H_p^{\nu}(\bR^d,w)$.
    \item The topological dual space of $H_p^{\nu}(\bR^d,w)$ is $H_{p'}^{-\nu}(\bR^d,\bar{w})$, where
    $$
    \frac{1}{p}+\frac{1}{p'}=1,\quad \bar{w}:=w^{-\frac{1}{p-1}}\in A_{p'}(\bR^d).
    $$
    Moreover, for $f\in H_p^{\nu}(\bR^d,w)$,
    $$
    \sup_{\|\phi\|_{H_{p'}^{-\nu}(\bR^d,\bar{w})\leq1}}\left|(f,\phi)\right|=\|f\|_{H_p^{\nu}(\bR^d,w)}
    $$
    \item If $\nu\geq0$, then the norm $\|\cdot\|_{H_p^{\nu}(\bR^d,w)}$ is equivalent to $\|\cdot\|_{L_p(\bR^d,w)}+\|\cdot\|_{\dot{H}_p^{\nu}(\bR^d,w)}$, where
    $$
    \|f\|_{\dot{H}_p^{\nu}(\bR^d,w)}:=\|(-\Delta)^{\nu/2}f\|_{L_p(\bR^d,w)}.
    $$
\end{enumerate}
\end{thm}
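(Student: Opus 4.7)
The plan is to handle all seven parts by reducing everything to weighted Fourier multiplier theorems and standard properties of the Hardy--Littlewood maximal operator on $A_p$-weighted Lebesgue spaces. The main tools are: the weighted Mikhlin multiplier theorem of \cite{kurtz1979results}; boundedness of $\bM$ on $L_p(\bR^d,w)$ for $w\in A_p(\bR^d)$; the duality $w\in A_p(\bR^d)\Longleftrightarrow \bar w:=w^{-1/(p-1)}\in A_{p'}(\bR^d)$; and the fact that every $A_p$-weight grows at most polynomially at infinity. I would begin with (ii): the semigroup identity $(1-\Delta)^{\nu_0/2}(1-\Delta)^{\nu/2}=(1-\Delta)^{(\nu+\nu_0)/2}$ holds on $\cS(\bR^d)$ from the Fourier side and extends by duality to $\cS'(\bR^d)$. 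The definition of the norm then makes $(1-\Delta)^{\nu_0/2}\colon H_p^\nu(\bR^d,w)\to H_p^{\nu+\nu_0}(\bR^d,w)$ a tautological bijective isometry. Taking $\nu_0=-\nu$ identifies $H_p^\nu(\bR^d,w)$ isometrically with $L_p(\bR^d,w)$, which immediately gives (i) by completeness of $L_p$ with respect to the $\sigma$-finite measure $w\,dx$.

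For (iii), (iv) and (vii) the task is to verify that certain Fourier symbols are bounded $L_p(\bR^d,w)$-multipliers. For (iii) with $\nu_0\leq\nu_1$, the symbol $m(\xi):=(1+|\xi|^2)^{(\nu_0-\nu_1)/2}$ is smooth and satisfies $|D^\alpha m(\xi)|\leq N_\alpha(1+|\xi|^2)^{(\nu_0-\nu_1)/2-|\alpha|/2}\leq N_\alpha|\xi|^{-|\alpha|}$, so \cite{kurtz1979results} yields $L_p(\bR^d,w)$-boundedness and hence the embedding. For (iv), I would use the Mikhlin multipliers $\xi^\alpha(1+|\xi|^2)^{-\nu/2}$ for $|\alpha|\leq\nu$ in one direction and, in the other, decompose $(1+|\xi|^2)^{\nu/2}$ as a polynomial in $(i\xi)$ times Riesz-type multipliers that satisfy the Mikhlin condition. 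Part (vii) is analogous using the symbols $(1+|\xi|^2)^{\nu/2}/(1+|\xi|^\nu)$ and $(1+|\xi|^\nu)/(1+|\xi|^2)^{\nu/2}$, which are smooth, bounded, and Mikhlin-regular since $(1+|\xi|^2)^{\nu/2}$ and $1+|\xi|^\nu$ are pointwise comparable on $\bR^d$.

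For (v), I would use a two-stage approximation. First I would show that $\cS(\bR^d)$ is dense in $H_p^\nu(\bR^d,w)$: given $f\in H_p^\nu(\bR^d,w)$, set $g:=(1-\Delta)^{\nu/2}f\in L_p(\bR^d,w)$, approximate $g$ in $L_p(\bR^d,w)$ by $g_n\in C_c^\infty(\bR^d)$ via cutoff-and-mollify (using the pointwise bound $|g\ast\eta_\varepsilon|\leq N\bM g$, $A_p$-boundedness of $\bM$, and dominated convergence), and set $f_n:=(1-\Delta)^{-\nu/2}g_n\in\cS(\bR^d)$; by (ii), $f_n\to f$ in $H_p^\nu(\bR^d,w)$. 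Second, I would approximate each Schwartz $\phi$ by $\phi\cdot\zeta_R\in C_c^\infty(\bR^d)$ for a smooth cutoff $\zeta_R\uparrow 1$, noting that $\phi\zeta_R\to\phi$ in $\cS(\bR^d)$, and that convergence in $\cS(\bR^d)$ implies convergence in $H_p^\sigma(\bR^d,w)$ for any $\sigma$ because $A_p$-weights grow polynomially while Schwartz seminorms give super-polynomial decay. The main subtlety here is that $(1-\Delta)^{-\nu/2}$ does not preserve compact support, forcing the two-stage structure.

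Finally, for (vi), the bijective isometry from (ii) reduces the identification of $(H_p^\nu(\bR^d,w))^\ast$ to the classical duality $L_p(\bR^d,w)^\ast\cong L_{p'}(\bR^d,\bar w)$ via the pairing $(u,v)\mapsto\int uv\,dx$, where $\bar w:=w^{-1/(p-1)}\in A_{p'}(\bR^d)$. Unwinding through the adjoint of $(1-\Delta)^{\nu/2}$ (which coincides with $(1-\Delta)^{\nu/2}$ itself on $\cS(\bR^d)$) identifies the dual with $H_{p'}^{-\nu}(\bR^d,\bar w)$, and the supremum identity follows from Hahn--Banach combined with the isometry. The expected principal obstacle throughout is the density step (v), specifically the passage from Schwartz to $C_c^\infty$ approximations that requires juggling the weight's possibly unbounded behavior with the nonlocality of $(1-\Delta)^{\pm\nu/2}$; all other parts reduce to routine symbol estimates and the $A_p$-bounded multiplier/maximal toolkit.
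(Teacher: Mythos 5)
Your approach matches the paper's in substance: both reduce everything to the bijective isometry $(1-\Delta)^{\nu_0/2}$ (making (i) trivial from $L_p$ completeness), weighted Mikhlin/Kurtz--Wheeden multipliers for (iii), (iv), (vii), the Riesz-representation-plus-weight-twist argument for (vi), and a two-stage density proof (Schwartz first, then truncation to compact support) for (v). The only cosmetic differences are that the paper simply cites Miller \cite{miller1982weighted} for (i)--(iv) and the density of $\cS$, whereas you sketch direct proofs; that the paper organizes the $C_c^\infty$-in-$\cS$ density step via dominated convergence plus (iv) and (iii) for integer and then general $\nu$, rather than via your ``convergence in $\cS$ implies convergence in $H_p^\sigma(\bR^d,w)$'' lemma (both rest on the same $w(B_{2^j})\lesssim 2^{jdp}w(B_1)$ growth estimate for $A_p$ measures); and that in (vii) the paper uses $m_1(\xi)=|\xi|^\nu/(1+|\xi|^2)^{\nu/2}$ and $m_2(\xi)=(1+|\xi|^2)^{\nu/2}/(1+|\xi|^\nu)$ to split the two sides of the equivalence, which is the cleaner form of the multiplier pair you indicate.
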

\begin{proof}
For ($i$) $\sim$ ($iii$), see \cite[Section 3]{miller1982weighted}. For ($iv$), see \cite[Theorem 3.3]{miller1982weighted}. 

($v$) Note that $\cS(\bR^d)$ is dense in $H_p^{\nu}(\bR^d,w)$ (see \cite[Section 3]{miller1982weighted}).
In particular, 
$$
\cS(\bR^d) \subset H_p^{\bar \nu}(\bR^d,w) \quad \forall \bar \nu \in \bR.
$$
Thus it is obviously sufficient to prove that $C_c^{\infty}(\bR^d)$ is dense in $\cS(\bR^d)$ under the norm in $H_p^{\nu}(\bR^d,w)$. 
Let $f\in\cS(\bR^d)$ and $\eta\in C_c^{\infty}(\bR^d)$ be a nonnegative function satisfying $\eta(0)=1$ and $\eta_n(x):=\eta(x/n)$. Clearly, $f\eta_n\in C_c^{\infty}(\bR^d)$. By Leibniz's product rule and the dominated convergence theorem, for any multi-index $\alpha$,
$$
\|D^{\alpha}(f\eta_n)-D^{\alpha}f\|_{L_p(\bR^d,w)}\to 0.
$$
Thus by ($iv$), 
\begin{align}
						\label{2021071401}
f\eta_n\to f ~\text{in}~ H_p^{\nu}(\bR^d,w)
\end{align}
if $\nu$ is a positive integer.  
Moreover, for general $\nu$, one can find a positive integer $\nu_1$ such that $\nu_1 \geq \nu$.
Therefore, due to ($iii$),  \eqref{2021071401} holds for all $\nu \in \bR$. 

($vi$) Since $(1-\Delta)^{\nu/2}(1-\Delta)^{-\nu/2}=(1-\Delta)^{-\nu/2}(1-\Delta)^{\nu/2}$ is an identity operator on $H_q^{\nu'}(\bR^d,w)$ for all $q\in(1,\infty)$ and $\nu'\in\bR$, it suffices to show that the topological dual space of $L_p(\bR^d,w)$ is $L_{p'}(\bR^d,\bar{w})$ and
$$
\sup_{\|\phi\|_{L_{p'}(\bR^d,\bar{w})}\leq1}\left|\int_{\bR^d}f(x)\phi(x) \mathrm{d}x\right|=\|f\|_{L_p(\bR^d,w)}.
$$
Recall Riesz's theorem, first. That is, any bounded linear function on $L_p(\bR^d)$ is given by a function in $\in L_{p'}(\bR^d)$.
We prove the statement by fitting our weighted classes  in the setting of classical Riesz's theorem as follows.
\begin{itemize}
    \item Let $f$ be a bounded linear functional on $L_p(\bR^d,w)$. Then it is easy to check that $fw^{-1/p}$ is a bounded linear functional on $L_p(\bR^d)$, thus, $fw^{-1/p}\in L_{p'}(\bR^d)$. This certainly implies that $f\in L_{p'}(\bR^d,\bar{w})$. 
    \item By H\"older's inequality,
    \begin{align*}
\sup_{\|\phi\|_{L_{p'}(\bR^d,\bar{w})}\leq1}\left|\int_{\bR^d}f(x)\phi(x) \mathrm{d}x\right|
&=\sup_{\|\phi\|_{L_{p'}(\bR^d,\bar{w})}\leq1}\left|\int_{\bR^d}f(x)w^{1/p}(x) \phi(x) w^{-1/p}(x) \mathrm{d}x\right|\leq\|f\|_{L_p(\bR^d,w)}.
    \end{align*}
    For $f\in L_p(\bR^d,w)$, $fw^{1/p}\in L_p(\bR^d)$. Thus,
    $$
    \|f\|_{L_p(\bR^d,w)}=\|fw^{1/p}\|_{L_p(\bR^d)}=\sup_{\|\phi\|_{L_{p'}(\bR^d)}\leq 1}\left|\int_{\bR^d}f(x)(w(x))^{1/p}\phi(x) \mathrm{d}x\right|.
    $$
    Since $\|\phi\|_{L_{p'}(\bR^d)}\leq1$,
    $$
    \|\phi w^{1/p}\|_{L_{p'}(\bR^d,\bar{w})}^{p'}=\int_{\bR^d}|\phi(x)|^{p'}(w(x))^{\frac{p'}{p}}(w(x))^{-\frac{1}{p-1}}\mathrm{d}x=\|\phi\|_{L_{p'}(\bR^d)}^{p'}\leq1.
    $$
    Therefore,
    $$
    \left|\int_{\bR^d}f(x)(w(x))^{1/p}\phi(x) \mathrm{d}x\right|\leq \sup_{\|\phi\|_{L_{p'}(\bR^d,\bar{w})}\leq1}\left|\int_{\bR^d}f(x)\phi(x) \mathrm{d}x\right|.
    $$
\end{itemize}

($vii$) Let
$$
m_1(\xi):=\frac{|\xi|^{\nu}}{(1+|\xi|^2)^{\nu/2}},\quad m_2(\xi):=\frac{(1+|\xi|^2)^{\nu/2}}{1+|\xi|^{\nu}}.
$$
One can easily check that (\textit{e.g.} \cite[Example 6.2.9]{grafakos2014classical}), for any multi-index $\alpha$,
$$
|D^{\alpha}_{\xi}m_1(\xi)|+|D^{\alpha}_{\xi}m_2(\xi)|\leq N(\nu,|\alpha|)|\xi|^{-|\alpha|}.
$$
Thus, for $s\in[1,\infty)$ and multi-index $\alpha$,
$$
\sum_{i=1}^2\sup_{R>0}\left(R^{s|\alpha|-d}\int_{R<|\xi|<2R}|D^{\alpha}_{\xi}m_i(\xi)|^{s}\mathrm{d}\xi\right)^{1/s}\leq N(d,\nu,|\alpha|).
$$
By the weighted multiplier theorem (\cite[Theorem 2]{kurtz1979results}), we have the following equivalence of the two norms
$$
\|f\|_{L_p(\bR^d,w)}+\|(-\Delta)^{\nu/2}f\|_{L_p(\bR^d,w)}\simeq\|f\|_{H_p^{\nu}(\bR^d,w)}.
$$
The theorem is proved.
\end{proof}

\begin{thm}\label{21.03.01.14.59}
Let $p\in(1,\infty)$, $w\in A_p(\bR^{d+1})$, and $\nu\in\bR$. Then,
\begin{enumerate}[(i)]
    \item $\bH_p^{\nu}((0,T)\times\bR^d,w)$ is a Banach space equipped with the norm 
$$
\|f\|_{\bH_p^{\nu}((0,T)\times\bR^d,w)}:=\|f\|_{L_p((0,T)\times\bR^d,w)}+\|(-\Delta)^{\nu}f\|_{L_p((0,T)\times\bR^d,w)}.
$$
    \item The topological dual space of $L_p((0,T)\times\bR^d,w)$ is $L_{p'}((0,T)\times\bR^d,\bar{w})$, where
    $$
    \frac{1}{p}+\frac{1}{p'}=1,\quad \bar{w}:=w^{-\frac{1}{p-1}}\in A_{p'}(\bR^{d+1}).
    $$
\end{enumerate}
\end{thm}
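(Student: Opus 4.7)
The plan for part (i) is a standard Banach space argument combined with a distributional identification step. Taking a Cauchy sequence $\{f_n\}$ in $\bH_p^\nu((0,T)\times\bR^d,w)$, the definition of the norm gives that $\{f_n\}$ and $\{(-\Delta)^{\nu/2} f_n\}$ (for $\nu\ge 0$), or $\{(1-\Delta)^{\nu/2}f_n\}$ (for $\nu<0$), are Cauchy in $L_p((0,T)\times\bR^d,w)$. The weighted Lebesgue space $L_p((0,T)\times\bR^d,w)$ is itself a Banach space by standard measure theory, interpreted as $L_p$ with respect to the absolutely continuous measure $w(t,x)\,\mathrm{d}x\,\mathrm{d}t$. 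Hence both sequences converge in that space, to limits $f$ and $g$ respectively, and the task reduces to identifying $g$ as the correct fractional Laplacian (or Bessel potential) of $f$ in $\cS'(\bR^d)$.

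To achieve this identification, I would first show that convergence in $L_p((0,T)\times\bR^d,w)$ implies convergence in $\cD'((0,T)\times\bR^d)$. The key input is Hölder's inequality: for any $\varphi\in C_c^\infty((0,T)\times\bR^d)$,
\[
\left|\int (f_n-f)\varphi\,\mathrm{d}x\,\mathrm{d}t\right|\le \|f_n-f\|_{L_p((0,T)\times\bR^d,w)}\left(\int_{\mathrm{supp}\,\varphi}|\varphi|^{p'}\bar{w}\,\mathrm{d}x\,\mathrm{d}t\right)^{1/p'},
\]
where $\bar{w}:=w^{-1/(p-1)}$. Since $w\in A_p(\bR^{d+1})$ automatically forces $\bar{w}\in L_{1,\mathrm{loc}}$, the right-hand side is finite and vanishes as $n\to\infty$. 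Because $(-\Delta)^{\nu/2}$ and $(1-\Delta)^{\nu/2}$ are continuous Fourier multipliers on $\cS'(\bR^d)$, passing to the limit in $\cS'$ forces $g$ to coincide with the appropriate operator applied to $f$, completing the completeness proof.

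For part (ii), the plan mirrors the duality argument of Theorem~\ref{21.02.23.17.15}(vi). The pointwise multiplication $T:L_p((0,T)\times\bR^d,w)\to L_p((0,T)\times\bR^d)$ defined by $Tf:=fw^{1/p}$ is an isometric isomorphism onto the classical unweighted $L_p$ space. Any bounded linear functional $\ell$ on the weighted space transports via $\ell\circ T^{-1}$ to a bounded linear functional on the unweighted space, where the classical Riesz representation theorem supplies a representative $h\in L_{p'}((0,T)\times\bR^d)$. Setting $g:=h\,w^{-1/p}\cdot w^{1/p'}$ (i.e., unwinding the isometry in the dual direction via $\phi\mapsto \phi w^{1/p}$ as in Theorem~\ref{21.02.23.17.15}(vi)) yields an element $g\in L_{p'}((0,T)\times\bR^d,\bar{w})$ representing $\ell$, with matching norms by direct computation. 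The assertion $\bar{w}\in A_{p'}(\bR^{d+1})$ is the classical Muckenhoupt duality, immediate from exchanging the roles of $p$ and $p'$ in the definition of $[\,\cdot\,]_{A_p}$.

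The main obstacle I expect is the distributional identification step in part (i): since the weight $w$ may be unbounded both above and below, one cannot resort to local $L^\infty$ or $L^1$ control of $f_n$ directly. The resolution rests on the specific structure of Muckenhoupt weights, namely that $w\in A_p$ forces both $w$ and $\bar{w}$ to be locally integrable, which is exactly what Hölder's inequality needs to extract $\cD'$-convergence from weighted $L_p$-convergence. Once this bridge to distributional convergence is in place, the continuity of the Fourier multipliers on $\cS'$ delivers the identification with essentially no further work.
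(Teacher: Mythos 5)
Your overall strategy matches the paper's: for part (i), identify the limit distributionally and for part (ii), transport the classical Riesz representation via the isometry $f\mapsto fw^{1/p}$. Part (ii) is essentially correct, though the explicit formula has a slip: the representative should be $g=hw^{1/p}$, not $hw^{-1/p}w^{1/p'}$ (which equals $hw^{1-2/p}$).

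There is, however, a genuine gap in the identification step of part (i) when $\nu>0$. The claim that $(-\Delta)^{\nu/2}$ is a continuous Fourier multiplier on $\cS'(\bR^d)$ is false unless $\nu$ is a nonnegative even integer: the symbol $|\xi|^\nu$ is not $C^\infty$ at $\xi=0$, so $(-\Delta)^{\nu/2}$ does not map $\cS(\bR^d)$ into itself, hence cannot be defined by adjointness on all of $\cS'(\bR^d)$. As a result, your chain ``$\cD'$-convergence $\Rightarrow$ $\cS'$-convergence $\Rightarrow$ apply a continuous multiplier'' breaks. What actually must be done is to pair against $\varphi\in C_c^\infty((0,T)\times\bR^d)$ and move $(-\Delta)^{\nu/2}$ onto $\varphi$: but $(-\Delta)^{\nu/2}\varphi$ is no longer compactly supported (for non-even $\nu$ it decays only like $|x|^{-d-\nu}$), so your H\"older bound over $\mathrm{supp}\,\varphi$ does not justify passing to the limit in $\int f_n\,1_{(0,T)}(-\Delta)^{\nu/2}\varphi$. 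One needs instead to show $(-\Delta)^{\nu/2}\varphi\,1_{(0,T)}\in L_{p'}(\bR^{d+1},\bar w)$ — true for $\nu\geq0$, e.g.\ by writing $(-\Delta)^{\nu/2}\varphi=[(-\Delta)^{\nu/2}(1-\Delta)^{-\nu/2}]\,(1-\Delta)^{\nu/2}\varphi$ with $(1-\Delta)^{\nu/2}\varphi\in\cS\subset L_{p'}(\bar w)$ and the bracketed multiplier $A_{p'}$-bounded — but this needs verification, and it is the substance of the argument, not a consequence of abstract continuity on $\cS'$. The paper's proof performs exactly this pairing computation for $\nu\geq0$, and it avoids the issue entirely for $\nu<0$ by noting that $\bH_p^\nu=(1-\Delta)^{-\nu/2}\bH_p^0$, which is cleaner than attempting a uniform distributional argument.
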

\begin{proof}
$(i)$ 
Note that $\|f\|_{\bH_p^{\nu}((0,T)\times\bR^d,w)}$ is a norm due to some properties of $L_p$-space.
Thus it suffices to prove the completeness. Suppose that $\{f_n\}_{n=1}^{\infty}\subseteq \bH_p^{\nu}((0,T)\times\bR^d,w)$ is a Cauchy sequence.

\textbf{Case 1.} $\nu\geq0$.

By the completeness of $L_p$-spaces, there exist $f$ and $g$ in $L_p((0,T)\times\bR^d,w)$ such that
$$
\lim_{n\to\infty}(\|f_n-f\|_{L_p((0,T)\times\bR^d,w)}+\|(-\Delta)^{\nu/2}f_n-g\|_{L_p((0,T)\times\bR^d,w)})=0.
$$
For $\phi\in C_c^{\infty}(\bR^{d+1})$,
\begin{equation*}
\begin{aligned}
&\int_0^T((-\Delta)^{\nu/2}f(t,\cdot),\phi(t,\cdot))\mathrm{d}t=\int_{-\infty}^{\infty}\int_{\bR^d}f(t,x)1_{0<t<T}(-\Delta)^{\nu/2}\phi(t,x)\mathrm{d}x\mathrm{d}t\\
&=\lim_{n\to\infty}\int_{-\infty}^{\infty}\int_{\bR^d}f_n(t,x)1_{0<t<T}(-\Delta)^{\nu/2}\phi(t,x)\mathrm{d}x\mathrm{d}t\\
&=\lim_{n\to\infty}\int_{-\infty}^{\infty}\int_{\bR^d}(-\Delta)^{\nu/2}f_n(t,x)1_{0<t<T}\phi(t,x)\mathrm{d}x\mathrm{d}t=\int_0^T\int_{\bR^d}g(t,x)\phi(t,x)\mathrm{d}x\mathrm{d}t.    
\end{aligned}    
\end{equation*}
By Theorem \ref{21.02.23.17.15} $(v)$, $(-\Delta)^{\nu/2}f1_{(0,T)}$ is a bounded linear functional on $L_{p'}(\bR^{d+1},\bar{w})$. By virtue of Theorem \ref{21.02.23.17.15} $(vi)$, $(-\Delta)^{\nu/2}f1_{(0,T)}\in L_p(\bR^{d+1},w)$. 
Therefore,
$$
(-\Delta)^{\nu/2}f1_{(0,T)}= g,
$$
which certainly implies that $f_n\to f$ in $\bH_p^{\nu}((0,T)\times\bR^d,w)$.

\textbf{Case 2.} $\nu<0$.

There exists $g\in L_p((0,T)\times\bR^d,w)$ such that
$$
\lim_{n\to\infty}\|(1-\Delta)^{\nu/2}f_n-g\|_{L_p((0,T)\times\bR^d,w)}=0.
$$
Denote
$$
f:=(1-\Delta)^{-\nu/2}g\in \bH_p^{\nu}((0,T)\times\bR^d,w),
$$
then $f_n\to f$ in $\bH_p^{\nu}((0,T)\times\bR^d,w)$.

$(ii)$ Let $f$ be a bounded linear functional on $L_p((0,T)\times\bR^d,w)$. Then, it is easy to check that $fw^{-1/p}$ is a bounded linear functional on $L_p((0,T)\times\bR^d)$, thus, $fw^{-1/p}\in L_{p'
}((0,T)\times\bR^d)$. This certainly implies that $f\in L_{p'}((0,T)\times\bR^d,\bar{w})$.

\end{proof}

\begin{assumption}
\label{21.01.14.15.33}
For almost all $t\in[0,T]$, $w(t,\cdot)\in A_p(\bR^d)$ and 
$$
\esssup_{t\in[0,T]}[w(t,\cdot)]_{A_p(\bR^d)}=: N_0<\infty.
$$
\end{assumption}
\begin{lem}
			\label{21.02.24.16.49}
Let $p\in(1,\infty)$, $w\in A_p(\bR^{d+1})$, and $f \in \cS(\bR^{d+1})$. 
Suppose that Assumption \ref{21.01.14.15.33} holds. 
Additionally, assume that
\begin{equation}
								\label{2021071510}
\sup_{R>0}\left(R^{2|\alpha|-d}\int_{R<|\xi|<2R}|D^{\alpha}_{\xi}m(\xi)|^{2}\mathrm{d}\xi\right)^{1/2}\leq N^*,\quad\forall |\alpha|\leq d+1,
\end{equation}
where $m(\xi)$ is a function defined on $\bR^d$.
Then, there exists a constant $N=N(d,p,N^*,N_0)$ such that
$$
\|T_mf(t,\cdot)\|_{L_p(\bR^d,w(t,\cdot))}\leq N\|f(t,\cdot)\|_{L_p(\bR^d,(w(t,\cdot))}\quad (t- a.e.),
$$
where
$$
T_mf(t,x):=\cF^{-1}[m\cF[f(t,\cdot)]](x).
$$
\end{lem}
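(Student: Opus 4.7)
The plan is to apply a weighted Fourier multiplier theorem slice-by-slice in the time variable. The key point is that, under Assumption \ref{21.01.14.15.33}, the time-slices $w(t,\cdot)$ are genuine $A_p(\bR^d)$-weights with a uniform constant $N_0$, so the desired inequality reduces to a pure $d$-dimensional weighted-multiplier estimate applied at each fixed $t$.

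More concretely, I would fix a $t\in[0,T]$ in the full-measure set on which $w(t,\cdot)\in A_p(\bR^d)$ and $[w(t,\cdot)]_{A_p(\bR^d)}\leq N_0$. Since $f\in\cS(\bR^{d+1})$, the slice $f(t,\cdot)$ lies in $\cS(\bR^d)$, so $T_mf(t,x)=\cF^{-1}[m\,\cF[f(t,\cdot)]](x)$ is well defined and, in view of the compact support of $\cF[f(t,\cdot)]\cdot\mathbf{1}_{|\xi|>\varepsilon}$ in a standard limiting argument, is a Schwartz-type object that falls within the scope of the Kurtz--Wheeden weighted multiplier theorem \cite[Theorem 2]{kurtz1979results}. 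Hypothesis \eqref{2021071510} (with $s=2$ and $|\alpha|\le d+1$) is precisely the integral H\"ormander--Mihlin condition required by that theorem to obtain boundedness on $L_p(\bR^d,v)$ for every weight $v\in A_p(\bR^d)$, and it yields a constant depending only on $d$, $p$, $N^*$, and $[v]_{A_p(\bR^d)}$.

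Applying this theorem with $v=w(t,\cdot)$ gives
\begin{equation*}
\|T_mf(t,\cdot)\|_{L_p(\bR^d,w(t,\cdot))}\leq N\bigl(d,p,N^*,[w(t,\cdot)]_{A_p(\bR^d)}\bigr)\|f(t,\cdot)\|_{L_p(\bR^d,w(t,\cdot))}.
\end{equation*}
Since the dependence of the Kurtz--Wheeden constant on the $A_p$-characteristic is monotone nondecreasing, the uniform estimate $[w(t,\cdot)]_{A_p(\bR^d)}\leq N_0$ lets me replace $N(d,p,N^*,[w(t,\cdot)]_{A_p(\bR^d)})$ by a single constant $N(d,p,N^*,N_0)$ valid for almost every $t\in[0,T]$. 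This is the desired estimate.

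The main (mild) obstacle is purely conceptual rather than technical: a priori, membership $w\in A_p(\bR^{d+1})$ does not imply that the slices $w(t,\cdot)$ lie in $A_p(\bR^d)$, nor that their characteristics are uniformly bounded; Assumption \ref{21.01.14.15.33} is exactly what supplies these two facts and allows us to freeze $t$ and invoke the $d$-dimensional theory. After this observation, the proof is a direct quotation of the weighted multiplier theorem, so there are no substantial computations to carry out.
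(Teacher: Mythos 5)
Your proposal is correct and follows the same slice-by-slice structure as the paper: fix $t$ in the full-measure set where $w(t,\cdot)\in A_p(\bR^d)$ with characteristic $\leq N_0$, apply a $d$-dimensional weighted multiplier theorem, observe that the constant depends on the $A_p$-characteristic monotonically, and use the uniform bound from Assumption \ref{21.01.14.15.33}. The only difference is the reference invoked at the central step: you cite Kurtz--Wheeden \cite[Theorem~2]{kurtz1979results} directly (noting that with $s=2$ and $l=d$, so that $d/s<l\leq d$ and $A_{pl/d}=A_p$, the integral condition for $|\alpha|\leq d$ is a subset of \eqref{2021071510}), while the paper instead establishes that $T_m$ is weak type $(1,1)$ via Grafakos \cite[Theorem 6.2.7]{grafakos2014classical} and then quotes \cite[Corollaries 6.10, 6.11, Remark 6.14]{fackler2020weighted} to get a sharp $A_p$-bound of the form $N\,[w(t,\cdot)]_{A_p}^{1\vee(p-1)^{-1}}[w(t,\cdot)^{-1/(p-1)}]_{A_{p'}}^{1\vee(p'-1)^{-1}}$. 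Both routes are valid; yours is a one-shot quotation that suffices for the qualitative statement being proved, while the paper's gives an explicit quantitative exponent in the characteristic (which it does not end up needing here). One small presentational quibble: the remark that $T_mf(t,\cdot)$ is ``a Schwartz-type object'' is not needed and not quite accurate (the symbol $m$ is typically singular at the origin), but this does not affect the argument since the Kurtz--Wheeden theorem applies to $f(t,\cdot)\in\cS(\bR^d)$ directly.
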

\begin{proof}
Consider the operator 
$$
g \mapsto T_mg(x):=\cF^{-1}[m\cF[g(\cdot)]](x).
$$
Then, by Theorem \cite[Theorem 6.2.7]{grafakos2014classical}, the operator $T_m:L_1(\bR^d)\to L_{1,\infty}(\bR^d)$ is bounded. 
Moreover, recall that $w(t,\cdot) \in A_p(\bR^d)$ $(t - a.e.)$.
Thus, applying \cite[Corollaries 6.10, 6.11, and Remark 6.14]{fackler2020weighted}, we have, for almost all $t\in[0,T]$,
\begin{align*}
\|T_m f(t,\cdot)\|_{L_p(\bR^d,w(t,\cdot))}&\leq N [w(t,\cdot)]_{A_{p}(\bR^d)}^{1\vee(p-1)^{-1}}[w(t,\cdot)^{-1/(p-1)}]_{A_{p'}(\bR^d)}^{1\vee(p'-1)^{-1}}\|f(t,\cdot)\|_{L_p(\bR^d,w(t,\cdot))}\\
&\leq N\|f(t,\cdot)\|_{L_p(\bR^d,w(t,\cdot))},
\end{align*}
where $N=N(d,p,N^*,N_0)$. The lemma is proved.
\end{proof}

\begin{thm}\label{21.03.01.14.59-2}
Let $p\in(1,\infty)$, $w\in A_p(\bR^{d+1})$, and $\nu\in\bR$. Suppose that Assumption \ref{21.01.14.15.33} holds.
\begin{enumerate}[(i)]
    \item For $\nu_0\in\bR$, $(1-\Delta)^{\nu_0/2}$ is a bijective mapping from $\bH_p^{\nu}((0,T)\times\bR^d,w)$ to $\bH_p^{\nu+\nu_0}((0,T)\times\bR^d,w)$.
    \item If $\nu_1\geq \nu_0$, then $\bH_p^{\nu_1}((0,T)\times\bR^d,w)$ is continuously embedded into $\bH_p^{\nu_0}((0,T)\times\bR^d,w)$. More precisely, there exists a positive constant $N$ such that
    $$
    \|f\|_{\bH_p^{\nu_0}((0,T)\times\bR^d,w)}\leq N\|f\|_{\bH_p^{\nu_1}((0,T)\times\bR^d,w)},\quad \forall f\in \cS(\bR^{d+1}).
    $$
    \item If $\nu$ is a nonnegative integer, then
    $$
    \|f\|_{\bH_p^{\nu}((0,T)\times\bR^d,w)}\simeq \sum_{|\alpha|\leq \nu}\|D^{\alpha}_xf\|_{L_p((0,T)\times\bR^d,w)}.
    $$
    \item The topological dual space of $\bH_p^{\nu}((0,T)\times\bR^d,w)$ is $\bH_{p'}^{-\nu}((0,T)\times\bR^d,\bar{w})$, where
    $$
    \frac{1}{p}+\frac{1}{p'}=1,\quad \bar{w}:=w^{-\frac{1}{p-1}}\in A_p(\bR^{d+1}).
    $$
\end{enumerate}
\end{thm}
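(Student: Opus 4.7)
The entire argument reduces to lifting pointwise-in-$t$ weighted multiplier estimates from Lemma~\ref{21.02.24.16.49} to bounds on $L_p((0,T)\times\bR^d,w)$. Given any symbol $m(\xi)$ obeying the Mikhlin-type condition \eqref{2021071510}, raising the slicewise inequality
$\|T_m f(t,\cdot)\|_{L_p(\bR^d,w(t,\cdot))} \leq N \|f(t,\cdot)\|_{L_p(\bR^d,w(t,\cdot))}$
to the $p$-th power and integrating over $t\in(0,T)$ immediately yields
\[
\|T_m f\|_{L_p((0,T)\times\bR^d,w)} \leq N \|f\|_{L_p((0,T)\times\bR^d,w)},
\]
with a constant depending only on $d,p,N^*,N_0$. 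Using this for $m_1(\xi) = |\xi|^\nu(1+|\xi|^2)^{-\nu/2}$ and $m_2(\xi) = (1+|\xi|^2)^{\nu/2}(1+|\xi|^\nu)^{-1}$, both of which satisfy \eqref{2021071510} by the computation in the proof of Theorem~\ref{21.02.23.17.15}(vii), I first upgrade the definition of $\bH_p^\nu$ to the single clean characterization
\[
\|f\|_{\bH_p^\nu((0,T)\times\bR^d,w)} \simeq \|(1-\Delta)^{\nu/2}f\|_{L_p((0,T)\times\bR^d,w)}, \qquad \forall \nu\in\bR.
\]
For $\nu<0$ this is the definition; for $\nu\geq 0$ it follows from the displayed multiplier bound applied to $m_1$ and $m_2$.

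\textbf{Deriving (i)--(iii).} With the reformulation in hand, (i) is immediate from the composition law $(1-\Delta)^{\nu_0/2}(1-\Delta)^{-\nu_0/2}=I$, making $(1-\Delta)^{\nu_0/2}$ an isometric bijection between the two spaces up to the norm equivalences. For (ii), apply the multiplier $m(\xi) = (1+|\xi|^2)^{(\nu_0-\nu_1)/2}$ with $\nu_0\leq\nu_1$: this is a bounded smooth function whose $\beta$-derivatives decay as $(1+|\xi|^2)^{-|\beta|/2}$, and \eqref{2021071510} is verified directly. For (iii) with a nonnegative integer $\nu$, combine the reformulation with standard Mikhlin multipliers (such as $(i\xi)^\alpha(1+|\xi|^2)^{-\nu/2}$ for $|\alpha|\leq\nu$ together with appropriate Riesz-transform composites) to pass between $\|(1-\Delta)^{\nu/2}f\|_{L_p(w)}$ and $\sum_{|\alpha|\leq\nu}\|D^\alpha f\|_{L_p(w)}$; all symbols involved satisfy \eqref{2021071510}.

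\textbf{Duality in (iv), and the main obstacle.} Since $w(t,\cdot)\in A_p(\bR^d)$ uniformly in $t$ implies $\bar w(t,\cdot) = w(t,\cdot)^{-1/(p-1)} \in A_{p'}(\bR^d)$ uniformly, the characterization from the first paragraph also holds with $(w,p)$ replaced by $(\bar w,p')$. Introduce the pairing
\[
\langle f,g\rangle := \int_0^T\!\!\int_{\bR^d} (1-\Delta)^{\nu/2}f(t,x)\cdot (1-\Delta)^{-\nu/2}g(t,x)\,\mathrm{d}x\,\mathrm{d}t
\]
for $f\in \bH_p^\nu$ and $g\in \bH_{p'}^{-\nu}$; it is bounded by H\"older and the reformulation. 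Conversely, a bounded linear functional on $\bH_p^\nu$ pulls back via $(1-\Delta)^{-\nu/2}$ to a bounded functional on $L_p((0,T)\times\bR^d,w)$, which by Theorem~\ref{21.03.01.14.59}(ii) is represented by an $h\in L_{p'}((0,T)\times\bR^d,\bar w)$, and $g:=(1-\Delta)^{\nu/2}h\in \bH_{p'}^{-\nu}((0,T)\times\bR^d,\bar w)$ realizes the original functional. The only delicate part of the whole program is the very first step: without Assumption~\ref{21.01.14.15.33} a slice $w(t,\cdot)$ of a generic $A_p(\bR^{d+1})$-weight need not itself belong to $A_p(\bR^d)$, and the pointwise-in-$t$ Mikhlin estimate of Lemma~\ref{21.02.24.16.49} becomes unavailable; consequently the statements of Theorem~\ref{21.03.01.14.59-2} cannot be expected to hold for fully general parabolic $A_p$-weights.
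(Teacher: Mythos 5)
Your proposal is correct and follows essentially the same route as the paper: lift the slicewise weighted Mikhlin estimate of Lemma~\ref{21.02.24.16.49} to $L_p((0,T)\times\bR^d,w)$ by integrating in $t$, establish the equivalence $\|f\|_{\bH_p^\nu}\simeq\|(1-\Delta)^{\nu/2}f\|_{L_p(w)}$ via the multipliers $|\xi|^\nu(1+|\xi|^2)^{-\nu/2}$, $(1+|\xi|^2)^{\nu/2}(1+|\xi|^\nu)^{-1}$ (the paper also records $(1+|\xi|^2)^{-\nu/2}$, which you need for the $L_p$-norm piece of the reverse bound but omit to list), deduce (i)--(iii) from the symbols $(1+|\xi|^2)^{(\nu_0-\nu_1)/2}$ and $(i\xi)^\alpha(1+|\xi|^2)^{-\nu/2}$, and obtain (iv) by conjugating with $(1-\Delta)^{\pm\nu/2}$ and invoking the $L_p$--$L_{p'}$ duality of Theorem~\ref{21.03.01.14.59}(ii); the roles of Assumption~\ref{21.01.14.15.33} and the uniform $A_{p'}$-membership of $\bar w(t,\cdot)$ are also identified exactly as in the paper.
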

\begin{proof}
$(i)$ Recall the definition of $\bH_p^{\nu}((0,T)\times\bR^d,w)$ (Definition \ref{defn solu space}(ii)). 
Then it suffices to show that for all $\nu\geq0$,
\begin{align}
						\label{20220103 01}
\|f\|_{L_p((0,T) \times \bR^d,w)}+\|(-\Delta)^{\nu/2}f\|_{L_p((0,T) \times \bR^d,w)}\simeq\|f\|_{H_p^{\nu}((0,T) \times \bR^d,w)}.
\end{align}
We prove this equivalence of the two norms above by using Lemma \ref{21.02.24.16.49}.
Let $\nu\geq0$ and put
$$
m_1(\xi):=\frac{1}{(1+|\xi|^2)^{\nu/2}},\quad m_2(\xi):=\frac{|\xi|^{\nu}}{(1+|\xi|^2)^{\nu/2}},\quad m_3(\xi):=\frac{(1+|\xi|^2)^{\nu/2}}{1+|\xi|^{\nu}}
$$
It is easy to check that $m_1$, $m_2$ and $m_3$ satisfy \eqref{2021071510}. Thus, by Lemma \ref{21.02.24.16.49},
\begin{align*}
    &\|f\|_{L_p((0,T)\times\bR^d,w)}=\|T_{m_1}(1-\Delta)^{\nu/2}f\|_{L_p((0,T)\times\bR^d,w)}\leq N\|(1-\Delta)^{\nu/2}f\|_{L_p((0,T)\times\bR^d,w)},\\
    &\|(-\Delta)^{\nu/2}f\|_{L_p((0,T)\times\bR^d,w)}=\|T_{m_2}(1-\Delta)^{\nu/2}f\|_{L_p((0,T)\times\bR^d,w)}\leq N\|(1-\Delta)^{\nu/2}f\|_{L_p((0,T)\times\bR^d,w)},\\
    &\|(1-\Delta)^{\nu/2}f\|_{L_p((0,T)\times\bR^d,w)}=\|T_{m_3}(1+(-\Delta)^{\nu/2})f\|_{L_p((0,T)\times\bR^d,w)}\leq N\|f\|_{\bH_p^{\nu}((0,T)\times\bR^d,w)}.
\end{align*}

$(ii)$ The result is an easy consequence of $(i)$
Indeed, define $m_4(\xi):=(1+|\xi|^2)^{\nu_0/2}(1+|\xi|^2)^{-\nu_1/2}$. 
Then $m_4$ satisfies \eqref{2021071510}. 
Thus by $(ii)$ and Lemma \ref{21.02.24.16.49},
\begin{align*}
\|f\|_{\bH_p^{\nu_0}((0,T)\times\bR^d,w)}&\simeq \|(1-\Delta)^{\nu_0/2}f\|_{L_p((0,T)\times\bR^d,w)}=\|T_{m_4}(1-\Delta)^{\nu_1/2}f\|_{L_p((0,T)\times\bR^d,w)}\\
&\leq N\|(1-\Delta)^{\nu_1/2}f\|_{L_p((0,T)\times\bR^d,w)}\simeq \|f\|_{\bH_p^{\nu_1}((0,T)\times\bR^d,w)}.
\end{align*}

$(iii)$ For a multi-index $\alpha$ satisfying $|\alpha|\leq\nu$, we denote
$$
m_5(\xi):=\frac{(i\xi)^{\alpha}}{(1+|\xi|^2)^{\nu/2}}.
$$
It is easy to show that \eqref{2021071510} holds with $m=m_5$. 
Thus by $(ii)$ and Lemma \ref{21.02.24.16.49},
\begin{equation*}
    \begin{gathered}
    \|D^{\alpha}_xf\|_{L_p((0,T)\times\bR^d,w)}=\|T_{m_5}(1-\Delta)^{\nu/2}f\|_{L_p((0,T)\times\bR^d,w)}\leq N\|f\|_{\bH_{p}^{\nu}((0,T)\times\bR^d,w)}.
    \end{gathered}
\end{equation*}
To prove the other direction, it is sufficient to show
\begin{equation}
\label{21.02.25.19.16}
\|(1-\Delta)^{1/2}f\|_{L_p((0,T)\times\bR^d,w)}\leq N\|f\|_{L_p((0,T)\times\bR^d,w)}+N\sum_{j=1}^{d}\|D_{x^j}f\|_{L_p((0,T)\times\bR^d,w)}
\end{equation}
due to the mathematical induction. By \eqref{20220103 01} and $(ii)$,
\begin{align*}
\|(1-\Delta)^{1/2}f\|_{L_p((0,T)\times\bR^d,w)}&\leq N\|g\|_{L_p((0,T)\times\bR^d,w)}+N\|\Delta g\|_{L_p((0,T)\times\bR^d,w)}\\
&\leq N\|f\|_{L_p((0,T)\times\bR^d,w)}+N\sum_{i=1}^d\|T_{m^i} D_{x^i}f\|_{L_p((0,T)\times\bR^d,w)},
\end{align*}
where $g:=(1-\Delta)^{-1/2}f$ and
$$
m^i(\xi):=\frac{i\xi^i}{(1+|\xi|^{2})^{1/2}},\quad i=1,2,\cdots,d.
$$
One can check that $m^i$ is a form of $m_5$. Thus \eqref{2021071510} holds with $m=m^i$.
Finally, applying Lemma \ref{21.02.24.16.49} again, we obtain \eqref{21.02.25.19.16}.

$(iv)$ Due to (ii), $(1-\Delta)^{\nu/2}(1-\Delta)^{-\nu/2}=(1-\Delta)^{-\nu/2}(1-\Delta)^{\nu/2}$ is an identity operator on $\bH_{q}^{\nu'}((0,T)\times\bR^d)$ for all $q\in(1,\infty)$ and $\nu'\in\bR$. 
Thus the statement can be proved by following the proof of Theorem \ref{21.02.23.17.15}(vi).
The theorem is proved.
\end{proof}

\begin{defn}
Let $\bH_c^{\infty}((0,T)\times\bR^d)$ ($\bH^{\infty}((0,T)\times\bR^d)$, resp.) be a set of all functions $f$ on $(0,T)\times\bR^d$ defined by
$$
f(t,x):=\sum_{i=1}^nf_i(t)g_i(x),
$$
where $f_i\in C_c^{\infty}((0,T))$ and $g_i\in C_c^{\infty}(\bR^d)$ ($g_i\in\cS(\bR^d)$, resp.) for all $i$.
\end{defn}

\begin{thm}
\label{21.03.01.15.34}
\begin{enumerate}[(i)]
    \item $\bH_c^{\infty}((0,T)\times\bR^d)$ is dense in $L_p((0,T)\times\bR^d,w)$ for all $p\in(1,\infty)$ and $w\in A_p(\bR^{d+1})$. 
    If additionally Assumption \ref{21.01.14.15.33} holds, then $\bH_c^{\infty}((0,T)\times\bR^d)$ is dense in $\bH_p^{\nu}((0,T)\times\bR^d,w)$ for all $\nu \in \bR$, $p\in(1,\infty)$, and $w\in A_p(\bR^{d+1})$.
    \item  $\bH_c^{\infty}((0,T)\times\bR^d)$ is dense in $L_q((0,T),w_1;H_p^{\nu}(\bR^d,w_2))$ for all $\nu \in \bR$, $p,q\in(1,\infty)$, $w_1\in A_q(\bR)$ and $w_2\in A_p(\bR^d)$.
\end{enumerate}

\end{thm}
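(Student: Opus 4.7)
\emph{Plan.} The plan is to prove the three density statements in sequence, exploiting the fact that $(1-\Delta)^{\nu/2}$ acts only on the $x$-variable and therefore respects tensor-product structure.

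\textbf{(i), unweighted $L_p$ density.} First I will show that $C_c^\infty((0,T)\times\bR^d)$ is dense in $L_p((0,T)\times\bR^d,w)$ using the standard machinery for $A_p$ weights (truncation, outer regularity of the Radon measure $w\,dt\,dx$, Urysohn-type continuous approximation, and mollification with a kernel whose support is small enough to remain inside $(0,T)\times\bR^d$). Next, given $\phi \in C_c^\infty((0,T)\times\bR^d)$ supported in a compact set $K_0$, I will pick tensor-product cutoffs $\zeta_1 \in C_c^\infty((0,T))$ and $\zeta_2 \in C_c^\infty(\bR^d)$ with $\zeta_1\zeta_2 \equiv 1$ on $K_0$ and with product support contained in a compact $K_1 \subset (0,T)\times\bR^d$. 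The Stone-Weierstrass theorem supplies tensor-product polynomials $P_n(t,x) = \sum_i p_{i,n}(t)q_{i,n}(x)$ converging to $\phi$ uniformly on $K_1$, and then $\zeta_1\zeta_2 P_n \in \bH_c^\infty$ satisfies
\[
\|\zeta_1\zeta_2 P_n - \phi\|_{L_p(w)}^p \le \|\zeta_1\zeta_2\|_\infty^p\|P_n-\phi\|_{L_\infty(K_1)}^p \cdot w(K_1) \longrightarrow 0,
\]
using local integrability of $w$.

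\textbf{(i), $\bH_p^\nu$ density.} Under Assumption \ref{21.01.14.15.33}, Theorem \ref{21.03.01.14.59-2}(i) furnishes the isomorphism $(1-\Delta)^{\nu/2} : \bH_p^\nu \to L_p(w)$. For $f \in \bH_p^\nu$ write $F := (1-\Delta)^{\nu/2}f \in L_p(w)$ and, via the first part, choose $F_n = \sum_i \phi_{i,n}(t)g_{i,n}(x) \in \bH_c^\infty$ with $F_n \to F$ in $L_p(w)$. Setting $h_{i,n} := (1-\Delta)^{-\nu/2}g_{i,n} \in \cS(\bR^d)$ produces $f_n := (1-\Delta)^{-\nu/2}F_n = \sum_i \phi_{i,n}(t)h_{i,n}(x) \to f$ in $\bH_p^\nu$, but only $f_n \in \bH^\infty$. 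I will then truncate each $h_{i,n}$ by $\chi_k(x):=\chi(x/k)$, where $\chi \in C_c^\infty(\bR^d)$ with $\chi \equiv 1$ on $B_1$, giving $f_n^{(k)} := \sum_i \phi_{i,n}(t)(\chi_k h_{i,n})(x) \in \bH_c^\infty$. Since $(\chi_k-1)h_{i,n} \to 0$ in $\cS(\bR^d)$ and $(1-\Delta)^{\nu/2}:\cS\to\cS$ is continuous, for any $M>0$
\[
\bigl|(1-\Delta)^{\nu/2}\bigl((\chi_k-1)h_{i,n}\bigr)(x)\bigr| \le \varepsilon_{k,M}(1+|x|)^{-M}, \qquad \varepsilon_{k,M} \to 0.
\]
A diagonal selection $k=k(n)$ then yields $f_n^{(k(n))} \in \bH_c^\infty$ converging to $f$ in $\bH_p^\nu$.

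\textbf{Part (ii).} With $X := H_p^\nu(\bR^d, w_2)$, which is separable via the Bessel isomorphism to $L_p(\bR^d,w_2)$, I will invoke Bochner integration theory: simple $X$-valued functions $\sum_i c_i\mathbf{1}_{A_i}(t)g_i$ with $g_i \in X$ and $\int_{A_i}w_1\,dt<\infty$ are dense in $L_q((0,T),w_1;X)$. Applying the first part of (i) (with $d=0$, $p=q$, $w=w_1$) to approximate each $\mathbf{1}_{A_i}$ in $L_q(w_1)$ by $\phi_i \in C_c^\infty((0,T))$, and Theorem \ref{21.02.23.17.15}(v) to approximate each $g_i$ in $X$ by $\psi_i \in C_c^\infty(\bR^d)$, the triangle estimate
\[
\|\phi_i\psi_i - \mathbf{1}_{A_i}g_i\|_{L_q(w_1;X)} \le \|\phi_i\|_{L_q(w_1)}\|\psi_i - g_i\|_X + \|\phi_i - \mathbf{1}_{A_i}\|_{L_q(w_1)}\|g_i\|_X
\]
closes the argument, since the resulting $\sum_i c_i\phi_i(t)\psi_i(x)$ lies in $\bH_c^\infty$.

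\textbf{Main obstacle.} The delicate point is the truncation step in the second part of (i), where the $\bH_p^\nu$-error is dominated by
\[
\varepsilon_{k,M}^p\sum_i \|\phi_{i,n}\|_\infty^p \int_0^T\!\!\int_{\bR^d}(1+|x|)^{-Mp}w(t,x)\,dx\,dt,
\]
and the integral must be finite. This requires balancing the rapid decay inherited from $h_{i,n} \in \cS$ against the polynomial growth of the doubling measure $w\,dt\,dx$: from $w \in A_p(\bR^{d+1})$ one gets $w((0,T)\times B_R) \lesssim R^\beta$ for some $\beta$ depending on $[w]_{A_p(\bR^{d+1})}$, so choosing $M>\beta/p$ makes the integral converge. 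Assumption \ref{21.01.14.15.33} enters only indirectly, through Theorem \ref{21.03.01.14.59-2}(i), which supplies the Bessel-potential isomorphism that makes the reduction to the $L_p(w)$ case possible.
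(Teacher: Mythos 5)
Your proof is correct, but it takes a genuinely different route from the paper's. For the base case ($\nu=0$ density of $\bH_c^\infty$ in $L_p(w)$) the paper does not build approximants at all: it argues by duality, invoking Hahn--Banach together with Theorem \ref{21.03.01.14.59}(ii) (that $L_p((0,T)\times\bR^d,w)^* = L_{p'}((0,T)\times\bR^d,\bar w)$), and shows that any $h\in L_{p'}(\bar w)$ annihilating $\bH_c^\infty$ satisfies $\int_{\bR^d}h(t,\cdot)g\,\mathrm{d}x=0$ for a.e.\ $t$ and every $g\in C_c^\infty(\bR^d)$, hence vanishes. That is considerably shorter than your truncation--mollification--Stone--Weierstrass construction, though yours is equally valid. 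Both you and the paper then pass to $\bH_p^\nu(w)$ via the Bessel-potential isomorphism of Theorem \ref{21.03.01.14.59-2}(i); but since $(1-\Delta)^{-\nu/2}$ carries $C_c^\infty(\bR^d)$-factors only into $\cS(\bR^d)$, this pullback lands in $\bH^\infty$, not $\bH_c^\infty$, and the paper's Case 2 in fact only concludes $\bH^\infty$-density. Your spatial-cutoff step (using $(\chi_k-1)h_{i,n}\to 0$ in $\cS(\bR^d)$, the continuity of $(1-\Delta)^{\nu/2}$ on $\cS(\bR^d)$, and the polynomial growth $w((0,T)\times B_R)\lesssim R^{(d+1)p}$ enjoyed by $A_p(\bR^{d+1})$ weights to make the tail integral converge for large $M$) is exactly what is needed to upgrade this to the $\bH_c^\infty$-density stated in the theorem, so your argument is in that respect more complete than the paper's. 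For part (ii) the paper simply says the proof goes ``by similarity'' to (i), i.e.\ duality again; your Bochner-space route (separability of $H_p^\nu(\bR^d,w_2)$, density of $X$-valued simple functions, then tensor approximation of each factor via the $d=1$ scalar case and Theorem \ref{21.02.23.17.15}(v)) is a different but perfectly sound path.
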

\begin{proof}
Because of the similarity of the proof, we only prove $(i)$. Let $\nu \in \bR$, $p\in(1,\infty)$, and $w\in A_p(\bR^{d+1})$. 
We divide the proof depending on the value of $\nu$.
\smallskip

\textbf{Case 1.} Assume $\nu=0$. Suppose that the statement is not true. 
Then by Hahn-Banach theorem (\textit{e.g.} \cite[Theorem 5.19]{rudin2006real}) and Theorem \ref{21.03.01.14.59}$(ii)$, there is a nonzero $h\in L_{p'}((0,T)\times\bR^d,\bar{w})$ such that
$$
\int_{0}^{T}\int_{\bR^d}h(t,x)f(t,x)\mathrm{d}x\mathrm{d}t=0,\quad \forall f\in \bH_c^{\infty}((0,T)\times\bR^d).
$$
In particular,
$$
\int_{t_0}^{t_1}\int_{\bR^d}h(t,x)g(x)\mathrm{d}x\mathrm{d}t=0,\quad \forall g\in C_c^{\infty}(\bR^d),\,0< t_0<t_1<T.
$$
By Fubini's theorem, $\int_{\bR^d}h(t,x)g(x)\mathrm{d}x$ is a measurable function with respect to $t$ and moreover by a basic property of the Lebesgue integral, we have
\begin{align}
						\label{2021071540}
\int_{\bR^d}h(t,x)g(x)\mathrm{d}x=0,
\end{align}
for almost all $t\in[0,T]$. 
Finally, since \eqref{2021071540} holds for all $g\in C_c^{\infty}(\bR^d)$, we conclude that $h(t,x)=0$ for almost all $(t,x)\in (0,T)\times\bR^d$.  It is a contradiction since $h$ is not zero.

\smallskip
\textbf{Case 2.} Assume that $\nu\neq0$. Note that for any $f\in \bH_c^{\infty}((0,T)\times\bR^d)$,
$$
(1-\Delta)^{\nu/2}f\in \bH^{\infty}((0,T)\times\bR^d).
$$
Moreover, it is obvious that
$$
\bH_c^{\infty}((0,T)\times\bR^d) \subset \bH^{\infty}((0,T)\times\bR^d).
$$
Thus, by Case 1, $\bH^{\infty}((0,T)\times\bR^d)$ is dense in $L_p((0,T)\times\bR^d,w)$. 
Using Theorem \ref{21.03.01.14.59-2} $(i)$, we obtain the result. The theorem is proved.
\end{proof}

We finish the appendix by proving the fact in Remark \ref{rem 2021-12-29}.
\begin{lem}
						\label{lem 2021-12-29}
Let $-d<\alpha<d(p-1)$  and denote $w_{\alpha}(t,x):=(t^2+|x|^2)^{\alpha/2}$.
Then there exists a positive constant $N(d,\alpha)$ such that for any balls $B_R(x_0)\subset \bR^d$ and almost all $t\in[0,T]$,
\begin{equation}
\label{21.12.29.12.43}
    \begin{aligned}
    &\left(\aint_{B_R(x_0)}w_{\alpha}(t,x)\mathrm{d}x\right)\left(\aint_{B_R(x_0)}w_{\alpha}(t,x)^{-\frac{1}{p-1}}\mathrm{d}x\right)^{p-1}\\
    &=\left(\aint_{B_R(x_0)}w_{\alpha}(t,x)\mathrm{d}x\right)\left(\aint_{B_R(x_0)}w_{-\alpha/(p-1)}(t,x)\mathrm{d}x\right)^{p-1}\leq N.
\end{aligned}
\end{equation}
\end{lem}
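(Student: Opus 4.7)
My strategy is to first kill the $t$-dependence by a scaling substitution, reducing to a single $t$-free $A_p$ bound, and then verify that bound by a direct case-by-case computation.

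For $t>0$, substituting $y=x/t$ in both integrals yields $(t^2+|x|^2)^{\alpha/2}=t^\alpha(1+|y|^2)^{\alpha/2}$, $\mathrm{d}x=t^d\,\mathrm{d}y$, and $B_R(x_0)\leftrightarrow B_{R/t}(x_0/t)$. The factors of $t^\alpha$ and $t^{-\alpha/(p-1)}$ produced by the two averages combine (after raising the second to the power $p-1$) to $t^{\alpha}\cdot t^{-\alpha}=1$, giving
\begin{equation*}
\text{LHS of }\eqref{21.12.29.12.43}=\left(\aint_{B_{R/t}(x_0/t)}W(y)\,\mathrm{d}y\right)\left(\aint_{B_{R/t}(x_0/t)}W(y)^{-1/(p-1)}\,\mathrm{d}y\right)^{p-1},
\end{equation*}
where $W(y):=(1+|y|^2)^{\alpha/2}$. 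This is bounded by $[W]_{A_p(\bR^d)}$ uniformly in $t,R,x_0$. The case $t=0$ reduces directly to the classical fact $|x|^\alpha\in A_p(\bR^d)$ under $-d<\alpha<d(p-1)$ (already cited in the paper). Hence the problem reduces to showing $W\in A_p(\bR^d)$ with constant depending only on $d,p,\alpha$.

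To estimate the $A_p$ constant of $W$ on a ball $B=B_R(x_0)$, I would split into two cases. If $|x_0|\geq 2R$, then $|x|\in[|x_0|/2,3|x_0|/2]$ on $B$, so $1+|x|^2\simeq 1+|x_0|^2$ with constants depending only on the range of $\alpha$; both $W$ and $W^{-1/(p-1)}$ are therefore pointwise comparable to constants on $B$, and the $A_p$ product is immediately bounded. If $|x_0|<2R$, then $B\subset B_{3R}(0)$ and $|B_{3R}(0)|/|B|=3^d$, so for any $f\geq 0$, $\aint_B f\leq 3^d\aint_{B_{3R}(0)}f$; it therefore suffices to bound the product of averages over $B_{3R}(0)$. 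Passing to polar coordinates,
\begin{equation*}
\aint_{B_{3R}(0)}W\,\mathrm{d}x=\frac{N(d)}{R^d}\int_0^{3R}(1+r^2)^{\alpha/2}r^{d-1}\,\mathrm{d}r,
\end{equation*}
with the analogous formula for $W^{-1/(p-1)}$ (replacing $\alpha$ by $-\alpha/(p-1)$). Splitting at $r=1$ and using $(1+r^2)^{\beta/2}\simeq 1$ for $r\leq 1$ and $\simeq r^\beta$ for $r\geq 1$, the integrability conditions $\alpha+d>0$ and $d-\alpha/(p-1)>0$ (exactly our hypothesis) yield: both averages behave like $1$ when $R\leq 1$, and scale as $R^\alpha$ and $R^{-\alpha/(p-1)}$ respectively when $R\geq 1$. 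In either regime the product equals a constant depending only on $d,p,\alpha$.

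The main obstacle is precisely this polar-coordinate computation in the second case, when $B$ contains the origin and $R$ is large: one must check that the two rates $R^\alpha$ and $R^{-\alpha/(p-1)\cdot(p-1)}=R^{-\alpha}$ cancel exactly in the $A_p$ product (the same mechanism making $|x|^\alpha$ an $A_p$ weight), and that the hypothesis $-d<\alpha<d(p-1)$ is used in an essential way at both ends of the radial integration to avoid logarithmic corrections. Once this is carried out, combining the two cases yields $[W]_{A_p(\bR^d)}\leq N(d,p,\alpha)$ and hence the desired uniform bound \eqref{21.12.29.12.43}.
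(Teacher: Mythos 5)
Your proof is correct, and the essential machinery is the same as the paper's (scaling out the $t$-dependence, a case split into balls far from vs.\ near the origin, and a polar-coordinate integral estimate), but you organize it in a cleaner and slightly more elementary way. You perform the $t$-scaling globally at the outset, which reduces the entire lemma to the single, $t$-free assertion that $W(y):=(1+|y|^2)^{\alpha/2}\in A_p(\bR^d)$ with $[W]_{A_p}$ depending only on $d,p,\alpha$; the paper instead keeps the $t$ explicit, only rescaling inside its near-origin case after first splitting on $|x_0|\geq 3R/2$ vs.\ $|x_0|<3R/2$. You also avoid the paper's appeal to the Lebesgue differentiation theorem (used in the paper to handle the regime $3R/|t|<R_1$ in Case 2-1) by instead splitting the radial integral at $r=1$ and using $(1+r^2)^{\beta/2}\simeq 1$ for $r\leq1$ and $\simeq r^\beta$ for $r\geq1$; this gives a direct and self-contained estimate covering small and large rescaled radii uniformly. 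Both arguments ultimately hinge on the same cancellation $R^{\alpha}\cdot\bigl(R^{-\alpha/(p-1)}\bigr)^{p-1}=1$ in the large-ball regime and on the integrability conditions $\alpha+d>0$, $d-\alpha/(p-1)>0$, which are exactly the hypothesis $-d<\alpha<d(p-1)$. One small point to keep explicit when writing it up: in the near-origin case the crossover between the two behaviors happens at $R\simeq1$ in rescaled variables, and for intermediate $R$ the average is comparable to $\max(1,R^{\beta})$; this interpolation is where the constants depending on $d,p,\alpha$ come from, and you should verify the product stays bounded through the crossover rather than only checking the two pure regimes.
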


\begin{proof}
\textbf{Case 1.} Suppose that $B_R(x_0)$ satisfies $|x_0|\geq 3R/2$. Then, 
\begin{itemize}
    \item for $x\in B_R(x_0)$, $|x_0|-R\leq|x|\leq|x_0|+R$.
    \item $|x_0|/3\leq|x_0|-R\leq|x_0|+R\leq 5|x_0|/3\leq 4(|x_0|-R)$.
\end{itemize}
These imply that for $t\in\bR$,
\begin{equation*}
    \aint_{B_R(x_0)}w_{\alpha}(t,x)\mathrm{d}x\simeq (t^2+|x_0|^2)^{\alpha/2},\quad \forall \alpha\in\bR,
\end{equation*}
where the comparability constant depends only on $d$ and $\alpha$. Therefore, \eqref{21.12.29.12.43} holds.

\textbf{Case 2.} Next, we assume that $B_R(x_0)$ satisfies $|x_0|< 3R/2$ and $t\in\bR\setminus\{0\}$. Since $B_R(x_0)\subseteq B_{3R}(0)$,
\begin{equation*}
    \aint_{B_R(x_0)}w_{\alpha}(t,x)\mathrm{d}x\leq N(d)\aint_{B_{3R}(0)}w_{\alpha}(t,x)\mathrm{d}x,
\end{equation*}
and by changing variables $x=|t|y$,
$$
\aint_{B_{3R}(0)}w_{\alpha}(t,x)\mathrm{d}x=N(d)t^{\alpha}\aint_{B_{3R/|t|}(0)}w_{\alpha}(1,y)\mathrm{d}y
$$
Due to the Lebesgue differentiation theorem, there exists $R_1>0$ such that if $0<R<R_1$, then
\begin{equation}
\label{21.12.29.12.55}
    \aint_{B_{R}(0)}|w_{\alpha}(1,y)-1|\mathrm{d}y< 1.
\end{equation}

\textbf{Case 2-1.} If $0<3R/|t|<R_1$, then by \eqref{21.12.29.12.55},
$$
\aint_{B_{3R/|t|}(0)}w_{\alpha}(1,y)\mathrm{d}y< 2,\quad \forall \alpha\in\bR.
$$
Therefore, \eqref{21.12.29.12.43} holds.

\textbf{Case 2-2.} Now, suppose that $3R/|t|\geq R_1$. Note that
$$
\aint_{B_{3R/|t|}(0)}w_{\alpha}(1,y)\mathrm{d}y=\frac{N(d)|t|^d}{(3R)^d}\int_0^{3R/|t|}(1+r^2)^{\alpha/2}r^{d-1}\mathrm{d}r.
$$
If $-d<\alpha<0$, then
\begin{equation}
\label{21.12.29.12.56}
    \begin{aligned}
    \frac{N(d)|t|^d}{(3R)^d}\int_0^{3R/|t|}(1+r^2)^{\alpha/2}r^{d-1}\mathrm{d}r
 &\leq \frac{N(d)|t|^d}{(3R)^d}\min\left( \int_0^{3R/|t|}r^{d-1}\mathrm{d}r, \int_0^{3R/|t|}r^{\alpha}r^{d-1}\mathrm{d}r \right)\\
    &=N(d,\alpha)\min(1,R^{\alpha}/|t|^{\alpha})
\end{aligned}
\end{equation}
If $\alpha\geq0$, then
\begin{equation}
\label{21.12.29.12.57}
\begin{aligned}
    \aint_{B_{3R/|t|}(0)}w_{\alpha}(1,y)\mathrm{d}y&=\frac{N(d)|t|^d}{(3R)^d}\int_0^{3R/|t|}(1+r^2)^{\alpha/2}r^{d-1}\mathrm{d}r\\
    &\leq N(d,R_1)(1+9R^2/t^2)^{\alpha/2}\leq N(d,\alpha,R_1)\max(1,R^{\alpha}/|t|^{\alpha}).
\end{aligned}
\end{equation}
Combining \eqref{21.12.29.12.56} and \eqref{21.12.29.12.57},
\begin{align*}
    &\left(\aint_{B_R(x_0)}w_{\alpha}(t,x)\mathrm{d}x\right)\left(\aint_{B_R(x_0)}w_{-\alpha/(p-1)}(t,x)\mathrm{d}x\right)^{p-1}\\
    &\leq N\left(\max(1,R^{\alpha}/|t|^{\alpha})\min(1,R^{-\alpha}/|t|^{-\alpha})1_{\alpha\geq0}+\min(1,R^{\alpha}/|t|^{\alpha})\max(1,R^{-\alpha}/|t|^{-\alpha})1_{-d<\alpha<0}\right)\\
    &\leq N(d,p,\alpha,R_1).
\end{align*}
The lemma is proved.
\end{proof}

\textbf{Declarations of interest}

Declarations of interest: none

\bibliographystyle{plain}

\end{document}